\newcommand{\indicator}[1]{\ensuremath{\mathbf{1}_{\{#1\}}}}
\newcommand{\oindicator}[1]{\ensuremath{\mathbf{1}_{{#1}}}}
\numberwithin{equation}{section}
\DeclareMathOperator{\var}{Var}
\DeclareMathOperator{\tr}{tr}
\DeclareMathOperator{\dist}{dist}
\newcommand{\Prob}{\mathbb{P}}
\newcommand{\E}{\mathbb{E}}
\newcommand{\C}{\mathbb{C}}
\renewcommand{\P}{\mathbb{P}}
\renewcommand\Re{\operatorname{Re}}
\renewcommand\Im{\operatorname{Im}}
\newcommand{\eps}{\varepsilon}
\newcommand{\mat}{\mathbf}
\theoremstyle{plain}
  \newtheorem{theorem}{Theorem}[section]
  \newtheorem{proposition}[theorem]{Proposition}
  \newtheorem{lemma}[theorem]{Lemma}
  \newtheorem{corollary}[theorem]{Corollary}
\theoremstyle{definition}
  \newtheorem{definition}[theorem]{Definition}
  \newtheorem{remark}[theorem]{Remark}
\begin{document}
\title[Central limit theorem for linear eigenvalue statistics]{Central limit theorem for linear eigenvalue statistics of elliptic random matrices}

\author[S. O'Rourke]{Sean O'Rourke}
\address{Department of Mathematics, University of Colorado at Boulder, Boulder, CO 80309  }
\thanks{S. O'Rourke has been supported by grant AFOSAR-FA-9550-12-1-0083}
\email{sean.d.orourke@colorado.edu}

\author[D. Renfrew]{David Renfrew} 
\address{Department of Mathematics, UCLA  }
\thanks{D. Renfrew has been supported by grant DMS-0838680}
\email{dtrenfrew@math.ucla.edu}

\begin{abstract}
We consider a class of elliptic random matrices which generalize two classical ensembles from random matrix theory: Wigner matrices and random matrices with iid entries.  In particular, we establish a central limit theorem for linear eigenvalue statistics of real elliptic random matrices under the assumption that the test functions are analytic.  As a corollary, we extend the results of Rider and Silverstein \cite{RS} to real iid random matrices.   
\end{abstract}

\maketitle

\section{Introduction}

Eigenvalues of large dimensional random matrices have been widely studied in recent years due, in part, to their relevance to statistics, computer science, and theoretical physics.  Two classical ensembles which have received considerable attention are Wigner matrices and iid random matrices. Elliptic random matrices (defined below in Section \ref{sec:ell}) were original introduced by Girko \cite{Gorig,Gten} as a natural generalization of both Wigner matrices and iid random matrices.

\subsection{Classical ensembles}
We begin with some definitions and examples.  

\begin{definition}[Wigner matrix]
Let $\xi, \zeta$ be real random variables.  We say $\mat{Y}_N$ is a \emph{real symmetric Wigner matrix} of size $N$ with atom variables $\xi,\zeta$ if $\mat{Y}_N = (y_{ij})_{i,j=1}^N$ is a random real symmetric $N \times N$ matrix that satisfies the following conditions.
\begin{itemize}
\item $\{y_{ij} : 1 \leq i \leq j \leq N\}$ is a collection of independent random variables.
\item $\{y_{ij} : 1 \leq i < j \leq N\}$ is a collection of independent and identically distributed (iid) copies of $\xi$.
\item $\{y_{ii} : 1 \leq i \leq N\}$ is a collection of iid copies of $\zeta$.  
\end{itemize}
\end{definition}

The prototypical example of a Wigner real symmetric matrix is the \emph{Gaussian orthogonal ensemble} (GOE).  The GOE is defined by the probability distribution  
\begin{equation*}
	\Prob(d \mat{M}) = \frac{1}{Z_N} \exp\left({-\frac{1}{4}\tr{\mat{M}^2}}\right) d \mat{M}
\end{equation*}
on the space of $N \times N$ real symmetric matrices, where $d \mat{M}$ refers to the Lebesgue measure on the $N(N+1)/2$ different elements of the matrix.  Here $Z_N$ denotes the normalization constant.  So for a matrix $\mat{Y}_N = (y_{ij})_{i,j=1}^N$ drawn from the GOE, the elements $\{ y_{ij} : 1 \leq i \leq j \leq N \}$ are independent Gaussian random variables with mean zero and variance $1+\delta_{ij}$. 

\begin{definition}[iid random matrix]
Let $\xi$ be a random variable.  We say $\mat{Y}_N$ is an \emph{iid random matrix} of size $N$ with atom variable $\xi$ if $\mat{Y}_N$ is a $N \times N$ matrix whose entries are iid copies of $\xi$.  
\end{definition}

In 1965, Ginibre \cite{Gi} introduced several classes of iid random matrices.  The \emph{complex (real) Ginibre ensemble} consists of $N \times N$ matrices whose entries are iid copies of a standard complex (real) Gaussian random variable.    

For both Wigner and iid random matrix ensembles, the most basic object of study is the limiting spectral distribution of the eigenvalues.  For any $N \times N$ matrix $\mat{M}$, we let $\lambda_1(\mat{M}), \ldots, \lambda_N(\mat{M})$ denote the eigenvalues of $\mat{M}$.  In this case, the \emph{empirical spectral measure} $\mu_{\mat{M}}$ is given by 
$$ \mu_{\mat{M}} := \frac{1}{N} \sum_{i=1}^N \delta_{\lambda_i(\mat{M})}. $$
In general, $\mu_{\mat{M}}$ is a probability measure supported on $\mathbb{C}$.  However, if the matrix $\mat{M}$ is Hermitian, then the eigenvalues $\lambda_1(\mat{M}), \ldots, \lambda_N(\mat{M})$ are real.  In this case, $\mu_{\mat{M}}$ is a probability measure on $\mathbb{R}$.  

A fundamental result for Wigner random matrices is Wigner's semicircle law \cite[Theorem 2.5]{BSbook}.  In particular, Wigner's semicircle law describes the convergence of the empirical spectral measure of $\frac{1}{\sqrt{N}} \mat{Y}_N$ when $\mat{Y}_N$ is a Wigner matrix.  

\begin{theorem}[Wigner's semicircle law]
Let $\xi, \zeta$ be real random variables, and assume $\xi$ has mean zero and unit variance.  For each $N \geq 1$, let $\mat{Y}_N$ be a real symmetric Wigner matrix of size $N$ with atom variables $\xi,\zeta$.  Then, for any bounded and continuous function $f: \mathbb{R} \to \mathbb{R}$, 
$$ \int_{\mathbb{R}} f(x) d \mu_{ \frac{1}{\sqrt{N}} \mat{Y}_N}(x)  \longrightarrow \int_{-2}^2 f(x) \frac{1}{2\pi} \sqrt{4 - x^2} dx $$
almost surely as $N \to \infty$.  
\end{theorem}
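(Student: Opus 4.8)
The plan is to prove this via the \emph{method of moments}. Set $\mat{W}_N := \tfrac{1}{\sqrt{N}}\mat{Y}_N$ and, for an integer $k\ge 0$, write
\[
m_k(N) := \int_{\mathbb R} x^k\, d\mu_{\mat W_N}(x) = \frac{1}{N}\tr\!\bigl(\mat W_N^k\bigr)
= \frac{1}{N^{1+k/2}}\sum_{i_0,\dots,i_{k-1}=1}^N y_{i_0i_1}y_{i_1i_2}\cdots y_{i_{k-1}i_0}.
\]
The semicircle law $\mu_{\mathrm{sc}}$ with density $\tfrac{1}{2\pi}\sqrt{4-x^2}\,\oindicator{[-2,2]}$ is compactly supported, hence determined by its moments, which are $C_{p}:=\tfrac{1}{p+1}\binom{2p}{p}$ for $k=2p$ and $0$ for $k$ odd. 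Thus it suffices to show that, almost surely, $m_k(N)\to\int x^k\,d\mu_{\mathrm{sc}}$ for each fixed $k$; together with an a priori tightness estimate (e.g.\ $\limsup_N m_4(N)<\infty$ a.s., which via Markov's inequality makes $\{\mu_{\mat W_N}\}$ a.s.\ tight), this upgrades to weak convergence $\mu_{\mat W_N}\Rightarrow\mu_{\mathrm{sc}}$ a.s.\ and in particular to $\int f\,d\mu_{\mat W_N}\to\int f\,d\mu_{\mathrm{sc}}$ for every bounded continuous $f$, by approximating $f$ uniformly by polynomials on a large fixed interval.

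A routine truncation reduces the problem to the case in which the atom variables are bounded: one replaces $y_{ij}$ by $y_{ij}\indicator{|y_{ij}|\le K}$, recenters and rescales, and controls the resulting change of $\mu_{\mat W_N}$ through the standard inequalities relating the L\'evy distance of two spectral distributions to the rank and to the normalized Hilbert--Schmidt norm of the perturbation. In particular, once the entries are bounded the diagonal part $\tfrac1{\sqrt N}\diag(y_{11},\dots,y_{NN})$ has operator norm $O(N^{-1/2})$ and may simply be discarded. Hence we assume henceforth that $|y_{ij}|\le K$ for all $i,j$, that the off-diagonal entries have mean zero and variance $1+o(1)$, and that $y_{ii}=0$.

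For the expectation, expand $\E\, m_k(N)$ over index sequences $(i_0,\dots,i_{k-1})$, each viewed as a closed walk of length $k$ on $K_N$. By independence and the vanishing mean of the off-diagonal entries, a term survives only if every edge of the walk is traversed at least twice; such a walk uses at most $\lfloor k/2\rfloor$ distinct edges and therefore visits at most $\lfloor k/2\rfloor+1$ distinct vertices, so the number of surviving sequences is $O(N^{\lfloor k/2\rfloor+1})$. With the $N^{-1-k/2}$ prefactor this gives $\E\, m_k(N)=O(1)$, and $=O(N^{-1/2})\to 0$ when $k$ is odd. When $k=2p$, the leading contribution comes exactly from walks that traverse $p$ distinct edges each twice and visit $p+1$ distinct vertices; the edge set is then a (doubled) spanning tree of the visited vertices, and up to relabeling these walks are counted by the Catalan number $C_p$ (via the usual bijection with Dyck paths). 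Each contributes $\E[\xi^2]^p=1$ and there are $N(N-1)\cdots(N-p)=N^{p+1}(1+o(1))$ label choices, whence $\E\, m_{2p}(N)\to C_p$.

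It remains to pass from convergence in mean to almost-sure convergence. Expanding $\var\bigl(m_k(N)\bigr)$ over \emph{pairs} of closed walks, a pair contributes $0$ unless the two walks share an edge, and a vertex count of the connected union graphs that can arise shows $\var\bigl(m_k(N)\bigr)=O(N^{-2})$; since $\sum_N \var(m_k(N))<\infty$, Chebyshev's inequality and the Borel--Cantelli lemma give $m_k(N)-\E\, m_k(N)\to 0$ almost surely, hence $m_k(N)\to\int x^k\,d\mu_{\mathrm{sc}}$ almost surely. Intersecting over $k\in\{0,1,2,\dots\}$ and invoking the tightness above yields the theorem; finally one reverses the truncation, the estimates being uniform enough to preserve the almost-sure conclusion. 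The main technical obstacle is the combinatorics of these walk counts: identifying the doubled-tree walks as the leading term and enumerating them by Catalan numbers, and especially the more intricate paired-walk bookkeeping behind the $O(N^{-2})$ variance bound. (A natural alternative, closer to the resolvent methods used later in the paper, is to show instead that $s_N(z):=\tfrac1N\tr(\mat W_N-z)^{-1}$ converges a.s.\ for each $z\in\mathbb C^+$ to the root in $\mathbb C^+$ of $s^2+zs+1=0$, using Schur-complement identities for the diagonal resolvent entries plus a martingale concentration bound; the obstacle then becomes controlling the error in the self-consistent equation.)
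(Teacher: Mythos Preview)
The paper does not actually prove Wigner's semicircle law; it is stated as background and attributed to \cite[Theorem 2.5]{BSbook}. There is therefore no ``paper's own proof'' to compare against.

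Your moment-method sketch is the classical argument and is essentially correct: truncate to bounded entries, discard the diagonal, compute $\E\,m_k(N)$ via closed-walk combinatorics to obtain the Catalan numbers, bound $\var(m_k(N))=O(N^{-2})$ via paired-walk counting, and conclude almost-sure convergence by Borel--Cantelli and moment determinacy of $\mu_{\mathrm{sc}}$. The one place where you could be more careful is the passage from a.s.\ convergence of all moments to a.s.\ weak convergence: rather than ``approximating $f$ uniformly by polynomials on a large fixed interval,'' the cleanest statement is that if a deterministic sequence of probability measures has all moments converging to those of a measure determined by its moments, then it converges weakly---and then one applies this on the a.s.\ event where every moment converges. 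Your tightness remark via $m_4$ is sufficient for this.

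For context, the cited reference \cite{BSbook} proves the result primarily via the Stieltjes transform, exactly the alternative you mention at the end: one shows $s_N(z)=\tfrac{1}{N}\tr(\mat W_N-z)^{-1}$ satisfies an approximate self-consistent equation whose unique solution in $\mathbb{C}^+$ is the Stieltjes transform of $\mu_{\mathrm{sc}}$. That approach is also the one closer in spirit to the resolvent machinery used throughout the present paper.
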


\begin{remark}
One can write 
$$ \int_{\mathbb{R}} f(x) d \mu_{ \frac{1}{\sqrt{N}} \mat{Y}_N}(x) = \frac{1}{N} \sum_{i=1}^N f \left( \lambda_i \left( \frac{1}{\sqrt{N}} \mat{Y}_N \right) \right). $$
In this way, Wigner's semicircle law can be viewed as a law of large numbers for Wigner matrices.  
\end{remark}

For iid random matrices, the limiting empirical spectral measure is described by the circular law.  Many authors have proved versions of the circular law under various assumptions on the atom variable $\xi$; see for instance \cite{Bcirc,BC,GTcirc,PZ,TVcirc,TVesd} and references therein.  We present the most general version due to Tao and Vu \cite{TVesd}.  

\begin{theorem}[Circular Law]
Let $\xi$ be a complex random variable with mean zero and unit variance.  For each $N \geq 1$, let $\mat{Y}_N$ be an iid random matrix of size $N$ with atom variable $\xi$.  Then, for any bounded and continuous function $f:\mathbb{C} \to \mathbb{C}$, 
$$ \int_{\mathbb{C}} f(z) d \mu_{ \frac{1}{\sqrt{N}} \mat{Y}_N }(z) \longrightarrow \frac{1}{\pi} \int_{\mathbb{U}} f(z) d^2z $$
almost surely as $N \to \infty$, where $\mathbb{U}$ is the unit disk in the complex plane and $d^2z = d \Re(z) d \Im(z)$.  
\end{theorem}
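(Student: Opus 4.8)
The plan is to prove this via Girko's Hermitization method, combined with least singular value estimates and a replacement principle, following the approach of Tao and Vu \cite{TVesd}. Write $\mat{X}_N := \frac{1}{\sqrt{N}}\mat{Y}_N$. The circular law measure is characterized by $\frac{1}{\pi}\indicator{|z|\le 1}\, d^2z = \frac{1}{2\pi}\Delta U_{\mathrm{circ}}$, where $U_{\mathrm{circ}}(z) := \int_{\C} \log|z-w|\, \tfrac{1}{\pi}\indicator{|w|\le 1}\, d^2w$ and $\Delta$ is the distributional Laplacian. Thus it suffices to show that for Lebesgue-almost every $z\in\C$,
$$
\frac{1}{N}\log\left|\det(\mat{X}_N - z\mat{I})\right|
 \;=\; \frac{1}{2}\int_0^\infty \log x \, d\nu_{N,z}(x)
 \;\longrightarrow\; U_{\mathrm{circ}}(z)
$$
almost surely, where $\nu_{N,z}$ denotes the empirical spectral measure of the positive semidefinite Hermitian matrix $(\mat{X}_N - z\mat{I})^*(\mat{X}_N - z\mat{I})$, together with enough tightness/uniform integrability in $z$ (in probability) to justify passing the Laplacian to the limit and upgrading to the almost sure convergence of $\int f\, d\mu_{\mat{X}_N}$ against bounded continuous $f$.

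The second step is to identify the deterministic limit of $\nu_{N,z}$. Via a Stieltjes transform fixed point analysis (as in Dozier--Silverstein), one shows that $\nu_{N,z}$ converges almost surely to a deterministic measure $\nu_z$ depending only on $|z|$, whose logarithmic moment $\frac{1}{2}\int_0^\infty \log x\, d\nu_z(x)$ equals $U_{\mathrm{circ}}(z)$. This part is relatively soft: it only uses convergence of the Stieltjes transform on the upper half plane and does not detect the behavior of $\nu_{N,z}$ near the origin.

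The third and hardest step is to control the small singular values of $\mat{X}_N - z\mat{I}$, so that $x\mapsto\log x$ is uniformly integrable against $\{\nu_{N,z}\}_N$ and the convergence $\nu_{N,z}\to\nu_z$ can be integrated against $\log$. The large-$x$ tail is handled by a crude bound on $\|\mat{X}_N\|$. Near $x=0$ one needs (i) a polynomial lower bound $\sigma_{\min}(\mat{X}_N - z\mat{I})\ge N^{-C}$ with probability $1-o(1)$, and (ii) a bound of the form $\nu_{N,z}([0,t])\le t^{c}$ with high probability for $t$ ranging down to a negative power of $N$. Estimate (i) is a least singular value bound for a shifted i.i.d. matrix, established through inverse Littlewood--Offord / small ball arguments (Rudelson--Vershynin, Tao--Vu); estimate (ii) follows from the negative second moment identity $\sum_j \sigma_j^{-2} = \sum_j \dist(R_j, H_j)^{-2}$, the distances of rows to the spans of the remaining rows, again bounded using small ball probabilities. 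Finally, since only mean zero and unit variance are assumed, one truncates the entries and invokes the replacement principle of \cite{TVesd}: two i.i.d.-type ensembles with the same limiting singular value distributions for a.e.\ $z$ and both satisfying the least singular value bound produce the same limiting empirical spectral measure, reducing the general case to bounded (or Gaussian) entries where (i)--(ii) are classical.

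I expect the main obstacle to be precisely this small-singular-value control for general, non-Gaussian entries with only two finite moments: establishing (i) and (ii) with quantitative uniformity in $z$ is the technical core of the argument, and it is where additive-combinatorial input (inverse Littlewood--Offord theorems) becomes indispensable.
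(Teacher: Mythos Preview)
The paper does not prove this theorem; it is quoted as background (the ``most general version due to Tao and Vu \cite{TVesd}'') alongside Wigner's semicircle law, with no proof given or even sketched. So there is nothing in the paper to compare your proposal against.

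That said, your outline is an accurate summary of the Tao--Vu argument in \cite{TVesd}: Girko's Hermitization reduces the problem to the almost sure convergence of $\frac{1}{N}\log|\det(\mat{X}_N - z\mat{I})|$ for a.e.\ $z$, which in turn requires (a) the limiting singular value distribution $\nu_z$ of $\mat{X}_N - z\mat{I}$, and (b) uniform integrability of $\log$ against $\nu_{N,z}$, the latter handled via least singular value bounds and the negative second moment identity, with the replacement principle used to pass from truncated to general entries. Your identification of the small singular value control as the crux is exactly right.
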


\subsection{Fluctuations of linear eigenvalue statistics}
While both Wigner's semicircle law and the circular law can be viewed as versions of the law of large numbers for random matrices, it is also natural to consider the fluctuations of linear spectral statistics.  That is, for any $N \times N$ matrix $\mat{M}$, we wish to study the sum
$$ \tr f( \mat{M} ) := \sum_{i=1}^N f (\lambda_i(\mat{M})), $$
where $f$ is a sufficiently smooth test function.  

Unlike the classical central limit theorem, the variance of linear spectral statistics for many ensembles of random matrices is $O(1)$ as the size of the matrix tends to infinity\footnote{See Section \ref{sec:notation} for a complete description of the asymptotic notation used here and throughout the paper.}.  More precisely, if $\mat{Y}_N$ is a Wigner or iid random matrix of size $N$, then, for any sufficiently smooth test function $f$,
$$ \limsup_{N \to \infty} \var \left[ \tr f \left( \frac{1}{\sqrt{N}} \mat{Y}_N \right) \right] < \infty. $$

A variety of results can be found in the random matrix literature concerning the fluctuations of linear spectral statistics for various random matrix ensembles under differing assumptions on the test functions $f$.  We refer the reader to \cite{AZ,BS,DS,DE,J,LP,NP,RS,S,SS,Scompact,SoWa} and references therein.  We present the following result for Wigner random matrices due to Shcherbina \cite{S}.  

\begin{theorem}[Shcherbina \cite{S}] \label{thm:shcherbina}
Let $\xi$ be a real random variable with mean zero, unit variance, and $\E|\xi|^4 < \infty$.  Let $\zeta$ be a real random variable with mean zero and variance $\sigma^2$.  For each $N \geq 1$, let $\mat{Y}_N$ be a real symmetric Wigner matrix of size $N$ with atom variables $\xi, \zeta$.  Let $f$ be a real-valued test function which satisfies 
$$ \int (1 + 2|l|)^{3/2 + \eps} |\hat{f}(l)|^2 dl < \infty $$
for some $\eps > 0$, where
$$ \hat{f}(l) := \frac{1}{\sqrt{2\pi}} \int f(x) e^{ix l} dx $$
is the Fourier transform of $f$.  Then 
$$ \tr f \left( \frac{1}{\sqrt{N}} \mat{Y}_N \right) - \E \tr f \left( \frac{1}{\sqrt{N}} \mat{Y}_N \right) $$
converges in distribution as $N \to \infty$ to a mean-zero Gaussian random variable with variance
\begin{align*}
	\frac{1}{2 \pi^2} & \int_{-2}^2 \int_{-2}^2 \left( \frac{f(x) - f(y)}{x - y} \right)^2 \frac{4 - xy}{\sqrt{4 - x^2} \sqrt{4 - y^2}} dx dy \\
	&+ \frac{\E|\xi|^4 - 3}{2 \pi^2} \left( \int_{-2}^2 f(x) \frac{2- x^2}{ \sqrt{4 - x^2}} dx \right)^2 + \frac{\sigma^2 - 2}{4 \pi^2} \left( \int_{-2}^2 \frac{f(x) x}{ \sqrt{4-x^2}} dx \right)^2.
\end{align*}
\end{theorem}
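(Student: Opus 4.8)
The plan is to follow the characteristic function method (as developed by Lytova--Pastur and by Shcherbina). Write $\mat{W}_N := \frac{1}{\sqrt{N}}\mat{Y}_N$ and, by Fourier inversion, represent the centered linear statistic as
\[
\mathcal{N}_N[f] := \tr f(\mat{W}_N) - \E\tr f(\mat{W}_N) = \frac{1}{\sqrt{2\pi}}\int_{\mathbb{R}} \hat{f}(l)\,\gamma_N(l)\,dl, \qquad \gamma_N(l) := \tr e^{-il\mat{W}_N} - \E\tr e^{-il\mat{W}_N}.
\]
It suffices to show that the characteristic function $Z_N(t) := \E\exp\bigl(it\,\mathcal{N}_N[f]\bigr)$ converges, for each $t\in\mathbb{R}$, to $\exp(-t^2 V[f]/2)$ with $V[f]$ the quantity in the statement; equivalently, one shows that $Z_N$ satisfies the equation $Z_N'(t) = -t\,V[f]\,Z_N(t) + o(1)$ uniformly on compact $t$-sets, which integrates to the Gaussian (so that vanishing of the higher cumulants is automatic). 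A preliminary truncation of $\xi$ at level $N^{1/2-\delta}$ (permissible since $\E|\xi|^4 < \infty$), together with a recentering of the entries, reduces matters to the case of bounded entries with exactly matched first two moments, so below the entries may be taken bounded.

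Two inputs are needed. First, \emph{a priori bounds}: $\var[\tr e^{-il\mat{W}_N}] \le C(1+|l|)^{\alpha}$ uniformly in $N$, proved by writing $\tr e^{-il\mat{W}_N} - \E\tr e^{-il\mat{W}_N}$ as a sum of martingale differences over the rows of $\mat{Y}_N$ and estimating each increment via Duhamel's formula and resolvent bounds; the exponent $\alpha$ the method produces is exactly the one for which $\int |\hat f(l)|^2(1+|l|)^{\alpha}\,dl<\infty$ matches the hypothesis $\int (1+2|l|)^{3/2+\eps}|\hat f(l)|^2\,dl<\infty$, which is why that precise Fourier weight appears. Second, the \emph{limiting covariance} $C(l_1,l_2) := \lim_{N\to\infty}\cov\bigl(\tr e^{-il_1\mat{W}_N},\,\tr e^{-il_2\mat{W}_N}\bigr)$. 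Granting these, dominated convergence gives $\lim_N \var[\mathcal{N}_N[f]] = \frac{1}{2\pi}\iint \hat f(l_1)\hat f(l_2)\,C(l_1,l_2)\,dl_1\,dl_2 =: V[f]$, and the same expansion used to compute $C$, applied inside $Z_N'(t)$, yields the transport equation.

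The heart of the matter is the computation of $C(l_1,l_2)$. Differentiating $e^{-isl\mat{W}_N}$ in $s$ brings down a factor $\mat{W}_N$, i.e.\ a single matrix entry $y_{jk}$; applying the integration-by-parts formula for the (now Gaussian, after truncation and recentering) entries, supplemented by a cumulant expansion to fourth order for the corrections, isolates three contributions. The second-order terms close into a linear equation for the relevant two-point function; solving it using self-averaging of the resolvent of $\mat{W}_N$ and the functional equation for the Stieltjes transform of the semicircle law gives the \emph{universal} Wigner kernel, responsible for the first double integral in the statement. The variance of the diagonal entries yields a rank-one correction proportional to $\sigma^2 - 2$ (the $2$ being the GOE diagonal variance), and the fourth cumulant $\kappa_4 = \E|\xi|^4 - 3$ of the off-diagonal entries yields a second rank-one correction. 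The main obstacle, and where the bulk of the work lies, is carrying out this expansion with error terms uniform in $l_1,l_2$ and of only polynomial growth, so that the Fourier integrals may legitimately be taken term by term under the stated regularity of $f$. (Alternatively one can interpolate the given ensemble to the GOE and differentiate $Z_N$ along the interpolation, the corrections coming from the mismatched fourth moments; the GOE endpoint is then handled by the explicit Gaussian integration-by-parts computation above.)

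Finally, one converts $V[f]$ from its frequency-space form to the real-space form in the statement by expanding $f$ on $[-2,2]$ in Chebyshev polynomials, $f(2\cos\theta) = \tfrac12 c_0 + \sum_{k\ge 1} c_k\cos k\theta$. In this basis the universal kernel is diagonal and reproduces $\frac{1}{2\pi^2}\int_{-2}^2\int_{-2}^2\bigl(\frac{f(x)-f(y)}{x-y}\bigr)^2 \frac{4-xy}{\sqrt{4-x^2}\sqrt{4-y^2}}\,dx\,dy$; the diagonal-variance correction involves only $c_1$ and, using $x = 2\cos\theta$, matches $\frac{\sigma^2-2}{4\pi^2}\bigl(\int_{-2}^2 \frac{x f(x)}{\sqrt{4-x^2}}\,dx\bigr)^2$; and the fourth-cumulant correction involves only $c_2$ and, using $2-x^2 = -2\cos 2\theta$, matches $\frac{\E|\xi|^4-3}{2\pi^2}\bigl(\int_{-2}^2 f(x)\frac{2-x^2}{\sqrt{4-x^2}}\,dx\bigr)^2$. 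This gives the asserted variance and completes the proof.
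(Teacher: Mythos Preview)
The paper does not prove Theorem~\ref{thm:shcherbina}. It is quoted verbatim from Shcherbina~\cite{S} as background for the paper's own results on elliptic ensembles (Theorems~\ref{thm:nice} and~\ref{thm:main}), and no proof or sketch is offered anywhere in the text. So there is no ``paper's own proof'' to compare your proposal against.

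That said, your sketch is a fair outline of the method actually used in~\cite{S} (and in Lytova--Pastur~\cite{LP}): Fourier representation, a priori polynomial variance bounds for $\tr e^{-il\mat{W}_N}$ via martingale increments, a transport equation for the characteristic function, and identification of the variance through Chebyshev coefficients. One point of phrasing is misleading: truncating the entries at level $N^{1/2-\delta}$ does \emph{not} make them Gaussian, so you cannot literally ``apply the integration-by-parts formula for the (now Gaussian \ldots) entries.'' What Shcherbina actually does is use the general (Stein-type) identity $\E[\xi F(\xi)] = \E[\xi^2]\,\E[F'(\xi)] + \text{remainder}$, with the remainder controlled by the fourth moment; the $\kappa_4$ and $\sigma^2-2$ corrections emerge from this remainder, not from a separate cumulant expansion layered on top of a Gaussian calculation. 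Your parenthetical alternative (interpolate to GOE and differentiate along the path) is closer to the Lytova--Pastur route and is also valid. Either way, the sketch is in the right spirit, but there is nothing in the present paper to check it against.
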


A similar result was obtained for iid random matrices with complex entries by Rider and Silverstein \cite{RS}.   

\begin{theorem}[Rider-Silverstein \cite{RS}] \label{thm:rs}
Let $\xi$ be a complex random variable with mean zero, unit variance, and which satisfies 
\begin{enumerate}[(i)]
\item $\E[ \xi^2] = 0$,
\item $\E|\xi|^k \leq k^{\alpha k}$ for $k > 2$ and some $\alpha > 0$,
\item $\Re(\xi)$ and $\Im(\xi)$ possess a bounded joint density.
\end{enumerate}
For each $N \geq 1$, let $\mat{Y}_N$ be an iid random matrix with atom variable $\xi$.  Consider test functions $f_1, \ldots, f_k$ analytic in a neighborhood of the disk $|z| \leq 4$ and otherwise bounded.  Then, as $N \to \infty$, the random vector
$$ \left( \tr f_j \left( \frac{1}{\sqrt{N}} \mat{Y}_N \right) - N f_j(0) \right)_{j=1}^k $$
converges in distribution to a mean-zero multivariate Gaussian vector $(\mathcal{G}(f_1), \ldots, \mathcal{G}(f_k))$ with covariances 
$$ \E[ \mathcal{G}(f_i) \overline{\mathcal{G}(f_j)}] = \frac{1}{\pi} \int_{\mathbb{U}} \frac{d}{dz} f_i(z) \overline{ \frac{d}{dz} f_j(z) } d^2 z, $$
in which $\mathbb{U}$ is the unit disk and $d^2 z = d \Re(z) d \Im(z)$.  
\end{theorem}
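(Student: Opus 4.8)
The plan is to express the linear statistic for an analytic test function as a series in the power‑trace statistics $\tr \mat{W}_N^m$, where $\mat{W}_N := \frac{1}{\sqrt{N}} \mat{Y}_N$, to establish a joint central limit theorem for finitely many of these power traces by the method of moments, and then to pass to the limit in the series. For the reduction, note first that since the entries of $\mat{Y}_N$ have finite fourth moment (by (ii)) the operator norm $\|\mat{W}_N\|$ converges to $2$, so $\Prob(\|\mat{W}_N\| > 3) \to 0$. Writing $f(z) = \sum_{m \ge 0} a_m z^m$ on the disk $|z| < R$ of analyticity, with $R > 4$ and hence $|a_m| \le C R^{-m}$, the holomorphic functional calculus gives, on the event $\{\|\mat{W}_N\| \le 3\}$,
\[
  \tr f(\mat{W}_N) - N f(0) = \sum_{m \ge 1} a_m\, \tr \mat{W}_N^m .
\]
A routine combinatorial estimate on closed walks shows that $\sup_N \E|\tr \mat{W}_N^m|^2$ grows at most polynomially in $m$, so the tail $\sum_{m > M} a_m\, \tr \mat{W}_N^m$ is negligible in $L^1$, uniformly in $N$, as $M \to \infty$. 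It therefore suffices to prove that for each fixed $M$ the random vector $\big( \tr \mat{W}_N^m - \E \tr \mat{W}_N^m \big)_{m=1}^M$ converges jointly in distribution, and that $\E \tr \mat{W}_N^m \to 0$ for every $m \ge 1$, so that the centering $N f(0)$ in the theorem is correct. The several test functions $f_1, \dots, f_k$ are handled simultaneously, being linear combinations of the same power traces.

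For the central limit theorem for power traces I would expand
\[
  \tr \mat{W}_N^m = \frac{1}{N^{m/2}} \sum_{i_1, \dots, i_m = 1}^N \xi_{i_1 i_2}\, \xi_{i_2 i_3} \cdots \xi_{i_m i_1},
\]
a sum over closed walks of length $m$ on $\{1, \dots, N\}$, and compute every mixed moment of the centered power traces and their complex conjugates by grouping the resulting sum according to the multigraph traced out by the union of the walks. Since $\E \xi = 0$ and, by hypothesis (i), $\E \xi^2 = 0$, any configuration containing an edge that is not covered by at least one unconjugated and one conjugated factor contributes zero, so after centering the dominant configurations are disjoint unions of pairs of oppositely oriented, matched cycles. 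A vertex count then shows that these configurations reproduce Wick's formula, that $\E \tr \mat{W}_N^m \to 0$, that $\tr \mat{W}_N^p$ is matched with $\overline{\tr \mat{W}_N^q}$ in the limit only when $p = q$ with limiting covariance $p$, and that $\E[ \tr \mat{W}_N^p\, \tr \mat{W}_N^q ] \to 0$. Hence the limit is a family $(\mathcal{G}_m)_{m \ge 1}$ of jointly complex Gaussian random variables with $\E \mathcal{G}_m = 0$, $\E[\mathcal{G}_p \overline{\mathcal{G}_q}] = p\, \delta_{pq}$, and $\E[\mathcal{G}_p \mathcal{G}_q] = 0$. Hypothesis (ii) is used to control the combinatorially many terms coming from vertices of high degree, while hypothesis (iii) plays its usual role, familiar from proofs of the circular law, of controlling the smallest singular values of the shifted matrices $z I - \mat{W}_N$ that underpin the functional‑calculus reduction and the truncation.

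Passing to the limit, set $\mathcal{G}(f) := \sum_{m \ge 1} a_m\, \mathcal{G}_m$, which converges in $L^2$ by the bound on $\E|\tr \mat{W}_N^m|^2$; then, writing $f_i(z) = \sum_m a^{(i)}_m z^m$, bilinearity together with the polar‑coordinate identity $\int_{\mathbb{U}} z^p \bar{z}^q\, d^2 z = \pi\, \delta_{pq}/(p+1)$ applied term by term gives
\[
  \E[ \mathcal{G}(f_i)\, \overline{\mathcal{G}(f_j)} ] = \sum_{m \ge 1} m\, a^{(i)}_m\, \overline{a^{(j)}_m} = \frac{1}{\pi} \int_{\mathbb{U}} \frac{d}{dz} f_i(z)\, \overline{ \frac{d}{dz} f_j(z) }\, d^2 z ,
\]
while $\E[ \mathcal{G}(f_i)\, \mathcal{G}(f_j) ] = 0$; this is precisely the asserted circular complex Gaussian limit, and the convergence of $\big( \tr f_j(\mat{W}_N) - N f_j(0) \big)_{j=1}^k$ follows from the finite‑$M$ convergence above together with the uniform tail bound.

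I expect the main obstacle to be the combinatorial moment computation of the second step: one must show that every connected cluster involving three or more of the walks contributes $o(1)$ — so that only pair matchings survive and the limit is genuinely Gaussian rather than merely having the correct second moments — and that the errors produced by the non‑Gaussian tails of $\xi$ are negligible, which is exactly where hypotheses (i)--(iii) are brought to bear. A natural alternative is to prove the power‑trace central limit theorem first for the complex Ginibre ensemble, whose eigenvalues form a determinantal point process and for which the statement is classical, and then to transfer to a general atom variable $\xi$ by a Lindeberg‑type replacement of the entries inside the resolvent $(z I - \mat{W}_N)^{-1}$; in that route the difficulty shifts to controlling the least singular value of $z I - \mat{W}_N$ along the swapping, again relying on (ii) and (iii).
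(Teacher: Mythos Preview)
This theorem is quoted from \cite{RS} and is not proved in the present paper; the paper instead proves a generalization (Theorem~\ref{thm:nice}) for real elliptic matrices. Both the original Rider--Silverstein argument and this paper's proof proceed via the resolvent: one writes $\tr f(\mat{W}_N) = -\frac{1}{2\pi i}\oint_{\mathcal C} f(z)\,\tr(\mat{W}_N - z\mat{I})^{-1}\,dz$ on a contour $\mathcal C$ enclosing the spectrum, decomposes the centered resolvent trace as a martingale difference sum, applies a martingale CLT for the finite-dimensional distributions, and proves tightness of the resolvent process on $\mathcal C$. The density assumption (iii) --- or, in this paper, a direct least-singular-value estimate, Theorem~\ref{thm:lsv} --- is needed precisely to bound $\|(\mat{W}_N - z\mat{I})^{-1}\|$ on the contour.

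Your approach is genuinely different: you expand $f$ in a power series and reduce to a joint CLT for the power traces $\tr\mat{W}_N^m$, proved by the method of moments. This is the strategy of Nourdin--Peccati \cite{NP} (who treat polynomial $f$), extended to analytic $f$ by a tail truncation. The combinatorics you outline --- the condition $\E\xi^2=0$ forcing every surviving directed edge to carry at least one unconjugated and one conjugated factor, so that only pair matchings of oppositely oriented cycles survive --- is correct in spirit and does produce the claimed limiting covariances $\E[\mathcal G_p\overline{\mathcal G_q}]=p\,\delta_{pq}$, $\E[\mathcal G_p\mathcal G_q]=0$. Two points deserve correction, however. First, your invocation of hypothesis~(iii) is misplaced: in your route there is no resolvent and no least singular value to control; the functional calculus you use is simply the power series, which requires only the operator-norm bound $\|\mat{W}_N\|\le 3$, and that follows from the moment hypothesis (ii) alone. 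Hypothesis~(iii) is specific to the resolvent approach and plays no role in yours. Second, the step you call ``routine'' --- the bound on $\sup_N \E|\tr\mat{W}_N^m|^2$ with growth in $m$ slow enough to be beaten by $R^{-m}$ --- is the genuine technical crux of passing from polynomial to analytic test functions by your method; it is achievable under (ii), but it is not routine, and it is exactly this difficulty that the resolvent approach sidesteps by replacing an infinite series with a single contour integral.
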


A version of Theorem \ref{thm:rs} was proved by Nourdin and Peccati \cite{NP} when $\mat{Y}_N$ is a real iid random matrix and $f_1, \ldots, f_k$ are polynomials.  

\subsection{Elliptic random matrices}
\label{sec:ell}

Elliptic random matrices generalize both Wigner matrices and iid random matrices. 

\begin{definition}[Real elliptic random matrix] \label{def:elliptic}
Let $(\xi_1, \xi_2)$ be a random vector in $\mathbb{R}^2$, and let $\zeta$ be a real random variable.  We say $\mat{Y}_N = (y_{ij})_{i,j=1}^N$ is a $N \times N$ \emph{real elliptic random matrix} with atom variables $(\xi_1,\xi_2), \zeta$ if the following conditions hold.  
\begin{itemize}
\item (independence) $\{ y_{ii} : 1 \leq i \leq N\} \cup \{ (y_{ij}, y_{ji}) : 1 \leq i < j \leq N\}$ is a collection of independent random elements.
\item (off-diagonal entries) $\{ (y_{ij}, y_{ji}) : 1 \leq i < j \leq N\}$ is a collection of iid copies of $(\xi_1,\xi_2)$.
\item (diagonal entries) $\{y_{ii} : 1 \leq i \leq N\}$ is a collection of iid copies of $\zeta$.  
\end{itemize}
\end{definition}

Let $\mat{Y}_N$ be a $N \times N$ real elliptic random matrix with atom variables $(\xi_1, \xi_2), \zeta$.  Assume $\xi_1, \xi_2$ have mean zero and unit variance.  The key parameter when studying elliptic random matrices turns out to be the covariance $\rho := \E[\xi_1 \xi_2]$.  By the Cauchy-Schwarz inequality, it follows that $|\rho| \leq 1$.  

If $\rho = 1$, then $\xi_1 = \xi_2$ almost surely and hence $\mat{Y}_N$ is a real symmetric Wigner matrix.  If $\xi_1, \xi_2, \zeta$ are iid, then $\rho = 0$ and $\mat{Y}_N$ is an iid random matrix.  

For elliptic random matrices, the limiting spectral distribution is known as the elliptic law.  In particular, the limiting distribution is given by the uniform measure on the ellipsoid 
\begin{equation} \label{def:Erho}
	\mathcal{E}_{\rho} := \left\{ z \in \C : \frac{(\Re z)^2}{(1+\rho)^2} +\frac{(\Im z)^2}{(1-\rho)^2} < 1 \right\} 
\end{equation}
for $|\rho| < 1$.  Versions of the elliptic law, under various assumptions on the entries, have been established in \cite{Nell, NO}.   

\begin{theorem}[Elliptic law]
Let $(\xi_1, \xi_2)$ be a random vector in $\mathbb{R}^2$, where $\xi_1, \xi_2$ each have mean zero and unit variance.  Set $\rho := \E[\xi_1 \xi_2]$, and assume $|\rho| < 1$.  Let $\zeta$ be a real random variable with mean zero and finite variance.  For each $N \geq 1$, let $\mat{Y}_N$ be an $N \times N$ real elliptic random matrix with atom variables $(\xi_1, \xi_2), \zeta$.  Then, for any bounded and continuous $f: \mathbb{C} \to \mathbb{C}$, 
$$ \int_{\mathbb{C}} f(z) d \mu_{\frac{1}{\sqrt{N}} \mat{Y}_N}(z) \longrightarrow \frac{1}{\pi (1- \rho^2)} \int_{\mathcal{E}_{\rho}} f(z) d^2 z $$
almost surely as $N \to \infty$, where $\mathcal{E}_{\rho}$ is defined in \eqref{def:Erho} and $d^2 z = d \Re(z) d \Im(z)$.  
\end{theorem}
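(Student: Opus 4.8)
The plan is to follow Girko's Hermitization scheme together with the logarithmic-potential characterization of limiting spectral measures, exactly as in the proof of the circular law (which is the special case $\rho = 0$). Write $\mu_N := \mu_{\frac{1}{\sqrt N}\mat{Y}_N}$, and for $z \in \C$ let $\nu_{N,z}$ denote the empirical distribution of the singular values of $\frac{1}{\sqrt N}\mat{Y}_N - zI$, so that the logarithmic potential of $\mu_N$ is
\[
U_{\mu_N}(z) = -\int_{\C}\log|z-w|\,d\mu_N(w) = -\frac1N\log\Bigl|\det\Bigl(\tfrac1{\sqrt N}\mat{Y}_N - zI\Bigr)\Bigr| = -\int_0^\infty \log x\,d\nu_{N,z}(x).
\]
Since a compactly supported probability measure on $\C$ is uniquely determined by its logarithmic potential, and since the potential $U_{\mu_{\mathcal{E}_\rho}}$ of the uniform probability measure $\mu_{\mathcal{E}_\rho}$ on $\mathcal{E}_\rho$ has a known closed form, it suffices to show that for Lebesgue-almost every $z$ one has $U_{\mu_N}(z) \to U_{\mu_{\mathcal{E}_\rho}}(z)$ almost surely, and then to upgrade the resulting vague convergence of $\mu_N$ to weak convergence.

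The first ingredient is the deterministic limit of $\nu_{N,z}$. Consider the $2N \times 2N$ Hermitian Hermitization $\mat{H}_{N,z}$ whose off-diagonal blocks are $\frac{1}{\sqrt N}\mat{Y}_N - zI$ and its conjugate transpose; the spectrum of $\mat{H}_{N,z}$ is the symmetrization of $\operatorname{supp}\nu_{N,z}$. Because the pairs $(y_{ij}, y_{ji})$ are correlated with $\E[\xi_1 \xi_2] = \rho$, a Schur-complement / resolvent expansion for $(\mat{H}_{N,z} - w)^{-1}$ produces a self-consistent (vector) equation for its Stieltjes transform whose coefficients depend on $\rho$ and $|z|^2$. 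After truncating the entries at scale $N^{1/2 - \delta}$ to accommodate the mere finite-variance hypothesis (the diagonal $\frac{1}{\sqrt N}\diag(\zeta_i)$ contributing only a lower-order perturbation), one either analyzes this equation directly or reduces to the Gaussian elliptic Ginibre model by a Lindeberg replacement, using that the limiting ESD of $\mat{H}_{N,z}$ depends only on the first two moments of the entries. Solving the self-consistent equation yields the deterministic limit $\nu_z$, and evaluating $\int_0^\infty \log x\,d\nu_z(x)$ --- or quoting the Gaussian elliptic computation --- reproduces $U_{\mu_{\mathcal{E}_\rho}}(z)$.

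The main obstacle is justifying the interchange of limit and integral in $-\int_0^\infty \log x\,d\nu_{N,z}(x)$, that is, establishing almost sure uniform integrability of $\log$ against the family $\{\nu_{N,z}\}_N$. The large-$x$ tail is harmless, since $\frac1N \sum_i \sigma_i\bigl(\frac{1}{\sqrt N}\mat{Y}_N - zI\bigr)^2 = \frac{1}{N^2}\|\mat{Y}_N - \sqrt N\, zI\|_{HS}^2 \to 1 + |z|^2$ almost surely by the strong law of large numbers. The delicate part is the small-$x$ tail: one needs a polynomially small bound $\Prob\bigl(\sigma_{\min}(\frac{1}{\sqrt N}\mat{Y}_N - zI) \le N^{-A}\bigr) = O(N^{-c})$ together with control on the number of singular values lying below a small threshold. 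The correlation between $y_{ij}$ and $y_{ji}$ destroys the column independence underlying the Rudelson--Vershynin and Tao--Vu least-singular-value estimates, so one conditions on the strictly upper-triangular part of $\mat{Y}_N$ (equivalently, splits the matrix into independent blocks of columns) and runs a distance-from-a-random-vector-to-a-subspace argument under the conditional law; the count of intermediate small singular values follows from a negative-second-moment bound on the resolvent of $\mat{H}_{N,z}$ near the origin, for which the self-consistent equation of the previous step supplies the needed regularity ($\nu_z$ has a bounded density near $0$ for $z$ outside a Lebesgue-null set).

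Assembling these ingredients gives, for almost every $z$, $U_{\mu_N}(z) \to U_{\mu_{\mathcal{E}_\rho}}(z)$ in probability; the upgrade to almost sure convergence uses that replacing a single pair $(y_{ij}, y_{ji})$ perturbs $\nu_{N,z}$ --- hence, after truncating the singularity of $\log$, perturbs $U_{\mu_N}(z)$ --- by $O(1/N)$ in Kolmogorov distance (by Cauchy interlacing for $\mat{H}_{N,z}$), so the Azuma--Hoeffding inequality gives concentration about the mean with a probability bound summable in $N$, whence Borel--Cantelli applies. Finally, since $\frac1N \sum_i |\lambda_i(\frac{1}{\sqrt N}\mat{Y}_N)|^2 \le \frac{1}{N^2}\|\mat{Y}_N\|_{HS}^2$ is almost surely bounded, the sequence $\mu_N$ is tight, so every vague subsequential limit of $\mu_N$ has logarithmic potential $U_{\mu_{\mathcal{E}_\rho}}$ and therefore equals $\mu_{\mathcal{E}_\rho}$ by uniqueness of the logarithmic potential; tightness then promotes vague convergence to weak convergence, which is precisely the convergence $\int_{\C} f\,d\mu_N \to \frac{1}{\pi(1-\rho^2)}\int_{\mathcal{E}_\rho} f\,d^2 z$ asserted in the theorem.
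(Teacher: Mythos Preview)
The paper does not actually prove this theorem. The Elliptic law is stated as a background result and attributed to the literature: ``Versions of the elliptic law, under various assumptions on the entries, have been established in \cite{Nell, NO}.'' There is no proof in the paper to compare your sketch against.

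That said, your sketch is the standard Girko Hermitization strategy and is essentially the approach taken in the cited references (Naumov; Nguyen--O'Rourke). The main steps --- reduction to logarithmic potentials, convergence of the symmetrized singular-value distribution of $\frac{1}{\sqrt N}\mat{Y}_N - zI$ via a self-consistent equation for the $2\times 2$ matrix-valued Stieltjes transform of the Hermitization, uniform integrability of $\log$ handled through least-singular-value and intermediate-singular-value bounds, and a concentration/Borel--Cantelli upgrade to almost sure convergence --- are all present in those papers. One point where your description is slightly off: the least-singular-value bound in \cite{NO} does not proceed by conditioning on the upper-triangular part to recover column independence; rather, it adapts the Tao--Vu framework by working with the dependency structure directly (the pairs $(y_{ij},y_{ji})$ are independent across $i<j$, so row/column distance arguments still go through with appropriate modifications). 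But the overall architecture you describe is correct.
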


\section{New results}

The goal of this note is to study the fluctuations of linear eigenvalues statistics for elliptic random matrices.  That is, we will prove versions of Theorems \ref{thm:shcherbina} and \ref{thm:rs} for a class of real elliptic random matrices.  In particular, we consider real elliptic random matrices whose atom variables $(\xi_1, \xi_2), \zeta$ satisfy the following conditions.
\begin{definition}[Condition {\bf C0}]
We say the atom variables $(\xi_1,\xi_2), \zeta$ satisfy condition {\bf C0} if
\begin{enumerate}[(i)]
\item $\xi_1, \xi_2$ each have mean zero and unit variance,
\item $\zeta$ has mean zero and variance $\sigma^2$,
\item there exists $\tau > 0$ such that 
\begin{equation} \label{eq:finitemoments}
	\E|\xi_1|^{6+\tau} + \E|\xi_2|^{6+\tau} + \E|\zeta|^{4+\tau} < \infty.
\end{equation}
\end{enumerate}
\end{definition}

Recall the definition of the ellipsoid $\mathcal{E}_{\rho}$ given in \eqref{def:Erho} for $|\rho| < 1$.  For $\rho = 1$, we define $\mathcal{E}_{1}$ to be the interval $[-2,2]$ on the real line.  For $\rho = -1$, let $\mathcal{E}_{-1}$ be the interval from $-2i$ to $2i$ on the imaginary axis.  For any $\delta > 0$, define the neighborhoods 
$$ \mathcal{E}_{\rho,\delta} := \left\{ z \in \mathbb{C} : \dist(z,\mathcal{E}_\rho) \leq \delta \right\}. $$

As in \cite{OR}, we will also need the function 
\begin{equation} \label{eq:def:mz}
	m(z) := \left\{
     		\begin{array}{lr}
       		\frac{ -z + \sqrt{z^2 - 4 \rho}}{2 \rho} & \text{for} \quad \rho \neq 0\\
       		\frac{-1}{z} & \text{for}\quad \rho = 0
     		\end{array}
   	\right. ,
\end{equation}
where $\sqrt{z^2 - 4 \rho}$ is the branch of the square root with branch cut $[-2\sqrt{\rho}, 2\sqrt{\rho}]$ for $\rho > 0$ and $[-2\sqrt{|\rho|}, 2\sqrt{|\rho|}]i$ for $\rho < 0$, and which equals $z$ at infinity.  In particular, the function $m(z)$ is analytic outside $\mathcal{E}_{\rho}$ and satisfies 
$$ m(z) = -\frac{1}{z + \rho m(z)}. $$

We are now ready to state our main result.  

\begin{theorem}[Main result] \label{thm:nice}
For each $N \geq 1$, let $\mat{Y}_N$ be an $N \times N$ real elliptic random matrix with atom variables $(\xi_1,\xi_2), \zeta$ which satisfy condition {\bf C0}.  Set $\rho :=\E[\xi_1 \xi_2]$.  Let $\delta > 0$.  Let $f_1, \ldots, f_k$ be analytic in a neighborhood of $\mathcal{E}_{\rho,\delta}$ and bounded otherwise.  In addition, assume
\begin{equation} \label{eq:fjreal}
	f_j(z) + f_j(\bar{z}) \in \mathbb{R}
\end{equation}  
for all $z \in \mathcal{E}_{\rho,\delta}$ and each $1 \leq j \leq k$.  Then, as $N \to \infty$, the random vector
$$ \left( \tr f_j \left( \frac{1}{\sqrt{N}} \mat{Y}_N \right) - \E  \tr f_j \left( \frac{1}{\sqrt{N}} \mat{Y}_N \right) \right)_{j=1}^k $$
converges in distribution to a mean-zero multivariate Gaussian vector $(\mathcal{G}(f_1), \ldots, \mathcal{G}(f_k))$ with covariances
\begin{align}
	\E[ \mathcal{G}(f_i) \mathcal{G}(f_j) ] &:= -\frac{1}{4 \pi^2} \oint_{\mathcal{C}} \oint_{\mathcal{C}} f_i(z) f_j(w) \upsilon(z,w) dz dw \label{eq:cov:form1}\\
		&= -\frac{1}{4 \pi^2} \oint_{\mathcal{C}} \oint_{\mathcal{C}} f_i'(z) f_j'(w) m(z) m(w) \beta(z,w) dz dw, \nonumber
\end{align}
where $\mathcal{C}$ is the contour around the boundary of $\mathcal{E}_{\rho,\delta}$, 
\begin{equation} \label{def:upsilon}
	\upsilon(z,w) := \frac{\partial^2}{\partial z \partial w} m(z) m(w) \beta(z,w),
\end{equation} 
\begin{align}
	\beta(z,w) &:= \sigma^2 - \rho - 1 - \frac{ \log( 1- \rho m(z) m(w)) }{ m(z) m(w) } - \frac{ \log (1 - m(z) m(w)) }{m(z) m(w)} \nonumber \\
	&\qquad+ \left( \frac{\E[\xi_1^2 \xi_2^2] - 2 \rho^2 - 1}{2} \right) m(z) m(w), \label{def:beta}
\end{align}
 and $m(z)$ is defined in \eqref{eq:def:mz}.  
\end{theorem}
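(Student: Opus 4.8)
The plan is to follow the standard two-step strategy for CLTs of linear eigenvalue statistics of non-Hermitian matrices: first reduce the problem to a Hermitized model, then establish a CLT for the resolvent (Stieltjes) traces of that Hermitization and integrate back. Concretely, since $f_1,\dots,f_k$ are analytic on a neighborhood of $\mathcal{E}_{\rho,\delta}$, by the Cauchy integral formula we may write
\begin{equation*}
	\tr f_j\left(\tfrac{1}{\sqrt N}\mat{Y}_N\right) = -\frac{1}{2\pi i}\oint_{\mathcal{C}} f_j(z)\,\tr\left(\tfrac{1}{\sqrt N}\mat{Y}_N - z\right)^{-1} dz,
\end{equation*}
so the whole statement reduces to a functional CLT for the centered process $z\mapsto \tr(\tfrac{1}{\sqrt N}\mat{Y}_N-z)^{-1} - \E[\cdots]$ on the contour $\mathcal{C}$, which lies outside $\mathcal{E}_\rho$. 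The reality constraint \eqref{eq:fjreal} is exactly what is needed to ensure the resulting limit is a genuine real Gaussian vector (the contour pairs $z$ with $\bar z$). Since $\mathcal{C}$ is a fixed contour at distance $\gtrsim\delta$ from the spectral bulk, one is in the ``soft edge'' regime where no local law near the edge is needed — only control of resolvents at spectral parameters bounded away from $\mathcal{E}_\rho$.

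The core analytic input is a CLT for $\tr(\tfrac{1}{\sqrt N}\mat{Y}_N-z)^{-1}$ uniformly for $z\in\mathcal{C}$. The natural route, following O'Rourke–Renfrew \cite{OR} and the Wigner-case techniques of Shcherbina \cite{S}, is to pass to the Hermitization: express the resolvent of the non-Hermitian $\mat{Y}_N$ in terms of the resolvent of the $2N\times 2N$ Hermitian matrix built from $\mat{Y}_N$ and $\mat{Y}_N^*$ (or, equivalently, work with $\det(\tfrac{1}{\sqrt N}\mat{Y}_N - z)$ via the log-determinant), for which the limiting Stieltjes transform is precisely the function $m(z)$ of \eqref{eq:def:mz} satisfying $m(z) = -1/(z+\rho m(z))$. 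I would then run a cumulant / interpolation expansion (Stein-type or the martingale-difference decomposition in the columns/rows of $\mat{Y}_N$) for the centered resolvent trace, using condition {\bf C0} — in particular the $6+\tau$ moments on $\xi_1,\xi_2$ and $4+\tau$ moments on $\zeta$ — to truncate the expansion after the second-order term and to control remainders. The variance computation produces a quadratic functional whose kernel, after resummation, is the function $\upsilon(z,w)$ in \eqref{def:upsilon}; the three contributions to $\beta(z,w)$ in \eqref{def:beta} should be traceable to, respectively, the ``CLT part'' coming from the $(y_{ij},y_{ji})$ pairs and their correlation $\rho$ (the two logarithm terms, which are the elliptic analogues of the $\log(1-m(z)m(w))$ kernel in the iid/Wigner cases), the fourth-cumulant correction $\E[\xi_1^2\xi_2^2]-2\rho^2-1$, and the diagonal contribution $\sigma^2$. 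Tightness of the process $z\mapsto \tr(\tfrac{1}{\sqrt N}\mat{Y}_N-z)^{-1}$ on $\mathcal{C}$ follows from a uniform second-moment bound on increments (again using {\bf C0}) plus analyticity.

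To get joint Gaussianity of the vector indexed by $f_1,\dots,f_k$ and multilinearity of the limiting covariance, I would prove convergence of all finite-dimensional distributions of the resolvent process (e.g.\ via the characteristic function / moment method, or via the CLT for martingale-difference arrays, checking the Lindeberg condition from the finite-moment assumption), after which the covariance of $\mathcal{G}(f_i),\mathcal{G}(f_j)$ is obtained by integrating the limiting resolvent covariance kernel against $f_i(z)f_j(w)$ over $\mathcal{C}\times\mathcal{C}$, giving \eqref{eq:cov:form1}; the second displayed form follows by integrating by parts in $z$ and $w$ (the boundary terms vanish since $\mathcal{C}$ is closed), moving the derivatives off the $f$'s and onto $m(z)m(w)\beta(z,w)$. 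Two technical points deserve care: (1) one must check that the limiting covariance does not depend on the choice of $\delta$ (equivalently, on the contour $\mathcal{C}$), which holds because $\upsilon(z,w)$ is analytic in the region between any two admissible contours — this is where the precise branch-cut choice for $\sqrt{z^2-4\rho}$ and the analyticity of $m$ outside $\mathcal{E}_\rho$ matter; and (2) the limits $\rho\to 0$ and $\rho\to\pm1$ should degenerate correctly to Theorems \ref{thm:rs} and \ref{thm:shcherbina} respectively (the $\rho\to 1$ limit of $\beta$ recovering Shcherbina's kernel after the substitution $z=2\cos\theta$).

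I expect the main obstacle to be the variance/covariance identification: carrying the second-order term of the cumulant expansion through to the closed form \eqref{def:beta}. The elliptic correlation $\rho$ couples the $(i,j)$ and $(j,i)$ entries, so the ``double-resolvent'' sums that appear do not factor the way they do for genuinely iid entries; one must track both $m(z)m(w)$ and $\rho\, m(z)m(w)$ geometric series simultaneously, which is the origin of the two distinct logarithm terms. Resumming these series into $\log(1-\rho m(z)m(w)) + \log(1-m(z)m(w))$ and verifying convergence of the series on the contour (i.e.\ $|m(z)m(w)| < 1$ there, which must be checked from the definition of $m$ and the geometry of $\mathcal{E}_{\rho,\delta}$) is the delicate book-keeping step. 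The fourth-moment term $\E[\xi_1^2\xi_2^2]$ — rather than $\E|\xi|^4$ as in the iid case — enters because for real elliptic matrices the relevant fourth cumulant mixes $\xi_1$ and $\xi_2$; isolating this contribution cleanly is the other place where the elliptic structure, as opposed to a routine adaptation of \cite{RS} or \cite{S}, genuinely shows up.
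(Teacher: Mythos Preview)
Your high-level outline (Cauchy integral formula, functional CLT for the resolvent process on $\mathcal{C}$, martingale CLT for finite-dimensional distributions, tightness) matches the paper's structure, and your identification of the three contributions to $\beta$ is correct. But there is a genuine gap and a misdirection.

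The gap: you write that on $\mathcal{C}$ one is ``in the soft edge regime where no local law near the edge is needed --- only control of resolvents at spectral parameters bounded away from $\mathcal{E}_\rho$,'' but you never say how that control is obtained, and you treat it as routine. For non-Hermitian $\mat{X}_N$ there is no a priori bound of the form $\|(\mat{X}_N-z)^{-1}\|\le 1/|\Im z|$; the resolvent norm is $1/\sigma_N(\mat{X}_N-z\mat{I})$, which can blow up even for $z$ far from the limiting spectrum. The paper's entire architecture is built around this difficulty: it first proves a more technical version (Theorem~\ref{thm:main}) with the indicator $\oindicator{E_N}$ of the event $\{\inf_{z}\sigma_N(\mat{X}_N-z\mat{I})\ge c/2\}$ inserted throughout, then separately establishes (Theorem~\ref{thm:lsv}, a substantial appendix result) that $\Prob(E_N^C)=o(N^{-1})$. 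Every step of the martingale analysis in Sections~\ref{sec:distr}--\ref{sec:tight} carries a hierarchy of events $\Omega_{N,k}$, $Q_{N,k}$ on which resolvent sub-blocks stay bounded; without them the Schur-complement identities and the Lindeberg/variance moment bounds have no footing. This is not a side issue --- it is the main technical obstacle distinguishing the non-Hermitian CLT from the Wigner case.

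The misdirection: Hermitization is \emph{not} how the paper proves the CLT itself. The $2N\times 2N$ Hermitization encodes the singular values of $\mat{X}_N-z\mat{I}$, not $\tr(\mat{X}_N-z)^{-1}$, and there is no clean formula recovering the latter from the former for analytic test functions. The paper runs the martingale decomposition directly on the non-Hermitian resolvent $\mat{G}_N(z)=(\mat{X}_N-z)^{-1}$, expressing increments via the Schur complement (Proposition~\ref{prop:trace}) and computing the conditional variance through the quantities $r_k\mat{G}_{N,k}(z)c_k$. Hermitization appears only in the appendix, precisely as the tool behind the least-singular-value bound you are missing.
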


\begin{remark}
We only require that the functions $f_1, \ldots, f_k$ be analytic in a neighborhood of $\mathcal{E}_{\rho}$.  As remarked in \cite{RS}, this is a more natural assumption than in Theorem \ref{thm:rs}, which requires analyticity on a larger domain. The proof in \cite{RS} uses estimates for the spectral norm of $\mat Y_N$ but not for the spectral radius. In Appendix \ref{sec:lsvproof}, we prove, using techniques from \cite{OR}, a sufficiently strong estimate on the spectral radius to use as an input in our proof.
\end{remark}

\begin{remark}
Condition \eqref{eq:fjreal} ensures that $\tr f_j \left( \frac{1}{\sqrt{N}} \mat{Y}_N \right)$ is real valued.  This is a natural condition since we only consider elliptic random matrices with real entries.  Many analytic functions possess this property.  For instance, if 
$$ f(z) = \sum_{n=0}^\infty a_n z^n $$
in a neighborhood of $\mathcal{E}_{\rho,\delta}$, where $a_n \in \mathbb{R}$, then $f$ satisfies condition \eqref{eq:fjreal}. 

Furthermore, if $f$ is an analytic function that does not satisfy condition \eqref{eq:fjreal}, then $f(z)$ can be rewritten as $\frac{1}{2} \left( f(z) + \overline{ f( \overline z) }\right) + \frac{1}{2} \left( f(z) - \overline{ f( \overline z) )} \right)$. The functions $ \frac{1}{2} \left( f(z) + \overline{ f( \overline z) }\right)$ and $\frac{1}{2i} \left( f(z) - \overline{ f( \overline z)} \right)$ both satisfy condition \eqref{eq:fjreal}.  Thus, Theorem \ref{thm:nice} can be applied to compute the limiting covariances of the real and imaginary parts of $\tr f \left( \frac{1}{\sqrt{N}} \mat{Y}_N \right)$.  See Remark \ref{rem:fjnotreal} for further discussion.  
\end{remark}

\begin{remark} \label{rem:rho0}
When $\xi_1, \xi_2, \zeta$ are iid random variables with mean zero and unit variance (hence $\rho=0$), it follows that
$$ v(z,\overline{w}) = \frac{1}{(1-z \overline{w})^2}. $$
Up to a factor of $\frac{1}{\pi}$, this function is the Bergman kernel function for the unit disk in $\mathbb{C}$.  See \cite{Bell} for further details regarding the Bergman kernel.  
\end{remark}

In the case that $\xi_1, \xi_2, \zeta$ are iid random variables, we have the following immediate corollary.  (See also Remark \ref{rem:rho0} above.)

\begin{corollary}
Let $\xi$ be a real random variable with mean zero, unit variance, and $\E|\xi|^{6 + \tau} < \infty$, for some $\tau > 0$.  For each $N \geq 1$, let $\mat{Y}_N$ be a iid random matrix of size $N$.  Let $f_1, \ldots, f_k$ be analytic in a neighborhood of the disk $|z| \leq 1$ and bounded otherwise.  In addition, assume \eqref{eq:fjreal} holds for all $z$ in a neighborhood of the disk $|z| \leq 1$ and each $1 \leq j \leq k$.  Then, as $N \to \infty$, the random vector
$$ \left( \tr f_j \left( \frac{1}{\sqrt{N}} \mat{Y}_N \right) - \E  \tr f_j \left( \frac{1}{\sqrt{N}} \mat{Y}_N \right) \right)_{j=1}^k $$
converges in distribution to a mean-zero multivariate Gaussian vector $(\mathcal{G}(f_1), \ldots, \mathcal{G}(f_k))$ with covariances
$$ \E[ \mathcal{G}(f_i) \mathcal{G}(f_j) ] := -\frac{1}{4 \pi^2} \oint_{\mathcal{C}} \oint_{\mathcal{C}} f_i(z) f_j(w) (1 - zw)^{-2} dz dw, $$
where $\mathcal{C}$ is a contour lying within the region of analyticity of $f_1, \ldots, f_k$, but enclosing the unit disk.  
\end{corollary}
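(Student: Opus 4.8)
My plan is to deduce the corollary directly from Theorem~\ref{thm:nice}. First I would regard $\mat{Y}_N$ as a real elliptic random matrix with atom variables $(\xi_1,\xi_2),\zeta$, where $\xi_1,\xi_2,\zeta$ are independent copies of $\xi$; then $\xi_1,\xi_2$ have mean zero and unit variance, $\zeta$ has mean zero and variance $\sigma^2=1$, and $\E|\xi|^{6+\tau}<\infty$ also yields $\E|\xi|^{4+\tau}<\infty$, so condition {\bf C0} holds with the same $\tau$. By independence, $\rho:=\E[\xi_1\xi_2]=\E[\xi_1]\E[\xi_2]=0$, so $\mathcal{E}_\rho=\mathcal{E}_0$ is the open unit disk and, by \eqref{eq:def:mz}, $m(z)=-1/z$. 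Since each $f_j$ is analytic in a neighborhood of $|z|\le 1$, after shrinking $\delta>0$ I may assume $f_j$ is analytic in a neighborhood of $\mathcal{E}_{0,\delta}$, the closed disk of radius $1+\delta$. Condition \eqref{eq:fjreal}, assumed near $|z|\le 1$, forces each $f_j$ to be real-valued on an interval of the real axis, so by the identity theorem the reflection identity $f_j(\bar z)=\overline{f_j(z)}$ holds throughout the neighborhood where $f_j$ is analytic, and hence \eqref{eq:fjreal} holds on $\mathcal{E}_{0,\delta}$. Thus all hypotheses of Theorem~\ref{thm:nice} are verified.

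Next I would evaluate the covariance kernel at $\rho=0$. Plugging the values $\rho=0$, $\sigma^2=1$, and $\E[\xi_1^2\xi_2^2]=\E[\xi_1^2]\E[\xi_2^2]=1$ (using independence and unit variance) into \eqref{def:beta}, the constant $\sigma^2-\rho-1$ vanishes, the term $\log(1-\rho\,m(z)m(w))/(m(z)m(w))$ vanishes because $\log 1 = 0$, and the coefficient $(\E[\xi_1^2\xi_2^2]-2\rho^2-1)/2$ vanishes; what survives is
\[
\beta(z,w)=-\frac{\log(1-m(z)m(w))}{m(z)m(w)},\qquad\text{so}\qquad m(z)m(w)\,\beta(z,w)=-\log\left(1-\frac{1}{zw}\right).
\]
(On the contour $|z|=|w|=1+\delta$ one has $|m(z)m(w)|=(1+\delta)^{-2}<1$, so the principal branch of the logarithm in \eqref{def:beta} is analytic there.) Differentiating as in \eqref{def:upsilon} then gives
\[
\upsilon(z,w)=\frac{\partial^2}{\partial z\,\partial w}\left(-\log\left(1-\frac{1}{zw}\right)\right)=\frac{1}{(1-zw)^2},
\]
and substituting into \eqref{eq:cov:form1} produces exactly the stated covariance, with $\mathcal{C}$ the circle of radius $1+\delta$.

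Finally I would justify replacing this circle by any simple closed contour $\mathcal{C}'$ that encloses the closed unit disk and can be obtained from it by a deformation staying inside the common region of analyticity of $f_1,\dots,f_k$. For $w$ on such a contour, within that region the map $z\mapsto f_i(z)f_j(w)(1-zw)^{-2}$ has a single singularity, a double pole at $z=1/w$, and $|1/w|<1$ because $|w|>1$; so by Cauchy's theorem the inner $z$-integral is unchanged by the deformation, and the analogous argument in the variable $w$ finishes the reduction. I expect no real obstacle in this corollary: the only steps needing attention are the collapse of $\beta$ to a single logarithm when $\rho=0$ and the legitimacy of this last contour deformation, both of which are routine once one notes that the singularity of $(1-zw)^{-2}$ always lies inside the unit disk.
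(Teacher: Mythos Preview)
Your proposal is correct and follows exactly the route the paper intends: the paper simply states this as an ``immediate corollary'' of Theorem~\ref{thm:nice} in the case $\xi_1,\xi_2,\zeta$ iid (so $\rho=0$, $\sigma^2=1$, $\E[\xi_1^2\xi_2^2]=1$), and Remark~\ref{rem:rho0} records that $\upsilon(z,w)=(1-zw)^{-2}$ in this case, which is precisely the computation you carry out. Your explicit verification of condition~{\bf C0}, the collapse of $\beta$ to a single logarithm, and the contour-deformation argument supply the details the paper omits; the only simplification you might make is that, since the hypotheses already give analyticity and \eqref{eq:fjreal} on a full neighborhood of $|z|\le 1$, one can just take $\delta$ small enough that $\mathcal{E}_{0,\delta}$ lies inside that neighborhood, rather than invoking the identity theorem.
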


\begin{remark}
It follows from the calculations in \cite{RS} that 
$$ -\frac{1}{4 \pi^2} \oint_{\mathcal{C}} \oint_{\mathcal{C}} f_i(z) f_j(w) (1 - zw)^{-2} dz dw = \frac{1}{\pi} \int_{\mathbb{U}} \frac{d}{dz} f_i(z) \frac{d}{dz} f_j(z) d^2 z, $$
where $\mathbb{U}$ is the unit disk and $d^2 z = d \Re(z) d \Im(z)$.  
\end{remark}

The expression for the covariance $\E[ \mathcal{G}(f_i) \mathcal{G}(f_j) ]$ given in \eqref{eq:cov:form1} is rather unintuitive.  In the case when $\rho =1$ and $\mat{Y}_N$ is a real-symmetric Wigner matrix, the covariance can be written in terms of the Chebyshev polynomials and as an integral involving the semicircle density function; see \cite[Chapter 9]{BSbook} and Theorem \ref{thm:shcherbina} for further details.  In Proposition \ref{prop:Cheb} below, we show that for general $-1 \leq \rho \leq 1$, the covariance is related to a suitably rescaled version of the Chebyshev polynomials.  

\begin{proposition} \label{prop:Cheb}
Under the assumptions of Theorem \ref{thm:nice}, there exists $0 < r < 1$ (depending only on $\delta$) such that
\begin{align*}
	\E[ \mathcal{G}(f_i) \mathcal{G}(f_j) ] &=  ( \sigma^2 - \rho - 1) a_{1}(f_i) a_{1}(f_j) + \left( \E[\xi_1^2 \xi_2^2] - 2 \rho^2 - 1 \right)  a_{2}(f_i) a_{2}(f_j)  \\
		&\qquad\qquad +  \sum_{l=1}^{\infty} l (1+\rho^{l} )  a_{l}(f_i) a_{l}(f_j), 
\end{align*}
where
\[a_l(f) := \frac{1}{2 \pi i } \oint_{|s|=r}  s^{l-1}  f(\rho s+ 1/s)  d s  \]
for $l \geq 1$.  Furthermore, if $T_j$ is the $j^{th}$ Chebyshev polynomial, and $F_j(z) :=  2  \rho^{j/2} T_j(  z/(2 \sqrt{\rho}))$ for $\rho \not =0$ (and $F_j(z) :=  z^j $ for the case when $\rho =0$), one has
\[ \E[ \mathcal{G}(F_j) \mathcal{G}(F_k) ] = \delta_{jk} \left(  ( \sigma^2 - \rho - 1) \delta_{1j} +   \left( \E[\xi_1^2 \xi_2^2] - 2 \rho^2 - 1 \right)  \delta_{2j}  + j (1+\rho^{j} ) \right),\]
where $\delta_{jk}$ is the Kronecker delta.  
\end{proposition}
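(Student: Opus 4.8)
The plan is to start from the second (contour-integral) form of the covariance in \eqref{eq:cov:form1}, namely
$\E[\mathcal G(f_i)\mathcal G(f_j)] = -\frac{1}{4\pi^2}\oint_{\mathcal C}\oint_{\mathcal C} f_i'(z) f_j'(w)\, m(z) m(w)\,\beta(z,w)\, dz\, dw$,
and to parametrize the contour by the conformal map $z = \rho s + 1/s$, which for $|s| = r$ (with $r < 1$ close to $1$) traces a curve just outside $\mathcal E_\rho$; under this map one has $m(z) = s$. This is the natural substitution: $m$ is precisely the inverse of $z \mapsto \rho m + 1/(\rho m + z)^{-1}$-type relation, and the identity $m(z) = -1/(z + \rho m(z))$ from the excerpt shows $z = -1/m(z) - \rho m(z)$, i.e. $z = \rho(-m) + 1/(-m)$; after fixing signs/branches the correct parametrization is $z = \rho s + 1/s$ with $s = m(z)$ (possibly up to sign of $s$, which I will track carefully). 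Under this substitution $dz = (\rho - 1/s^2)\, ds$, and $f_i'(z)\,dz = d[f_i(\rho s + 1/s)]$, so after an integration by parts in $s$ the whole double integral becomes
$-\frac{1}{4\pi^2}\oint_{|s|=r}\oint_{|t|=r} f_i(\rho s + 1/s) f_j(\rho t + 1/t)\, \frac{\partial^2}{\partial s\,\partial t}\big[ \widetilde\beta(s,t)\big]\, ds\, dt$
for a suitable $\widetilde\beta$, and the point is to identify $\frac{\partial^2}{\partial s\,\partial t}\widetilde\beta$ as an explicit kernel whose Laurent expansion in $s,t$ on $|s|=|t|=r$ has coefficients matching the claimed series.

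The key computation is then to expand $\beta(z,w)$ with $m(z) = s$, $m(w) = t$: it becomes
$\beta = \sigma^2 - \rho - 1 - \frac{\log(1-\rho st)}{st} - \frac{\log(1-st)}{st} + \big(\frac{\E[\xi_1^2\xi_2^2] - 2\rho^2 - 1}{2}\big) st$.
Using $-\log(1-x)/x = \sum_{l\ge 1} x^{l-1}/l$, I get $-\frac{\log(1-\rho st)}{st} = \sum_{l\ge1}\frac{(\rho st)^{l-1}}{l}\cdot\rho^{0}$... more precisely $\sum_{l\ge 1}\frac{\rho^{l-1}(st)^{l-1}}{l}$ — wait, I should be careful: $-\log(1-\rho st)/(st) = \sum_{l\ge 1}\rho^l (st)^{l-1}/l$. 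So $m(z)m(w)\beta(z,w) = st\,\beta$ has Laurent/Taylor expansion $\sum_{l\ge0} c_l (st)^l$ with $c_0 = \rho + 1$ (from the two $\log$ terms at $l=1$), $c_1 = \sigma^2 - \rho - 1 + \frac{\rho^2+1}{2} + \frac{\E[\xi_1^2\xi_2^2]-2\rho^2-1}{2} = \sigma^2 - \rho - 1 + \frac{\E[\xi_1^2\xi_2^2]}{2} - \frac{1}{2}$... and in general $c_l = \frac{\rho^l + 1}{l+1}$ for $l \ge 2$ plus the extra $l=1,2$ contributions. Then $\frac{\partial^2}{\partial z\partial w}\big[st\,\beta\big]$: since $s = m(z)$, $ds/dz = m'(z)$, one has $\partial_z = m'(z)\partial_s$ and similarly for $w$, so $\upsilon(z,w) = m'(z)m'(w)\,\partial_s\partial_t\big[\sum c_l (st)^l\big] = m'(z)m'(w)\sum_{l\ge1} l^2 c_l (st)^{l-1}$. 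Plugging into the first form $-\frac{1}{4\pi^2}\oint\oint f_i(z)f_j(w)\upsilon(z,w)\,dz\,dw$ and changing variables $z\mapsto s$, $w\mapsto t$ (so $m'(z)\,dz = ds$), the integral factorizes term by term into $\sum_{l\ge1} l^2 c_l \big(\frac{1}{2\pi i}\oint_{|s|=r} s^{l-1} f_i(\rho s+1/s)\,ds\big)\big(\frac{1}{2\pi i}\oint_{|t|=r} t^{l-1} f_j(\rho t+1/t)\,dt\big) = \sum_{l\ge1} l^2 c_l\, a_l(f_i) a_l(f_j)$, and $l^2 c_l$ equals $l(1+\rho^l)$ for $l\ge3$, with the modified values $l^2 c_1 = \sigma^2 - \rho - 1 + 1\cdot(1+\rho)$ and $l^2 c_2 = (\E[\xi_1^2\xi_2^2]-2\rho^2-1) + 2(1+\rho^2)$ for $l=1,2$, giving exactly the stated formula. (I will double-check the sign coming from the overall $-\frac{1}{4\pi^2}$ versus the two factors of $\frac{1}{2\pi i} = \frac{1}{2\pi i}$, whose product is $-\frac{1}{4\pi^2}$, so the signs cancel correctly.)

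For the second part, with $F_j(z) = 2\rho^{j/2} T_j(z/(2\sqrt\rho))$, I use the classical identity $T_j\big(\frac{u + u^{-1}}{2}\big) = \frac{u^j + u^{-j}}{2}$; setting $z = \rho s + 1/s$ and $u = \sqrt\rho\, s$ gives $z/(2\sqrt\rho) = (u + u^{-1})/2$, hence $F_j(\rho s + 1/s) = 2\rho^{j/2}\cdot\frac{(\sqrt\rho s)^j + (\sqrt\rho s)^{-j}}{2} = \rho^j s^j + s^{-j}$. Then $a_l(F_j) = \frac{1}{2\pi i}\oint_{|s|=r} s^{l-1}(\rho^j s^j + s^{-j})\,ds = \rho^j\,\indicator{l+j=0} + \indicator{l-j=0} = \delta_{lj}$ for $l,j\ge1$. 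Substituting $a_l(F_j) = \delta_{lj}$ into the series collapses it to the single term $l = j = k$, yielding $\E[\mathcal G(F_j)\mathcal G(F_k)] = \delta_{jk}\big((\sigma^2-\rho-1)\delta_{1j} + (\E[\xi_1^2\xi_2^2]-2\rho^2-1)\delta_{2j} + j(1+\rho^j)\big)$ as claimed; the $\rho = 0$ case with $F_j(z) = z^j$ follows from the same computation using $m(z) = -1/z$ and the analogous substitution, or simply by continuity/direct check. The main obstacle I anticipate is purely bookkeeping: getting the branch of the square root in $m(z)$, hence the correct sign of the substitution $s = m(z)$ versus $s = -m(z)$ and the correct orientation of $|s| = r$, consistent throughout so that the residue computations land on $\delta_{lj}$ rather than $\pm\delta_{lj}$; and checking that the interchange of summation and contour integration is justified, which follows since on $|s|=r<1$ the series $\sum c_l(st)^l$ converges uniformly (the $f_i(\rho s + 1/s)$ are bounded there) and $\sum l^2|c_l|r^{2l-2} < \infty$ because $c_l = O(1/l)$.
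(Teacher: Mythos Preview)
Your approach is essentially the paper's: both substitute $s=m(z)$, $t=m(w)$ (equivalently parametrize by $z=\rho s+1/s$), expand the kernel as a power series in $st$, and read off the coefficients as products $a_l(f_i)a_l(f_j)$. The paper differentiates first (writing $\upsilon(z,w)=m'(z)m'(w)\big[(\sigma^2-\rho-1)+\rho(1-\rho st)^{-2}+(1-st)^{-2}+(\E[\xi_1^2\xi_2^2]-2\rho^2-1)st\big]$ and then expanding the geometric series), while you expand $st\,\beta$ first and then apply $\partial_s\partial_t$; these are the same computation in a different order. For the second assertion, the paper establishes the Faber expansion $f(z)=\sum_l a_l(f)F_l(z)$ via Cauchy's formula and the Chebyshev generating function, from which $a_l(F_j)=\delta_{lj}$ follows; your use of the identity $T_j\big(\tfrac{u+u^{-1}}{2}\big)=\tfrac{u^j+u^{-j}}{2}$ with $u=\sqrt{\rho}\,s$ to get $F_j(\rho s+1/s)=\rho^j s^j+s^{-j}$ and hence $a_l(F_j)=\delta_{lj}$ directly is a slightly shorter path to the same conclusion.

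One bookkeeping correction: your listed $c_l$'s are mis-indexed. With $c_l$ the coefficient of $(st)^l$ in $st\,\beta$, one has $c_0=0$, $c_1=(\sigma^2-\rho-1)+(\rho+1)=\sigma^2$, $c_l=(\rho^l+1)/l$ for $l\ge 3$, so that $l^2c_l=l(1+\rho^l)$ there, and your final claimed value $1^2c_1=(\sigma^2-\rho-1)+1\cdot(1+\rho)$ is right. For $l=2$, however, a direct computation gives $c_2=\tfrac{1}{2}(\E[\xi_1^2\xi_2^2]-\rho^2)$ and thus $4c_2=2\E[\xi_1^2\xi_2^2]-2\rho^2$, which does \emph{not} equal $(\E[\xi_1^2\xi_2^2]-2\rho^2-1)+2(1+\rho^2)=\E[\xi_1^2\xi_2^2]+1$ in general. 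The paper sidesteps this by writing down $\upsilon$ in closed form and reading off the $st$-coefficient directly; you should do the same rather than routing through $l^2c_2$, and verify that the $st$-coefficient in the closed form of $\upsilon/(m'(z)m'(w))$ matches the stated proposition (this is exactly the ``sign/branch'' bookkeeping you flagged, and it is the one place where your $\partial_s\partial_t$ shortcut needs to be checked against the explicit derivative).
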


The polynomials $F_l$ introduced above are known as the Faber polynomials associated with the function $\rho z + z^{-1}$ or the domain $\mathcal{E}_\rho$; see, for instance, \cite{F, Ma}. The Faber polynomials form a basis for analytic functions on $\mathcal{E}_\rho$ such that $f(z) = \sum_{l=0}^\infty a_l F_{l}(z)$; see the proof of Proposition \ref{prop:Cheb} and \eqref{eq:fexpandFaber} below for further details.

\begin{proof}[Proof of Proposition \ref{prop:Cheb}]

We assume $\rho \not= 0$. The $\rho = 0$ case was studied in \cite{RS}, and our argument can easily be modified to handle that case as well.

When computing the covariance in \eqref{eq:cov:form1} we can, by Cauchy's theorem, integrate along $\mathcal{C'}$, parametrized by $z=r \rho e^{ i t} + r^{-1} e^{- i t}$, $0 \leq t \leq 2 \pi$ for some $0<r<1$ sufficiently close to $1$. Note that in contrast to all other contours of integration, this contour is traversed counter-clockwise.

Then changing variables $s=m(z)$ and $t=m(w)$, so $z = \rho s + s^{-1}$ and $w = \rho t + t^{-1}$, leads to
\begin{align} \label{chebvar}
	\E[ \mathcal{G}(f_i) \mathcal{G}(f_j) ] &= -\frac{1}{4 \pi^2} \oint_{\mathcal{C'}} \oint_{\mathcal{C'}} f_i(z) f_j(w) \left( m'(z) m'(w) ( \sigma^2 - \rho - 1) \right. \\
	&\qquad + \frac{  \rho m'(z) m'(w) }{(1 - \rho m(z) m(w))^2 } + \frac{   m'(z) m'(w) }{(1-m(z) m(w))^2} \nonumber \\
	&\qquad \left. + \left( \E[\xi_1^2 \xi_2^2] - 2 \rho^2 - 1 \right) m'(z) m'(w) m(z) m(w) \right)  dz dw  \nonumber \\
&= -\frac{1}{4 \pi^2} \oint_{|s|=r} \oint_{|t|=r} f_i(\rho s+ 1/s) f_j(\rho t+1/t)  \bigg( ( \sigma^2 - \rho - 1) \nonumber \\
         &\qquad  - \frac{ - \rho  }{(1 - \rho s t)^2 } - \frac{  - 1 }{(1- s t)^2} + \left( \E[\xi_1^2 \xi_2^2] - 2 \rho^2 - 1 \right) s t \bigg)  ds dt\nonumber \\
&= -\frac{1}{4 \pi^2} \oint_{|s|=r} \oint_{|t|=r} f_i(\rho s+ 1/s) f_j(\rho t+1/t)  \bigg( ( \sigma^2 - \rho - 1) \nonumber \\
 	&\qquad  +   \sum_{l=1}^{\infty} l (1+\rho^{l} )(s t )^{l-1} + \left( \E[\xi_1^2 \xi_2^2] - 2 \rho^2 - 1 \right) s t \bigg)  ds dt. \nonumber
\end{align}

On the other hand, for an arbitrary function $f$ analytic in a neighborhood containing $\mathcal{E}_{\rho}$, we have
\[f(z) =  \frac{1}{2 \pi i} \oint_{\mathcal{C'}}  \frac{f(s)}{z- s} ds = \frac{1}{2 \pi i} \oint_{|s|=r} \frac{f(\rho s + 1/s) \frac{d}{ds}(\rho s + 1/s)}{z- (\rho s+ 1/s)} ds.\]
Rewriting, we obtain
\[  \frac{ \rho - 1/s^2}{z- ( \rho s+1/ s)} 
=   s^{-1} \left( 2  \frac{ 1  - \frac{s}{2} z  }{1  -s z  + \rho s^2} -1  \right), \]
and recalling that $\sum_{l=0}^{\infty} T_l(x) t^l = \frac{1 - t x}{ 1 - 2tx + t^2}$ is the generating function for the Chebyshev polynomials, we find that
\[\frac{ \rho - 1/s^2}{z- ( \rho s+1/ s)} = 2 s^{-1} \sum_{l=0}^{\infty} \left( \sqrt{\rho}  \right)^l T_l(  z/(2 \sqrt{\rho})  ) s^l     - s^{-1}.\]
Thus, we obtain the following expansion of $f$ in terms of the rescaled Chebyshev polynomials:
\begin{equation} \label{eq:fexpandFaber}
	f(z) = \sum_{l=0}^{\infty} a_l(f) F_l(z), 
\end{equation} 
where
\[ a_l(f) := \frac{1}{2 \pi i } \oint_{|s|=r}  s^{l-1}  f(\rho s+ 1/s)  d s,  \]
\[ F_0 := 1, \text{  and }  F_l(z) :=  2  \rho^{l/2} T_l(  z/(2 \sqrt{\rho})) \text{ for } l \geq 1. \]

Returning to the covariance, \eqref{chebvar}, we conclude that
\begin{align*}
	\E[ \mathcal{G}(f_i) \mathcal{G}(f_j) ] &=  ( \sigma^2 - \rho - 1) a_{1}(f_i) a_{1}(f_j) +  \sum_{l=1}^{\infty} l (1+\rho^{l} )  a_{l}(f_i) a_{l}(f_j) \nonumber \\
	&\qquad+ \left( \E[\xi_1^2 \xi_2^2] - 2 \rho^2 - 1 \right)  a_{2}(f_i) a_{2}(f_j), 
\end{align*}	
as desired.
\end{proof}

\begin{remark} \label{rem:fjnotreal}
Consider a function $f$ analytic in a neighborhood that contains $\mathcal{E}_{\rho}$ and bounded otherwise, and assume $f$ does not satisfy \eqref{eq:fjreal}.  In view of Proposition \ref{prop:Cheb} and \eqref{eq:fexpandFaber}, we can write
$$ f(z) = \sum_{l=0}^\infty a_l(f) F_l(z), $$
where $F_l$ are the Faber polynomials associated to the domain $\mathcal{E}_{\rho}$ (see \cite{F, Ma} for further details).  Thus, the functions
$$ g(z) := \sum_{l=0}^\infty \Re(a_l(f)) F_l(z) $$ 
and
$$ h(z) := \sum_{l=0}^\infty \Im(a_l(f)) F_l(z) $$
both satisfy the assumptions of Theorem \ref{thm:nice}.  Applying Theorem \ref{thm:nice} to the functions $g$ and $h$, we find that
$$ \tr f \left( \frac{1}{\sqrt{N}} \mat{Y}_N \right) - \E \tr f \left( \frac{1}{\sqrt{N}} \mat{Y}_N \right) $$
converges in distribution, as $N \to \infty$, to the mean-zero complex Gaussian $\mathcal{G}(g) + i \mathcal{G}(h)$.  
\end{remark}

Theorem \ref{thm:nice} will follow from the slightly more technical Theorem \ref{thm:main} below.  We first state some notation.  For an $N \times N$ matrix $\mat{M}$, let $\sigma_1(\mat{M}) \geq \cdots \geq \sigma_N(\mat{M}) \geq 0$ denote the singular values of $\mat{M}$.  In particular, $\sigma_N(\mat{M})$ is the least singular value of $\mat{M}$ and can be written
$$ \sigma_N(\mat{M}) = \min_{\|x\| = 1} \|\mat{M} x \|. $$
Here $\|x\|$ denotes the Euclidean norm of the vector $x$.  A bound for the least singular value of a shifted elliptic random matrix will play a significant role in the proof of Theorem \ref{thm:main}.  

\begin{theorem} \label{thm:main}
For each $N \geq 1$, let $\mat{Y}_N$ be an $N \times N$ real elliptic random matrix with atom variables $(\xi_1,\xi_2), \zeta$ which satisfy condition {\bf C0}.  Set $\rho :=\E[\xi_1 \xi_2]$.  Let $f_1, \ldots, f_k$ be analytic in a neighbor of $\mathcal{E}_{\rho,\delta}$ for some $\delta > 0$, and assume \eqref{eq:fjreal} holds for all $z \in \mathcal{E}_{\rho,\delta}$ and each $1 \leq j \leq k$.  Then there exists $c > 0$ such that the event 
\begin{equation} \label{eq:def:EN}
	E_N := \left\{ \inf_{\dist(z, \mathcal{E}_{\rho}) \geq \delta} \sigma_N\left(\frac{1}{\sqrt{N}} \mat{Y}_N - z \mat{I} \right) \geq \frac{c}{2} \right\}, 
\end{equation}
holds with probability $1-o(N^{-1})$, and, as $N \to \infty$, the random vector
$$ \left( \tr f_j \left( \frac{1}{\sqrt{N}} \mat{Y}_N \right) \oindicator{E_N} - \E  \tr f_j \left( \frac{1}{\sqrt{N}} \mat{Y}_N \right) \oindicator{E_N} \right)_{j=1}^k $$
converges in distribution to a mean-zero multivariate Gaussian vector $(\mathcal{G}(f_1), \ldots, \mathcal{G}(f_k))$ with covariances
\begin{align*}
	\E[ \mathcal{G}(f_i) \mathcal{G}(f_j) ] &:= -\frac{1}{4 \pi^2} \oint_{\mathcal{C}} \oint_{\mathcal{C}} f_i(z) f_j(w) \upsilon(z,w) dz dw \\
		&= -\frac{1}{4 \pi^2} \oint_{\mathcal{C}} \oint_{\mathcal{C}} f_i'(z) f_j'(w) m(z) m(w) \beta(z,w) dz dw, 
\end{align*}
where $\oindicator{E_N}$ denotes the indicator function of the event $E_N$, $\mathcal{C}$ is the contour around the boundary of $\mathcal{E}_{\rho,\delta}$ and $m(z)$, $\upsilon(z,w)$, $\beta(z,w)$ are defined in \eqref{eq:def:mz}, \eqref{def:upsilon}, \eqref{def:beta}.
\end{theorem}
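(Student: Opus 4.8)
The plan is, on the high-probability event $E_N$, to rewrite each linear statistic as a contour integral of the resolvent trace $G_N(z):=\tr(z\mat I-\mat X)^{-1}$, where $\mat X:=\frac{1}{\sqrt N}\mat Y_N$, around $\mathcal C=\partial\mathcal E_{\rho,\delta}$; to prove a central limit theorem for the random field $z\mapsto G_N(z)-\E G_N(z)$ on $\mathcal C$; and then to read off the covariance. The bound $\Prob(E_N)=1-o(N^{-1})$ is the content of Appendix~\ref{sec:lsvproof}: cover $\{\dist(z,\mathcal E_\rho)\ge\delta\}$ by an $N^{-O(1)}$-net, apply a least-singular-value estimate to each fixed shift $\frac{1}{\sqrt N}\mat Y_N-z\mat I$, and control the operator norm; the moment hypothesis \eqref{eq:finitemoments} and a preliminary truncation of the entries at scale $N^{1/2-\eps}$ enter here. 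On $E_N$ all eigenvalues of $\mat X$ lie inside $\mathcal C$, so $\tr f_j(\mat X)\oindicator{E_N}=\oindicator{E_N}\frac{1}{2\pi i}\oint_{\mathcal C}f_j(z)G_N(z)\,dz$; since $\bigl|\frac{1}{2\pi i}\oint_{\mathcal C}f_j(z)G_N(z)\,dz\bigr|=\bigl|\sum_i f_j(\lambda_i(\mat X))\bigr|\le N\sup_{\overline{\mathcal E_{\rho,\delta}}}|f_j|$ deterministically and $\Prob(E_N^c)=o(N^{-1})$, removing the indicator changes the centred statistic by $o(1)$ both in probability and in $L^1$. By Slutsky it therefore suffices to prove the claimed CLT for $\bigl(\frac{1}{2\pi i}\oint_{\mathcal C}f_j(z)(G_N(z)-\E G_N(z))\,dz\bigr)_{j=1}^k$.

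\textbf{The central limit theorem for the resolvent field.} This is the heart of the matter: $\{G_N(z)-\E G_N(z):z\in\mathcal C\}$ should converge to a centred Gaussian field with covariance $\upsilon(z,w)$, with enough uniform second-moment control to integrate against $\mathcal C$. For $z$ at distance $\delta$ from the spectrum we Hermitize: $\mat B_z:=(z\mat I-\mat X)(z\mat I-\mat X)^*$ is positive Hermitian with smallest eigenvalue $\ge c^2/4$ on $E_N$, one has $2\log|\det(z\mat I-\mat X)|=\tr\log\mat B_z$ and $G_N(z)=2\partial_z\log|\det(z\mat I-\mat X)|$, and, using $\log x=\int_0^\infty\bigl(\frac{1}{1+\eta}-\frac{1}{x+\eta}\bigr)\,d\eta$, the centred field $\log|\det(z\mat I-\mat X)|-\E\log|\det(z\mat I-\mat X)|$ is an integral over $\eta\ge0$ of $\tr(\mat B_z+\eta\mat I)^{-1}-\E\tr(\mat B_z+\eta\mat I)^{-1}$. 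It thus suffices (the passage to $\log|\det|$ being by integration in $\eta$, and then to $G_N$ by differentiation in $z$) to establish a joint CLT for the resolvent traces of the Hermitian family $\{\mat B_z+\eta\mat I\}$ — equivalently, for the Green's functions of the associated $2N\times 2N$ Hermitian block matrix built from $z\mat I-\mat X$ and its adjoint — uniformly over the spectral parameters in the relevant compact set. Since the pairs $(y_{ij},y_{ji})$ are only pairwise independent and carry covariance $\rho$ and mixed fourth moment $\E[\xi_1^2\xi_2^2]$, one adapts the classical CLT for resolvent traces of sample-covariance / deformed-Wigner ensembles (in the spirit of \cite{BS,S,RS}) to this correlated ensemble via a cumulant expansion (or a martingale-difference decomposition over rows and columns) of $\tr(\mat B_z+\eta\mat I)^{-1}$: the deterministic equivalent of the Green's function solves the same self-consistent equation as $m(z)$ in \eqref{eq:def:mz}, the diagonal variance produces the $\sigma^2$ term and the mixed fourth moment the $\E[\xi_1^2\xi_2^2]$ term in $\beta$, and the two logarithmic terms in \eqref{def:beta} emerge when the resolvent covariance (rational in the Green's functions) is integrated against $\int_0^\infty(\cdot)\,d\eta$. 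The lower bound on $\sigma_N(z\mat I-\mat X)$ supplied by $E_N$ is precisely what keeps this analysis away from the hard edge of $\mat B_z$ at $0$ and makes the $\eta\to0$ part of the integral harmless.

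\textbf{Covariance identification and assembly.} Granting $\cov(G_N(z),G_N(w))\to\upsilon(z,w)$ uniformly on $\mathcal C\times\mathcal C$ together with the Gaussian limit of the field,
\[\E[\mathcal G(f_i)\mathcal G(f_j)]=\Bigl(\frac{1}{2\pi i}\Bigr)^{2}\oint_{\mathcal C}\oint_{\mathcal C}f_i(z)f_j(w)\upsilon(z,w)\,dz\,dw=-\frac{1}{4\pi^2}\oint_{\mathcal C}\oint_{\mathcal C}f_i(z)f_j(w)\upsilon(z,w)\,dz\,dw,\]
which is \eqref{eq:cov:form1}; the second expression in \eqref{eq:cov:form1} follows by integrating by parts twice — the endpoint contributions vanish because $\mathcal C$ is closed and $m,f_i,f_j$ are analytic near $\mathcal C$ — using $\upsilon=\partial_z\partial_w\bigl(m(z)m(w)\beta(z,w)\bigr)$. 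The explicit form \eqref{def:beta} of $\beta$ is obtained exactly as in the proof of Proposition~\ref{prop:Cheb}: the change of variables $s=m(z)$, $t=m(w)$ there turns the kernel into a sum of $(\sigma^2-\rho-1)$, $\rho(1-\rho st)^{-2}$, $(1-st)^{-2}$ and $(\E[\xi_1^2\xi_2^2]-2\rho^2-1)st$, which antidifferentiates to the terms of \eqref{def:beta}. Finally, $h\mapsto\frac{1}{2\pi i}\oint_{\mathcal C}f_j(z)h(z)\,dz$ is a fixed bounded linear functional; the finite-dimensional Gaussian convergence of $G_N-\E G_N$ together with the uniform bound $\sup_{z\in\mathcal C}\E|G_N(z)-\E G_N(z)|^2=O(1)$ (which gives tightness of the integrals) yields joint convergence of the vector of contour integrals to the mean-zero Gaussian vector with the covariance above; combined with the reduction of the first step, this proves the theorem. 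That $(\mathcal G(f_1),\dots,\mathcal G(f_k))$ is real-valued follows from \eqref{eq:fjreal} and $G_N(\bar z)=\overline{G_N(z)}$.

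\textbf{Main obstacle.} The decisive step is the second one: transporting the CLT for linear eigenvalue statistics of Hermitian random matrices to the Hermitization $\mat B_z$ of an \emph{elliptic}, hence only pairwise independent, matrix, uniformly over the spectral parameters, and controlling the small eigenvalues of $\mat B_z$ in $\tr\log\mat B_z$ using only the crude bound afforded by $E_N$. The correlations force one to follow the extra $\rho$-, $\E[\xi_1^2\xi_2^2]$- and $\sigma^2$-dependent terms through every order of the expansion and to verify that they resum to the corrections in \eqref{def:beta}, with the deterministic equivalent governed by the same function $m(z)$ as in \cite{OR}.
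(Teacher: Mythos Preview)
Your high-level architecture --- contour integral of the resolvent trace, then a CLT for the field $z\mapsto \tr\mat G_N(z)-\E\tr\mat G_N(z)$ on $\mathcal C$, then read off the covariance --- matches the paper. The execution of the central step, however, diverges from the paper in a way that introduces real gaps.

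\textbf{What the paper actually does.} The paper does \emph{not} Hermitize for the CLT. It works directly with the non-Hermitian resolvent $\mat G_N(z)=(\mat X_N-z\mat I)^{-1}$, always multiplied by the indicator $\oindicator{\Omega_N}$ so that $\|\mat G_N(z)\|\le c^{-1}$ deterministically on the relevant event. The process $\Xi_N(z)=\tr\mat G_N(z)\oindicator{\Omega_N}-\E\tr\mat G_N(z)\oindicator{\Omega_N}$ is written as a martingale $\sum_k(\E_k-\E_{k-1})\tr\mat G_N(z)\oindicator{\Omega_N}$, and the Schur complement identity $\tr\mat G_N-\tr\mat G_{N,k}=(1+r_k\mat G_{N,k}^2 c_k)/(\frac{1}{\sqrt N}y_{kk}-z-r_k\mat G_{N,k}c_k)$ reduces each increment to a bilinear form in the $k$-th row/column against the minor resolvent. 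The martingale CLT is then applied directly, and the covariance $\sum_k\E_{k-1}[Z''_{N,k}(z)Z''_{N,k}(w)]$ is computed by expanding these bilinear forms. Tightness is handled by a separate second-moment bound on $(\Xi_N(z)-\Xi_N(w))/(z-w)$. The Hermitization appears only in the appendix, and only to prove the least-singular-value input that makes the event $\Omega_N$ overwhelmingly likely.

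\textbf{Where your route runs into trouble.} First, once you drop the indicator and write the bare $G_N(z)$, the object is not globally defined: off $E_N$ an eigenvalue of $\mat X_N$ can sit on $\mathcal C$, and the contour integral you want to analyse is ill-posed. The paper never removes the indicator; it carries $\oindicator{\Omega_N}$ (and variants $\oindicator{\Omega_{N,k}}$, $\oindicator{Q_{N,k}}$) through every estimate, and a nontrivial share of the work in Section~5 is checking that swapping one indicator for another costs only $o(1)$. Second, passing from a CLT for $\tr\log\mat B_z$ to one for $G_N(z)=2\partial_z\log|\det(z\mat I-\mat X)|$ requires differentiating a distributional limit in $z$; you would need a tightness bound on the derivative field, which is essentially the same work the paper does directly on $\Xi_N$. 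Third --- and this is the concrete error --- your explanation of where the logarithms in $\beta(z,w)$ come from is wrong. They do not arise from the $\int_0^\infty(\cdot)\,d\eta$ in the $\log$ representation. In the paper they come from the \emph{martingale index}: after the bilinear-form analysis one finds that $\frac1N\sum_{l_1,l_2<k}b_{kl_1l_2}(z)b_{kl_2l_1}(w)$ satisfies a closed equation whose solution is $\frac{(k-1)/N\cdot m(z)m(w)}{1-\rho\,(k-1)/N\cdot m(z)m(w)}+o(1)$, and then
\[
\frac1N\sum_{k=1}^N\frac{(k-1)/N\cdot m(z)m(w)}{1-\rho\,(k-1)/N\cdot m(z)m(w)}\longrightarrow\int_0^1\frac{t\,m(z)m(w)}{1-\rho t\,m(z)m(w)}\,dt,
\]
which is exactly $-\rho^{-1}\bigl(\rho+\frac{\log(1-\rho m(z)m(w))}{m(z)m(w)}\bigr)$; the second logarithm arises the same way with $\rho$ replaced by $1$. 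So the logs are an artefact of the Riemann sum over $k/N\in[0,1]$, not of any spectral-parameter integral, and your Hermitized route gives no obvious mechanism to produce them.

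In short: the strategy is right, but the second step should be executed directly on the non-Hermitian resolvent (with the indicator retained), via the martingale decomposition and Schur complement, exactly as in \cite[Chapter~9]{BSbook} adapted to the elliptic correlation structure. The Hermitization belongs only in the least-singular-value input.
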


\begin{remark} \label{rem:singeig}
On the event $E_N$ (defined in \eqref{eq:def:EN}) all eigenvalues of $\frac{1}{\sqrt{N}} \mat{Y}_N$ are contained in the interior of $\mathcal{E}_{\rho,\delta}$.  Indeed, we observe that $z$ is an eigenvalue of $\frac{1}{\sqrt{N}} \mat{Y}_N$ if and only if 
$$ \det \left( \frac{1}{\sqrt{N}} \mat{Y}_N - z \mat{I}_N \right) = 0, $$
where $\mat{I}_N$ is the $N \times N$ identity matrix.  Since 
$$ \left| \det \left( \frac{1}{\sqrt{N}} \mat{Y}_N - z \mat{I}_N \right) \right| = \prod_{i=1}^N \sigma_i\left( \frac{1}{\sqrt{N}} \mat{Y}_N - z \mat{I}_N \right), $$
it follows that $z$ is an eigenvalue of $\frac{1}{\sqrt{N}} \mat{Y}_N$ if and only if $\sigma_N \left( \frac{1}{\sqrt{N}} \mat{Y}_N - z \mat{I}_N \right) = 0$.  
\end{remark}

We now prove Theorem \ref{thm:nice} assuming Theorem \ref{thm:main}.  

\begin{proof}[Proof of Theorem \ref{thm:nice}]
Define
$$ \mat{X}_N := \frac{1}{\sqrt{N}} \mat{Y}_N. $$ 
By the Cram\'{e}r-Wold device, it suffices to consider linear combinations of the real random variables
$$ \tr f_j \left( \mat{X}_N \right) - \E \tr f_j \left( \mat{X}_N \right), \quad 1 \leq j \leq k. $$
Let $f$ be a linear combination of $f_1, \ldots, f_k$.  Then $f$ is analytic in a neighborhood of $\mathcal{E}_{\rho,\delta}$ and bounded otherwise.  We write
\begin{align*}
	\tr f \left( \mat{X}_N \right) - \E \tr f \left( \mat{X}_N \right) &= \tr f \left( \mat{X}_N \right)\oindicator{E_N} - \E \tr f \left( \mat{X}_N \right)\oindicator{E_N} \\
	&\quad+ \tr f \left( \mat{X}_N \right)\oindicator{E_N^C} - \E \tr f \left( \mat{X}_N \right)\oindicator{E_N^C},
\end{align*}
where $E_N^C$ denotes the complement of the event $E_N$.  Thus, in view of Theorem \ref{thm:main}, it suffices to show that
$$ \tr f \left( \mat{X}_N \right)\oindicator{E_N^C} - \E \tr f \left( \mat{X}_N \right)\oindicator{E_N^C} $$
converges to zero in probability as $N \to \infty$.  

Also from Theorem \ref{thm:main}, we observe that $\Prob(E_N^C) = o(N^{-1})$.  Therefore, we have
$$ \E \left| \tr f(\mat{X}_N) \oindicator{E_N^C} \right| \leq N \|f \|_{\infty} \Prob(E_N^C) = o(1), $$
where $\| \cdot \|_{\infty}$ denotes the supremum norm.  Finally, we note that, for any $\eta > 0$, 
$$ \Prob\left( \left| \tr f \left( \mat{X}_N \right)\oindicator{E_N^C} \right| > \eta \right) \leq \Prob(E_N^C) = o(N^{-1}), $$
and the proof of Theorem \ref{thm:nice} is complete.  
\end{proof}

\subsection{Overview and organization}

It remains to prove Theorem \ref{thm:main}.  Broadly speaking the proof of Theorem \ref{thm:main} proceeds as follows.  For any analytic function $f$, we write
$$ \tr f \left( \frac{1}{\sqrt{N}} \mat{Y}_N \right) \oindicator{E_N} - \E  \tr f \left( \frac{1}{\sqrt{N}} \mat{Y}_N \right) \oindicator{E_N} = \frac{-1}{2\pi i} \oint_{\mathcal{C}} f(z) \Xi_N(z)dz, $$
where $\mathcal{C}$ is a contour whose interior contains the eigenvalues of $\frac{1}{\sqrt{N}} \mat{Y}_N$, 
$$ \Xi_N(z) := \tr \left( \frac{1}{\sqrt{N}} \mat{Y}_N - z \mat{I}_N \right)^{-1} \oindicator{E_N} - \E \tr \left( \frac{1}{\sqrt{N}} \mat{Y}_N - z \mat{I}_N \right)^{-1} \oindicator{E_N}, $$
and $\mat{I}_N$ denotes the $N \times N$ identity matrix.  We view $\Xi_N(z)$ as a process in $z \in \mathcal{C}$.  Thus, the proof of Theorem \ref{thm:main} reduces to showing that $\Xi_N$ converges weakly to an appropriate Gaussian process in the space of continuous functions on the contour $\mathcal{C}$.  The main technical challenge that arises when working with $\Xi_N(z)$ is the need to bound the spectral norm of the matrix $\left(\frac{1}{\sqrt{N}} \mat{Y}_N - z \mat{I}_N \right)^{-1}$ (or, equivalently, control the least singular value of $\frac{1}{\sqrt{N}} \mat{Y}_N - z \mat{I}_N$), and hence it becomes necessary to work on the event $E_N$.  

The paper is organized as follows.  In Section \ref{sec:prelim}, we present some preliminary tools we will need to prove Theorem \ref{thm:main}, including a bound on the least singular value of $\frac{1}{\sqrt{N}} \mat{Y}_N - z \mat{I}_N$.  We begin the proof of Theorem \ref{thm:main} in Section \ref{sec:proof}.  In Sections \ref{sec:distr} and \ref{sec:tight}, we show that a truncated version of $\Xi_N$ converges weakly to a Gaussian process in the space of continuous functions on an appropriately chosen contour $\mathcal{C}$.  In general, these two sections are based on \cite[Chapter 9]{BSbook} and \cite{RS}.  In fact, our proof seems to inherit many of the technical challenges present in \cite[Chapter 9]{BSbook} and \cite{RS}.  In particular, the material presented in these two sections is rather technical and some of the calculations are tedious.  Finally, we complete the proof of Theorem \ref{thm:main} in Section \ref{sec:conclusion}.  In addition, the appendix contains a number of auxiliary results.

\subsection{Notation} \label{sec:notation}

We use asymptotic notation (such as $O,o$) under the assumption that $N \rightarrow \infty$.  We use $X = O(Y)$ to denote the bound $X \leq CY$ for all sufficiently large $N$ and for some constant $C$.  Notation such as $X=O_k(Y)$ mean that the hidden constant $C$ depends on another constant $k$.  $X=o(Y)$ or $Y=\omega(X)$ means that $X/Y \rightarrow 0$ as $N \rightarrow \infty$.  

An event $E$, which depends on $N$, is said to hold with \emph{overwhelming probability} if $\Prob(E) \geq 1 - O_C(N^{-C})$ for every constant $C > 0$.  We let $\oindicator{E}$ denote the indicator function of the event $E$.  $E^{C}$ denotes the complement of the event $E$.  We write a.s. for almost surely. 

For any matrix $\mat M$, we denote the Hilbert-Schmidt norm $\|\mat M\|_2$ by the formula
\begin{equation*} 
	\|\mat M\|_2 := \sqrt{ \tr (\mat{M} \mat{M}^\ast) } = \sqrt{ \tr (\mat{M}^\ast \mat{M})}. 
\end{equation*}
Let $\|\mat{M}\|$ denote the spectral norm of $\mat{M}$.  We let $\mat{I}_N$ denote the $N \times N$ identity matrix.  Often we will just write $\mat{I}$ for the identity matrix when the size can be deduced from the context.     

We let $C$ and $K$ denote constants that are non-random and may take on different values from one appearance to the next.  The notation $K_p$ means that the constant $K$ depends on another parameter $p$.

\section{Preliminary tools} \label{sec:prelim}

Let $\mat{Y}_N$ be an elliptic random matrix with atom variables $(\xi_1,\xi_2),\zeta$ which satisfy condition {\bf C0}.  Define
$$ \mat{X}_N := \frac{1}{\sqrt{N}} \mat{Y}_N. $$ 

\subsection{Truncation} \label{sec:truncation}
Instead of working with the matrix $\mat{Y}_N$ directly, we will work with a truncated version of this matrix.  From \eqref{eq:finitemoments}, there exists $\eps > 0$ such that, taking $\eps_N := N^{-\eps}$, 
\begin{equation} \label{eq:xiilim}
	\lim_{N \to \infty} \frac{1}{\eps_N^6} \E|\xi_i|^6\indicator{|\xi_i| > \eps_N\sqrt{N}} = 0 
\end{equation}
for $i=1,2$ and
$$ \lim_{N \to \infty} \frac{1}{\eps_N^4} \E|\zeta|^4 \indicator{|\zeta| > \eps_N\sqrt{N}} = 0. $$

Define
$$ \tilde{\xi}_i = \xi_i \indicator{|\xi_i| \leq \eps_N \sqrt{N}} - \E \xi_i \indicator{|\xi_i| \leq \eps_N \sqrt{N}} $$
for $i=1,2$ and
$$ \tilde{\zeta} = \zeta \indicator{|\zeta| \leq \eps_N \sqrt{N}} - \E\zeta \indicator{|\zeta| \leq \eps_N \sqrt{N}} . $$
We also define
$$ \hat{\xi}_i = \frac{\tilde{\xi}_i}{\sqrt{\var(\tilde{\xi}_i)}}, \quad \hat{\zeta} = \sigma \frac{\tilde{\zeta}}{\sqrt{\var(\tilde{\zeta})}} $$
for $i=1,2$.  Set $\hat{\rho} := \E[ \hat{\xi}_1 \hat{\xi}_2]$.  Of course, $\tilde{\xi}_i, \hat{\xi}_i, \tilde{\zeta}, \hat{\zeta}, \hat{\rho}$ depend on $N$, but we do not denote this dependence in our notation.  

\begin{lemma}[Truncation] \label{lemma:truncation}
Assume the atom variables $(\xi_1, \xi_2), \zeta$ satisfy condition {\bf C0}.  Then
\begin{enumerate}[(i)]
\item $|1 - \var(\tilde{\xi_i})| = o(\eps_N^2 N^{-2})$ for $i=1,2$,  \label{item:var1}
\item $|\sigma^2 - \var(\tilde{\zeta})| = o(\eps_N^2 N^{-1})$, \label{item:varsigma}
\item there exists $N_0 > 0$ such that for any $N > N_0$, $\hat{\xi}_1, \hat{\xi}_2$ both have mean zero and unit variance, $\hat{\zeta}$ has mean zero and variance $\sigma^2$, and a.s. \label{item:asbnd}
$$ |\hat{\xi}_i| \leq 4 \eps_N \sqrt{N} \quad \text{and} \quad |\hat{\zeta}| \leq 4 \eps_N \sqrt{N} $$
for $i=1,2$,  
\item there exists $N_0 > 0$ such that for any $N > N_0$,  \label{item:momentbnd}
$$ \E |\hat{\xi}_i|^6 \leq 2^{12} \E |\xi_i|^6 \quad \text{and} \quad \E|\hat{\zeta}|^4 \leq 2^8 \E |\zeta|^4 $$
for $i=1,2$, 
\item $\hat{\rho} = \rho + o(N^{-1})$,  \label{item:rho}
\item $\E[\hat{\xi}_1^2 \hat{\xi}_2^2] = \E[\xi_1^2 \xi_2^2] + o(1)$. \label{item:highmom}
\end{enumerate}
\end{lemma}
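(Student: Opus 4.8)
The plan is to prove all six items by direct moment estimates, leaning on condition {\bf C0} (in particular the $(6+\tau)$-th and $(4+\tau)$-th moment bounds) and the choice $\eps_N = N^{-\eps}$ satisfying \eqref{eq:xiilim}. First I would establish items \eqref{item:var1} and \eqref{item:varsigma}. Since $\xi_i$ has mean zero and unit variance, we have $\E[\xi_i \indicator{|\xi_i| \le \eps_N \sqrt N}] = -\E[\xi_i \indicator{|\xi_i| > \eps_N \sqrt N}]$, and by Cauchy--Schwarz (or H\"older with the $(6+\tau)$-th moment) together with Chebyshev/Markov this tail is bounded by $C \E[|\xi_i|^{6+\tau}] (\eps_N \sqrt N)^{-(5+\tau)}$, which is $o(\eps_N^2 N^{-2})$ for $\eps$ small. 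Similarly $1 - \var(\xi_i \indicator{|\xi_i| \le \eps_N\sqrt N}) = \E[\xi_i^2 \indicator{|\xi_i| > \eps_N\sqrt N}] \le C\E[|\xi_i|^{6+\tau}] (\eps_N\sqrt N)^{-(4+\tau)}$, and subtracting the square of the mean correction (which is even smaller) gives the claimed $o(\eps_N^2 N^{-2})$ bound on $|1 - \var(\tilde\xi_i)|$. The computation for $\tilde\zeta$ is identical but uses $\E|\zeta|^{4+\tau} < \infty$, producing the weaker rate $o(\eps_N^2 N^{-1})$ in \eqref{item:varsigma}.

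Next I would handle \eqref{item:asbnd}. Mean zero and the prescribed variances for $\hat\xi_i, \hat\zeta$ are immediate from the definitions (the division by $\sqrt{\var(\tilde\xi_i)}$ and multiplication by $\sigma/\sqrt{\var(\tilde\zeta)}$ are precisely the normalizations). For the almost-sure bound: $|\tilde\xi_i| \le |\xi_i|\indicator{|\xi_i| \le \eps_N\sqrt N} + |\E \xi_i \indicator{|\xi_i| \le \eps_N\sqrt N}| \le \eps_N\sqrt N + o(\eps_N\sqrt N) \le 2\eps_N\sqrt N$ for $N$ large, and since by \eqref{item:var1} we have $\var(\tilde\xi_i) \ge 1/4$ for $N$ large, we get $|\hat\xi_i| \le 2 \cdot 2\eps_N\sqrt N = 4\eps_N\sqrt N$; the argument for $\hat\zeta$ is the same using \eqref{item:varsigma}. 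For \eqref{item:momentbnd}, $\E|\hat\xi_i|^6 = \E|\tilde\xi_i|^6 / \var(\tilde\xi_i)^3 \le 2^3 \E|\tilde\xi_i|^6$, and by the $c_r$-inequality $\E|\tilde\xi_i|^6 \le 2^5(\E[|\xi_i|^6\indicator{|\xi_i|\le\eps_N\sqrt N}] + |\E\xi_i\indicator{|\xi_i|\le\eps_N\sqrt N}|^6) \le 2^6 \E|\xi_i|^6$ for $N$ large, giving the stated $2^{12}$ (and analogously $2^8$ for $\hat\zeta$ from $\E|\hat\zeta|^4 = \sigma^4\E|\tilde\zeta|^4/\var(\tilde\zeta)^2$, using $\sigma^2/\var(\tilde\zeta) \le 2$).

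For \eqref{item:rho} and \eqref{item:highmom} I would write $\hat\rho = \E[\tilde\xi_1 \tilde\xi_2]/(\sqrt{\var(\tilde\xi_1)}\sqrt{\var(\tilde\xi_2)})$, bound the denominator away from $1$ by $1 + o(\eps_N^2 N^{-2})$ via \eqref{item:var1}, and estimate $\E[\tilde\xi_1\tilde\xi_2] - \rho$ by expanding $\tilde\xi_i = \xi_i\indicator{|\xi_i|\le\eps_N\sqrt N} - (\text{small mean})$ and bounding the cross terms: the dominant error is $\E[\xi_1\xi_2(\indicator{|\xi_1|>\eps_N\sqrt N \text{ or } |\xi_2|>\eps_N\sqrt N})]$, controlled by H\"older with the $(6+\tau)$-th moments and a union bound on the tails, which is $o(N^{-1})$ for $\eps$ small. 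For \eqref{item:highmom} the same strategy applies to $\E[\hat\xi_1^2\hat\xi_2^2] = \E[\tilde\xi_1^2\tilde\xi_2^2]/(\var(\tilde\xi_1)\var(\tilde\xi_2))$; here only $o(1)$ is claimed, so even cruder tail bounds via the sixth moments suffice, and the normalization factor contributes $o(1)$. The main obstacle is purely bookkeeping: making sure $\eps > 0$ can be chosen small enough that \emph{all} the error rates above ($o(\eps_N^2 N^{-2})$, $o(\eps_N^2 N^{-1})$, $o(N^{-1})$) hold simultaneously. This is where \eqref{eq:xiilim} is used: it guarantees exactly such an $\eps$ exists, and all tail estimates should be phrased in terms of the quantities $\eps_N^{-6}\E|\xi_i|^6\indicator{|\xi_i|>\eps_N\sqrt N}$ and $\eps_N^{-4}\E|\zeta|^4\indicator{|\zeta|>\eps_N\sqrt N}$ appearing there rather than re-deriving rates from the raw $(6+\tau)$-moment bound, so that the lemma's hypotheses are matched cleanly.
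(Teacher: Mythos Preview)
Your proposal is correct and follows essentially the same approach as the paper: control the truncation errors by bounding tail moments (using that $|\xi_i|^2 \le |\xi_i|^6/(\eps_N\sqrt N)^4$ on the tail event, then invoking \eqref{eq:xiilim}), use $\var(\tilde\xi_i)\ge 1/4$ for large $N$ to handle the normalizations in \eqref{item:asbnd}--\eqref{item:momentbnd}, and split $|\hat\rho-\rho|\le|\hat\rho-\tilde\rho|+|\tilde\rho-\rho|$ for \eqref{item:rho}. Your closing remark---that all tail estimates should be phrased in terms of $\eps_N^{-6}\E|\xi_i|^6\indicator{|\xi_i|>\eps_N\sqrt N}$ rather than via the raw $(6+\tau)$-th moment---is exactly what the paper does, so you should drop the initial $(6+\tau)$-moment detour and go straight to that; also check your constants in \eqref{item:momentbnd}, since $\var(\tilde\xi_i)\ge 1/4$ gives $\var^{-3}\le 2^6$, not $2^3$, which is how the paper arrives at $2^{12}$.
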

\begin{proof}
For \eqref{item:var1} and \eqref{item:varsigma}, we observe that
$$ |1 - \var(\tilde{\xi_i})| \leq 2 \E|\xi_i|^2 \indicator{|\xi_i| > \eps_N \sqrt{N}} \leq \frac{1}{ \eps_N^4 N^2} \E|\xi_i|^6 \indicator{|\xi_i| > \eps_N \sqrt{N}} = o(\eps_N^2 N^{-2}) $$
and, similarly,  $|\sigma^2 - \var(\tilde{\zeta})| = o(\eps_N^2 N^{-1})$.

By the estimates above, there exists $N_0 > 0$ such that $\var(\tilde{\xi}_1) \geq 1/4$, $\var(\tilde{\xi}_2) \geq 1/4$, and $\var(\tilde{\zeta}) \geq \sigma^2/4$ for all $N > N_0$.  Taking $N > N_0$, we observe that $\hat{\xi}_1, \hat{\xi}_2$ have mean zero and unit variance, and $\hat{\zeta}$ has mean zero and variance $\sigma^2$ by construction.  In addition, the almost sure bounds in \eqref{item:asbnd} follow from the bounds above and the definitions of $\tilde{\xi}_1$, $\tilde{\xi}_2$, $\tilde{\zeta}$.

For \eqref{item:momentbnd}, we take $N_0$ as above.  Then for $N > N_0$, we obtain $\E |\hat{\xi}_i|^6 \leq 2^6 \E |\tilde{\xi}_i|^6$.  By the binomial theorem and H\"{o}lder's inequality, we conclude that $\E|\tilde{\xi}_i|^6 \leq 2^6 \E|\xi_i|^6$ for $i=1,2$.  Similarly, we have $\E|\hat{\zeta}|^4 \leq 2^8 \E |\zeta|^4$.  

We now prove \eqref{item:rho}.  We observe that 
$$ |\hat{\rho} - \rho| \leq |\hat{\rho} - \tilde{\rho}| + |\tilde{\rho} - \rho|, $$  
where $\tilde{\rho} := \E[\tilde{\xi}_1 \tilde{\xi}_2]$.  For the first term, we apply the Cauchy--Schwarz inequality and obtain
\begin{align*}
	|\hat{\rho} - \tilde{\rho}| &= \left| \E[ \hat{\xi}_1 \hat{\xi}_2 - \tilde{\xi}_1 \tilde{\xi}_2] \right| \\
		&\leq \sqrt{ \E|\hat{\xi}_1|^2 \E |\hat{\xi}_2|^2 } \left| 1 - \sqrt{\var(\tilde{\xi}_1)} \sqrt{\var(\tilde{\xi}_2)} \right| \\
		&\leq \left| 1 - \var(\tilde{\xi}_1) \var(\tilde{\xi}_2) \right| \\
		&\leq \left| 1- \var(\tilde{\xi}_1) \right| + \var(\tilde{\xi}_1) \left| 1 - \var(\tilde{\xi}_2) \right| \\
		&= o(\eps_N^2 N^{-2})
\end{align*}
by \eqref{item:var1}.  

For the second term, we have
\begin{align*}
	|\tilde{\rho} - \rho| &\leq \sum_{i=1}^2 \left( \sqrt{ \E |\xi_i|^2 \indicator{|\xi_i| > \eps_N \sqrt{N}} } + \E |\xi_i| \indicator{|\xi_i| > \eps_N \sqrt{N}} \right) \\
		&\qquad + \sqrt{ \E |\xi_1|^2 \indicator{|\xi_1| > \eps_N \sqrt{N}} \E |\xi_2|^2 \indicator{|\xi_2| > \eps_N \sqrt{N}} } \\
		&\qquad + \E|\xi_1| \indicator{|\xi_1| > \eps_N \sqrt{N}} \E |\xi_2| \indicator{|\xi_2| > \eps_N \sqrt{N}}
\end{align*}
by the Cauchy--Schwarz inequality.  It now follows from \eqref{eq:xiilim} that $|\tilde{\rho} - \rho| = o(N^{-1})$.  This completes the proof of \eqref{item:rho}.  

The treatment of \eqref{item:highmom} is similar.  Indeed, \eqref{item:highmom} follows from \eqref{item:var1} and the dominated convergence theorem; we omit the details.    
\end{proof}

Define
$$ \tilde{y}_{ij} = y_{ij} \indicator{|y_{ij}| \leq \eps_N \sqrt{N}} - \E y_{ij} \indicator{|y_{ij}| \leq \eps_N \sqrt{N}}. $$
For $i \neq j$, we define
$$ \hat{y}_{ij} = \frac{\tilde{y}_{ij}}{\sqrt{\var(\tilde{y}_{ij})}}, $$
and for the diagonal entries, we define
$$ \tilde{y}_{ii} = \sigma \frac{\tilde{y}_{ii}}{\sqrt{\var(\tilde{y}_{ii})}}. $$
Define the matrices $\tilde{\mat Y}_N = (\tilde{y}_{ij})_{i,j=1}^N$ and $\hat{\mat Y}_{N} = (\hat{y}_{ij})_{i,j=1}^N$ as well as 
$$ \tilde{\mat X}_N := \frac{1}{\sqrt{N}} \tilde{\mat Y}_N, \quad \hat{\mat X}_N := \frac{1}{\sqrt{N}} \hat{\mat Y}_N. $$

Using Lemma \ref{lemma:truncation}, we will verify the following bounds.  

\begin{lemma} \label{lemma:norm}
Under the assumptions of Theorem \ref{thm:main}, 
\begin{equation} \label{eq:fndiff}
	\E \| \mat{X}_N - \hat{\mat X}_N \|_2^2 = o(\eps_N^2 N^{-1})
\end{equation}
and
\begin{equation} \label{eq:probdiff}
	\Prob \left( \| \mat{X}_N - \hat{\mat X}_N \| > \eps_N \right) = o(N^{-1}). 
\end{equation}
\end{lemma}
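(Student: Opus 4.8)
The plan is to prove the Hilbert--Schmidt estimate \eqref{eq:fndiff} first, by a direct entry-by-entry computation, and then to deduce the operator-norm estimate \eqref{eq:probdiff} from it via Markov's inequality together with $\|\mat M\| \le \|\mat M\|_2$. To begin, since $\mat X_N - \hat{\mat X}_N = \frac{1}{\sqrt N}(\mat Y_N - \hat{\mat Y}_N)$,
\[
	\E\|\mat X_N - \hat{\mat X}_N\|_2^2 = \frac1N \sum_{i,j=1}^N \E\,|y_{ij} - \hat y_{ij}|^2 .
\]
For $i \ne j$, the pair $(y_{ij}, \hat y_{ij})$ has the same law as $(\xi_\ell, \hat\xi_\ell)$ for the appropriate $\ell \in \{1,2\}$, since $\hat y_{ij}$ is obtained from $y_{ij}$ by the same fixed measurable map that produces $\hat\xi_\ell$ from $\xi_\ell$; similarly $(y_{ii}, \hat y_{ii})$ has the law of $(\zeta, \hat\zeta)$. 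Hence it suffices to estimate $\E|\xi_\ell - \hat\xi_\ell|^2$ for $\ell = 1,2$ and $\E|\zeta - \hat\zeta|^2$.

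For the off-diagonal atoms I would split $\xi_\ell - \hat\xi_\ell = (\xi_\ell - \tilde\xi_\ell) + (\tilde\xi_\ell - \hat\xi_\ell)$. Since $\E\xi_\ell = 0$, the first difference equals $\xi_\ell\indicator{|\xi_\ell| > \eps_N\sqrt N} - \E[\xi_\ell\indicator{|\xi_\ell| > \eps_N\sqrt N}]$, a centred variable, so
\[
	\E|\xi_\ell - \tilde\xi_\ell|^2 \le \E\big[\xi_\ell^2\,\indicator{|\xi_\ell| > \eps_N\sqrt N}\big] \le \frac{1}{\eps_N^4 N^2}\,\E\big[|\xi_\ell|^6\,\indicator{|\xi_\ell| > \eps_N\sqrt N}\big] = o(\eps_N^2 N^{-2})
\]
by \eqref{eq:xiilim}; the rescaling term satisfies $\E|\tilde\xi_\ell - \hat\xi_\ell|^2 = \big(\sqrt{\var(\tilde\xi_\ell)} - 1\big)^2 \le |\var(\tilde\xi_\ell) - 1|^2 = o(\eps_N^4 N^{-4})$ by Lemma \ref{lemma:truncation}\eqref{item:var1}, so altogether $\E|\xi_\ell - \hat\xi_\ell|^2 = o(\eps_N^2 N^{-2})$. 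The same splitting with $\zeta$ in place of $\xi_\ell$, now using only the fourth-moment control, gives $\E|\zeta - \tilde\zeta|^2 \le \eps_N^{-2}N^{-1}\,\E[|\zeta|^4\indicator{|\zeta| > \eps_N\sqrt N}] = o(\eps_N^2 N^{-1})$ and $\E|\tilde\zeta - \hat\zeta|^2 = (\sqrt{\var(\tilde\zeta)} - \sigma)^2 \le \sigma^{-2}|\var(\tilde\zeta) - \sigma^2|^2 = o(\eps_N^4 N^{-2})$ by Lemma \ref{lemma:truncation}\eqref{item:varsigma} (the degenerate case $\sigma = 0$ being trivial, as then $\zeta = 0$ a.s.), hence $\E|\zeta - \hat\zeta|^2 = o(\eps_N^2 N^{-1})$.

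Assembling, the $O(N^2)$ off-diagonal terms contribute $\frac1N\cdot O(N^2)\cdot o(\eps_N^2 N^{-2}) = o(\eps_N^2 N^{-1})$ to the sum above and the $N$ diagonal terms contribute $\frac1N\cdot N\cdot o(\eps_N^2 N^{-1}) = o(\eps_N^2 N^{-1})$, which proves \eqref{eq:fndiff}. For \eqref{eq:probdiff}, since $\|\mat X_N - \hat{\mat X}_N\| \le \|\mat X_N - \hat{\mat X}_N\|_2$, Markov's inequality gives
\[
	\Prob\big(\|\mat X_N - \hat{\mat X}_N\| > \eps_N\big) \le \eps_N^{-2}\,\E\|\mat X_N - \hat{\mat X}_N\|_2^2 = o(N^{-1}) .
\]

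There is no deep obstacle here; the work is essentially bookkeeping. The points demanding care are: keeping the truncation error $\xi_\ell \to \tilde\xi_\ell$ separate from the rescaling error $\tilde\xi_\ell \to \hat\xi_\ell$, and checking that the recentring constant absorbed into $\tilde\xi_\ell$ is itself negligible; invoking the sixth-moment hypothesis on $\xi_1,\xi_2$ precisely because there are $\sim N^2$ off-diagonal entries, while the $N$ diagonal entries may each carry the larger per-entry error $o(\eps_N^2 N^{-1})$ afforded by the weaker fourth-moment hypothesis on $\zeta$; and the reduction to i.i.d.\ copies in the first display, which is legitimate exactly because $\hat y_{ij}$ is a fixed measurable function of $y_{ij}$.
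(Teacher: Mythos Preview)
Your proof is correct and follows essentially the same approach as the paper's: both split via the intermediate $\tilde{\mat X}_N$, bound the truncation error $\mat X_N - \tilde{\mat X}_N$ using the sixth/fourth moment tail bounds, bound the rescaling error $\tilde{\mat X}_N - \hat{\mat X}_N$ using the variance estimates from Lemma~\ref{lemma:truncation}, and then deduce \eqref{eq:probdiff} from \eqref{eq:fndiff} via Markov together with $\|\cdot\|\le\|\cdot\|_2$. Your entrywise computation at the atom level is a clean repackaging of the paper's matrixwise sum, and your bound on the rescaling term is in fact slightly sharper (you obtain $|\var(\tilde\xi_\ell)-1|^2$ rather than $|\var(\tilde\xi_\ell)-1|$), though this makes no difference to the conclusion.
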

\begin{proof}
By Markov's inequality, 
\begin{align*}
	\Prob \left( \| \mat{X}_N - \hat{\mat X}_N \| > \eps_N \right) &\leq \frac{1}{\eps_N^2} \E \| \mat{X}_N - \hat{\mat X}_N \|^2 \leq \frac{1}{\eps_N^2} \E \| \mat{X}_N - \hat{\mat X}_N \|_2^2.
\end{align*}
Thus, it suffices to prove \eqref{eq:fndiff}.  

By the triangle inequality, 
$$ \E \| \mat{X}_N - \hat{\mat X}_N \|_2^2 \leq 2 \left( \E\| \mat{X}_N - \tilde{\mat X}_N \|_2^2 + \E \| \tilde{\mat X}_N - \hat{\mat X}_N \|_2^2 \right). $$
We will show that both terms on the right-hand side are $o(\eps_N^2 N^{-1})$.  For the first term, we observe that
\begin{align*} 
	\E\| \mat{X}_N - \tilde{\mat{X}}_N \|_2^2 &= \frac{1}{N} \sum_{i,j=1}^N \E|y_{ij} - \tilde{y}_{ij}|^2 \\
		&\leq \frac{4}{N} \sum_{i,j=1}^N \E |y_{ij}|^2 \indicator{|y_{ij}| > \eps_N \sqrt{N}} \\
		&\leq \frac{4}{\eps_N^4 N} \E|\xi_1|^6 \indicator{|\xi_1| > \eps_N \sqrt{N}} + \frac{4}{\eps_N^4 N} \E|\xi_2|^6 \indicator{|\xi_2| > \eps_N \sqrt{N}} \\
		&\qquad + \frac{4}{\eps_N^2 N} \E |\zeta|^4 \indicator{|\zeta| > \eps_N \sqrt{N}} \\
		&= o(\eps_N^2 N^{-1}).
\end{align*}

Similarly, by Lemma \ref{lemma:truncation}, we obtain
\begin{align*}
	\E \| \tilde{\mat{X}}_N - \hat{\mat{X}}_N \|_2^2 &\leq \frac{1}{N} \sum_{i \neq j} \E |\hat{y}_{ij}|^2 \left| \sqrt{\var(\tilde{y}_{ij})} -1 \right| + \frac{1}{N} \sum_{i=1}^N \E |\hat{y}_{ii}|^2 \left| \frac{\sqrt{\var(\tilde{y}_{ii})}}{\sigma} - 1 \right|  \\
		&\leq \frac{1}{N} \sum_{i \neq j} \E |\hat{y}_{ij}|^2 | {\var(\tilde{y}_{ij})} -1 | + \frac{1}{N} \sum_{i=1}^N \E |\hat{y}_{ii}|^2 \left| \frac{{\var(\tilde{y}_{ii})}}{\sigma^2} - 1 \right|  \\
		&\leq N | \var(\tilde{\xi}_1) - 1| + N |\var(\tilde{\xi}_2) - 1| + | \var(\tilde{\zeta}) - \sigma^2|  \\
		&= o(\eps_N^2 N^{-1}).
\end{align*}
The proof of the lemma is now complete by combining the bounds above.  
\end{proof}

\subsection{Least singular value bound}
We will also need the following result which allows us to control the least singular value of $\mat{X}_N$ given control of the least singular value of the truncated matrix $\hat{\mat X}_N$.  

\begin{lemma} \label{lemma:lsv}
Let $D \subset \mathbb{C}$.  Then, under the assumptions of Theorem \ref{thm:main}, for any $c > 0$, 
$$ \Prob \left( \inf_{z \in D} \sigma_N(\mat{X}_N - z \mat{I}) < c/2 \right) \leq \Prob \left( \inf_{z \in D} \sigma_N(\hat{\mat X}_N - z \mat{I}) < c \right) + o(N^{-1}). $$
\end{lemma}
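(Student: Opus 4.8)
The plan is to deduce Lemma~\ref{lemma:lsv} from Lemma~\ref{lemma:norm} via a simple perturbation argument on singular values. First I would recall the stability of the least singular value under additive perturbations: for any matrices $\mat A, \mat B$ of the same size, $|\sigma_N(\mat A) - \sigma_N(\mat B)| \leq \|\mat A - \mat B\|$. Applying this with $\mat A = \mat X_N - z\mat I$ and $\mat B = \hat{\mat X}_N - z\mat I$, the perturbation is $\mat X_N - \hat{\mat X}_N$, which does not depend on $z$, so in fact
\[
	\left| \inf_{z \in D} \sigma_N(\mat X_N - z\mat I) - \inf_{z \in D} \sigma_N(\hat{\mat X}_N - z\mat I) \right| \leq \sup_{z \in D} \left| \sigma_N(\mat X_N - z\mat I) - \sigma_N(\hat{\mat X}_N - z\mat I) \right| \leq \| \mat X_N - \hat{\mat X}_N \|.
\]
(I should be slightly careful passing the absolute value through the two infima, but this is the standard fact that $|\inf_x g(x) - \inf_x h(x)| \leq \sup_x |g(x) - h(x)|$, which holds for any pair of real-valued functions bounded below.)

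Next I would split on the event $\{ \| \mat X_N - \hat{\mat X}_N \| \leq \eps_N \}$. On this event, the displayed inequality gives $\inf_{z \in D} \sigma_N(\mat X_N - z\mat I) \geq \inf_{z \in D} \sigma_N(\hat{\mat X}_N - z\mat I) - \eps_N$. Hence, for $N$ large enough that $\eps_N < c/2$ (recall $\eps_N = N^{-\eps} \to 0$), the event $\{ \inf_{z \in D} \sigma_N(\mat X_N - z\mat I) < c/2 \}$ is contained in $\{ \inf_{z \in D} \sigma_N(\hat{\mat X}_N - z\mat I) < c/2 + \eps_N \} \subseteq \{ \inf_{z \in D} \sigma_N(\hat{\mat X}_N - z\mat I) < c \}$, together with the complementary event $\{ \| \mat X_N - \hat{\mat X}_N \| > \eps_N \}$.

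Therefore, by the union bound,
\[
	\Prob \left( \inf_{z \in D} \sigma_N(\mat X_N - z\mat I) < c/2 \right) \leq \Prob \left( \inf_{z \in D} \sigma_N(\hat{\mat X}_N - z\mat I) < c \right) + \Prob \left( \| \mat X_N - \hat{\mat X}_N \| > \eps_N \right),
\]
and the last term is $o(N^{-1})$ by \eqref{eq:probdiff} of Lemma~\ref{lemma:norm}. This is the claimed bound. There is no real obstacle here: the only points requiring a line of care are the $1$-Lipschitz property of $\sigma_N(\cdot)$ in the operator norm (a consequence of Weyl's inequality for singular values, or equivalently of the min-max characterization $\sigma_N(\mat M) = \min_{\|x\|=1}\|\mat M x\|$ together with the reverse triangle inequality), and the interchange of absolute value with the infimum over $z \in D$; both are elementary. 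The asymptotic hypotheses ($\eps_N < c/2$ for large $N$) are harmless since all statements are asymptotic in $N$.
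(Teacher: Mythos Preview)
Your proof is correct and follows essentially the same approach as the paper: split on the event $\{\|\mat X_N - \hat{\mat X}_N\| \leq \eps_N\}$, use Weyl's inequality to compare the least singular values on that event, and invoke \eqref{eq:probdiff} of Lemma~\ref{lemma:norm} to handle the complement. The paper argues pointwise at a minimizing $z_0 \in D$ rather than via the inequality $|\inf g - \inf h| \leq \sup |g-h|$, but this is only a cosmetic difference.
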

\begin{proof}
We observe that
\begin{align*}
	&\Prob \left( \inf_{z \in D} \sigma_N(\mat{X}_N - z \mat{I}) < c/2 \right)  \\
	&\quad = \Prob \left( \inf_{z \in D} \sigma_N(\mat{X}_N - z \mat{I}) < c/2 \text{ and } \|\mat{X}_N - \hat{\mat X}_N \| \leq \eps_N \right)  \\
	&\qquad + \Prob \left( \inf_{z \in D} \sigma_N(\mat{X}_N - z \mat{I}) < c/2 \text { and }  \|\mat{X}_N - \hat{\mat X}_N \| > \eps_N\right) \\
	&\quad \leq \Prob \left( \inf_{z \in D} \sigma_N(\mat{X}_N - z \mat{I}) < c/2 \text{ and } \|\mat{X}_N - \hat{\mat X}_N \| \leq \eps_N \right) + o(N^{-1})
\end{align*}
by Lemma \ref{lemma:norm}.

Now, if there exists $z_o \in D$ such that $\sigma_N(\mat{X}_N - z_0\mat{I}) < c/2$ and $\|\mat{X}_N - \hat{\mat X}_N \| \leq \eps_N < c/2$, then by Weyl's inequality $\sigma_N(\hat{\mat X}_N - z_0 \mat{I}) < c$.  Thus, we conclude that, for $N$ sufficiently large (so that $\eps_N < c/2$), 
$$ \Prob \left( \inf_{z \in D} \sigma_N(\mat{X}_N - z \mat{I}) < c/2 \text{ and } \|\mat{X}_N - \hat{\mat X}_N \| \leq \eps_N \right) \leq \Prob \left( \inf_{z \in D} \sigma_N(\hat{\mat X}_N - z \mat{I}) < c \right), $$
and the proof is complete. 
\end{proof}

We will need the following bound on the least singular value of $\hat{\mat X}_N$.  

\begin{theorem} \label{thm:lsv}
Under the assumptions of Theorem \ref{thm:main}, for any $\delta > 0$, there exists $c > 0$ such that the event
$$ \left\{ \inf_{\dist(z,\mathcal{E}_{\rho}) \geq \delta} \sigma_N(\hat{\mat X}_N - z \mat{I}) \geq c \right\} $$
holds with overwhelming probability. 
\end{theorem}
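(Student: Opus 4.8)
\medskip
\noindent\textbf{Proof strategy for Theorem~\ref{thm:lsv}.}
The plan is to follow the approach of \cite{OR}, working throughout with the truncated matrix $\hat{\mat X}_N$, whose entries are deterministically bounded by $4\eps_N\sqrt{N}$ (with $\eps_N = N^{-\eps}$) and whose low-order moments are controlled by Lemma~\ref{lemma:truncation}.

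The first step is to reduce to a compact set of spectral parameters. A standard moment-method estimate (for instance, after splitting $\hat{\mat Y}_N$ into its symmetric and antisymmetric parts), using the deterministic bound on the entries, shows that $\|\hat{\mat X}_N\| \leq K_0$ with overwhelming probability for some constant $K_0 = K_0(\rho)$. On this event, for $|z| \geq K := K_0 + \delta$ we have $\sigma_N(\hat{\mat X}_N - z\mat{I}) \geq |z| - \|\hat{\mat X}_N\| \geq \delta$, so it suffices to establish the bound uniformly over the compact set $D := \{ z \in \mathbb{C} : \dist(z, \mathcal{E}_{\rho}) \geq \delta, \ |z| \leq K \}$. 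Since $\sigma_N$ is $1$-Lipschitz for the operator norm, the map $z \mapsto \sigma_N(\hat{\mat X}_N - z\mat{I})$ is $1$-Lipschitz; covering $D$ by a net $\mathcal{N}$ of polynomial cardinality and mesh $N^{-2}$, it then suffices to show that for each \emph{fixed} $z_0 \in D$ there is a constant $c = c(\delta) > 0$, independent of $z_0$, such that $\sigma_N(\hat{\mat X}_N - z_0\mat{I}) \geq 2c$ with overwhelming probability, and to take a union bound over $\mathcal{N}$.

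Fix $z_0 \in D$ and pass to the $2N \times 2N$ Hermitization $\mat{H}(z_0)$ of $\hat{\mat X}_N - z_0\mat{I}$ (zero diagonal blocks, off-diagonal blocks $\hat{\mat X}_N - z_0\mat{I}$ and its adjoint), whose eigenvalues are $\pm\sigma_i(\hat{\mat X}_N - z_0\mat{I})$. The empirical spectral distribution of $\mat{H}(z_0)$ converges to a deterministic symmetric measure $\hat{\nu}_{z_0}$ whose Stieltjes transform is governed by a $2\times 2$ self-consistent system encoding the correlation $\rho = \E[\xi_1\xi_2]$; using the properties of $m$ recorded after \eqref{eq:def:mz} (analyticity and boundedness of $m$ off $\mathcal{E}_{\rho}$, and $m(z) = -1/(z + \rho m(z))$) one checks that $\hat{\nu}_{z_0}$ has a hard spectral gap, $\mathrm{supp}(\hat{\nu}_{z_0}) \cap (-\kappa, \kappa) = \emptyset$, with $\kappa = \kappa(\delta) > 0$ uniform over $D$. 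It remains to rule out eigenvalues of $\mat{H}(z_0)$ in this gap, which I would do in two tiers. First, a crude polynomial lower bound $\sigma_N(\hat{\mat X}_N - z_0\mat{I}) \geq N^{-B}$ holding with high probability, obtained via the compressible/incompressible dichotomy: for compressible unit vectors one combines an $\eps$-net with a tensorized small-ball estimate for $\|(\hat{\mat X}_N - z_0\mat{I})u\|$, and for incompressible vectors one uses the Rudelson--Vershynin distance bound $\sigma_N(\mat{M}) \geq N^{-1/2}\min_i \dist(R_i, \mathrm{span}\{R_j : j \neq i\})$ together with anti-concentration of the inner product of the $i$-th row $R_i$ with the direction orthogonal to the others, absorbing the mild dependence between $R_i$ and that direction (caused by the correlation of $\hat x_{ij}$ with $\hat x_{ji}$) by first conditioning on all rows but the $i$-th. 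Second, one upgrades this to a constant lower bound by a local-law argument: with the polynomial bound as an a priori input, $\frac{1}{2N}\tr(\mat{H}(z_0) - \zeta)^{-1}$ concentrates around its deterministic limit for $\zeta = \lambda + i\eta$ with $|\lambda| < \kappa/2$ and $\eta$ a suitably small negative power of $N$; the limit has imaginary part $O(\eta)$ in the gap, whereas an eigenvalue of $\mat{H}(z_0)$ in $(-\kappa/2, \kappa/2)$ would force the imaginary part of this normalized trace to be of order at least $(N\eta)^{-1}$, a contradiction for $\eta$ small enough. Hence no such eigenvalue occurs, and $\sigma_N(\hat{\mat X}_N - z_0\mat{I}) \geq \kappa/2 =: 2c$ with overwhelming probability.

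The main obstacle is this local-law step, carried out uniformly in $z_0 \in D$: for elliptic matrices the self-consistent equation for the resolvent of $\mat{H}(z_0)$ is a genuine $2\times 2$ system rather than a scalar one, and the correlation of $\hat x_{ij}$ with $\hat x_{ji}$ also enters the large-deviation bounds for the quadratic forms that drive the concentration. One must further quantify every estimate with enough room to absorb the union bound over $\mathcal{N}$ while keeping all constants uniform in $z_0$. These are precisely the points that the techniques of \cite{OR} are designed to handle, and we adapt them to the present setting.
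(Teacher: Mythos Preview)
Your overall architecture---reduce to a compact region, Hermitize, identify a spectral gap in the limiting singular-value distribution $\nu_z$ uniform over $z\in D$, and then run a local-law argument on the $2\times 2$ matrix-valued Stieltjes transform to exclude eigenvalues of $\mat H(z_0)$ in that gap---matches the paper's proof, which is explicitly an adaptation of \cite{OR}. The paper also reduces first to the ellipse $\mathcal{E}_{\hat\rho}$ (since $\hat\rho\to\rho$) and to zero diagonal (via Weyl), which you do not mention but which is harmless.

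The one substantive difference is your ``first tier'': you propose a polynomial lower bound $\sigma_N(\hat{\mat X}_N-z_0\mat I)\ge N^{-B}$ via a Rudelson--Vershynin compressible/incompressible dichotomy, to be used as the a~priori input to the local law. The paper does \emph{not} use any least-singular-value technology of this kind. Instead, its a~priori input is obtained entirely inside the Stieltjes-transform framework: one works at scale $t_N=N^{-\beta}$, uses the trivial bound $\|\mat R\|\le t_N^{-1}$ together with Lemma~\ref{lemma:lde} to show $\|\mat\Gamma_N(\mat q)-\mat\Gamma(\mat q)\|=o(N^{-\beta})$, and from this deduces $\nu_{\hat{\mat X}_N-z\mat I}([0,c])=o(N^{-\beta})$. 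That density bound (not a pointwise bound on $\sigma_N$) is what feeds the bootstrap to the $O(1)$ scale. Your RV tier is therefore unnecessary, and it is also the most delicate part of your sketch: after conditioning on the other rows, the entries $\hat x_{ij}$ of the $i$-th row have conditional laws that depend on the now-fixed $\hat x_{ji}$, so the small-ball/anti-concentration step requires more care than ``conditioning'' suggests. None of this is fatal, but the paper's route avoids it entirely, and in the second tier the quantity the local law actually needs bounded is $\tr(\mat R^{(i)}\mat R^{(i)*})$ throughout the gap, for which a lower bound on $\sigma_N$ alone is not the right input anyway.
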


The proof of Theorem \ref{thm:lsv} follows the arguments from \cite{OR}; we present the proof in Appendix \ref{sec:lsvproof}. 

From Theorem \ref{thm:lsv} and Lemma \ref{lemma:lsv}, we immediately obtain the following analog of Theorem \ref{thm:lsv} for the event $E_N$ (defined in \eqref{eq:def:EN}).  
\begin{corollary} \label{cor:ENC}
Under the assumptions of Theorem \ref{thm:main}, for any $\delta > 0$, there exists $c > 0$ such that $\Prob(E_N^C) = o(N^{-1})$.  
\end{corollary}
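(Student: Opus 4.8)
The plan is to deduce the corollary directly from Theorem~\ref{thm:lsv} and Lemma~\ref{lemma:lsv}; essentially no work is required beyond bookkeeping the constant $c$ and the domain $D$. First I would fix $\delta > 0$ and apply Theorem~\ref{thm:lsv} to obtain a constant $c > 0$ for which the event $\{\inf_{\dist(z,\mathcal{E}_\rho) \geq \delta} \sigma_N(\hat{\mat X}_N - z \mat I) \geq c\}$ holds with overwhelming probability. Unpacking this, $\Prob\bigl(\inf_{\dist(z,\mathcal{E}_\rho) \geq \delta} \sigma_N(\hat{\mat X}_N - z \mat I) < c\bigr) = O_{C'}(N^{-C'})$ for every $C' > 0$, and in particular this probability is $o(N^{-1})$. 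This is precisely the constant $c$ that is used to define the event $E_N$ in \eqref{eq:def:EN}.

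Next I would invoke Lemma~\ref{lemma:lsv} with the set $D := \{z \in \mathbb{C} : \dist(z,\mathcal{E}_\rho) \geq \delta\}$ and this choice of $c$, which gives
\[
\Prob\left( \inf_{z \in D} \sigma_N(\mat X_N - z \mat I) < c/2 \right) \leq \Prob\left( \inf_{z \in D} \sigma_N(\hat{\mat X}_N - z \mat I) < c \right) + o(N^{-1}).
\]
By the first step the right-hand side is $o(N^{-1})$. Finally, I would note that, in view of the definition \eqref{eq:def:EN} of $E_N$ with this $c$, the complement $E_N^C$ is exactly the event $\{\inf_{z \in D} \sigma_N(\mat X_N - z \mat I) < c/2\}$, so $\Prob(E_N^C) = o(N^{-1})$, as claimed.

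There is no real obstacle at the level of this corollary: it is a formal consequence of two results already in hand. All of the genuine difficulty is deferred — to Theorem~\ref{thm:lsv}, whose proof (in Appendix~\ref{sec:lsvproof}, following \cite{OR}) supplies the uniform-in-$z$ lower bound on the least singular value of the shifted truncated matrix, and to Lemma~\ref{lemma:lsv}, which transfers that bound back to $\mat X_N$ via the truncation estimate \eqref{eq:probdiff} of Lemma~\ref{lemma:norm} together with Weyl's inequality.
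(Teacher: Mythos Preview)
Your proposal is correct and follows exactly the paper's approach: the paper states that Corollary~\ref{cor:ENC} follows immediately from Theorem~\ref{thm:lsv} and Lemma~\ref{lemma:lsv}, and your write-up simply unpacks that implication with the appropriate choice of $D$ and $c$.
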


\section{Proof of Theorem \ref{thm:main}} \label{sec:proof}

Define the resolvents
$$ \mat{G}_N(z) := (\mat{X}_N -  z \mat{I})^{-1}, \quad \hat{\mat{G}}_N(z) := (\hat{\mat{X}}_N - z \mat{I})^{-1} $$
for $z \in \mathbb{C}$.  Also, for any $1 \leq k \leq N$, let $\hat{\mat{X}}_{N,k}$ be the matrix $\hat{\mat{X}}_N$ with the $k$-th row and $k$-th column removed.  We will index the entries of $\hat{\mat{X}}_{N,k}$ by the set $\{1,\ldots, k-1, k+1, \ldots, N\}$.  Define 
$$ \hat{\mat{G}}_{N,k}(z) := (\hat{\mat{X}}_{N,k} - z \mat{I})^{-1}. $$
Again, we index the entries of $\hat{\mat{G}}_{N,k}(z)$ by the set $\{1,\ldots, k-1, k+1, \ldots, N\}$.

Fix $\delta > 0$.  Let $\mathcal{C}$ be the contour along the boundary of $\mathcal{E}_{\rho,\delta}$.  We will mostly work on the events 
$$ \hat{E}_N := \left\{ \inf_{z \in \mathcal{C}} \sigma_N(\hat{\mat{X}}_N - z \mat{I}) \geq c \right\} $$
and
$$ \hat{\Omega}_N := \bigcap_{k=1}^N \hat{\Omega}_{N,k} \bigcap \hat{E}_N, $$
where
$$ \hat{\Omega}_{N,k} := \left\{ \inf_{z \in \mathcal{C}} \sigma_{N-1}(\hat{\mat{X}}_{N,k} - z \mat{I}) \geq c \right\}. $$

By Theorem \ref{thm:lsv} (applied to $\hat{\mat X}_N$ as well as $\hat{\mat X}_{N-1}$), the union bound, and Corollary \ref{cor:ENC}, there exists $c > 0$ such that
\begin{equation} \label{eq:hatONC}
	\hat{\Omega}_N \text{ holds with overwhelming probability}
\end{equation}
and 
\begin{equation} \label{eq:ENC}
	\Prob(E_N^C) = o(N^{-1}). 
\end{equation}
Here we used the fact that each $\hat{\mat X}_{N,k}$ has the same distribution as $\hat{\mat X}_{N-1}$.  Clearly, \eqref{eq:hatONC} implies that $\hat{E}_N$ holds with overwhelming probability.  

By the Cram\'{e}r-Wold device, in order to prove Theorem \ref{thm:main}, it suffices to show that 
$$ \tr f(\mat{X}_N) \oindicator{E_N} - \E \tr f(\mat{X}_N) \oindicator{E_N} $$
converges in distribution to an appropriate mean-zero Gaussian random variable, where $f$ is a linear combination of the functions $f_1, \ldots, f_k$.  In particular, any such $f$ is analytic in a neighborhood containing $\mathcal{E}_{\rho,\delta}$.  

Let $f$ be a linear combination of $f_1, \ldots, f_k$.  In view of Remark \ref{rem:singeig}, all eigenvalues of $\mat{X}_N$ are contained on the interior of $\mathcal{E}_{\rho, \delta}$ on the event $E_N$.  By Cauchy's integral formula, we have
$$ \tr f(\mat{X}_N) \oindicator{E_N} - \E \tr f(\mat{X}_N) \oindicator{E_N} = \frac{-1}{2 \pi i} \oint_{\mathcal{C}} f(z) \left( \tr \mat{G}_N(z) \oindicator{E_N} - \E \tr \mat{G}_N(z) \oindicator{E_N} \right) dz. $$
Therefore, the proof of Theorem \ref{thm:main} reduces to showing that
$$ \tr \mat{G}_N(z) \oindicator{E_N} - \E \tr \mat{G}_N(z) \oindicator{E_N} $$
converges weakly to an appropriate Gaussian process in the space of continuous functions on the contour $\mathcal{C}$.

We now reduce to the case where we only need to consider the truncated resolvent $\hat{\mat{G}}_N(z)$.  Indeed, we observe, by the resolvent identity, that
\begin{align*}
	\sup_{z \in \mathcal{C}} \E &| \tr \mat{G}_N(z) \oindicator{E_N} - \tr \hat{ \mat{G}}_N(z) \oindicator{\hat{\Omega}_N} | \\
		&\leq \frac{2N}{c} \left( \Prob(E_N^C) + \Prob(\hat{\Omega}_N^C) \right) + \sup_{z \in \mathcal{C}} \E| \tr \mat{G}_N(z) \oindicator{E_N} (\mat{X}_N - \hat{\mat{X}}_N) \hat{\mat{G}}_N(z) \oindicator{\hat{\Omega}_N} | \\
		&\leq \frac{2N}{c} \left( \Prob(E_N^C) + \Prob(\hat{\Omega}_N^C) \right) + \frac{2}{c^2} \E \|\mat{X}_N - \hat{\mat{X}}_N \|_2.
\end{align*}
From \eqref{eq:hatONC} and \eqref{eq:ENC}, we find that the first term is $o(1)$.  By Lemma \ref{lemma:norm}, we observe that the second term is also $o(1)$.  Therefore, the problem reduces to verifying the weak convergence of
$$ \hat{\Xi}_N(z) := \tr \hat{\mat{G}}_N(z) \oindicator{\hat{\Omega}_N} - \E \tr \hat{\mat{G}}_N(z) \oindicator{\hat{\Omega}_N} $$
in the space of continuous functions on the contour $\mathcal{C}$.  

We proceed as follows.  In Section \ref{sec:distr}, we show the convergence of the finite dimensional distributions.  In Section \ref{sec:tight}, we prove that $\hat{\Xi}_N$ is tight in the space of continuos functions on the contour $\mathcal{C}$.  Finally, we complete the proof of Theorem \ref{thm:main} in Section \ref{sec:conclusion}.  Since we will only work with the truncated process, we adjust our notation as follows.  We will no longer include the superscript decorations.  Instead, we will simply write $\mat{X}_N, \mat{X}_{N,k}, \mat{G}_N, \mat{G}_{N,k}, \Omega_N, \Xi_N$, etc.

\section{Finite dimensional distributions} \label{sec:distr}

This section is devoted to studying the convergence of the finite dimensional distributions of $\Xi_N$.  We will make use of the following standard central limit theorem for martingale difference sequences. 

\begin{theorem}[Theorem 35.12 of \cite{PB}] \label{thm:clt}
For each $N$, suppose $Z_{N1}, Z_{N2}, \ldots, Z_{Nr_N}$ is a real martingale difference sequence with respect to the increasing 
$\sigma$-field $\{\mathcal{F}_{N,j}\}$ having second moments.  Suppose, for any $\eta > 0$ and a positive constant $v^2$,
\begin{equation} \label{eq:clt_variance}
	\lim_{N \to \infty} \Prob \left( \left| \sum_{j=1}^{r_N} \E(Z^2_{Nj} \mid \mathcal{F}_{N,j-1}) - v^2 \right| > \eta \right) = 0,
\end{equation}
and
\begin{equation} \label{eq:clt_lf}
	\lim_{N \to \infty} \sum_{j=1}^{r_N} \E(Z^2_{Nj} \indicator{|Z_{Nj}| \geq \eta}) = 0.
\end{equation}
Then as $N \to \infty$, the distribution of $\sum_{j=1}^{r_N} Z_{Nj}$ converges weakly to a Gaussian distribution with mean zero and variance $v^2$.  
\end{theorem}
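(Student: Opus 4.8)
The statement is the classical martingale central limit theorem, so the plan is the standard characteristic-function argument. Fixing $t \in \mathbb{R}$, by L\'evy's continuity theorem it suffices to show $\E[e^{itS_N}] \to e^{-v^2 t^2/2}$, where $S_N := \sum_{j=1}^{r_N} Z_{Nj}$. Write $U_{Nj} := \E[Z_{Nj}^2 \mid \mathcal{F}_{N,j-1}]$, so that \eqref{eq:clt_variance} says $W_N := \sum_{j=1}^{r_N} U_{Nj} \to v^2$ in probability.

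The first step is two successive truncations. Fix $\eps > 0$ and replace $Z_{Nj}$ by $Z_{Nj}' := Z_{Nj} \indicator{|Z_{Nj}| \le \eps} - \E[Z_{Nj}\indicator{|Z_{Nj}| \le \eps} \mid \mathcal{F}_{N,j-1}]$, which is again a martingale difference array for $\{\mathcal{F}_{N,j}\}$ with $|Z_{Nj}'| \le 2\eps$. Using the orthogonality of martingale differences and \eqref{eq:clt_lf}, $\E[(S_N - S_N')^2] = \sum_j \E[(Z_{Nj} - Z_{Nj}')^2] \le \sum_j \E[Z_{Nj}^2 \indicator{|Z_{Nj}| > \eps}] \to 0$, so $|\E e^{itS_N} - \E e^{itS_N'}| \le |t|\, \E|S_N - S_N'| \to 0$; a conditional Cauchy--Schwarz estimate, together with the fact that truncation can only decrease conditional variances, shows that $\sum_j \E[(Z_{Nj}')^2 \mid \mathcal{F}_{N,j-1}]$ still converges to $v^2$ in probability. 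Next, to bound the total conditional variance, I introduce the predictable stopping time $\tau_N := \inf\{k : \sum_{j \le k} \E[(Z_{Nj}')^2 \mid \mathcal{F}_{N,j-1}] > v^2 + 1\}$ and pass to $Z_{Nj}'' := Z_{Nj}' \indicator{j \le \tau_N}$. Since $\Prob(\tau_N \le r_N) \to 0$, this changes $\E e^{itS_N'}$ by at most $2\Prob(\tau_N \le r_N) \to 0$, while now $\sum_j \E[(Z_{Nj}'')^2 \mid \mathcal{F}_{N,j-1}] \le v^2 + 1 + 4\eps^2$ almost surely and still tends to $v^2$ in probability. Thus it suffices to prove the theorem under the extra standing assumptions that $|Z_{Nj}| \le 2\eps$, that $W_N \le K$ a.s. for a fixed constant $K$, and that $W_N \to v^2$ in probability.

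Under these assumptions I use a Taylor expansion and an auxiliary exponential process. Since $\E[Z_{Nj} \mid \mathcal{F}_{N,j-1}] = 0$ and $|Z_{Nj}| \le 2\eps$, the elementary bound $|e^{ix} - 1 - ix + \tfrac{x^2}{2}| \le \tfrac{|x|^3}{6}$ gives $\phi_{Nj}(t) := \E[e^{itZ_{Nj}} \mid \mathcal{F}_{N,j-1}] = 1 - \tfrac{t^2}{2} U_{Nj} + \rho_{Nj}$ with $|\rho_{Nj}| \le \tfrac{|t|^3}{3}\eps\, U_{Nj}$. Setting $S_{Nk} := \sum_{j \le k} Z_{Nj}$, $V_{Nk} := \sum_{j \le k} U_{Nj}$, and $T_{Nk} := \exp\!\bigl(it S_{Nk} + \tfrac{t^2}{2} V_{Nk}\bigr)$, one has $|T_{Nk}| = e^{t^2 V_{Nk}/2} \le e^{t^2 K/2}$, and one-step conditioning gives $\E[T_{Nk} - T_{N,k-1} \mid \mathcal{F}_{N,k-1}] = T_{N,k-1}\bigl(e^{t^2 U_{Nk}/2}\phi_{Nk}(t) - 1\bigr)$. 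Using $U_{Nk} \le 4\eps^2$ one checks $|e^{t^2 U_{Nk}/2}\phi_{Nk}(t) - 1| \le C_t\, \eps\, U_{Nk}$; summing over $k$ and taking expectations yields $|\E T_{N,r_N} - 1| \le e^{t^2 K/2} C_t\, \eps\, \E[W_N] \le e^{t^2 K/2} C_t K \eps$.

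Finally, since $e^{itS_N} = T_{N,r_N}\, e^{-t^2 W_N/2}$, I write $\E e^{itS_N} - e^{-t^2 v^2/2} = \E\!\bigl[T_{N,r_N}(e^{-t^2 W_N/2} - e^{-t^2 v^2/2})\bigr] + e^{-t^2 v^2/2}(\E T_{N,r_N} - 1)$. The first term tends to $0$ by bounded convergence, since $|T_{N,r_N}| \le e^{t^2 K/2}$, $|e^{-t^2 W_N/2} - e^{-t^2 v^2/2}| \le 2$, and $W_N \to v^2$ in probability; the second term is $O_t(\eps)$. Hence $\limsup_N |\E e^{itS_N} - e^{-t^2 v^2/2}| = O_t(\eps)$ for the twice-truncated array, and combining this with the $o(1)$ truncation errors from the second paragraph gives the same bound for the original array; letting $\eps \downarrow 0$ completes the proof. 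I expect the main obstacle to be exactly what the second paragraph is designed to handle: the hypotheses only give control of $\sum_j U_{Nj}$ in probability and provide no uniform bound on it, so the exponential process $T_{Nk}$ cannot be controlled (its modulus is $e^{t^2 V_{Nk}/2}$) until the stopping-time truncation has forced $W_N$ to be bounded almost surely.
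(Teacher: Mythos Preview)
The paper does not prove this statement at all: it is quoted verbatim as Theorem~35.12 of Billingsley's \emph{Probability and Measure} and used as a black box in Section~\ref{sec:distr}. So there is no ``paper's own proof'' to compare against.

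Your argument, however, is the standard and correct proof of this martingale CLT, essentially the one in Billingsley. The two truncations are the right preprocessing: the first (centering of $Z_{Nj}\indicator{|Z_{Nj}|\le\eps}$) uses the Lindeberg condition \eqref{eq:clt_lf} to pass to a uniformly bounded array without changing the limit, and the second (the predictable stopping time $\tau_N$) is exactly what is needed to force an almost-sure bound on $\sum_j U_{Nj}$, which you correctly identify as the only obstruction to controlling the exponential process $T_{Nk}$. The Taylor/exponential-martingale computation that follows is routine once $|Z_{Nj}|\le 2\eps$ and $W_N\le K$ a.s.; your estimate $|e^{t^2 U_{Nk}/2}\phi_{Nk}(t)-1|\le C_t\,\eps\,U_{Nk}$ is valid because $U_{Nk}\le 4\eps^2$ kills the $U_{Nk}^2$ cross-terms. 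The final splitting of $\E e^{itS_N}-e^{-t^2v^2/2}$ and the $\eps\downarrow 0$ conclusion are fine.

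One small point worth tightening: when you assert that $\sum_j \E[(Z_{Nj}')^2\mid\mathcal{F}_{N,j-1}]\to v^2$ in probability after the first truncation, the difference $U_{Nj}-\E[(Z_{Nj}')^2\mid\mathcal{F}_{N,j-1}]$ is $\E[Z_{Nj}^2\indicator{|Z_{Nj}|>\eps}\mid\mathcal{F}_{N,j-1}]+\bigl(\E[Z_{Nj}\indicator{|Z_{Nj}|>\eps}\mid\mathcal{F}_{N,j-1}]\bigr)^2$, and both pieces sum (in $L^1$) to something controlled by $\sum_j \E[Z_{Nj}^2\indicator{|Z_{Nj}|>\eps}]\to 0$. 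This is what your ``conditional Cauchy--Schwarz'' sentence is pointing to; spelling it out removes any ambiguity.
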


We rewrite
$$\Xi_N(z) = \sum_{k=1}^N (\E_k - \E_{k-1}) \tr \mat{G}_N(z) \oindicator{\Omega_N} =: \sum_{k=1}^N Z_{N,k}(z), $$
where $\E_k$ denotes conditional expectation with respect to the $\sigma$-algebra generated by $\{y_{ij} : i,j \leq k \}$.  Here we use the convention that $\E_0$ denotes conditional expectation with respect to the trivial $\sigma$-algebra, and hence $\E_0 = \E$.  By the Cram\'{e}r-Wold device, it suffices to verify the conditions of Theorem \ref{thm:clt} for martingale difference sums of the form
$$ \mathcal{M}_N := \sum_{k=1}^N \sum_{l=1}^L \left( \alpha_l Z_{N,k}(z_l) + \beta_l \overline{ Z_{N,k}(z_l)} \right) =: \sum_{k=1}^N \mathcal{M}_{N,k} $$
for any fixed $L$, any choice of $z_l \in \mathcal{C}$, and any $\alpha_l, \beta_l \in \mathbb{C}$ such that $\mathcal{M}_N$ is real.

The goal of this section is to prove the following.  

\begin{theorem} \label{thm:finitedim}
Under the assumptions of Theorem \ref{thm:main}, $\mathcal{M}_N$ converges weakly to a Gaussian random variable with mean zero and variance
\begin{align*}
	\sum_{l_1, l_2=1}^L \left( \alpha_{l_1} \alpha_{l_2} \upsilon(z_{l_1}, z_{l_2}) + \alpha_{l_1} \beta_{l_2} \upsilon(z_{l_1}, \overline{z}_{l_2}) + \beta_{l_1} \alpha_{l_2} \upsilon(\overline{z}_{l_1}, z_{l_2}) + \beta_{l_1} \beta_{l_2} \upsilon(\overline{z}_{l_1}, \overline{z}_{l_2}) \right),
\end{align*}
where $\upsilon(z,w)$ is defined in \eqref{def:upsilon}.
\end{theorem}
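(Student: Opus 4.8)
The plan is to verify the two hypotheses of the martingale CLT, Theorem \ref{thm:clt}, for the triangular array $\{\mathcal{M}_{N,k}\}_{k=1}^N$ with the filtration $\mathcal{F}_{N,k} = \sigma(\{y_{ij}: i,j\le k\})$. The Lindeberg-type condition \eqref{eq:clt_lf} is the easier of the two: on the event $\Omega_N$ the resolvent $\mat{G}_N(z)$ has spectral norm at most $1/c$ uniformly in $z\in\mathcal{C}$, and the rank-one interlacing/splitting formula (expressing $(\E_k-\E_{k-1})\tr\mat{G}_N$ as a bounded functional of the $k$-th row/column and the minor $\mat{X}_{N,k}$, as in \cite[Chapter 9]{BSbook} and \cite{RS}) shows $|Z_{N,k}(z)|\oindicator{\Omega_N}$ is bounded by $C/N$ times a factor involving the truncated entries, which are $O(\eps_N\sqrt{N})$ almost surely by Lemma \ref{lemma:truncation}\eqref{item:asbnd}. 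Hence $|\mathcal{M}_{N,k}| = O(\eps_N^2) = o(1)$ uniformly in $k$, so for fixed $\eta>0$ the indicator $\indicator{|\mathcal{M}_{N,k}|\ge\eta}$ vanishes for $N$ large, and $\sum_k \E(\mathcal{M}_{N,k}^2\indicator{|\mathcal{M}_{N,k}|\ge\eta})=0$ eventually.

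The substance is the variance condition \eqref{eq:clt_variance}: I must show $\sum_{k=1}^N \E_{k-1}(\mathcal{M}_{N,k}^2)$ converges in probability to the claimed bilinear form. Expanding $\mathcal{M}_{N,k}$, this reduces by bilinearity to showing, for each pair $z,w\in\mathcal{C}$, that
\[
\Gamma_N(z,w) := \sum_{k=1}^N \E_{k-1}\!\left[ (\E_k-\E_{k-1})\tr\mat{G}_N(z)\,\cdot\,(\E_k-\E_{k-1})\tr\mat{G}_N(w)\right]
\]
converges in probability to $\upsilon(z,w)$ (and the analogous statements with $z,w$ replaced by conjugates, which follow by the same computation). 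First I would use the rank-one perturbation formula to write $(\E_k-\E_{k-1})\tr\mat{G}_N(z) = (\E_k-\E_{k-1})\gamma_{N,k}(z)$, where $\gamma_{N,k}(z)$ is an explicit scalar built from the $k$-th row $r_k$, column $c_k$, the removed entry, and the minor resolvent $\mat{G}_{N,k}(z)$ — this is where the elliptic structure (correlation $\rho$ between $y_{ij}$ and $y_{ji}$, and the fourth mixed moment $\E[\xi_1^2\xi_2^2]$) enters. Then I would replace quadratic forms such as $c_k^\mathsf{T}\mat{G}_{N,k}(z)\,r_k$ by their conditional expectations $\tfrac{1}{N}\tr(\cdots)$ using concentration of quadratic forms (the truncated entries have enough moments by Lemma \ref{lemma:truncation}\eqref{item:momentbnd}), controlling errors on $\Omega_N$ via the uniform resolvent bound, and invoke the deterministic approximation $\tfrac1N\tr\mat{G}_{N,k}(z)\approx m(z)$ together with bounds on $\tfrac1N\tr(\mat{G}_{N,k}(z)\mat{G}_{N,k}(w))$ and $\tfrac1N\tr(\mat{G}_{N,k}(z)\mat{G}_{N,k}(w)^\mathsf{T})$ of the form $\partial_z\partial_w[\,\cdot\,]$ of geometric-series expressions $-\log(1-m(z)m(w))$ and $-\log(1-\rho m(z)m(w))$. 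Summing the resulting per-$k$ contributions (essentially $N$ identical terms of size $1/N$) produces exactly the four pieces of $\beta(z,w)$ in \eqref{def:beta}, and differentiating twice gives $\upsilon(z,w)$ as in \eqref{def:upsilon}.

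The main obstacle is this variance computation: it requires (i) a robust rank-one splitting identity valid on the good event, (ii) quadratic-form concentration estimates with the truncated (hence bounded but only finitely-many-moments) entries, and (iii) matching the bookkeeping of the cross terms — in particular isolating the contributions of $\rho$ (giving the $\rho$-shift and the $-\log(1-\rho m(z)m(w))$ term), of $\sigma^2$ (the diagonal entries, contributing the $\sigma^2-\rho-1$ term), and of $\E[\xi_1^2\xi_2^2]$ (the mixed fourth moment, contributing the final term of \eqref{def:beta}). I expect to follow the scheme of \cite[Chapter 9]{BSbook} and \cite{RS} closely, with the elliptic correlation $\rho$ being the principal new source of terms to track; these are the technical, calculation-heavy steps that the overview warns are tedious, and they are carried out in detail in the remainder of this section.
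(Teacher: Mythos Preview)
Your high-level plan (martingale CLT via Theorem \ref{thm:clt}, rank-one splitting, concentration of quadratic forms) matches the paper's, but two steps as you describe them would not go through.

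First, your Lindeberg argument is incorrect. The rank-one identity (Proposition \ref{prop:trace}) gives on $\Omega_N$
\[
\tr\mat G_N(z)-\tr\mat G_{N,k}(z)=(\mat G_N(z))_{kk}\bigl(1+r_k\mat G_{N,k}^2(z)c_k\bigr),
\]
with $|(\mat G_N)_{kk}|\le 1/c$, but the second factor is bounded only by $1+\|r_k\|\,\|c_k\|/c^2\le 1+16\eps_N^2 N/c^2$ since $\|r_k\|,\|c_k\|\le 4\eps_N\sqrt N$ almost surely. There is no $C/N$ prefactor, and the resulting deterministic bound on $|Z_{N,k}(z)|$ is $O(\eps_N^2 N)=O(N^{1-2\eps})$, which diverges. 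The paper instead verifies \eqref{eq:clt_lf} via fourth moments: after the reductions of Lemmas \ref{lemma:reduction} and \ref{lemma:reduction2}, one shows $\sum_k\E|Z''_{N,k}(z)|^4=o(1)$ using the concentration estimates of Lemma \ref{lemma:alpha}.

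Second, and more substantively, the per-$k$ contributions to the conditional variance are \emph{not} ``essentially $N$ identical terms of size $1/N$''. After the reductions, the object to be analyzed is
\[
\frac{1}{N}\sum_{l_1,l_2<k} b_{kl_1l_2}(z)\,b_{kl_2l_1}(w),\qquad b_{kij}(z):=\E_k\bigl[(\mat G_{N,k}(z))_{ij}\oindicator{\Omega_{N,k}\cap Q_{N,k}}\bigr],
\]
where the restriction $l_1,l_2<k$ is forced by the conditional expectation $\E_k$ and is essential. The paper derives a self-consistent equation for this partial sum (via the decomposition $z\mat G_{N,k}=-\mat I+\mat A_{N,k}+\mat C_{N,k}+\mat D_{N,k}+\mat E_{N,k}+\mat F_{N,k}$) whose solution is
\[
\frac{(k-1)/N\cdot m(z)m(w)}{1-\rho\,(k-1)/N\cdot m(z)m(w)}+o(1).
\]
The logarithm in $\beta(z,w)$ then arises not from $\tfrac1N\tr(\mat G_{N,k}(z)\mat G_{N,k}(w))$ (which, by the resolvent identity, equals $(m_N(z)-m_N(w))/(z-w)$ and contains no $\log$) but from the Riemann sum
\[
\frac1N\sum_{k=1}^N\frac{(k-1)/N\cdot m(z)m(w)}{1-\rho\,(k-1)/N\cdot m(z)m(w)}\longrightarrow\int_0^1\frac{t\,m(z)m(w)}{1-\rho t\,m(z)m(w)}\,dt.
\]
Your proposal does not account for this filtration-dependent structure; without it one cannot recover the terms $-\log(1-\rho m(z)m(w))/(m(z)m(w))$ and $-\log(1-m(z)m(w))/(m(z)m(w))$ in \eqref{def:beta}.
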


We now begin the proof of Theorem \ref{thm:finitedim}.  The proof is similar to the proofs given in \cite{BSbook, S} for Wigner matrices.  However, in this case, the eigenvalues of $\mat{Y}_N$ can be complex and special care must be taken to work on events in which the resolvent can be adequately bounded.  We will show that these events occur with sufficiently high probability.

\subsection{A simple reduction}
For each $1 \leq k \leq N$, let $r_k$ be $k$-th row of $\mat{X}_N$ with the $k$-th entry removed, and let $c_k$ be the $k$-th column of $\mat{X}_N$ with the $k$-th entry removed.    

\begin{lemma} \label{lemma:reduction}
For any $\eta > 0$, 
$$ \lim_{N \to \infty} \Prob( | \mathcal{M}_N - \mathcal{M}_N'| > \eta ) = 0, $$
where $\mathcal{M}_N'$ is defined by replacing each appearance of $Z_{N,k}(z)$ in $\mathcal{M}_{N,k}$ with 
$$ Z_{N,k}'(z) := (\E_k - \E_{k-1}) \frac{1 + r_k \mat{G}_{N,k}^2(z) c_k}{\frac{1}{\sqrt{N}} y_{kk} - z - r_k \mat{G}_{N,k}(z) c_k} \oindicator{\Omega_N}. $$
\end{lemma}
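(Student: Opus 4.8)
The plan is to express $Z_{N,k}(z)$ in terms of a rank-one Schur complement and show that the resulting formula differs from $Z'_{N,k}(z)$ only by terms whose total contribution to $\mathcal{M}_N$ is negligible in probability. First I would recall the standard resolvent/Schur complement identity: writing $\mat{X}_N$ in block form with the $k$-th row and column separated from the $(N-1) \times (N-1)$ principal submatrix $\mat{X}_{N,k}$, the Schur complement formula gives
$$ \left( \mat{G}_N(z) \right)_{kk} = \frac{1}{\frac{1}{\sqrt{N}} y_{kk} - z - r_k \mat{G}_{N,k}(z) c_k}, $$
and, using $\tr \mat{G}_N(z) - \tr \mat{G}_{N,k}(z) = (\mat{G}_N(z))_{kk}\left(1 + r_k \mat{G}_{N,k}^2(z) c_k\right)$ (the derivative-in-$z$ version of the same identity, valid since $\partial_z \mat{G}_{N,k} = \mat{G}_{N,k}^2$), one obtains
$$ (\E_k - \E_{k-1}) \tr \mat{G}_N(z) = (\E_k - \E_{k-1}) \left( \tr \mat{G}_N(z) - \tr \mat{G}_{N,k}(z) \right) = (\E_k - \E_{k-1}) \frac{1 + r_k \mat{G}_{N,k}^2(z) c_k}{\frac{1}{\sqrt{N}} y_{kk} - z - r_k \mat{G}_{N,k}(z) c_k}, $$
where the first equality uses that $\tr \mat{G}_{N,k}(z)$ is measurable with respect to $\mathcal{F}_{N,k-1}$ as well, so $(\E_k - \E_{k-1})$ annihilates it. The only remaining discrepancy between $Z_{N,k}(z)$ and $Z'_{N,k}(z)$ comes from the indicator: $Z_{N,k}$ carries $\oindicator{\Omega_N}$ inside both conditional expectations applied to $\tr \mat{G}_N(z)$, whereas after the subtraction of $\tr \mat{G}_{N,k}(z)$ one must also insert $\oindicator{\Omega_N}$ consistently. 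So the heart of the argument is controlling the mismatch between $\tr \mat{G}_N(z) \oindicator{\Omega_N}$ and $(\tr \mat{G}_N(z) - \tr \mat{G}_{N,k}(z))\oindicator{\Omega_N}$ after applying $\E_k - \E_{k-1}$, plus the difference between the indicator $\oindicator{\Omega_N}$ and the natural $\mathcal{F}_{N,k}$- or $\mathcal{F}_{N,k-1}$-measurable indicators $\oindicator{\Omega_{N,k} \cap E_N}$ that make the Schur complement expansion valid.

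The key steps, in order: (1) establish the resolvent identities above on the event $\hat\Omega_N$ (now written $\Omega_N$), where $\sigma_N(\mat{X}_N - z\mat{I}) \geq c$ and $\sigma_{N-1}(\mat{X}_{N,k} - z\mat{I}) \geq c$ uniformly on $\mathcal{C}$, so that all resolvents have operator norm at most $1/c$ and the denominator $\frac{1}{\sqrt{N}} y_{kk} - z - r_k \mat{G}_{N,k}(z) c_k = 1/(\mat{G}_N(z))_{kk}$ is bounded below in modulus; (2) bound the error from swapping $\oindicator{\Omega_N}$ for a more convenient indicator — since $\Omega_N$ holds with overwhelming probability and $|\tr \mat{G}_N(z)| \leq N/c$ on the relevant events, each such swap costs at most $O(N)\cdot \Prob(\Omega_N^C) = o(1)$ in $L^1$, hence $o(1)$ in probability, uniformly over the finitely many $z_l \in \mathcal{C}$; (3) sum over $k$ and over $l$ with the coefficients $\alpha_l, \beta_l$ and invoke the triangle inequality, using that $L$ is fixed. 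A small additional point: one should check that $Z'_{N,k}(z)$ is genuinely a martingale difference (it is, since $\E_{k-1}Z'_{N,k}(z) = 0$ by construction), and that the truncation/indicator bookkeeping does not destroy this; but this is automatic from the $(\E_k - \E_{k-1})$ structure.

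The main obstacle I expect is not the algebraic identity — which is routine — but the careful handling of the indicator functions: the quantities $\tr \mat{G}_N(z)$, $\tr \mat{G}_{N,k}(z)$, and the Schur complement are only well-behaved (and only well-defined with good bounds) on $\Omega_N$ and on the per-$k$ events $\Omega_{N,k}$, and $\Omega_N \neq \Omega_{N,k} \cap E_N$ as $\sigma$-algebra-measurable sets in general. One must argue that all of these events coincide up to a set of overwhelmingly small probability, so that inserting or deleting any particular indicator changes each $Z_{N,k}$ by something of size $O(N/c)$ on a set of probability $o(N^{-C})$ for all $C$, and that after summing over $k \leq N$ and taking expectations the total error is still $o(1)$. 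This requires being slightly careful that the "overwhelming probability" bound in \eqref{eq:hatONC} beats the factor of $N$ from the sum and the factor of $N/c$ from the trivial bound on each resolvent trace — which it does, since $\Prob(\Omega_N^C) = O_C(N^{-C})$ for every $C$. Once this bookkeeping is in place, the conclusion $\Prob(|\mathcal{M}_N - \mathcal{M}'_N| > \eta) \to 0$ follows from Markov's inequality applied to $\E|\mathcal{M}_N - \mathcal{M}'_N|$.
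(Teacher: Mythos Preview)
Your overall approach is essentially the same as the paper's: subtract $\tr \mat{G}_{N,k}(z)$ (which is annihilated by $\E_k - \E_{k-1}$), invoke the Schur complement identity on $\Omega_N$, and control the indicator swaps via $\Prob(\Omega_N^C) = O_C(N^{-C})$ together with the crude bound $|\tr \mat{G}_{N,k}(z)| \leq N/c$. The paper carries this out by subtracting $\tr \mat{G}_{N,k}(z)\,\oindicator{\Omega_{N,k}}$ (so that both the trace and the indicator are independent of row/column $k$), then replaces $\oindicator{\Omega_{N,k}}$ by $\oindicator{\Omega_N}$ at $L^1$ cost $\frac{N^2}{c}\Prob(\Omega_N^C)=o(1)$, and finally uses the trace identity $\tr \mat{G}_N(z) - \tr \mat{G}_{N,k}(z) = (1 + r_k \mat{G}_{N,k}^2(z) c_k)\big/(\tfrac{1}{\sqrt{N}}y_{kk}-z-r_k \mat{G}_{N,k}(z)c_k)$ on $\Omega_N$.

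There is one concrete error in your justification: you assert that $\tr \mat{G}_{N,k}(z)$ is $\mathcal{F}_{N,k-1}$-measurable. This is false, since $\mat{G}_{N,k}(z)$ depends on all entries $y_{ij}$ with $i,j \neq k$, in particular on those with $i,j > k$, which do not lie in $\mathcal{F}_{N,k-1}=\sigma(y_{ij}:i,j\leq k-1)$. The correct reason that $(\E_k-\E_{k-1})\tr \mat{G}_{N,k}(z)=0$ is that $\tr \mat{G}_{N,k}(z)$ is \emph{independent of the $k$-th row and column}; since the variables in $\mathcal{F}_{N,k}$ that are not already in $\mathcal{F}_{N,k-1}$ are precisely $\{y_{kj},y_{jk}:j\leq k\}$, all contained in that row/column, one has $\E_k\tr \mat{G}_{N,k}(z)=\E_{k-1}\tr \mat{G}_{N,k}(z)$. (This is exactly why the paper keeps the indicator $\oindicator{\Omega_{N,k}}$ attached rather than $\oindicator{\Omega_N}$: the former is likewise independent of row/column $k$, so the product is annihilated by $\E_k-\E_{k-1}$.) Once this is corrected, your plan goes through and matches the paper.
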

\begin{proof}
We begin by observing that
$$ \Xi_N(z) = \sum_{k=1}^N (\E_k - \E_{k-1}) \left(  \tr \mat{G}_N(z) \oindicator{\Omega_N} - \tr \mat{G}_{N,k}(z) \oindicator{\Omega_{N,k}} \right). $$
Moreover, from \eqref{eq:hatONC}, we have
\begin{align*}
	\sum_{k=1}^N \E \left|  \tr \mat{G}_{N,k}(z) \oindicator{\Omega_{N,k}} -  \tr \mat{G}_{N,k}(z) \oindicator{\Omega_N} \right| &\leq \frac{N}{c} \sum_{k=1}^N \Prob ( \Omega_{N,k} \setminus \Omega_N) \\
		&\leq \frac{N^2}{c} \Prob(\Omega_{N}^C) = o(1).
\end{align*}
Thus, it suffices to show that on the event $\Omega_N$, 
\begin{equation} \label{eq:trident}
	\tr \mat{G}_N(z) - \tr \mat{G}_{N,k}(z) = \frac{1 + r_k \mat{G}_{N,k}^2(z) c_k}{\frac{1}{\sqrt{N}} y_{kk} - z - r_k \mat{G}_{N,k}(z) c_k}.
\end{equation}
We note that on $\Omega_N$, the matrices $\mat{X}_N - z \mat{I}$ and $\mat{X}_{N,k} - z \mat{I}$ are invertible.  Moreover, from \cite[Theorem A.4]{BSbook}, the $(k,k)$-entry of $\mat{G}_{N}(z)$ is given by 
$$ \frac{1}{\frac{1}{\sqrt{N}} y_{kk} - z - r_k \mat{G}_{N,k}(z) c_k}. $$
Therefore, \eqref{eq:trident} will follow from Proposition \ref{prop:trace} below.
\end{proof}

\begin{proposition} \label{prop:trace}
Let $\mat{A}_N$ be an $N \times N$ matrix of the form 
$$ \mat{A}_N = \begin{pmatrix} a & r \\ c & \mat{A}_{N-1} \end{pmatrix}, $$
where $\mat{A}_{N-1}$ is a $(N-1) \times (N-1)$ major submatrix, $a \in \mathbb{C}$, $r$ is a row vector, and $c$ is a column vector.  Then 
$$ \tr { \mat{A}_N} - \tr \mat{A}_{N-1} = \frac{1 + r \mat{A}_{N-1}^{-2} c}{a - r \mat{A}_{N-1}^{-1} c} $$
provided $\mat{A}_N$ and $\mat{A}_{N-1}$ are invertible and $a - r \mat{A}_{N-1}^{-1} c \neq 0$.  
\end{proposition}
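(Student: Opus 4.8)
The plan is to apply the standard block‑matrix inversion (Schur complement) formula and then read off the trace. The identity is really about $\tr \mat{A}_N^{-1} - \tr \mat{A}_{N-1}^{-1}$ — this is the form used in \eqref{eq:trident} via $\mat{G}_N(z) = (\mat{X}_N - z\mat{I})^{-1}$ — so that is what I will establish. Set $s := a - r\, \mat{A}_{N-1}^{-1} c$, and note that $s$, together with $r\, \mat{A}_{N-1}^{-1} c$ and $r\, \mat{A}_{N-1}^{-2} c$, is a scalar, so division by $s$ makes sense and the trace may be cycled freely. By hypothesis $\mat{A}_{N-1}$ is invertible and $s \neq 0$, which is exactly what is needed for the block‑inversion formula
\begin{equation*}
	\mat{A}_N^{-1} = \begin{pmatrix} s^{-1} & -s^{-1} r\, \mat{A}_{N-1}^{-1} \\ -\mat{A}_{N-1}^{-1} c\, s^{-1} & \mat{A}_{N-1}^{-1} + \mat{A}_{N-1}^{-1} c\, s^{-1} r\, \mat{A}_{N-1}^{-1} \end{pmatrix}
\end{equation*}
to hold; one checks it directly by multiplying out $\mat{A}_N \mat{A}_N^{-1} = \mat{I}$.

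Next I would take the trace. The top‑left $1\times1$ block contributes $s^{-1}$; the first term of the lower‑right block contributes $\tr \mat{A}_{N-1}^{-1}$; and the second term of that block contributes $s^{-1}\tr(\mat{A}_{N-1}^{-1} c\, r\, \mat{A}_{N-1}^{-1}) = s^{-1}\tr(r\, \mat{A}_{N-1}^{-2} c) = s^{-1}\, r\, \mat{A}_{N-1}^{-2} c$ by cyclicity of the trace. Adding these gives
\begin{equation*}
	\tr \mat{A}_N^{-1} - \tr \mat{A}_{N-1}^{-1} = s^{-1}\bigl(1 + r\, \mat{A}_{N-1}^{-2} c\bigr) = \frac{1 + r\, \mat{A}_{N-1}^{-2} c}{a - r\, \mat{A}_{N-1}^{-1} c},
\end{equation*}
which is the claimed identity.

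There is no real obstacle here: the entire content is the block‑inversion identity together with the observation that the relevant quadratic forms in $r$ and $c$ are scalars, so that one may invert $s$ and cycle the trace. The three hypotheses — invertibility of $\mat{A}_N$, invertibility of $\mat{A}_{N-1}$, and $a - r\, \mat{A}_{N-1}^{-1} c \neq 0$ — are precisely those under which each of the above steps is legitimate. Specializing to $\mat{A}_N = \mat{X}_N - z\mat{I}$, $\mat{A}_{N-1} = \mat{X}_{N,k} - z\mat{I}$, $a = \frac{1}{\sqrt{N}} y_{kk} - z$, $r = r_k$, $c = c_k$, and noting that on the event $\Omega_N$ both matrices are invertible, one recovers \eqref{eq:trident}.
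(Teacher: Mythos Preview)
Your proof is correct and is exactly the standard Schur complement argument; the paper itself does not give a proof but simply cites \cite[Section 0.7]{HJ2}, where this block-inversion computation appears. You are also right that the statement as written contains a typo: the identity is for $\tr \mat{A}_N^{-1} - \tr \mat{A}_{N-1}^{-1}$, as confirmed by its use in \eqref{eq:trident}.
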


The proof of Proposition \ref{prop:trace} can be found in \cite[Section 0.7]{HJ2}. 

\subsection{Preliminary tools}
We will eventually proceed with another reduction.  However, first we collect some preliminary tools we will need throughout this section.  

We write
$$ \frac{1}{\sqrt{N}} y_{kk} - z - r_k \mat{G}_{N,k}(z) c_k = - z - \frac{\rho}{N} \tr \mat{G}_{N,k} + \alpha_{N,k}(z), $$
where 
\begin{equation} \label{def:alpha}
	\alpha_{N,k}(z) := \frac{1}{\sqrt{N}}y_{kk} - \left(r_k \mat{G}_{N,k}(z) c_k - \frac{\rho}{N} \tr \mat{G}_{N,k}(z) \right). 
\end{equation}
We verify the following concentration results.  

\begin{lemma} \label{lemma:alpha}
Let $D \subset \mathbb{C}$.  Define the event
$$ \Gamma_{N,k} := \left\{ \inf_{z \in D} \sigma_{N-1}(X_{N,k} - z I) \geq \gamma \right\} $$
for some constant $\gamma > 0$. Then
\begin{enumerate}
\item
for any $z,w \in D$ and $p\geq 2$, there exists a constant $K_p > 0$ (depending only on $p$) such that
 \begin{align}
 \E&|\alpha_{N,k}(z)|^p \oindicator{\Gamma_{N,k}} \nonumber \\
 &\leq K_p \left( \frac{\E|\zeta|^p}{N^{p/2}} + \frac{\left( \max \{ \E|\xi_1|^4, \E|\xi_2|^4 \} \right)^{p/2}}{\gamma^{p} N^{p/2}}  + \frac{\max \{ \E|\xi|^{2p}, \E|\xi_2|^{2p} \}}{\gamma^p N^{p-1}}  \right)  \label{eq:alpha}
 \end{align}
and 
\begin{align}
\label{eq:GNk2}
	\E&\left| r_k \mat{G}_{N,k}(z)\mat{G}_{N,k}(w) c_k - \frac{\rho}{N} \tr \mat{G}_{N,k}(z)\mat{G}_{N,k}(w) \right|^p \oindicator{\Gamma_{N,k}} \\
	&\qquad \leq K_p \left( \frac{\left( \max \{ \E|\xi_1|^4, \E|\xi_2|^4 \} \right)^{p/2}}{\gamma^{2p} N^{p/2}}  + \frac{\max \{ \E|\xi|^{2p}, \E|\xi_2|^{2p} \}}{\gamma^{2p} N^{p-1}}  \right). \notag
\end{align}
\item if $D$ is compact, the event 
\begin{equation}
\label{eq:alphaprob}
 \left\{ \sup_{z \in D} |\alpha_{N,k}(z)| \oindicator{\Gamma_{N,k}} \leq \frac{1}{\log N} \right\} 
 \end{equation}
holds with overwhelming probability.  
\item for $N \geq \exp(2/\gamma)$ and $D$ compact,
\begin{align}
\label{eq:interprob}
	\Prob&\left( \left\{ \inf_{z \in D} \left| z + \frac{\rho}{N} \tr \mat{G}_{N,k}(z) \right| < \frac{\gamma}{2} \right\} \bigcap \Gamma_{N,k} \right) \\
	&\qquad\leq \Prob\left( \sup_{z \in D} \| G_{N}(z) \| > \gamma^{-1}\right) + \Prob \left( \sup_{z \in D} |\alpha_{N,k}(z)| \oindicator{\Gamma_{N,k}} > \frac{1} {\log N} \right). \nonumber
\end{align}
\end{enumerate}

\end{lemma}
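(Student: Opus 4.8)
The plan is to treat the three parts in turn, with part~(1) as the analytic core and parts~(2)--(3) as consequences. For part~(1), the starting point is that $\mat{G}_{N,k}(z)$ depends only on the entries $\{y_{ab}: a,b\neq k\}$, hence is independent of the $k$-th row $r_k$ and column $c_k$ of $\mat{X}_N$, while the pairs $\{(\sqrt{N}(r_k)_i,\sqrt{N}(c_k)_i): i\neq k\}$ are i.i.d.\ copies of $(\xi_1,\xi_2)$. Conditioning on $\mat{G}_{N,k}(z)$ and using $\E[\xi_1\xi_2]=\rho$, one gets $\E[r_k\mat{G}_{N,k}(z)c_k\mid\mat{G}_{N,k}(z)]=\frac{\rho}{N}\tr\mat{G}_{N,k}(z)$, so $\alpha_{N,k}(z)$ splits as $\frac{1}{\sqrt N}y_{kk}$ plus the centered bilinear form $r_k\mat{G}_{N,k}(z)c_k-\frac{\rho}{N}\tr\mat{G}_{N,k}(z)$. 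The diagonal term contributes $\E|\zeta|^p/N^{p/2}$ to $\E|\alpha_{N,k}(z)|^p$, and the bilinear term is controlled by a standard moment inequality for bilinear forms in independent random vectors (cf.\ \cite{BSbook} and \cite{OR}): for a deterministic matrix $\mat{A}$ independent of $r_k,c_k$,
\[
  \E\Big| r_k\mat{A}c_k-\tfrac{\rho}{N}\tr\mat{A}\Big|^p \le \frac{K_p}{N^p}\Big( \big(\max_i\E|\xi_i|^4\big)^{p/2}\|\mat{A}\|_2^p + \max_i\E|\xi_i|^{2p}\,\|\mat{A}\|^{p-2}\|\mat{A}\|_2^2\Big).
\]
Applying this conditionally with $\mat{A}=\mat{G}_{N,k}(z)$ and inserting $\|\mat{G}_{N,k}(z)\|\le\gamma^{-1}$ and $\|\mat{G}_{N,k}(z)\|_2^2\le(N-1)\gamma^{-2}$, which hold on $\Gamma_{N,k}$, yields \eqref{eq:alpha}. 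Taking instead $\mat{A}=\mat{G}_{N,k}(z)\mat{G}_{N,k}(w)$, for which $\|\mat{A}\|\le\gamma^{-2}$ and $\|\mat{A}\|_2^2\le(N-1)\gamma^{-4}$ on $\Gamma_{N,k}$ (since $z,w\in D$), gives \eqref{eq:GNk2} in exactly the same way.

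For part~(2), I would combine the moment bound \eqref{eq:alpha} with a union bound over a fine net. Since $\frac{d}{dz}\mat{G}_{N,k}(z)=\mat{G}_{N,k}(z)^2$ and, under the truncation of Section~\ref{sec:truncation}, the entries of $\mat{X}_N$ are bounded by $4\eps_N\sqrt N$, on $\Gamma_{N,k}$ the map $z\mapsto\alpha_{N,k}(z)$ is Lipschitz on $D$ with constant $O_\gamma(N)$. Hence, taking a net $\mathcal{N}\subset D$ of spacing $N^{-2}$ (so $|\mathcal{N}|=O(N^4)$), we get $\sup_{z\in D}|\alpha_{N,k}(z)|\oindicator{\Gamma_{N,k}}\le\max_{z\in\mathcal{N}}|\alpha_{N,k}(z)|\oindicator{\Gamma_{N,k}}+\frac{1}{2\log N}$ for $N$ large. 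By Markov's inequality and \eqref{eq:alpha}, for each fixed $z$,
\[
  \Prob\Big(|\alpha_{N,k}(z)|\oindicator{\Gamma_{N,k}}>\tfrac{1}{2\log N}\Big)\le(2\log N)^p\,\E|\alpha_{N,k}(z)|^p\oindicator{\Gamma_{N,k}},
\]
and the truncated moment bounds of Lemma~\ref{lemma:truncation} (which give $\E|\zeta|^p=O((\eps_N\sqrt N)^{p-4})$, $\E|\xi_i|^4=O(1)$, $\E|\xi_i|^{2p}\le(4\eps_N\sqrt N)^{2p-2}$, with $\eps_N=N^{-\eps}$) show that the right side is $O(N^{-c(p)})$ with $c(p)\to\infty$ as $p\to\infty$. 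Choosing $p$ large enough (depending on the target power) and taking a union bound over the polynomially many net points gives \eqref{eq:alphaprob} with overwhelming probability. This step is essentially bookkeeping, but it is where the truncation is genuinely needed and where the interplay between the moment order $p$ and the net size must be kept under control; I expect it, together with the bilinear-form inequality of part~(1), to be the main technical obstacle.

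Finally, part~(3) is a short deterministic deduction from the Schur complement identity. On $\Gamma_{N,k}$ the matrix $\mat{X}_{N,k}-z\mat{I}$ is invertible, and off the event $\{\sup_{z\in D}\|\mat{G}_N(z)\|>\gamma^{-1}\}$ the matrix $\mat{X}_N-z\mat{I}$ is invertible with $\|\mat{G}_N(z)\|\le\gamma^{-1}$ for all $z\in D$; on the intersection of these events the Schur complement formula gives $(\mat{G}_N(z))_{kk}=(-z-\frac{\rho}{N}\tr\mat{G}_{N,k}(z)+\alpha_{N,k}(z))^{-1}$, whence
\[
  \Big| -z-\tfrac{\rho}{N}\tr\mat{G}_{N,k}(z)+\alpha_{N,k}(z)\Big| = \frac{1}{|(\mat{G}_N(z))_{kk}|}\ge\frac{1}{\|\mat{G}_N(z)\|}\ge\gamma .
\]
If moreover $\sup_{z\in D}|\alpha_{N,k}(z)|\oindicator{\Gamma_{N,k}}\le\frac{1}{\log N}\le\frac{\gamma}{2}$ (the last inequality being exactly where $N\ge\exp(2/\gamma)$ is used), then the triangle inequality gives $|z+\frac{\rho}{N}\tr\mat{G}_{N,k}(z)|\ge\gamma-\frac{1}{\log N}\ge\frac{\gamma}{2}$ for every $z\in D$. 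Contrapositively, on $\Gamma_{N,k}$ the event $\{\inf_{z\in D}|z+\frac{\rho}{N}\tr\mat{G}_{N,k}(z)|<\frac{\gamma}{2}\}$ forces one of the two events on the right of \eqref{eq:interprob}, and \eqref{eq:interprob} follows by the union bound.
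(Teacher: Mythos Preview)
Your proposal is correct and follows essentially the same approach as the paper: part~(1) via the bilinear-form concentration inequality (the paper's Lemma~\ref{lemma:lde}) combined with the bound $\|\mat{G}_{N,k}\|\le\gamma^{-1}$ on $\Gamma_{N,k}$, part~(2) via a Lipschitz estimate on $z\mapsto\alpha_{N,k}(z)$ plus Markov's inequality and a union bound over an $N^{-2}$-net of $D$, and part~(3) via the Schur complement formula for $(\mat{G}_N(z))_{kk}$ and the triangle inequality. One minor slip: after truncation the entries of $\mat{X}_N=\frac{1}{\sqrt N}\mat{Y}_N$ are bounded by $4\eps_N$, not $4\eps_N\sqrt N$ (so that $\|r_k\|,\|c_k\|\le4\eps_N\sqrt N$), which yields the sharper Lipschitz constant $O(N\eps_N^2/\gamma^2)$ used in the paper, though your stated $O_\gamma(N)$ bound also suffices for the net argument.
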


\begin{remark}
In most cases, we will apply Lemma \ref{lemma:alpha} by taking $D = \mathcal{C}$ and $\gamma = c$.  In this case, $\Gamma_{N,k} = \Omega_{N,k}$.   
\end{remark}

In order to prove Lemma \ref{lemma:alpha}, we will need to make use of the following concentration result from \cite{OR}.  
\begin{lemma}[Concentration of bilinear forms] \label{lemma:lde}
Let $(x_1,y_1), (x_2,y_2), \ldots, (x_N,y_N)$ be iid random vectors in $\mathbb{C}^2$ such that
$$ \E[x_1] = \E[y_1] = 0, \quad \E|x_1|^2 = \E|y_1|^2 = 1, \quad \E[\bar{x}_1 y_1] = \rho. $$
Let $\mu_p = \max\{ \E|x_1|^p, \E|y_1|^p \}$ for $p \geq 4$.  Let $\mat{B}=(b_{ij})$ be a deterministic complex $N \times N$ matrix and write $X = (x_1, x_2, \ldots, x_N)^\mathrm{T}$ and $Y = (y_1, y_2, \ldots, y_N)^\mathrm{T}$.  Then, for any $p \geq 2$, 
\[ \E \left| X^\ast \mat{B} Y - \rho \tr \mat{B} \right|^p \leq K_p \left( ( \mu_4 \tr (\mat{B} \mat{B}^\ast) )^{p/2} + \mu_{2p} \tr (\mat{B} \mat{B}^\ast)^{p/2} \right). \]
\end{lemma}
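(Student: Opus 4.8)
The plan is to adapt the proof of the classical scalar quadratic-form moment estimate (as in, e.g., \cite[Lemma~B.26]{BSbook}) to the present bilinear setting with two correlated vectors, by splitting off the diagonal and treating the off-diagonal part via a martingale difference decomposition. Since the coordinates are independent across the index, $\E[\bar x_i y_j] = \rho$ if $i=j$ and $0$ otherwise, so $\E[X^\ast \mat B Y] = \rho \tr \mat B$ and $X^\ast \mat B Y - \rho \tr \mat B = \sum_{i} b_{ii}(\bar x_i y_i - \rho) + \sum_{i \neq j} \bar x_i b_{ij} y_j =: S_{\mathrm d} + S_{\mathrm o}$. Since $|a+b|^p \leq 2^{p-1}(|a|^p+|b|^p)$, it suffices to bound $\E|S_{\mathrm d}|^p$ and $\E|S_{\mathrm o}|^p$ separately. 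Throughout I would freely use $\mu_q \geq 1$ for $q \geq 2$, $\mu_p \leq \mu_{2p}$ (Lyapunov), $\mu_p^2 \leq \mu_{2p}$ (Cauchy--Schwarz), and $\sum_i |c_i|^p \leq (\sum_i |c_i|^2)^{p/2}$ for $p \geq 2$, in order to collapse every stray moment factor to $\mu_4$ or $\mu_{2p}$.

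The term $S_{\mathrm d}$ is a sum of independent mean-zero random variables, so Rosenthal's inequality gives $\E|S_{\mathrm d}|^p \leq K_p\big( (\sum_i |b_{ii}|^2 \E|\bar x_i y_i - \rho|^2)^{p/2} + \sum_i |b_{ii}|^p \E|\bar x_i y_i - \rho|^p\big)$. By Cauchy--Schwarz and $|\rho|\leq 1$ one has $\E|\bar x_i y_i - \rho|^2 \leq \E|x_i y_i|^2 \leq \mu_4$ and $\E|\bar x_i y_i - \rho|^p \leq 2^p(\E|x_i y_i|^p + 1) \leq C_p \mu_{2p}$, and since $\sum_i |b_{ii}|^2 \leq \tr(\mat B \mat B^\ast)$ and $\sum_i |b_{ii}|^p \leq (\tr \mat B \mat B^\ast)^{p/2}$, this yields the claimed bound for $S_{\mathrm d}$.

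For $S_{\mathrm o}$ I would use the filtration $\mathcal F_k := \sigma\{(x_i,y_i) : i \leq k\}$ and write $S_{\mathrm o} = \sum_{k=1}^N \gamma_k$ with $\gamma_k := (\E_k - \E_{k-1}) S_{\mathrm o}$; since only the terms with $\max\{i,j\}=k$ survive, $\gamma_k = \bar x_k u_k + y_k v_k$, where $u_k := \sum_{j<k} b_{kj} y_j$ and $v_k := \sum_{i<k} b_{ik}\bar x_i$ are $\mathcal F_{k-1}$-measurable, and $(\gamma_k)$ is a martingale difference sequence. Burkholder's inequality then gives $\E|S_{\mathrm o}|^p \leq K_p\big( \E(\sum_k \E_{k-1}|\gamma_k|^2)^{p/2} + \sum_k \E|\gamma_k|^p\big)$. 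For the ``large deviation'' sum, I would use that $x_k$ is independent of $u_k$ and $y_k$ of $v_k$, apply Rosenthal's inequality to $u_k$ and $v_k$ (whose summands are iid, mean zero, variance one), and invoke $\mu_p^2 \leq \mu_{2p}$ together with $\sum_k (\sum_{j<k}|b_{kj}|^2)^{p/2} \leq (\tr \mat B \mat B^\ast)^{p/2}$ to obtain $\sum_k \E|\gamma_k|^p \leq K_p \mu_{2p}(\tr \mat B \mat B^\ast)^{p/2}$.

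The main obstacle is the bracket term $\E(\sum_k \E_{k-1}|\gamma_k|^2)^{p/2}$, which is itself a moment of a quadratic form. Since $|\E[x_k y_k]|\leq 1$ by Cauchy--Schwarz, expanding gives $\E_{k-1}|\gamma_k|^2 \leq 2(|u_k|^2 + |v_k|^2)$, and $\sum_k(|u_k|^2 + |v_k|^2) = Y^\ast \mat C Y + X^\ast \mat C' X$ with $\mat C, \mat C' \succeq 0$, $\tr \mat C, \tr \mat C' \leq \tr(\mat B \mat B^\ast)$ and $\tr(\mat C^q) \leq (\tr \mat C)^q$ for $q \geq 1$ (here $\mat C = \mat B_{\mathrm L}^\ast \mat B_{\mathrm L}$ with $\mat B_{\mathrm L}$ the strictly lower-triangular part of $\mat B$, and $\mat C'$ is the analogue from the upper-triangular part). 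I would handle this by induction on $p$. For the base range $2 \leq p \leq 4$ no recursion is needed: concavity of $t \mapsto t^{p/4}$ gives $\E(Y^\ast \mat C Y)^{p/2} \leq (\E(Y^\ast \mat C Y)^2)^{p/4}$, while $\E(Y^\ast \mat C Y)^2 = (\tr \mat C)^2 + \E|Y^\ast \mat C Y - \tr \mat C|^2 \leq C\mu_4 (\tr \mat B \mat B^\ast)^2$ by the $p=2$ case of the lemma (itself verified by a direct second-moment computation, in which the cross terms vanish), so $\E(Y^\ast \mat C Y)^{p/2} \leq K_p(\mu_4 \tr \mat B \mat B^\ast)^{p/2}$. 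For $p > 4$ one applies the lemma at exponent $p/2 \geq 2$ (induction hypothesis, in the special case $X = Y$, $\rho = 1$) to $Y^\ast \mat C Y - \tr \mat C$, and the estimates $(\mu_4 \tr \mat C \mat C^\ast)^{p/4} \leq (\mu_4 \tr \mat B \mat B^\ast)^{p/2}$ and $\mu_p \tr(\mat C^{p/2}) \leq \mu_{2p}(\tr \mat B \mat B^\ast)^{p/2}$ close the induction. Combining the bounds for $S_{\mathrm d}$ and $S_{\mathrm o}$ then finishes the proof; beyond this bracket term, the only delicate point is the systematic bookkeeping of moment exponents noted in the first paragraph.
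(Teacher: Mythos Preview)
The paper does not give its own proof of this lemma; it is quoted from \cite{OR}, so there is no in-paper argument to compare against. Your martingale approach (diagonal/off-diagonal split, Rosenthal on $S_{\mathrm d}$, Burkholder on $S_{\mathrm o}$, induction for the bracket) is the standard route and is correct in outline, but as written it only reaches the weaker inequality with $(\tr\mat B\mat B^\ast)^{p/2}$ in the second term rather than $\tr\bigl((\mat B\mat B^\ast)^{p/2}\bigr)$. These differ by as much as $N^{p/2-1}$ (take $\mat B=\mat I_N$), and the paper genuinely needs the sharper form---see the step $\tr(\mat G_{N,k}\mat G_{N,k}^\ast)^{p/2}\le N\|\mat G_{N,k}\|^p$ in the proof of Lemma~\ref{lemma:alpha}. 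Concretely, already for $S_{\mathrm d}$ your bound $\sum_i|b_{ii}|^p\le(\tr\mat B\mat B^\ast)^{p/2}$ yields $\mu_{2p}(\tr\mat B\mat B^\ast)^{p/2}$; for $\mat B=\mat I_N$ and a distribution with $\mu_4$ bounded but $\mu_{2p}\to\infty$ (e.g.\ $\Prob(x=\pm 1)=\tfrac12(1-M^{-4})$, $\Prob(x=\pm M)=\tfrac12 M^{-6}$, $M$ large), this is not dominated by $K_p\bigl[(\mu_4 N)^{p/2}+\mu_{2p}N\bigr]$ for any $K_p$ once $N$ is large.

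For $S_{\mathrm d}$ and for $\sum_k\E|\gamma_k|^p$ the repair is immediate: replace the crude $\sum_i c_i^{p/2}\le(\sum_i c_i)^{p/2}$ by Schur's majorization inequality---for $A\succeq 0$ and $q\ge 1$ the diagonal $(A_{ii})$ is majorized by the eigenvalues, hence $\sum_i A_{ii}^{q}\le\tr A^{q}$. With $A=\mat B\mat B^\ast$ and $q=p/2$ this gives $\sum_i|b_{ii}|^p\le\sum_i(\mat B\mat B^\ast)_{ii}^{p/2}\le\tr(\mat B\mat B^\ast)^{p/2}$ and likewise $\sum_k\bigl(\sum_{j<k}|b_{kj}|^2\bigr)^{p/2}\le\tr(\mat B\mat B^\ast)^{p/2}$, which is exactly what is needed. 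The bracket term is more delicate: your induction produces $\mu_p\,\tr\mat C^{p/2}$ with $\mat C=\mat B_{\mathrm L}^\ast\mat B_{\mathrm L}$, and your stated passage $\mu_p\tr(\mat C^{p/2})\le\mu_{2p}(\tr\mat B\mat B^\ast)^{p/2}$ again lands on the weak form. To reach $\mu_{2p}\,\tr(\mat B\mat B^\ast)^{p/2}$ one needs $\tr(\mat B_{\mathrm L}^\ast\mat B_{\mathrm L})^{p/2}\le K_p\,\tr(\mat B\mat B^\ast)^{p/2}$, i.e.\ boundedness of the lower-triangular projection on the Schatten class $S_p$; this does hold for $1<p<\infty$, but it is a nontrivial input that your sketch does not invoke.
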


\begin{proof}[Proof of Lemma \ref{lemma:alpha}]
To prove \eqref{eq:alpha}, we apply the triangle inequality, to get
$$ \E|\alpha_{N,k}(z)|^p \oindicator{\Gamma_{N,k}} \leq K_p \left( \frac{\E|\zeta|^p}{N^{p/2}} + \E\left| r_k \mat{G}_{N,k}(z) c_k - \frac{\rho}{N} \tr \mat{G}_{N,k}(z) \right|^p\oindicator{\Gamma_{N,k}} \right). $$
As $(r_k, c_k)$ is independent of $\mat{G}_{N,k}(z)$ and $\Gamma_{N,k}$, we apply Lemma \ref{lemma:lde} to the second term and obtain
\begin{align*}
	\E&\left| r_k \mat{G}_{N,k}(z) c_k - \frac{\rho}{N} \tr \mat{G}_{N,k}(z) \right|^p \oindicator{\Gamma_{N,k}} \\
	&\leq K_p \bigg[ \E\left( \frac{\max\{\E|\xi_1|^4, \E|\xi_2|^4 \}}{N^2} \tr \left( \mat{G}_{N,k}(z) \mat{G}_{N,k}(z)^\ast \right) \oindicator{\Gamma_{N,k}} \right)^{p/2} \\
	& \qquad\qquad + \frac{\max\{\E|\xi_1|^{2p}, \E|\xi_2|^{2p}\}}{N^p} \E \tr \left(\mat{G}_{N,k}(z) \mat{G}_{N,k}(z)^\ast \right)^{p/2} \oindicator{\Gamma_{N,k}} \bigg].
\end{align*}
Since
$$ \tr \left(\mat{G}_{N,k}(z) \mat{G}_{N,k}(z)^\ast \right)^{p/2} \leq N \|\mat{G}_{N,k}(z) \|^p \leq N \gamma^{-p} $$
on the event $\Gamma_{N,k}$, the proof of \eqref{eq:alpha} is complete.  
The proof of \eqref{eq:GNk2} is nearly identical to the proof of \eqref{eq:alpha}; we omit the details.  

To prove \eqref{eq:alphaprob}, we observe that, for $z,z' \in D$, we have the deterministic bound
\begin{align*}
	\left| \alpha_{N,k}(z) - \alpha_{N,k}(z') \right| \oindicator{\Gamma_{N,k}} &\leq \|r_k \| \|c_k\| \| \mat{G}_{N,k}(z) - \mat{G}_{N,k}(z') \| \oindicator{\Gamma_{N,k}} \\
	&\qquad + \left| \frac{1}{N} \tr \mat{G}_{N,k}(z) - \frac{1}{N} \tr \mat{G}_{N,k}(z') \right| \oindicator{\Gamma_{N,k}} \\
	&\leq 16 N \eps_N^2 \frac{|z-z'|}{c^2} + \frac{|z-z'|}{c^2}
\end{align*} 
by the resolvent identity and Lemma \ref{lemma:truncation}.  Let $\mathcal{N}$ be a $N^{-2}$-net of $D$.  Then $|\mathcal{N}| = O_D(N^4)$ (see for example \cite[Lemma 3.11]{OR}).  Thus, we have, for $N$ sufficiently large and for any $p \geq 2$, 
\begin{align*}
	\Prob \left( \sup_{z \in D} |\alpha_{N,k}(z)| \oindicator{\Gamma_{N,k}} > \frac{1}{\log N} \right) &\leq \sum_{z \in \mathcal{N}} \Prob \left( |\alpha_{N,k}(z)| \oindicator{\Gamma_{N,k}} > \frac{1}{2 \log N} \right) \\
	&\leq (2\log N)^p \sum_{z \in \mathcal{N}} \E|\alpha_{N,k}(z)|^p \oindicator{\Gamma_{N,k}}.
\end{align*}
The claim now follows from \eqref{eq:alpha} by taking $p$ sufficiently large.  (Here we use the bounds in Lemma \ref{lemma:truncation} to control the higher moments of $\zeta, \xi_1, \xi_2$ as well as the fact that $\eps_N := N^{-\eps}$, for some $\eps > 0$.)

Finally, to prove \eqref{eq:interprob}, we note that
\begin{align*}
	&\Prob\left( \left\{ \inf_{z \in D} \left| z + \frac{\rho}{N} \tr \mat{G}_{N,k}(z) \right| < \frac{\gamma}{2} \right\} \bigcap \Gamma_{N,k} \right) \\
	&\qquad\leq \Prob\left( \left\{ \inf_{z \in D} \left| z + \frac{\rho}{N} \tr \mat{G}_{N,k}(z) \right| < \frac{\gamma}{2} \right\} \bigcap \Gamma_{N,k} \bigcap \Delta_{N,k} \right) \\
	&\qquad\qquad + \Prob\left( \sup_{z \in D} \| G_{N}(z) \| > \gamma^{-1}\right) + \Prob \left( \sup_{z \in D} |\alpha_{N,k}(z)| \oindicator{\Gamma_{N,k}} > \frac{1}{\log N} \right),
\end{align*}
where
$$ \Delta_{N,k} := \left\{ \sup_{z \in D} \| G_{N}(z) \| \leq \gamma^{-1}\ \right\} \bigcap \left\{ \sup_{z \in D} |\alpha_{N,k}(z)| \oindicator{\Gamma_{N,k}} \leq \frac{1}{\log N}  \right\}. $$
It now suffices to show that the event
\begin{equation} \label{eq:eventcap}
	\left\{ \inf_{z \in D} \left| z + \frac{\rho}{N} \tr \mat{G}_{N,k}(z) \right| < \frac{\gamma}{2} \right\} \bigcap \Gamma_{N,k} \bigcap \Delta_{N,k} 
\end{equation}
is empty.  Indeed, if there exists $z_0 \in D$ such that 
$$ \left|z_0 + \frac{\rho}{N} \tr \mat{G}_{N,k}(z_0) \right| < \frac{\gamma}{2} \quad \text{and} \quad |\alpha_{N,k}(z_0)| \leq \frac{1}{\log N} \leq \frac{\gamma}{2}, $$
then, by definition of $\alpha_{N,k}(z_0)$, it follows that
$$ \left| \frac{1}{\sqrt{N}} y_{kk} - z_0 - r_k \mat{G}_{N,k}(z_0) c_k\right| < \gamma. $$
However, this implies that $\| \mat{G}_{N}(z_0) \| \geq | (\mat{G}_{N}(z_0))_{kk} | > \gamma^{-1}$, which cannot happen on the event \eqref{eq:eventcap}.  The proof of the lemma is complete.   
\end{proof}

Define the events
$$ Q_{N,k} := \left\{ \inf_{z \in \mathcal{C}} \left|z + \frac{\rho}{N} \tr \mat{G}_{N,k}(z) \right| \geq \frac{c}{2} \right\} $$
for $1 \leq k \leq N$, and the event
$$ Q_N := \bigcap_{k=1}^N Q_{N,k}. $$

\begin{corollary} \label{cor:QN}
Under the assumptions of Theorem \ref{thm:main}, the event $Q_N$ holds with overwhelming probability.  
\end{corollary}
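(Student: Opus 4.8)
The plan is to write $Q_N^C = \bigcup_{k=1}^N Q_{N,k}^C$ and to show that each $\Prob(Q_{N,k}^C)$ is overwhelmingly small with bounds uniform in $k$, so that the union bound over the $N$ choices of $k$ costs only a harmless factor $N$. First I would intersect with $\Omega_{N,k}$: since $\Omega_{N,k} \supseteq \Omega_N$, \eqref{eq:hatONC} gives $\Prob(\Omega_{N,k}^C) \leq \Prob(\Omega_N^C) = O_C(N^{-C})$ for every $C > 0$, so it suffices to control $\Prob(Q_{N,k}^C \cap \Omega_{N,k})$. Now I apply Lemma \ref{lemma:alpha} with $D = \mathcal{C}$ (which is compact) and $\gamma = c$; then $\Gamma_{N,k} = \Omega_{N,k}$, and for $N \geq \exp(2/c)$ the event $Q_{N,k}^C \cap \Omega_{N,k}$ coincides with the event appearing on the left-hand side of \eqref{eq:interprob}, so
$$ \Prob\bigl( Q_{N,k}^C \cap \Omega_{N,k} \bigr) \leq \Prob\Bigl( \sup_{z \in \mathcal{C}} \|\mat{G}_N(z)\| > c^{-1} \Bigr) + \Prob\Bigl( \sup_{z \in \mathcal{C}} |\alpha_{N,k}(z)| \oindicator{\Omega_{N,k}} > \tfrac{1}{\log N} \Bigr). $$

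It remains to bound the two terms on the right. For the first, observe that $\Omega_N$ is contained in the event $\{ \inf_{z \in \mathcal{C}} \sigma_N(\mat{X}_N - z\mat{I}) \geq c \}$ (this is the factor $\hat{E}_N$ in its definition), so on $\Omega_N$ we have $\|\mat{G}_N(z)\| = \sigma_N(\mat{X}_N - z\mat{I})^{-1} \leq c^{-1}$ for every $z \in \mathcal{C}$; hence $\{\sup_{z \in \mathcal{C}}\|\mat{G}_N(z)\| > c^{-1}\} \subseteq \Omega_N^C$, which has overwhelmingly small probability by \eqref{eq:hatONC}. The second term is overwhelmingly small by part (2) of Lemma \ref{lemma:alpha}, i.e.\ \eqref{eq:alphaprob}, applied with $D = \mathcal{C}$, $\gamma = c$, and $\Gamma_{N,k} = \Omega_{N,k}$. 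Both estimates are uniform in $k$, since each $\mat{X}_{N,k}$ has the same law as $\mat{X}_{N-1}$ and the implied constants in Lemma \ref{lemma:alpha} and in \eqref{eq:hatONC} depend only on $c$, on the exponents used, and on the (truncated) atom moments, not on $k$.

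Combining, $\Prob(Q_{N,k}^C) = O_C(N^{-C})$ for every $C > 0$, uniformly in $1 \leq k \leq N$, so a union bound gives $\Prob(Q_N^C) \leq \sum_{k=1}^N \Prob(Q_{N,k}^C) = O_C(N^{-C+1})$ for every $C$; since $C$ is arbitrary, $Q_N$ holds with overwhelming probability. The argument is short because Lemma \ref{lemma:alpha} does essentially all of the work; the only point that requires care is the $k$-uniformity that renders the factor $N$ from the union bound innocuous, and this is immediate from the distributional identity $\mat{X}_{N,k} \overset{d}{=} \mat{X}_{N-1}$ together with the fact that none of the inputs \eqref{eq:hatONC}, \eqref{eq:alphaprob}, \eqref{eq:interprob} carry $k$-dependent constants.
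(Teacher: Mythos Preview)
Your proof is correct and follows essentially the same approach as the paper: union bound over $k$, then invoke Lemma \ref{lemma:alpha} parts (2) and (3) with $D=\mathcal{C}$, $\gamma=c$, using that $\Omega_N$ (and hence its factor $\hat{E}_N$ controlling $\|\mat{G}_N(z)\|$) and each $\Omega_{N,k}$ hold with overwhelming probability. Your write-up is simply more explicit than the paper's about why the first term on the right of \eqref{eq:interprob} is controlled by $\Omega_N^C$ and about the $k$-uniformity of the bounds.
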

\begin{proof}
We note that each $\mat{G}_{N,k}(z)$, $1 \leq k \leq N$ has the same distribution.  Thus, by the union bound, it suffices to show that each $Q_{N,k}$ holds with overwhelming probability.  Fix $ 1\leq k \leq N$.  Taking $D = \mathcal{C}$, we find that the event in \eqref{eq:alphaprob} holds with overwhelming probability.  As $E_N$ and $\Omega_{N,k}$ both hold with overwhelming probability, the conclusion follows from \eqref{eq:interprob}.  
\end{proof}

We show that the diagonal entries of $\mat{G}_{N,k}(z)$ converge to $m(z)$.  

\begin{lemma} \label{lemma:diagonal}
Under the assumptions of Theorem \ref{thm:main}, one has
\begin{align}
	\sup_{z \in \mathcal{C}} \sup_{1 \leq k \leq N} \sup_{i \neq k} \E \left| \left( \mat{G}_{N,k}(z) \right)_{ii} \oindicator{\Omega_{N,k} \cap Q_{N,k}} - m(z) \right|^2 &= o(1), \label{eq:Giim} \\
	\sup_{z \in \mathcal{C}} \sup_{1 \leq k \leq N} \sup_{i \neq k} \E \left| \left( \mat{G}_{N,k}(z) \right)_{ii} - m(z) \right|^2 \oindicator{\Omega_{N,k} \cap Q_{N,k}} &= o(1), \label{eq:Giim2} \\
	\sup_{z \in \mathcal{C}} \sup_{1 \leq k \leq N} \E \left| \left(z + \frac{\rho}{N} \tr \mat{G}_{N,k}(z) \right)^{-1} \oindicator{\Omega_{N,k} \cap Q_{N,k}} + m(z) \right|^2 &= o(1), \label{eq:zm} \\
	\sup_{z \in \mathcal{C}} \sup_{1 \leq k \leq N} \E \left| \left(z + \frac{\rho}{N} \tr \mat{G}_{N,k}(z) \right)^{-1} + m(z) \right|^2 \oindicator{\Omega_{N,k} \cap Q_{N,k}} &= o(1). \label{eq:zm2}
\end{align}
\end{lemma}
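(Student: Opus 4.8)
The plan is to deduce all four estimates from the single assertion that $\tfrac1N\tr\mat{G}_{N,k}(z)$ concentrates near $m(z)$, and to prove that by combining the Schur complement formula with a stability analysis of the self-consistent equation $w=-(z+\rho w)^{-1}$. Before doing so I would record two harmless reductions. First, since $m$ is bounded on $\mathcal{C}$ and each of the events $\Omega_{N,k}\cap Q_{N,k}$ (and their analogues below) holds with overwhelming probability by Theorem \ref{thm:lsv} and Corollaries \ref{cor:ENC} and \ref{cor:QN}, for any random variable $A$ one has $\E|A\,\oindicator{\mathcal{A}}-m|^2=\E|A-m|^2\oindicator{\mathcal{A}}+|m|^2\Prob(\mathcal{A}^C)$, so that \eqref{eq:Giim} $\Leftrightarrow$ \eqref{eq:Giim2} and \eqref{eq:zm} $\Leftrightarrow$ \eqref{eq:zm2}. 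Second, since each $\mat{X}_{N,k}$ has the law of $\mat{X}_{N-1}$ and the indices in $\{1,\dots,N\}\setminus\{k\}$ are exchangeable, the quantities appearing in \eqref{eq:Giim}--\eqref{eq:zm2} do not depend on $k$ or $i$; combined with the deterministic resolvent-Lipschitz bounds valid on $\Omega_{N,k}$ (the resolvent identity together with Lemma \ref{lemma:truncation}) and compactness of $\mathcal{C}$, it then suffices to prove each estimate for a single fixed $z\in\mathcal{C}$.

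Next I would reduce the diagonal estimates to the trace estimate. Fix $i\neq k$, let $\mat{X}_{N,k,i}$ denote $\mat{X}_{N,k}$ with the $i$-th row and column removed, and set $\mat{G}_{N,k,i}=(\mat{X}_{N,k,i}-z\mat{I})^{-1}$. By the Schur complement formula (\cite[Theorem A.4]{BSbook}), $(\mat{G}_{N,k}(z))_{ii}$ is the reciprocal of $\tfrac1{\sqrt N}y_{ii}-z-r\,\mat{G}_{N,k,i}(z)\,c$, which I write as $-z-\tfrac{\rho}{N}\tr\mat{G}_{N,k,i}(z)+\alpha$ in the manner of \eqref{def:alpha}. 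Applying the analogue of Lemma \ref{lemma:alpha}\eqref{eq:alpha} to the matrix $\mat{X}_{N,k}$, on the overwhelmingly likely event $\Omega_{N,k,i}$ that $\sigma_{N-2}(\mat{X}_{N,k,i}-z\mat{I})$ is bounded below (again furnished by Theorem \ref{thm:lsv}, via a union bound over the $O(N^2)$ pairs), gives $\E|\alpha|^2\oindicator{\mathrm{good}}=O(N^{-1})$; and comparing $\tr\mat{G}_{N,k}$ with $\tr\mat{G}_{N,k,i}$ through Proposition \ref{prop:trace} and the bound \eqref{eq:GNk2} gives $\tfrac1N(\tr\mat{G}_{N,k}-\tr\mat{G}_{N,k,i})\oindicator{\mathrm{good}}=O(N^{-1})$ in $L^2$. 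On the good events the denominators $|z+\tfrac{\rho}{N}\tr\mat{G}_{N,k,i}|$ and $|(\mat{G}_{N,k})_{ii}|^{-1}$ are bounded away from $0$ (by $Q_{N,k,i}$ and $\Omega_{N,k}$ respectively), so expanding the reciprocal yields $(\mat{G}_{N,k}(z))_{ii}\oindicator{\mathrm{good}}=-(z+\tfrac{\rho}{N}\tr\mat{G}_{N,k}(z))^{-1}\oindicator{\mathrm{good}}+o(1)$ in $L^2$; hence \eqref{eq:Giim} follows from \eqref{eq:zm}.

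It remains to prove \eqref{eq:zm}. Summing the Schur identities over $i\neq k$ shows that, on the good event, $W_N:=\tfrac1N\tr\mat{G}_{N,k}(z)$ satisfies $W_N=-(z+\rho W_N)^{-1}+\mathcal{R}_N$ with $\E|\mathcal{R}_N|\oindicator{\mathrm{good}}=o(1)$ by the $L^2$ estimates above and the triangle inequality; equivalently $\rho W_N^2+zW_N+1\to0$ in probability on the good event. Thus, with probability $\to1$, $W_N$ lies within $o(1)$ of the root set $\{m(z),\tilde m(z)\}$ of $\rho w^2+zw+1=0$, where $\tilde m(z)=(\rho\,m(z))^{-1}$; since $\sup_{z\in\mathcal{C}}|m(z)|=:r_0<1$ while $|\tilde m(z)|\ge(|\rho|r_0)^{-1}>1$, the two roots are separated on $\mathcal{C}$. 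To show $W_N$ selects $m(z)$ I would argue by analyticity: on the overwhelmingly likely event that $\mat{X}_{N,k}$ has no eigenvalue in $\{z:\dist(z,\mathcal{E}_\rho)>\delta/2\}$ (Theorem \ref{thm:lsv}) the map $z\mapsto W_N(z)$ is analytic there and, using the operator-norm bound for $\mat{X}_N$ together with Theorem \ref{thm:lsv}, uniformly bounded on compact subsets, hence a normal family; the approximate self-consistent equation holds locally uniformly; and for $|z|$ large $W_N(z)=-z^{-1}+O(|z|^{-2})$ lies near $m(z)$ and far from $\tilde m(z)$. Consequently every locally uniform subsequential limit $g$ of $W_N-m$ obeys $\rho\,g\,(g+m-\tilde m)\equiv0$ and vanishes for $|z|$ large, forcing $g\equiv0$; so $W_N\to m(z)$ in probability and, being bounded by $\|\mat{G}_{N,k}(z)\|\le 1/c$ on the good event, also in $L^2$. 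Composing with $w\mapsto-(z+\rho w)^{-1}$, which is Lipschitz where $|z+\rho w|\ge c/2$, gives \eqref{eq:zm}, and then \eqref{eq:zm2}, \eqref{eq:Giim2} follow by the indicator-placement equivalence. (When $\rho=0$ the root $\tilde m$ is absent, so no selection is needed and the argument simplifies.)

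I expect the delicate point to be this root-selection step. Because $\mat{X}_{N,k}$ is non-Hermitian, the only a priori resolvent bound available on the relevant events is $\|\mat{G}_{N,k}(z)\|\le 1/c$ with $c$ small, which is not enough to distinguish $m(z)$ from $\tilde m(z)$ by a crude modulus estimate—in contrast with the Wigner case, where working off the real axis one enjoys $O(1)$ resolvent bounds—so one is forced to exploit analyticity in $z$ and propagate from $|z|=\infty$. The remaining work is essentially bookkeeping: one must track the nested good events ($\Omega_{N,k}$, $Q_{N,k}$, their analogues for the $(N-2)$-dimensional matrices $\mat{X}_{N,k,i}$, and the larger no-eigenvalue event) and verify that intersecting or discarding them costs only $o(1)$, which is routine given Theorem \ref{thm:lsv} and Corollaries \ref{cor:ENC} and \ref{cor:QN}, together with the fact that after truncation $\hat\rho$ and $\E[\hat\xi_1^2\hat\xi_2^2]$ differ from $\rho$ and $\E[\xi_1^2\xi_2^2]$ by $o(1)$ (Lemma \ref{lemma:truncation}), so the self-consistent equation one actually obtains has the stated limit.
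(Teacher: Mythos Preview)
Your proposal is essentially correct and follows the same route as the paper: derive the Schur-complement expression for the diagonal entries, sum to obtain an approximate self-consistent equation for $m_N(z)=\tfrac1N\tr\mat G_{N,k}(z)$, and then select the correct root of $\rho w^2+zw+1=0$ by anchoring at a point with large $|z|$ and propagating by continuity/analyticity. The paper does exactly this: it enlarges $\mathcal C$ to a connected compact $D$ containing a point $z_0$ with $|z_0|$ large, proves the approximate fixed-point equation holds uniformly on $D$ with overwhelming probability, invokes a dichotomy lemma (\cite[Lemma~4.7]{OR}) stating that either $m_N$ is uniformly close to $m$ or uniformly bounded away from it, and uses Lemma~\ref{lemma:normowp} (operator-norm bound) together with $|m(z_0)|\to0$ to eliminate the second option.

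The only substantive differences are in packaging. The paper works with high-probability pointwise bounds of size $C/\log N$ (so the $L^2$ statement follows by boundedness on $\Omega_{N,k}$), and outsources the root-selection continuity argument to the ready-made dichotomy lemma from \cite{OR}. Your normal-family formulation conveys the same idea but is slightly imprecise as stated, since $W_N$ is random: ``locally uniform subsequential limits of $W_N-m$'' needs to be interpreted pathwise on the good event, and the conclusion $g\equiv0$ should be read as ``for every $\eta>0$, with probability $\to1$ one has $\sup_{z\in D}|W_N(z)-m(z)|<\eta$'', which is precisely what the dichotomy lemma delivers. Also, your reduction ``it suffices to prove each estimate for a single fixed $z\in\mathcal C$'' is in tension with the root-selection step, which genuinely requires working on an extended domain reaching large $|z|$; the paper makes this extension explicit from the outset rather than first fixing $z$.
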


The proof of Lemma \ref{lemma:diagonal} is based on the arguments in \cite{OR}.  In fact, a version of Lemma \ref{lemma:diagonal} also appears in \cite{OR} (see \cite[Lemma 7.1]{OR}).  

\begin{proof}[Proof of Lemma \ref{lemma:diagonal}]
As $\sup_{z \in \mathcal{C}} |m(z)| < \infty$ and $\cap_{k=1}^N \Omega_{N,k} \cap Q_{N,k}$ holds with overwhelming probability, it suffices to verify \eqref{eq:Giim} and \eqref{eq:zm}.  Indeed, \eqref{eq:Giim2} follows from \eqref{eq:Giim}, and \eqref{eq:zm2} follows from \eqref{eq:zm}.  

Since
$$ \sup_{z \in \mathcal{C}} \sup_{1 \leq k \leq N} \sup_{i \neq k} \left| \left(\mat{G}_{N,k}(z) \right)_{ii} \right| \oindicator{\Omega_{N,k} \cap Q_{N,k}} \leq \frac{1}{c}, $$
in order to verify \eqref{eq:Giim}, it suffices to show that 
\begin{equation} \label{eq:diagonalsuffice1}
	\sup_{z \in \mathcal{C}} \sup_{1 \leq k \leq N} \sup_{i \neq k} \left| \left( \mat{G}_{N,k}(z) \right)_{ii} \oindicator{\Omega_{N,k} \cap Q_{N,k}} - m(z) \right| \leq \frac{C}{ \log N }
\end{equation}
with overwhelming probability, where $C>0$ depends only on $c$.  Similarly, since 
$$ \sup_{z \in \mathcal{C}} \sup_{1 \leq k \leq N} \sup_{i \neq k} \left| \left(z + \frac{\rho}{N} \tr \mat{G}_{N,k}(z) \right)^{-1} \oindicator{\Omega_{N,k} \cap Q_{N,k}} \right| \leq \frac{2}{c}, $$
in order to prove \eqref{eq:zm}, it suffices to show that
\begin{equation} \label{eq:diagonalsuffice2}
	\sup_{z \in \mathcal{C}} \sup_{1 \leq k \leq N} \left| \left(z + \frac{\rho}{N} \tr \mat{G}_{N,k}(z) \right)^{-1} \oindicator{\Omega_{N,k} \cap Q_{N,k}} + m(z) \right| \leq \frac{C}{\log N}
\end{equation}
with overwhelming probability.  

We first verify \eqref{eq:diagonalsuffice1}.  Let $D \subset \mathbb{C}$ be a connected, compact set outside $\mathcal{E}_{\rho, \delta}$ which contains the contour $\mathcal{C}$.  In addition, assume there exists $z_0 \in D$ with $|z_0| > C_0$ for some sufficiently large $C_0 > 0$ to be chosen later.  Define the events 
$$ \Gamma_{N,k} := \left\{ \inf_{z \in D} \sigma_{N-1} (\mat{X}_{N,k} - z \mat{I}) \geq c \right\} $$
and
$$ \Gamma_{N} := \bigcap_{k=1}^N \Gamma_{N,k} \bigcap \left\{ \inf_{z \in D} \sigma_{N} (\mat{X}_N - z \mat{I} ) \geq c \right\}. $$
By Theorem \ref{thm:lsv} and the union bound, $\Gamma_N$ holds with overwhelming probability.  Thus, in order to verify \eqref{eq:diagonalsuffice1}, it suffices to show that
$$ \sup_{z \in D} \sup_{1 \leq k \leq N} \sup_{i \neq k} \left| \left( \mat{G}_{N,k}(z) \right)_{ii} \oindicator{\Gamma_{N,k}} - m(z) \right| \leq \frac{C}{\log N } $$
with overwhelming probability.  As $\mat{G}_{N,k}(z)$ has the same distribution as $\mat{G}_{N-1}(z)$, it suffices to show that 
\begin{equation} \label{eq:diagonalsuffice11}
	\sup_{z \in D} \sup_{1 \leq k \leq N} \left| \left( \mat{G}_{N}(z) \right)_{kk} \oindicator{\Gamma_{N}} - m(z) \right| \leq \frac{C}{\log N }
\end{equation}
with overwhelming probability.  

Define the events 
$$ \Delta_{N,k} := \left\{ \sup_{z \in D} |\alpha_{N,k}| \oindicator{\Gamma_{N,k}} \leq \frac{1}{ \log N } \right\}. $$
By \eqref{eq:alphaprob} and the union bound, $\cap_{k=1}^N \Delta_{N,k}$ holds with overwhelming probability.  In addition, by the resolvent identity\footnote{Technically, one cannot compare $\mat{G}_{N,k}(z)$ with $\mat{G}_N(z)$ using the resolvent identity since these matrices are different sizes.  However, one can compare $\mat{G}_{N}(z)$ with $\breve{\mat G}_{N,k}(z)$ using the resolvent identity, where $\breve{\mat G}_{N,k}(z) := (\breve{\mat X}_{N,k} - z \mat{I})^{-1}$ and $\breve{\mat X}_{N,k}$ is formed from $\mat{X}_{N}$ by replacing the $k$-th row and $k$-th column with zeros.  Thus, $\breve{\mat G}_{N,k}(z)- \mat{G}_{N}(z)$ is at most rank $2$.  In addition, one can compare $\frac{\rho}{N} \tr \breve{\mat G}_{N,k}(z)$ with $\frac{\rho}{N} \tr \mat{G}_{N,k}(z)$ since all eigenvalues of $\mat{X}_{N,k}$ are also eigenvalues of $\breve{\mat{X}}_{N,k}$.} and the almost sure bounds 
$$ \sup_{1 \leq k \leq N} \| r_k \| \leq 4 \eps_N \sqrt{N}, \quad \sup_{1 \leq k \leq N} \| c_k \| \leq 4 \eps_N \sqrt{N}, $$
we find that
$$ \sup_{z \in D} \sup_{1 \leq k \leq N} \left| \frac{\rho}{N} \tr \mat{G}_{N,k}(z) - \frac{\rho}{N} \tr \mat{G}_{N}(z) \right|\oindicator{\Gamma_N} \leq \frac{C}{\sqrt{N}} $$
for some constant $C > 0$ depending only on $c$.

Combining the bounds above, we obtain 
$$ \sup_{z \in D} \sup_{1 \leq k \leq N} \left| \frac{1}{\sqrt{N}} y_{kk} \oindicator{\Gamma_{N} \cap \Delta_{N,k}} - \left( r_k \mat{G}_{N,k}(z) c_k - \frac{\rho}{N} \tr \mat{G}_N(z) \right) \oindicator{\Gamma_{N} \cap \Delta_{N,k}} \right| \leq \frac{C}{ \log N}. $$
By the Schur complement of a matrix, we have
$$ \left( \mat{G}_{N}(z) \right)_{kk} \oindicator{\Gamma_{N} \cap \Delta_{N,k}} = \frac{\oindicator{\Gamma_{N} \cap \Delta_{N,k}}}{ \frac{1}{\sqrt{N}} y_{kk} \oindicator{\Gamma_{N} \cap \Delta_{N,k}} - z - r_k \mat{G}_{N,k}(z) c_k \oindicator{\Gamma_{N} \cap \Delta_{N,k}} }. $$
Thus, by the trivial bound 
$$ \sup_{z \in D} \sup_{1 \leq k \leq N} |\left( \mat{G}_{N}(z) \right)_{kk}| \oindicator{\Gamma_{N} \cap \Delta_{N,k}} \leq \frac{1}{c}, $$ 
we find that
$$ \inf_{z \in D} \inf_{1 \leq k \leq N} \left|  \frac{1}{\sqrt{N}} y_{kk} \oindicator{\Gamma_{N} \cap \Delta_{N,k}} - z - r_k \mat{G}_{N,k}(z) c_k \oindicator{\Gamma_{N} \cap \Delta_{N,k}} \right| \geq c, $$
and hence 
$$ \inf_{z \in D} \inf_{1 \leq k \leq N} \left| z + \frac{\rho}{N} \tr \mat{G}(z) \oindicator{\Gamma_{N} \cap \Delta_{N,k}} \right| \geq \frac{c}{2} $$
for $N$ sufficiently large.  Therefore, we conclude that
\begin{equation} \label{eq:diagonalalmost}
	\sup_{z \in D} \sup_{1 \leq k \leq N} \left| \left( \mat{G}_{N}(z) \right)_{kk} \oindicator{\Gamma_{N} \cap \Delta_{N,k}} + \frac{\oindicator{\Gamma_{N} \cap \Delta_{N,k}}}{ z + \frac{\rho}{N} \tr \mat{G}_N(z) \oindicator{\Gamma_{N} \cap \Delta_{N,k}} } \right| \leq \frac{C}{\log N} 
\end{equation}
for $N$ sufficiently large, where $C>0$ depends only on $c$.   

By summing over $k$, we find that, with $m_N(z) := \frac{1}{N} \tr \mat{G}_N(z)$, 
$$ \sup_{z \in D} \left| m_N(z) \oindicator{\Gamma_{N} \cap \Delta_{N,k}} + \frac{1}{z + \rho m_N(z) \oindicator{\Gamma_{N} \cap \Delta_{N,k}} } \right| \leq \frac{C}{\log N} $$
with overwhelming probability.  We now wish to apply \cite[Lemma 4.7]{OR} to conclude that
\begin{equation} \label{eq:mnm}
	\sup_{z \in D} \left| m_N(z) \oindicator{\Gamma_{N} \cap \Delta_{N,k}} - m(z) \right| \leq \frac{C}{\log N}. 
\end{equation}
However, in order to avoid option (2) of the dichotomy presented in \cite[Lemma 4.7]{OR}, we will need to use the value $z_0 \in D$ with $|z_0| > C_0$.  Indeed, for any $\eta > 0$, there exists $C_0$ such that if $|z_0| > C_0$, then by \cite[Lemma 3.1]{OR} and Lemma \ref{lemma:normowp} below, we have
$$ |m_N(z_0) - m(z_0)| \leq |m_N(z_0)| + |m(z_0)| \leq \eta $$
with overwhelming probability.  This rules out option (2) in \cite[Lemma 4.7]{OR}, and we conclude that \eqref{eq:mnm} holds with overwhelming probability for any sufficiently large (in terms of $\delta$ and $\rho$) choice of $C_0$.  

Since 
$$ \frac{1}{z + \rho m(z)} = -m(z), $$ 
it follows from \eqref{eq:diagonalalmost} and \eqref{eq:mnm} that \eqref{eq:diagonalsuffice11} holds with overwhelming probability.  Similarly, \eqref{eq:diagonalsuffice2} also follows from \eqref{eq:diagonalalmost}.  
\end{proof}

\begin{lemma} \label{lemma:normowp}
There exists $C_0 > 0$ such that
$$ \|\mat{X}_N \| \leq C_0 $$
with overwhelming probability.  
\end{lemma}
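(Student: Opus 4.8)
The plan is to prove Lemma~\ref{lemma:normowp} by the moment method, taking advantage of the almost sure bound on the entries of the truncated matrix. Recall that, following the notational convention adopted at the end of Section~\ref{sec:proof}, the matrix $\mat{X}_N$ here is the truncated matrix $\hat{\mat{X}}_N$; by Lemma~\ref{lemma:truncation} the entries of $\sqrt{N}\,\mat{X}_N$ have mean zero, have variance $1$ off the diagonal and $\sigma^2$ on the diagonal, satisfy $\E[(\sqrt{N}\,\mat{X}_N)_{ij}(\sqrt{N}\,\mat{X}_N)_{ji}] = \hat{\rho}$ with $|\hat{\rho}|\le 1$, and are bounded almost surely in absolute value by $4\eps_N\sqrt{N}=4N^{1/2-\eps}$, where $\eps>0$ is the fixed exponent from the truncation.

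The first step is to fix an integer $k=k_N$, to be chosen below with $k_N\to\infty$ but $k_N\le N^{\eps/2}$, and to use positivity of $\mat{X}_N\mat{X}_N^\ast$ to write $\|\mat{X}_N\|^{2k}\le\tr\bigl((\mat{X}_N\mat{X}_N^\ast)^k\bigr)$. Taking expectations and expanding the trace as a sum over closed walks of length $2k$ on $\{1,\dots,N\}$, the mean-zero assumption shows that a walk contributes only if each edge it uses is traversed at least twice, so a contributing walk uses at most $k$ distinct edges and, being connected, spans at most $k+1$ vertices. An edge traversed $m\ge2$ times contributes a quantity of absolute value at most a constant times $(4N^{1/2-\eps})^{m-2}$, the dependence within a pair $(y_{ij},y_{ji})$ affecting only constants since $|\E[y_{ij}^{a}y_{ji}^{b}]|\le C(4N^{1/2-\eps})^{a+b-2}$ for $a+b\ge2$. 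Running the standard bookkeeping of walk shapes (as in \cite[Chapter~5]{BSbook}, or \cite{OR} in the elliptic setting), each traversal of an edge beyond the first two costs a factor $4N^{1/2-\eps}/\sqrt{N}=4N^{-\eps}=o(1)$, which outweighs both the accompanying loss of a vertex and the polynomially-in-$k$ many walk shapes of that type; one is left with, for an absolute constant $C$ and uniformly over $k\le N^{\eps/2}$,
\[ \E\tr\bigl((\mat{X}_N\mat{X}_N^\ast)^k\bigr)\le C\,N\,C^{k}. \]

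The second step is to combine this with Markov's inequality: for any $C_0>0$,
\[ \Prob\bigl(\|\mat{X}_N\|>C_0\bigr)\le C_0^{-2k}\,\E\|\mat{X}_N\|^{2k}\le C\,N\,(C/C_0^2)^{k}. \]
Taking $C_0:=2\sqrt{C}$ and $k=k_N:=\lceil(\log N)^2\rceil$, which satisfies $k_N\to\infty$ and $k_N\le N^{\eps/2}$ for $N$ large, the right-hand side is $C\,N\,4^{-(\log N)^2}=o(N^{-A})$ for every $A>0$, so $\|\mat{X}_N\|\le C_0$ holds with overwhelming probability, as required.

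The step I expect to be the main obstacle is the combinatorial estimate $\E\tr((\mat{X}_N\mat{X}_N^\ast)^k)\le C\,N\,C^k$ uniform over $k\le N^{\eps/2}$: one must keep careful track, for each walk shape, of the number of distinct vertices versus distinct edges, of the powers of the truncation level $4N^{1/2-\eps}$ contributed by edges of high multiplicity, and of the number of walk shapes, all while the pairs $(y_{ij},y_{ji})$ are correlated. This bookkeeping is routine but lengthy. An alternative avoiding it is to split $\mat{X}_N=\tfrac12(\mat{X}_N+\mat{X}_N^{\mathrm T})+\tfrac12(\mat{X}_N-\mat{X}_N^{\mathrm T})$ into its symmetric and anti-symmetric parts: after rescaling by $\sqrt{N}$, each part is a Wigner-type matrix with independent, mean-zero, bounded entries of variance at most $\max\{1,\sigma^2\}$, for which the operator-norm bound $\|\cdot\|\le C\sqrt{N}$ with overwhelming probability is classical (see \cite[Section~5]{BSbook}), and the triangle inequality then yields $\|\mat{X}_N\|\le C_0$ with overwhelming probability.
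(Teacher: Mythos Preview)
Your proposal is correct, though it takes a different route from the paper.  Your main argument runs the moment method directly on $\tr((\mat{X}_N\mat{X}_N^\ast)^k)$, carrying the correlated pairs $(y_{ij},y_{ji})$ through the walk combinatorics; this is valid but, as you note, the bookkeeping is lengthy.  Your alternative splits $\mat{X}_N$ into its symmetric and antisymmetric parts, each of which is a Wigner-type matrix with independent entries above the diagonal.

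The paper instead splits $\mat{X}_N=\mat{U}_N+\mat{L}_N$ into its upper-triangular and strictly lower-triangular parts.  The point of this decomposition is that \emph{all} entries of $\mat{U}_N$ (and of $\mat{L}_N$) are jointly independent --- the elliptic correlation links only $y_{ij}$ with $y_{ji}$, and these land in different triangles.  Having matrices with fully independent entries, the paper then appeals to two black boxes: Lata\l a's bound \cite{Lat} gives $\E\|\mat{U}_N\|\le C_0/2$ directly, and Talagrand's concentration inequality (the operator norm being convex and $1$-Lipschitz in the entries, which are bounded by $4\eps_N$) yields $\Prob(|\,\|\mat{U}_N\|-\E\|\mat{U}_N\|\,|\ge t)\le C\exp(-ct^2/\eps_N^2)$, which with $\eps_N=N^{-\eps}$ is overwhelmingly small.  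This sidesteps the moment combinatorics altogether.  Your symmetric/antisymmetric split shares the same spirit --- decompose so as to kill the correlation --- but lands on matrices with a symmetry constraint (Wigner type) rather than fully free entries, so you then need a Wigner operator-norm bound with overwhelming probability; this is available, though the citation to \cite[Chapter~5]{BSbook} more directly gives almost sure convergence, and for the overwhelming-probability rate one again ends up invoking either high moments (your main approach) or concentration.
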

\begin{proof}
We write $\mat{X}_N = \mat{U}_N + \mat{L}_N$, where $\mat{U}_N$ is an upper-triangular matrix and $\mat{L}_N$ is a strictly lower-triangular matrix (i.e. $\mat{L}_N$ is a lower-triangular matrix whose diagonal entries are all zero).  Thus, by the triangle inequality, it suffices to show that
\begin{equation} \label{eq:norm:un}
	\| \mat{U}_N \| \leq C_0
\end{equation}
and
\begin{equation} \label{eq:norm:ln}
	\| \mat{L}_N \| \leq C_0 
\end{equation}
with overwhelming probability.  

By Lemma \ref{lemma:truncation}, the entries of $\mat{U}_N$ are bounded in magnitude by $4 \eps_N$ almost surely and are jointly independent.  Thus, from \cite[Proposition 2.3.10]{Tbook}, we have, for any $t > 0$, 
\begin{equation} \label{eq:talagrand}
	\Prob \left( \left| \| \mat{U}_N \| - \E \| \mat{U}_N \| \right| \geq t \right) \leq C_1 \exp\left( - c_1 \frac{t^2}{\eps_N^2} \right), 
\end{equation}
where $C_1,c_1 > 0$ are absolute constants.  Moreover, from \cite[Theorem 2]{Lat}, there exists $C_0 > 0$ such that
\begin{equation} \label{eq:latala}
	\E\|\mat{U}_N\| \leq \frac{C_0}{2}
\end{equation}
for all $N \geq 1$.  Combining \eqref{eq:talagrand} (taking $t=C_0/2$) with \eqref{eq:latala}, we conclude that \eqref{eq:norm:un} holds with overwhelming probability.  Similarly, \eqref{eq:norm:ln} also holds with overwhelming probability, and the proof of the lemma is complete.  
\end{proof}

\subsection{A further reduction}
The goal of this subsection is to prove the following reduction.  

\begin{lemma} \label{lemma:reduction2}
For any $\eta > 0$, 
$$ \lim_{N \to \infty} \Prob( | \mathcal{M}_N' - \mathcal{M}_N''| > \eta ) = 0, $$
where $\mathcal{M}_N''$ is defined by replacing each appearance of $Z_{N,k}'(z)$ in $\mathcal{M}'_{N,k}$ with 
\begin{align*}
	Z_{N,k}''(z) &:= - (\E_k - \E_{k-1}) \bigg[ \left(1+ \frac{\rho}{N} \tr \mat{G}_{N,k}^2(z)\right)\left(z + \frac{\rho}{N} \tr \mat{G}_{N,k}(z)\right)^{-2} \alpha_{N,k}(z) \\
		&\quad\qquad\qquad\qquad\qquad + (1+ r_k\mat{G}_{N,k}^2(z) c_k)\left(z + \frac{\rho}{N} \tr \mat{G}_{N,k}(z)\right)^{-1}  \bigg]\oindicator{\Omega_{N,k} \cap Q_{N,k}}.
\end{align*}
\end{lemma}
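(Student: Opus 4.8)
The plan is to expand the ratio that defines $Z_{N,k}'(z)$ to first order in $\alpha_{N,k}(z)$ and in the bilinear-form fluctuation $\beta_{N,k}(z) := r_k \mat{G}_{N,k}^2(z) c_k - \tfrac{\rho}{N}\tr \mat{G}_{N,k}^2(z)$, to identify the two leading terms of that expansion with $Z_{N,k}''(z)$, and then to show that the resulting essentially quadratic remainder, summed over $k$, converges to zero in $L^2$. Writing $A_k(z) := z + \tfrac{\rho}{N}\tr \mat{G}_{N,k}(z)$ and $d_k(z) := \tfrac{1}{\sqrt N} y_{kk} - z - r_k \mat{G}_{N,k}(z) c_k$, we have $d_k = -A_k + \alpha_{N,k}$ by \eqref{def:alpha}; combining the elementary identity $\tfrac{1}{-A + \alpha} = -\tfrac1A - \tfrac{\alpha}{A^2} + \tfrac{\alpha^2}{A^2(-A+\alpha)}$ (valid whenever $A \neq 0$ and $-A + \alpha \neq 0$) with $1 + r_k \mat{G}_{N,k}^2 c_k = \big(1 + \tfrac{\rho}{N}\tr \mat{G}_{N,k}^2\big) + \beta_{N,k}$ gives, on any event on which $\mat{X}_N - z\mat{I}$ and $\mat{X}_{N,k} - z\mat{I}$ are invertible,
\[
\frac{1 + r_k \mat{G}_{N,k}^2 c_k}{d_k}
= -\Big(1 + \tfrac{\rho}{N}\tr \mat{G}_{N,k}^2\Big) A_k^{-2} \alpha_{N,k} - \big(1 + r_k \mat{G}_{N,k}^2 c_k\big) A_k^{-1} + R_{N,k}(z),
\]
where
\[
R_{N,k}(z) := \frac{\big(1 + r_k \mat{G}_{N,k}^2 c_k\big)\, \alpha_{N,k}(z)^2}{A_k(z)^2\, d_k(z)} - \frac{\beta_{N,k}(z)\, \alpha_{N,k}(z)}{A_k(z)^2}.
\]
The first two terms on the right, fed into the operator $W \mapsto -(\E_k - \E_{k-1})\big[ W\, \oindicator{\Omega_{N,k} \cap Q_{N,k}} \big]$, reproduce exactly $Z_{N,k}''(z)$; hence on the event $\Omega_N \cap Q_N$, where the indicators appearing in $Z_{N,k}'$ and in $Z_{N,k}''$ both equal $1$ (using $\Omega_N \subseteq \Omega_{N,k}$ and $Q_N \subseteq Q_{N,k}$), one has $Z_{N,k}'(z) - Z_{N,k}''(z) = (\E_k - \E_{k-1})\big[ R_{N,k}(z)\, \oindicator{\Omega_N \cap Q_N} \big]$.

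The next step is to discard the contribution coming from the complement of $\Omega_N \cap Q_N$. By \eqref{eq:hatONC} and Corollary~\ref{cor:QN} this event holds with overwhelming probability, and the indicators built into $Z_{N,k}'$ and $Z_{N,k}''$ keep the denominators $d_k$ and $A_k$ bounded below by $c$ and $c/2$ respectively; combined with the almost sure truncation bounds $\|r_k\|, \|c_k\| \le 4 \eps_N \sqrt N$ and $|y_{kk}| \le 4 \eps_N \sqrt N$ from Lemma~\ref{lemma:truncation}, this gives the crude deterministic bound $|Z_{N,k}'(z)| + |Z_{N,k}''(z)| = O(N^{1 - 2\eps})$, so that $|\mathcal{M}_N' - \mathcal{M}_N''| = O(N^{2 - 2\eps})$ deterministically and hence $\E\big| (\mathcal{M}_N' - \mathcal{M}_N'')\, \oindicator{(\Omega_N \cap Q_N)^C} \big| = o(1)$. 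It therefore suffices to control the $L^2$ norm of $\sum_{k=1}^N (\E_k - \E_{k-1})\big[ g_k\, \oindicator{\Omega_N \cap Q_N} \big]$, where $g_k$ denotes the fixed linear combination — with the same coefficients $\alpha_l, \beta_l$ and their conjugates appearing in $\mathcal{M}_N$ — of $R_{N,k}(z_l)$ and $\overline{R_{N,k}(z_l)}$, $1 \le l \le L$.

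Since these summands form a martingale difference sequence with respect to $\{\mathcal{F}_{N,k}\}$, orthogonality in $L^2$ yields
\[
\E\Big| \sum_{k=1}^N (\E_k - \E_{k-1})\big[ g_k\, \oindicator{\Omega_N \cap Q_N} \big] \Big|^2 \le \sum_{k=1}^N \E\big[ |g_k|^2\, \oindicator{\Omega_N \cap Q_N} \big].
\]
On $\Omega_N \cap Q_N$ we have $|A_k(z)| \ge c/2$ (from $Q_{N,k}$), $|d_k(z)| = |(\mat{G}_N(z))_{kk}|^{-1} \ge c$ (since $\sup_{z \in \mathcal{C}} \|\mat{G}_N(z)\| \le 1/c$ on $\Omega_N$), and $|1 + r_k \mat{G}_{N,k}^2 c_k| \le 1 + |\beta_{N,k}(z)| + |\rho|\, c^{-2}$ (from $\Omega_{N,k}$), whence $|R_{N,k}(z)|\, \oindicator{\Omega_N \cap Q_N} \le C\big( 1 + |\beta_{N,k}(z)| \big) |\alpha_{N,k}(z)|^2\, \oindicator{\Omega_{N,k}} + C |\beta_{N,k}(z)|\, |\alpha_{N,k}(z)|\, \oindicator{\Omega_{N,k}}$ with $C = C(c, \rho)$. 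Consequently $\E\big[ |g_k|^2\, \oindicator{\Omega_N \cap Q_N} \big]$ is bounded by a finite sum of terms of the form $\E\big[ (1 + |\beta_{N,k}(z_l)|^2) |\alpha_{N,k}(z_l)|^4\, \oindicator{\Omega_{N,k}} \big]$ and $\E\big[ |\beta_{N,k}(z_l)|^2 |\alpha_{N,k}(z_l)|^2\, \oindicator{\Omega_{N,k}} \big]$; the Cauchy–Schwarz inequality together with the moment bounds \eqref{eq:alpha} (for $p = 4$ and $p = 8$) and \eqref{eq:GNk2} (for $p = 4$), taken with $\gamma = c$, whose right-hand sides are $O(N^{-2})$ since all moments of $\xi_1, \xi_2, \zeta$ are finite and controlled through Lemma~\ref{lemma:truncation} (e.g. $\E |\xi_i|^8 = O(\eps_N^2 N)$), shows that $\E\big[ |g_k|^2\, \oindicator{\Omega_N \cap Q_N} \big] = O(N^{-2})$. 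Summing over $1 \le k \le N$ gives $O(N^{-1}) = o(1)$, so $\mathcal{M}_N' - \mathcal{M}_N'' \to 0$ in $L^2$ and hence in probability, as claimed.

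The step I expect to be the main obstacle is the bookkeeping with the various indicator events rather than any individual estimate. The indicators $\oindicator{\Omega_N}$ and $\oindicator{\Omega_{N,k} \cap Q_{N,k}}$ sit inside the conditional expectation operators $\E_k - \E_{k-1}$, and $\Omega_N$ (through the lower bound on $\sigma_N(\mat{X}_N - z\mat{I})$) is not measurable with respect to the first $k$ rows and columns, so one must peel these indicators off in the right order, replace them by the common event $\Omega_N \cap Q_N$, and verify that every discrepancy thus created is annihilated by the overwhelming-probability estimates \eqref{eq:hatONC}, \eqref{eq:alphaprob} and Corollary~\ref{cor:QN} set against the crude polynomial bounds coming from the truncation. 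Once that is arranged, the only analytic input needed is the concentration estimates of Lemma~\ref{lemma:alpha}.
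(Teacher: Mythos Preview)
Your approach is essentially the same as the paper's: the algebraic decomposition into leading terms plus remainder matches the paper's $-A_{N,k,1}-A_{N,k,4}$ plus $A_{N,k,3}-A_{N,k,2}$ exactly, and the $L^2$ martingale estimate on the remainder via Cauchy--Schwarz and Lemma~\ref{lemma:alpha} is precisely what the paper does.

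The one point that is not quite right as written is the indicator bookkeeping. You multiply $\mathcal{M}_N'-\mathcal{M}_N''$ from the \emph{outside} by $\oindicator{(\Omega_N\cap Q_N)^C}$ and show that piece is small; but this does not yield ``it therefore suffices to control $\sum_k(\E_k-\E_{k-1})[g_k\,\oindicator{\Omega_N\cap Q_N}]$'', because the indicators $\oindicator{\Omega_N}$ and $\oindicator{\Omega_{N,k}\cap Q_{N,k}}$ sit \emph{inside} $\E_k-\E_{k-1}$ and cannot be altered by an external multiplication. What one must do (and what the paper does) is swap the indicators \emph{inside} the conditional expectations: first replace $\oindicator{\Omega_N}$ by $\oindicator{\Omega_N\cap Q_N}$ in $Z_{N,k}'$, then after identifying the remainder replace $\oindicator{\Omega_N\cap Q_N}$ by $\oindicator{\Omega_{N,k}\cap Q_{N,k}}$ in the leading terms. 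Each swap costs at most $\sum_k \E\bigl[|\,\cdot\,|\,\oindicator{\Omega_{N,k}\cap Q_{N,k}}\oindicator{(\Omega_N\cap Q_N)^C}\bigr]$, which is exactly the ``overwhelming probability versus crude polynomial bound'' trade-off you describe in your last paragraph. So your diagnosis of the obstacle is correct, but the fix is to swap inside rather than multiply outside; once you make that change, your argument is complete and coincides with the paper's.
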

\begin{proof}
We first observe that 
\begin{align*}
	\sum_{k=1}^N \E &\left | (\tr \mat{G}_N(z) - \tr \mat{G}_{N,k}(z)) \oindicator{\Omega_N} - (\tr \mat{G}_N(z) - \tr \mat{G}_{N,k}(z)) \oindicator{\Omega_N \cap Q_N} \right| \\
		&= \sum_{k=1}^N \E \left| (\tr \mat{G}_N(z) - \tr \mat{G}_{N,k}(z)) \right| |\oindicator{\Omega_N}  \oindicator{Q_N^C} \\
		&\leq \frac{2N^2}{c} \Prob(Q_N^C) = o(1)
\end{align*}
by Corollary \ref{cor:QN}.  Thus, by \eqref{eq:trident}, it suffices to consider
$$ \sum_{k=1}^N (\E_k - \E_{k-1}) \left( \frac{1 + r_k \mat{G}_{N,k}^2(z) c_k}{\frac{1}{\sqrt{N}} y_{kk} - z - r_k \mat{G}_{N,k}(z) c_k} \oindicator{\Omega_N \cap Q_N} \right). $$

By \eqref{def:alpha}, we have that
$$ \frac{\alpha_{N,k}(z)}{ \frac{1}{\sqrt{N}} y_{kk} - z - r_k \mat{G}_{N,k}(z) c_k } = 1 + \frac{z + \frac{\rho}{N} \tr \mat{G}_{N,k}(z) }{ \frac{1}{\sqrt{N}} y_{kk} - z - r_k \mat{G}_{N,k}(z) c_k }. $$
Therefore, on the event $\Omega_N \cap Q_N$, we obtain
\begin{align*}
	&\frac{1 + r_k \mat{G}_{N,k}^2(z) c_k}{\frac{1}{\sqrt{N}} y_{kk} - z - r_k \mat{G}_{N,k}(z) c_k} \\
	&\quad=\frac{ (1+ r_k\mat{G}_{N,k}^2(z) c_k)\left(z + \frac{\rho}{N} \tr \mat{G}_{N,k}(z)\right)^{-1}}{ \frac{1}{\sqrt{N}} y_{kk} - z - r_k \mat{G}_{N,k}(z) c_k } \alpha_{N,k}(z) \\
	&\qquad\qquad - (1+ r_k\mat{G}_{N,k}^2(z) c_k)\left(z + \frac{\rho}{N} \tr \mat{G}_{N,k}(z)\right)^{-1} \\
	&\quad= (1+ r_k\mat{G}_{N,k}^2(z) c_k)\left(z + \frac{\rho}{N} \tr \mat{G}_{N,k}(z)\right)^{-2} \alpha_{N,k}(z) \left[ \frac{\alpha_{N,k}(z)}{ \frac{1}{\sqrt{N}} y_{kk} - z - r_k \mat{G}_{N,k}(z) c_k } - 1 \right] \\
	&\qquad\qquad - (1+ r_k\mat{G}_{N,k}^2(z) c_k)\left(z + \frac{\rho}{N} \tr \mat{G}_{N,k}(z)\right)^{-1} \\
	&\quad= -\left(1+ \frac{\rho}{N} \tr \mat{G}_{N,k}^2(z)\right)\left(z + \frac{\rho}{N} \tr \mat{G}_{N,k}(z)\right)^{-2} \alpha_{N,k}(z) \\
	&\qquad\qquad - \left(r_k \mat{G}_{N,k}^2(z) c_k - \frac{\rho}{N} \tr \mat{G}_{N,k}^2(z)(z) \right) \alpha_{N,k}(z) \left(z + \frac{\rho}{N} \tr \mat{G}_{N,k}(z)\right)^{-2} \\
	&\qquad\qquad + (1+ r_k\mat{G}_{N,k}^2(z) c_k)\left(z + \frac{\rho}{N} \tr \mat{G}_{N,k}(z)\right)^{-2} \frac{ \alpha_{N,k}^2(z)}{ \frac{1}{\sqrt{N}} y_{kk} - z - r_k \mat{G}_{N,k}(z) c_k } \\
	&\qquad\qquad - (1+ r_k\mat{G}_{N,k}^2(z) c_k)\left(z + \frac{\rho}{N} \tr \mat{G}_{N,k}(z)\right)^{-1} \\
	&\quad=: -A_{N,k,1} - A_{N,k,2} + A_{N,k,3} - A_{N,k,4}. 
\end{align*}

We will show that the contributions from $A_{N,k,2}$ and $A_{N,k,3}$ are negligible.  Indeed, we have
\begin{align*}
	\E \left| \sum_{k=1}^N (\E_{k} - \E_{k-1}) A_{N,k,3} \oindicator{\Omega_N \cap Q_N} \right|^2 &\leq 4 \sum_{k=1}^N \E|A_{N,k,3}|^2 \oindicator{\Omega_N \cap Q_{N}} \\
		&\leq 4\frac{2^4}{c^6} \sum_{k=1}^N \E|1 + r_k \mat{G}_{N,k}(z) c_k |^2 |\alpha_{N,k}(z)|^4 \oindicator{\Omega_N \cap Q_{N}}. 
\end{align*}
By the triangle inequality, we have
\begin{align*}
	\left| 1 + r_k \mat{G}_{N,k}^2(z) c_k \right| &\leq 1 + \left| r_k \mat{G}_{N,k}^2(z) c_k - \frac{\rho}{N} \tr \mat{G}_{N,k}^2(z) \right| + \left| \frac{\rho}{N} \tr \mat{G}_{N,k}^2(z) \right|.
\end{align*}
Thus, we obtain
\begin{align*}
	&\sum_{k=1}^N \E |A_{N,k,3}|^2 \oindicator{\Omega_N \cap Q_N} \\
	&\leq C \sum_{k=1}^N \left( \E|\alpha_{N,k}(z)|^4 \oindicator{\Omega_{N,k}} + \E|\alpha_{N,k}(z)|^4 \left|r_k\mat{G}_{N,k}^2(z) c_k - \frac{\rho}{N} \tr \mat{G}_{N,k}^2(z) \right|^2 \oindicator{\Omega_{N,k}} \right) \\
	&\leq C \sum_{k=1}^N \bigg( \E|\alpha_{N,k}(z)|^4 \oindicator{\Omega_{N,k}} \\
	&\qquad\qquad + \sqrt{ \E |\alpha_{N,k}(z)|^8 \oindicator{\Omega_{N,k}} \E\left| r_k \mat{G}_{N,k}^2(z) c_k - \frac{\rho}{N} \tr \mat{G}_{N,k}^2(z) \right|^4 \oindicator{\Omega_{N,k}}  } \bigg) \\
	&= o(1).
\end{align*}
by the Cauchy-Schwarz inequality and Lemma \ref{lemma:alpha}.  Similarly, 
\begin{align*}
	\sum_{k=1}^N &\E |A_{N,k,2}|^2 \oindicator{\Omega_N \cap Q_N} \\
	&\leq C \sum_{k=1}^N \sqrt{ \E |\alpha_{N,k}(z)|^4 \oindicator{\Omega_{N,k}} \E \left| r_k \mat{G}_{N,k}^2(z) c_k - \frac{\rho}{N} \tr \mat{G}_{N,k}^2(z) \right|^4 \oindicator{\Omega_{N,k}} } \\
	&= o(1).
\end{align*}

Therefore, the problem reduces to studying 
\begin{align*}
	-\sum_{k=1}^N (\E_k - \E_{k-1}) (A_{N,k,1} + A_{N,k,4}) \oindicator{\Omega_{N} \cap Q_N}.
\end{align*}
We now show that we can replace the indicator function $\oindicator{\Omega_N \cap Q_N}$ with $\oindicator{\Omega_{N,k} \cap Q_{N,k}}$.  Indeed, on the event $\Omega_{N,k} \cap Q_{N,k}$, we have $|A_{N,k,1}| \leq C |\alpha_{N,k}(z)|$ and
\begin{align*}
	|A_{N,k,4}| \leq C \left( 1 + \left| r_k \mat{G}_{N,k}^2(z) c_k - \frac{\rho}{N} \tr \mat{G}_{N,k}^2(z) \right| \right).
\end{align*}
Thus, by the Cauchy-Schwarz inequality, Lemma \ref{lemma:alpha}, and Corollary \ref{cor:QN}, we obtain
\begin{align*}
	\sum_{k=1}^N &\E |A_{N,k,1} + A_{N,k,4}| |\oindicator{\Omega_{N} \cap Q_N} - \oindicator{\Omega_{N,k} \cap Q_{N,k}} | \\
		&\leq C \sum_{k=1}^N \E\left( |\alpha_{N,k}(z)| \oindicator{\Omega_{N,k}} + \left(1 + \left| r_k \mat{G}_{N,k}^2(z) c_k - \frac{\rho}{N} \tr \mat{G}_{N,k}^2(z) \right| \right) \oindicator{\Omega_{N,k}} \right) \oindicator{\Omega_{N}^C \cup Q_N^C} \\
		&\leq C N \sqrt{ \Prob( \Omega_{N}^C \cup Q_{N}^C) } = o(1). 
\end{align*}
Since 
$$ Z_{N,k}''(z) = -(\E_k - \E_{k-1}) (A_{N,k,1} + A_{N,k,4}) \oindicator{\Omega_{N,k} \cap Q_{N,k}}, $$
the proof of the lemma is complete.  
\end{proof}

\begin{lemma} \label{lemma:reduction2calc}
Under the assumptions of Theorem \ref{thm:main}, 
$$ Z_{N,k}''(z) = \E_k \left[ \frac{d}{dz} \left( \alpha_{N,k}(z) \left( z + \frac{\rho}{N} \tr \mat{G}_{N,k}(z) \right)^{-1} \right) \oindicator{\Omega_{N,k} \cap Q_{N,k}} \right]. $$
\end{lemma}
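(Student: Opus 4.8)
The plan is to derive the identity by an explicit differentiation followed by a short martingale-difference observation; there is no analytic subtlety, only careful bookkeeping with the $\sigma$-algebras $\mathcal{F}_k$. Fix $z \in \mathcal{C}$. On the event $\Omega_{N,k}\cap Q_{N,k}$ the matrix $\mat{X}_{N,k}-z\mat{I}$ is invertible and $z+\frac{\rho}{N}\tr\mat{G}_{N,k}(z)\neq 0$, so the expression being differentiated makes sense (off that event everything is multiplied by $\oindicator{\Omega_{N,k}\cap Q_{N,k}}=0$). Using $\frac{d}{dz}\mat{G}_{N,k}(z)=\mat{G}_{N,k}^2(z)$ together with the definition \eqref{def:alpha} of $\alpha_{N,k}(z)$, one has $\frac{d}{dz}\alpha_{N,k}(z)=-\big(r_k\mat{G}_{N,k}^2(z)c_k-\frac{\rho}{N}\tr\mat{G}_{N,k}^2(z)\big)$ and $\frac{d}{dz}\big(z+\frac{\rho}{N}\tr\mat{G}_{N,k}(z)\big)^{-1}=-\big(1+\frac{\rho}{N}\tr\mat{G}_{N,k}^2(z)\big)\big(z+\frac{\rho}{N}\tr\mat{G}_{N,k}(z)\big)^{-2}$. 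The product rule, combined with the splitting $1+r_k\mat{G}_{N,k}^2(z)c_k=\big(r_k\mat{G}_{N,k}^2(z)c_k-\frac{\rho}{N}\tr\mat{G}_{N,k}^2(z)\big)+\big(1+\frac{\rho}{N}\tr\mat{G}_{N,k}^2(z)\big)$, then yields the pointwise identity, on $\Omega_{N,k}\cap Q_{N,k}$,
\begin{align*}
	\frac{d}{dz}\Big(\alpha_{N,k}(z)\big(z+\tfrac{\rho}{N}\tr\mat{G}_{N,k}(z)\big)^{-1}\Big) &= -\big(1+\tfrac{\rho}{N}\tr\mat{G}_{N,k}^2(z)\big)\big(z+\tfrac{\rho}{N}\tr\mat{G}_{N,k}(z)\big)^{-2}\alpha_{N,k}(z) \\
	&\quad -\big(1+r_k\mat{G}_{N,k}^2(z)c_k\big)\big(z+\tfrac{\rho}{N}\tr\mat{G}_{N,k}(z)\big)^{-1} \\
	&\quad +\big(1+\tfrac{\rho}{N}\tr\mat{G}_{N,k}^2(z)\big)\big(z+\tfrac{\rho}{N}\tr\mat{G}_{N,k}(z)\big)^{-1}.
\end{align*}
The first two terms on the right are precisely the negative of the bracket defining $Z_{N,k}''(z)$; hence, multiplying by $\oindicator{\Omega_{N,k}\cap Q_{N,k}}$ and applying $\E_k-\E_{k-1}$,
\begin{align*}
	Z_{N,k}''(z) &= (\E_k-\E_{k-1})\Big[\tfrac{d}{dz}\big(\alpha_{N,k}(z)(z+\tfrac{\rho}{N}\tr\mat{G}_{N,k}(z))^{-1}\big)\oindicator{\Omega_{N,k}\cap Q_{N,k}}\Big] \\
	&\quad -(\E_k-\E_{k-1})\Big[\big(1+\tfrac{\rho}{N}\tr\mat{G}_{N,k}^2(z)\big)\big(z+\tfrac{\rho}{N}\tr\mat{G}_{N,k}(z)\big)^{-1}\oindicator{\Omega_{N,k}\cap Q_{N,k}}\Big].
\end{align*}

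It remains to show that the second line vanishes and that $\E_{k-1}$ annihilates the quantity in the first line. For the second line, the integrand is a function of $\mat{X}_{N,k}$ alone (the events $\Omega_{N,k}$, $Q_{N,k}$ and the resolvent $\mat{G}_{N,k}(z)$ depend only on $\mat{X}_{N,k}$), and any function of $\mat{X}_{N,k}$ is independent of the $k$-th row, column and diagonal entry of $\mat{X}_N$, so $\E_k$ and $\E_{k-1}$ agree on it and the second line is $0$. For the first line I would condition on $\mathcal{H}:=\mathcal{F}_{k-1}\vee\sigma(\mat{X}_{N,k})$, which contains $\mathcal{F}_{k-1}$ and with respect to which the triple $(r_k,c_k,y_{kk})$ is independent. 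Every factor in $\frac{d}{dz}\big(\alpha_{N,k}(z)(z+\frac{\rho}{N}\tr\mat{G}_{N,k}(z))^{-1}\big)\oindicator{\Omega_{N,k}\cap Q_{N,k}}$ other than $\alpha_{N,k}(z)$ and $\frac{d}{dz}\alpha_{N,k}(z)$ is $\mathcal{H}$-measurable and pulls out of $\E[\,\cdot\mid\mathcal{H}]$, leaving only $\E[y_{kk}]=0$ and $\E\big[r_k\mat{M}c_k-\frac{\rho}{N}\tr\mat{M}\big]=0$ for a deterministic matrix $\mat{M}=(m_{ij})$; the latter holds because the pairs $\big((r_k)_i,(c_k)_i\big)_{i\neq k}$ are independent and centered with $\E[(r_k)_i(c_k)_i]=\rho/N$ and $\E[(r_k)_i(c_k)_j]=0$ for $i\neq j$, so $\E[r_k\mat{M}c_k]=\frac{\rho}{N}\tr\mat{M}$. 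Hence $\E[\,\cdot\mid\mathcal{H}]=0$, so by the tower property $\E_{k-1}[\,\cdot\,]=0$, and therefore $(\E_k-\E_{k-1})[\,\cdot\,]=\E_k[\,\cdot\,]$, which is the assertion of the lemma.

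I do not expect a genuine obstacle: once the differentiation identity above is in hand, the result is pure bookkeeping. The only place where care is needed is the last step — correctly recognizing which quantities are measurable with respect to $\mat{X}_{N,k}$ (equivalently, with respect to $\mathcal{H}$) and evaluating the conditional first moments of $y_{kk}$ and of the bilinear forms $r_k\mat{G}_{N,k}^j(z)c_k$ for $j=1,2$ from the mean-zero, unit-variance and covariance-$\rho$ structure of the atom variables. (As elsewhere, the $\rho$ written here is the covariance $\hat{\rho}$ of the truncated off-diagonal atoms, consistent with the notation fixed after the reduction at the end of Section~\ref{sec:proof}.)
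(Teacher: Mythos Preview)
Your proof is correct and follows essentially the same route as the paper: both arguments compute the derivative via $\frac{d}{dz}\mat{G}_{N,k}(z)=\mat{G}_{N,k}^2(z)$, match the resulting expression against the definition of $Z_{N,k}''(z)$, and then use that any function of $\mat{X}_{N,k}$ alone satisfies $\E_k[\cdot]=\E_{k-1}[\cdot]$ together with $\E_{k-1}[\alpha_{N,k}(z)\,\cdot\,]=0$. Your introduction of $\mathcal{H}=\mathcal{F}_{k-1}\vee\sigma(\mat{X}_{N,k})$ makes the measurability reasoning more explicit than in the paper, but the substance is identical.
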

\begin{proof}
We observe that
$$ E_{k-1}\left[ \left(1+ \frac{\rho}{N} \tr \mat{G}_{N,k}^2(z)\right)\left(z + \frac{\rho}{N} \tr \mat{G}_{N,k}(z)\right)^{-2} \alpha_{N,k}(z) \oindicator{\Omega_{N,k} \cap Q_{N,k}} \right] = 0$$
and
\begin{align}
	&(E_k - E_{k-1}) \left[ (1+ r_k\mat{G}_{N,k}^2(z) c_k)\left(z + \frac{\rho}{N} \tr \mat{G}_{N,k}(z)\right)^{-1} \oindicator{\Omega_{N,k} \cap Q_{N,k}} \right] \label{def:ZNk2} \\
	&\qquad= E_k \left[ \left(z + \frac{\rho}{N} \tr \mat{G}_{N,k}(z)\right)^{-1} \left( r_k \mat{G}_{N,k}^2(z) c_k - \frac{\rho}{N} \tr \mat{G}_{N,k}^2(z) \right) \oindicator{\Omega_{N,k} \cap Q_{N,k}} \right]. \nonumber
\end{align}
Thus,
\begin{align*}
	Z_{N,k}''(z) = - E_{k} &\bigg[ \left(1+ \frac{\rho}{N} \tr \mat{G}_{N,k}^2(z)\right)\left(z + \frac{\rho}{N} \tr \mat{G}_{N,k}(z)\right)^{-2} \alpha_{N,k}(z) \oindicator{\Omega_{N,k} \cap Q_{N,k}} \\
		& \quad + \left(z + \frac{\rho}{N} \tr \mat{G}_{N,k}(z)\right)^{-1} \left( r_k \mat{G}_{N,k}^2(z) c_k - \frac{\rho}{N} \tr \mat{G}_{N,k}^2(z) \right) \oindicator{\Omega_{N,k} \cap Q_{N,k}} \bigg]. 
\end{align*}
As
$$ \frac{d}{dz} G_{N,k}(z) = G_{N,k}^2(z), $$
it follows that
\begin{align*}
	\frac{d}{dz} &\left( \alpha_{N,k}(z) \left( z + \frac{\rho}{N} \tr \mat{G}_{N,k}(z) \right)^{-1} \right) \\
		&\qquad= -\left(1+ \frac{\rho}{N} \tr \mat{G}_{N,k}^2(z)\right)\left(z + \frac{\rho}{N} \tr \mat{G}_{N,k}(z)\right)^{-2} \alpha_{N,k}(z) \\
		&\qquad\qquad - \left(r_k\mat{G}_{N,k}^2(z) c_k - \frac{\rho}{N} \tr \mat{G}_{N,k}^2(z) \right)\left(z + \frac{\rho}{N} \tr \mat{G}_{N,k}(z)\right)^{-1},
\end{align*}
and the proof of the lemma is complete.  
\end{proof}

\subsection{Proof of Theorem \ref{thm:finitedim}}
We now prove Theorem \ref{thm:finitedim}.  In view of Lemmas \ref{lemma:reduction} and \ref{lemma:reduction2}, it suffices to apply Theorem \ref{thm:clt} to $\mathcal{M}_N''$.  Thus, we now verify conditions \eqref{eq:clt_variance} and \eqref{eq:clt_lf} of Theorem \ref{thm:clt}.  

For the Lindeberg condition, we observe that
\begin{align*}
	\sum_{k=1}^N \E \left| \mathcal{M}_{N,k}'' \right|^2 \indicator{|\mathcal{M}_{N,k}''| \geq \eta} &\leq \frac{1}{\eta^2} \sum_{k=1}^N \E \left| \mathcal{M}_{N,k}'' \right|^4 \\
		&\leq \frac{CL^3}{\eta^2} \max_{l=1,\ldots, L} \left\{ |\alpha_l| +|\beta_l| \right\} \sum_{k=1}^N \sum_{l=1}^L \E \left| Z_{N,k}''(z_l)\right|^4
\end{align*}
for some absolute constant $C>0$.  By \eqref{def:ZNk2} and Lemma \ref{lemma:alpha}, we have
\begin{align*}
	\sum_{k=1}^N &\sum_{l=1}^L \E \left|Z_{N,k}''(z_l) \right|^4 \\
	&\leq C \sum_{k=1}^N \sum_{l=1}^L \left( \E|\alpha_{N,k}(z_l)|^4 \oindicator{\Omega_{N,k}} + \E\left| r_k \mat{G}_{N,k}^2(z_l) c_k - \frac{\rho}{N} \tr \mat{G}_{N,k}^2(z_l) \right|^4\oindicator{\Omega_{N,k}} \right) \\
	&= o(1),
\end{align*}
where $C>0$ is a constant depending only on $c$.  This verifies condition \eqref{eq:clt_lf} of Theorem \ref{thm:clt}.  

In order to verify condition \eqref{eq:clt_variance}, we consider terms of the form
$$ \sum_{k=1}^N \E_{k-1} \left[ Z_{N,k}''(z) Z_{N,k}''(w) \right] $$
for $z,w \in \mathcal{C}$.  
\begin{remark}
Here we will take advantage of the fact that $\overline{Z_{N,k}''(z)} = Z_{N,k}''(\bar{z})$ since $\mat{Y}_N$ contains real entries.  Moreover, by symmetry, $z \in \mathcal{C}$ if and only if $\bar{z} \in \mathcal{C}$.  
\end{remark}

By  Lemma \ref{lemma:reduction2calc} and Vitali's theorem (see for instance \cite[Lemma 2.14]{BSbook}), we only need to find the limit of
\begin{align*} 
	\sum_{k=1}^N \E_{k-1} &\Bigg[ \alpha_{N,k}(z) \left(z + \frac{\rho}{N} \tr \mat{G}_{N,k}(z) \right)^{-1} \oindicator{\Omega_{N,k} \cap Q_{N,k}} \\
		&\quad \times \E_{k} \left[ \alpha_{N,k}(w) \left(w + \frac{\rho}{N} \tr \mat{G}_{N,k}(w) \right)^{-1} \oindicator{\Omega_{N,k} \cap Q_{N,k}} \right] \Bigg]
\end{align*}
for $z,w \in \mathcal{C}$.  

We now note that we can replace $\left(z + \frac{\rho}{N} \tr \mat{G}_{N,k}(z) \right)^{-1}$ with $-m(z)$ using Lemma \ref{lemma:diagonal}.  Indeed, 
\begin{align*}
	\E &\Bigg| \sum_{k=1}^N \E_{k-1} \Bigg[ \alpha_{N,k}(z) \left(z + \frac{\rho}{N} \tr \mat{G}_{N,k}(z) \right)^{-1} \oindicator{\Omega_{N,k} \cap Q_{N,k}} \\
	&\qquad \times \E_k \left[ \alpha_{N,k}(w) \left(w + \frac{\rho}{N} \tr \mat{G}_{N,k}(w) \right)^{-1} \oindicator{\Omega_{N,k} \cap Q_{N,k}} \right] \Bigg] \\
	& + m(z) \sum_{k=1}^N \E_{k-1} \Bigg[ \alpha_{N,k}(z) \oindicator{\Omega_{N,k} \cap Q_{N,k}} \\
	&\qquad \times \E_k \left[ \alpha_{N,k}(w) \left(w + \frac{\rho}{N} \tr \mat{G}_{N,k}(w) \right)^{-1} \oindicator{\Omega_{N,k} \cap Q_{N,k}} \right] \Bigg] \Bigg| \\
	&\leq \frac{2}{c} \sum_{k=1}^N \E \Bigg| \E_{k} \left| \alpha_{N,k}(w) \oindicator{\Omega_{N,k} \cap Q_{N,k}} \right| \alpha_{N,k}(z) \oindicator{\Omega_{N,k} \cap Q_{N,k}} \\
	&\qquad \times \left[ \left( z + \frac{\rho}{N} \tr \mat{G}_{N,k}(z) \right)^{-1} \oindicator{\Omega_{N,k} \cap Q_{N,k}} + m(z) \right] \Bigg| \\
	&\leq \frac{2}{c} \sum_{k=1}^N \left( \E|\alpha_{N,k}(w)|^4 \oindicator{\Omega_{N,k}} \right)^{1/4} \left( \E|\alpha_{N,k}(z) |^4 \oindicator{\Omega_{N,k}} \right)^{1/4} \\
	&\qquad \times \left( \E \left| \left( z + \frac{\rho}{N} \tr \mat{G}_{N,k}(z) \right)^{-1} \oindicator{\Omega_{N,k} \cap Q_{N,k}} + m(z) \right|^2 \right)^{1/2} \\
	&= o(1)
\end{align*}
by the generalized H\"{o}lder inequality and Lemmas \ref{lemma:alpha} and \ref{lemma:diagonal}.  Similarly, we obtain
\begin{align*}
	\E &\Bigg| \sum_{k=1}^N \E_{k-1} \left[ \alpha_{N,k}(w) \left(w + \frac{\rho}{N} \tr \mat{G}_{N,k}(w) \right)^{-1} \oindicator{\Omega_{N,k} \cap Q_{N,k}}  \E_{k} \left[ \alpha_{N,k}(z) \oindicator{\Omega_{N,k} \cap Q_{N,k}} \right] \right] \\
	& + m(w) \sum_{k=1}^N \E_{k-1} \left[ \alpha_{N,k}(w) \oindicator{\Omega_{N,k} \cap Q_{N,k}}  \E_{k} \left[ \alpha_{N,k}(z) \oindicator{\Omega_{N,k} \cap Q_{N,k}} \right] \right] \Bigg| \\
	& = o(1).
\end{align*}
Thus, since $m(z)$ is uniformly bounded on $\mathcal{C}$, we only need to find the limit of 
\begin{align*}
	m(z) &m(w) \beta_{N}(z,w) \\
	&:= m(z) m(w) \sum_{k=1}^N \E_{k-1} \left[ \E_{k} \left( \alpha_{N,k}(z) \oindicator{\Omega_{N,k} \cap Q_{N,k}} \right) \E_{k} \left( \alpha_{N,k}(w) \oindicator{\Omega_{N,k} \cap Q_{N,k}} \right) \right]. 
\end{align*}
In particular, in order to complete the proof of Theorem \ref{thm:finitedim}, it suffices to show that $\beta_N(z,w)$ converges in probability to $\beta(z,w)$ as $N \to \infty$, where $\beta(z,w)$ is defined in \eqref{def:beta}.  

By definition of $\beta_{N}$, we have
\begin{align}
	\beta_N(z,w) = \sum_{k=1}^N &\Bigg( \frac{\sigma^2}{N} \E_{k-1}\left[ \left( \E_{k} \oindicator{\Omega_{N,k} \cap Q_{N,k}} \right)^2 \right] \label{eq:betaN} \\
	&+ \E_{k-1} \bigg[ \E_{k} \left[ \left(r_k \mat{G}_{N,k}(z) c_k - \frac{\rho}{N} \tr \mat{G}_{N,k}(z) \right) \oindicator{\Omega_{N,k} \cap Q_{N,k}} \right] \nonumber \\
	&\qquad\qquad \times \E_{k} \left[ \left(r_k \mat{G}_{N,k}(w) c_k - \frac{\rho}{N} \tr \mat{G}_{N,k}(w) \right) \oindicator{\Omega_{N,k} \cap Q_{N,k}} \right] \bigg] \Bigg). \nonumber
\end{align}
For the first term, we note that
\begin{align*}
	\E \left| \sum_{k=1}^N \frac{\sigma^2}{N} \E_{k-1}\left[ \left( \E_{k} \oindicator{\Omega_{N,k} \cap Q_{N,k}}  \right)^2 \right]- \sigma^2 \right| &\leq 2 \frac{\sigma^2}{N} \sum_{k=1}^N \E \left| \E_{k} \oindicator{\Omega_{N,k} \cap Q_{N,k}} -1 \right| \\
		&\leq 2 \frac{\sigma^2}{N} \sum_{k=1}^N \Prob( \Omega_{N,k}^C \cup Q_{N,k}^C) \\
		&= o(1)
\end{align*}
by \eqref{eq:hatONC} and Corollary \ref{cor:QN}.  

We now consider the second term on the right hand side of \eqref{eq:betaN}.  Define
$$ b_{kij}(z) := \E_{k} \left[ \left( \mat{G}_{N,k}(z) \right)_{ij} \oindicator{\Omega_{N,k} \cap Q_{N,k}} \right]. $$
Then
\begin{align*}
	 \E_{k} &\left[ \left(r_k \mat{G}_{N,k}(z) c_k - \frac{\rho}{N} \tr \mat{G}_{N,k}(z) \right) \oindicator{\Omega_{N,k} \cap Q_{N,k}} \right] \\
	 &\qquad\qquad= \sum_{i,j=1}^{k-1} \left(r_k\right)_i b_{kij}(z) \left(c_k\right)_j  - \frac{\rho}{N} \sum_{i=1}^{k-1} b_{kii}(z),
\end{align*}
where $(r_k)_i$ denotes the $i$-th entry of $r_k$ and $(c_k)_j$ denotes the $j$-th entry of $c_k$.  Thus, we conclude that
\begin{align*}
	\beta_{N}(z,w) &= \sigma^2 + \frac{\rho^2}{N^2} \sum_{k=1}^N \sum_{i,j=1}^{k-1} b_{kij}(z) b_{kji}(w) + \frac{1}{N^2} \sum_{k=1}^N \sum_{i,j=1}^{k-1} b_{kij}(z) b_{kij}(w) \\
	&\qquad + \frac{\E[\xi_1^2 \xi_2^2] - 2 \rho^2 - 1}{N^2} \sum_{k=1}^N \sum_{i=1}^{k-1} b_{kii}(z) b_{kii}(w) + o(1) \\
	&=: \sigma^2 + \rho^2 \beta_{N,1}(z,w) + \beta_{N,2}(z,w) + \left(\E[\xi_1^2 \xi_2^2] - 2 \rho^2 - 1\right) \beta_{N,3}(z,w) + o(1).
\end{align*}
Here $o(1)$ denotes a term which converges to zero in probability.  From Lemma \ref{lemma:diagonal}, we obtain
$$ \E \left| \beta_{N,3}(z,w) - \frac{1}{2}m(z) m(w) \right| = o(1). $$
Thus, we find that
\begin{align}
	\beta_{N}(z,w) &= \sigma^2 + \rho^2 \beta_{N,1}(z,w) + \beta_{N,2}(z,w) \nonumber \\
	&\qquad\qquad + \left(\frac{\E[\xi_1^2 \xi_2^2] - 2 \rho^2 - 1}{2}\right) m(z) m(w) + o(1). \label{eq:betaNreduce}
\end{align}
It remains to compute the limit of $\beta_{N,1}(z,w)$ and $\beta_{N,2}(z,w)$.

\subsection{Limit of $\beta_{N,1}$ and $\beta_{N,2}$}

In order to compute the limit of $\beta_{N,1}$ and $\beta_{N,2}$, we will need the following decomposition.  Let $e_j$ ($j=1, \ldots, k-1, k+1, \ldots, N$) be the  $(N-1)$-vector whose $j$-th (or $(j-1)$-th) element is $1$ and others are $0$ if $j < k$ (or $j > k$ correspondingly).  Thus, for $i,j \neq k$, 
$$ e_i^\mathrm{T} \mat{G}_{N,k}(z) e_j = \left( \mat{G}_{N,k}(z) \right)_{ij}. $$
Clearly, $e_j$ also depends on $k$.  We now fix $1 \leq k \leq N$.  We will write $e_j$ and not denote the dependence on $k$.  We note that all of the constants in the bounds below are independent of $k$.  

For $i,j$ different from $k$, define
$$ \mat{X}_{N,k}^{(i,j)} := \mat{X}_{N,k} - \tau_{ij} \left( x_{ij} e_i e_j^\mathrm{T} + x_{ji} e_j e_i^\mathrm{T} \right), $$
where 
$$ \tau_{ij} := \left\{
	\begin{array}{lr}
		1, & \text{ if } i \neq j\\
		1/2, & \text{ if } i = j.
	\end{array} \right. $$
We also define
$$ \mat{G}_{N,k}^{(i,j)}(z) := \left( \mat{X}_{N,k}^{(i,j)} - z \mat{I} \right)^{-1}. $$
We will need the following lemmata.  

\begin{lemma} \label{lemma:ijbnd}
For $N$ sufficiently large (such that $\eps_N \leq c/16$),
$$ \sup_{z \in \mathcal{C}} \sup_{1 \leq k \leq N} \sup_{i,j \neq k} \| \mat{G}_{N,k}^{(i,j)}(z) \oindicator{\Omega_{N,k}} \| \leq \frac{2}{c}. $$
\end{lemma}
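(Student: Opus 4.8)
The plan is to view $\mat{X}_{N,k}^{(i,j)}$ as a perturbation of $\mat{X}_{N,k}$ of operator norm $O(\eps_N)$ and then invoke Weyl's inequality, in the spirit of the proof of Lemma~\ref{lemma:lsv}. First I would note that, since throughout this section we work with the truncated matrix, the almost-sure bound in Lemma~\ref{lemma:truncation}, item~\eqref{item:asbnd}, shows that every entry $x_{ab}$ of $\mat{X}_{N,k}$ satisfies $|x_{ab}| \le 4\eps_N$ a.s. (each is $N^{-1/2}$ times a variable bounded by $4\eps_N\sqrt{N}$), with a constant depending on neither $N$, $k$, nor the indices. Consequently the rank-at-most-two perturbation
\[
	\mat{X}_{N,k}^{(i,j)} - \mat{X}_{N,k} = -\tau_{ij}\bigl( x_{ij} e_i e_j^\mathrm{T} + x_{ji} e_j e_i^\mathrm{T}\bigr)
\]
has operator norm at most $4\eps_N$ almost surely: for $i\neq j$ this matrix, restricted to the span of $e_i$ and $e_j$, has singular values $|x_{ij}|$ and $|x_{ji}|$, whereas for $i=j$ it reduces to $x_{ii} e_i e_i^\mathrm{T}$ of norm $|x_{ii}|$; in both cases $\tau_{ij}\le 1$ and each entry is at most $4\eps_N$.

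Next I would combine this with the definition of $\Omega_{N,k}$. On $\Omega_{N,k}$ one has $\sigma_{N-1}(\mat{X}_{N,k}-z\mat{I}) \ge c$ for every $z\in\mathcal{C}$, and $\mat{X}_{N,k}^{(i,j)}-z\mat{I}$ is $(N-1)\times(N-1)$, so Weyl's inequality for singular values gives
\[
	\sigma_{N-1}\!\left(\mat{X}_{N,k}^{(i,j)}-z\mat{I}\right) \ge \sigma_{N-1}\!\left(\mat{X}_{N,k}-z\mat{I}\right) - \left\|\mat{X}_{N,k}^{(i,j)} - \mat{X}_{N,k}\right\| \ge c - 4\eps_N
\]
almost surely. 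Once $N$ is large enough that $\eps_N \le c/16$ the right-hand side is at least $3c/4 > c/2$, so on $\Omega_{N,k}$ the matrix $\mat{X}_{N,k}^{(i,j)}-z\mat{I}$ is invertible and $\|\mat{G}_{N,k}^{(i,j)}(z)\| = \sigma_{N-1}(\mat{X}_{N,k}^{(i,j)}-z\mat{I})^{-1} \le 2/c$. Multiplying by $\oindicator{\Omega_{N,k}}$ and taking the supremum over $z\in\mathcal{C}$, $1\le k\le N$, and $i,j\neq k$ --- all of which are controlled by the same deterministic estimates --- then yields the stated bound.

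I do not expect a serious obstacle; the only point requiring a little care is that the entrywise estimate $|x_{ab}|\le 4\eps_N$ must hold simultaneously for all entries of all the submatrices $\mat{X}_{N,k}$, with a constant independent of $N$, $k$, and $a,b$. This is immediate from the almost-sure truncation bound of Lemma~\ref{lemma:truncation}, so the argument is essentially routine.
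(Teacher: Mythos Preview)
Your proposal is correct and follows essentially the same route as the paper: both argue via Weyl's inequality for singular values, using the truncation bound from Lemma~\ref{lemma:truncation} to control the rank-at-most-two perturbation $\mat{X}_{N,k}^{(i,j)}-\mat{X}_{N,k}$, and then conclude on $\Omega_{N,k}$ that $\sigma_{N-1}(\mat{X}_{N,k}^{(i,j)}-z\mat{I})\ge c/2$. The only difference is cosmetic: the paper bounds $\|x_{ij}e_ie_j^{\mathrm{T}}+x_{ji}e_je_i^{\mathrm{T}}\|$ by $8\eps_N$ via the triangle inequality, whereas you compute the singular values directly to get $4\eps_N$, which is sharper but not needed.
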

\begin{proof}
By Weyl's inequality, we have
$$ \left| \sigma_{N-1}\left( \mat{X}_{N,k} - z \mat{I} \right) - \sigma_{N-1} \left( \mat{X}_{N,k}^{(i,j)} - z \mat{I} \right) \right| \leq \| x_{ij} e_i e_j^\mathrm{T} + x_{ji} e_j e_i^\mathrm{T}, \| $$
and hence, from Lemma \ref{lemma:truncation}, we obtain
$$ \left| \sigma_{N-1}\left( \mat{X}_{N,k} - z \mat{I} \right) - \sigma_{N-1} \left( \mat{X}_{N,k}^{(i,j)} - z \mat{I} \right) \right| \leq 8 \eps_N. $$
Thus, on the event $\Omega_{N,k}$, we conclude that 
$$ \sigma_{N-1} \left( \mat{X}_{N,k}^{(i,j)} - z \mat{I} \right) \geq c - 8 \eps_N \geq \frac{c}{2} $$
for $8 \eps_N \leq c/2$.  Since this bound holds uniformly for $z \in \mathcal{C}$, $1 \leq k \leq N$, and $i,j \neq k$, the proof of the lemma is complete.  
\end{proof}

\begin{lemma} \label{lemma:ijdiagonal}
One has
$$ \sup_{z \in \mathcal{C}} \sup_{1 \leq k \leq N} \sup_{i,j \neq k} \sup_{l \neq k} \E \left|  e_l^\mathrm{T} \mat{G}_{N,k}^{(i,j)}(z) e_l \oindicator{\Omega_{N,k} \cap Q_{N,k}} - m(z) \right|^2 = o(1) $$
and
$$ \sup_{z \in \mathcal{C}} \sup_{1 \leq k \leq N} \sup_{i,j \neq k} \sup_{l \neq k} \E \left|  e_l^\mathrm{T} \mat{G}_{N,k}^{(i,j)}(z) e_l - m(z) \right|^2 \oindicator{\Omega_{N,k} \cap Q_{N,k}} = o(1). $$
\end{lemma}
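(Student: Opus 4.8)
The plan is to realize $\mat{G}_{N,k}^{(i,j)}(z)$ as a rank-at-most-two perturbation of $\mat{G}_{N,k}(z)$ and then to invoke Lemma \ref{lemma:diagonal}. First I would apply the resolvent identity: since $\mat{X}_{N,k} - \mat{X}_{N,k}^{(i,j)} = \tau_{ij}\left( x_{ij} e_i e_j^\mathrm{T} + x_{ji} e_j e_i^\mathrm{T} \right)$,
\[
	\mat{G}_{N,k}^{(i,j)}(z) - \mat{G}_{N,k}(z) = \tau_{ij}\, \mat{G}_{N,k}(z) \left( x_{ij} e_i e_j^\mathrm{T} + x_{ji} e_j e_i^\mathrm{T} \right) \mat{G}_{N,k}^{(i,j)}(z).
\]
Contracting with $e_l$ on both sides and using $\tau_{ij} \le 1$, the quantity $e_l^\mathrm{T}\left( \mat{G}_{N,k}^{(i,j)}(z) - \mat{G}_{N,k}(z) \right) e_l$ becomes a sum of two terms, each of the form $x_{ab}\left( e_l^\mathrm{T} \mat{G}_{N,k}(z) e_a \right)\left( e_b^\mathrm{T} \mat{G}_{N,k}^{(i,j)}(z) e_l \right)$ with $\{a,b\} = \{i,j\}$.

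Next I would bound these terms deterministically on the event $\Omega_{N,k}$. On $\Omega_{N,k}$ one has $\|\mat{G}_{N,k}(z)\| \le 1/c$, so $\left| e_l^\mathrm{T} \mat{G}_{N,k}(z) e_a \right| \oindicator{\Omega_{N,k}} \le 1/c$; by Lemma \ref{lemma:ijbnd}, $\left| e_b^\mathrm{T} \mat{G}_{N,k}^{(i,j)}(z) e_l \right| \oindicator{\Omega_{N,k}} \le 2/c$; and by the almost sure bounds of Lemma \ref{lemma:truncation}, the entries of $\mat{X}_{N,k}$ satisfy $|x_{ab}| \le 4\eps_N$. Since all of these estimates are uniform in $z \in \mathcal{C}$, in $1 \le k \le N$, and in the indices $i, j, l \ne k$, I obtain, for $N$ sufficiently large,
\[
	\sup_{z \in \mathcal{C}}\, \sup_{1 \le k \le N}\, \sup_{i,j \ne k}\, \sup_{l \ne k}\, \left| e_l^\mathrm{T} \mat{G}_{N,k}^{(i,j)}(z) e_l - e_l^\mathrm{T} \mat{G}_{N,k}(z) e_l \right| \oindicator{\Omega_{N,k}} \le \frac{16}{c^2}\, \eps_N,
\]
which tends to $0$ since $\eps_N = N^{-\eps}$ for some $\eps > 0$.

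Finally I would combine this bound with Lemma \ref{lemma:diagonal}. Multiplying through by $\oindicator{Q_{N,k}} \le 1$ and squaring, the previous display gives $\E\left| e_l^\mathrm{T} \mat{G}_{N,k}^{(i,j)}(z) e_l\, \oindicator{\Omega_{N,k} \cap Q_{N,k}} - e_l^\mathrm{T} \mat{G}_{N,k}(z) e_l\, \oindicator{\Omega_{N,k} \cap Q_{N,k}} \right|^2 = O(\eps_N^2)$ uniformly in all the indices; together with the triangle inequality in $L^2$ and \eqref{eq:Giim} of Lemma \ref{lemma:diagonal}, this yields the first assertion. The second assertion follows in the same way from \eqref{eq:Giim2}, or from the first assertion, since $m(z)$ is uniformly bounded on $\mathcal{C}$ and $\Omega_{N,k} \cap Q_{N,k}$ holds with overwhelming probability (by \eqref{eq:hatONC} and Corollary \ref{cor:QN}). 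There is no serious obstacle in this argument; the only points demanding care are that the resolvent bound for $\mat{G}_{N,k}^{(i,j)}(z)$, as opposed to $\mat{G}_{N,k}(z)$, must be taken from Lemma \ref{lemma:ijbnd}, and that every estimate is simultaneously uniform over $z \in \mathcal{C}$ and over the four indices $i, j, l, k$.
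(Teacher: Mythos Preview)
Your proposal is correct and follows essentially the same approach as the paper: use the resolvent identity together with the truncation bound $|x_{ab}| \le 4\eps_N$ and the operator-norm bounds from Lemma~\ref{lemma:ijbnd} to show that $e_l^\mathrm{T}\bigl(\mat{G}_{N,k}^{(i,j)}(z) - \mat{G}_{N,k}(z)\bigr)e_l$ is $O(\eps_N)$ deterministically on $\Omega_{N,k}$, then conclude via Lemma~\ref{lemma:diagonal}. The paper's proof is slightly terser (it bounds the difference directly by a product of operator norms rather than expanding into two scalar terms), but the content is the same.
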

\begin{proof}
By the resolvent identity, we observe that
$$ \left| e_l^\mathrm{T} \left( \mat{G}_{N,k}(z) - \mat{G}_{N,k}^{(i,j)} \right) e_l \right| \leq \| \mat{G}_{N,k}(z) \| \| \mat{G}_{N,k}^{(i,j)} \| \|x_{ij} e_i e_j^\mathrm{T} + x_{ji} e_i e_j^\mathrm{T} \|. $$
Thus, by Lemmas \ref{lemma:truncation} and \ref{lemma:ijbnd}, we have
$$ \left| e_l^\mathrm{T} \left( \mat{G}_{N,k}(z) - \mat{G}_{N,k}^{(i,j)} \right) e_l \right| \leq \frac{16 \eps_N}{c^2} $$
on the event $\Omega_{N,k}$.  The claim now follows from Lemma \ref{lemma:diagonal}.  
\end{proof}

We now consider the elements $b_{kl_1 l_2}(z)$.  By the resolvent identity, we have
$$ z \mat{G}_{N,k}(z) = -\mat{I} + \sum_{i,j \neq k} x_{ij} e_i e_j^\mathrm{T} \mat{G}_{N,k}(z). $$
Again by the resolvent identity, we note that
\begin{equation} \label{eq:resolvdiff}
	\mat{G}_{N,k}(z) - \mat{G}_{N,k}^{(i,j)}(z) = - \mat{G}_{N,k}^{(i,j)}(z) \tau_{ij}(x_{ij} e_i e_j^\mathrm{T} + x_{ji} e_j e_i^\mathrm{T}) \mat{G}_{N,k}(z). 
\end{equation}
Thus, we obtain the decomposition 
\begin{align*}
	z \mat{G}_{N,k}(z) &= - \mat{I} + \sum_{i,j \neq k} x_{ij} e_i e_j^\mathrm{T} \mat{G}_{N,k}^{(i,j)}(z) \\
		&\qquad- \sum_{i,j\neq k} x_{ij} e_i e_j^\mathrm{T} \mat{G}_{N,k}^{(i,j)}(z) \tau_{ij} (x_ij e_i e_j^\mathrm{T} + x_{ji} e_j e_i^\mathrm{T}) \mat{G}_{N,k}(z) \\
	&= -\mat{I} + \mat{A}_{Nk}(z) + \mat{C}_{N,k}(z) + \mat{D}_{N,k}(z) + \mat{E}_{N,k}(z) + \mat{F}_{N,k}(z), 
\end{align*}
where
\begin{align*}
	\mat{A}_{N,k}(z) &:= \sum_{i,j \neq k} x_{ij} e_i e_j^\mathrm{T} \mat{G}_{N,k}^{(i,j)}(z), \\
	\mat{C}_{N,k}(z) &:= -\sum_{i,j \neq k} \tau_{ij} x_{ij} x_{ji} \left( e_j^\mathrm{T} \mat{G}_{N,k}^{(i,j)}(z) e_j - m(z) \right) e_i e_i^\mathrm{T} \mat{G}_{N,k}(z), \\
	\mat{D}_{N,k}(z) &:= - \frac{\rho}{N} m(z) \sum_{i,j \neq k} e_i e_i^\mathrm{T} \mat{G}_{N,k}(z) \tau_{ij} \\
		&= -m(z) \frac{\rho}{N} (N - 3/2) \sum_{i \neq k} e_i e_i^\mathrm{T} \mat{G}_{N,k}(z), \\
	\mat{E}_{N,k}(z) &:= - m(z) \sum_{i,j \neq k} \tau_{ij} \left( x_{ij} x_{ji} - \frac{\rho}{N} \right) e_i e_i^\mathrm{T} \mat{G}_{N,k}(z), \\
	\mat{F}_{N,k}(z) &:= - \sum_{i,j \neq k} \tau_{ij} x_{ij}^2 e_j^\mathrm{T} \mat{G}_{N,k}^{(i,j)}(z) e_i e_i e_j^\mathrm{T} \mat{G}_{N,k}(z).
\end{align*}

We will now show that the contributions from $\mat{C}_{N,k}(z), \mat{E}_{N,k}(z)$, and $\mat{F}_{N,k}(z)$ are negligible.  Throughout, we will take advantage of the fact that the norm of a matrix is not less than that of its sub-matrices.  So, for instance, 
\begin{equation} \label{eq:subbnd1}
	\sup_{l_1 < k} \left| \sum_{l_2 < k} e_{l_1}^\mathrm{T} \mat{G}_{N,k}(z)  e_{l_2} e_{l_2}^\mathrm{T} \E_{k-1} \left[ \mat{G}_{N,k}(w) \oindicator{\Omega_{N,k} \cap Q_{N,k}} \right] e_{l_1} \right| \oindicator{\Omega_{N,k} \cap Q_{N,k}} \leq \frac{1}{c^2} 
\end{equation}
and
\begin{equation} \label{eq:subbnd2}
	\sup_{l_1 < k} \left| \sum_{l_2 < k} e_{l_1}^\mathrm{T} \mat{G}_{N,k}(z) e_{l_2} e_{l_1}^\mathrm{T} \E_{k-1} \left[ \mat{G}_{N,k}(w) \oindicator{\Omega_{N,k} \cap Q_{N,k}} \right] e_{l_2} \right| \oindicator{\Omega_{N,k} \cap Q_{N,k}} \leq \frac{1}{c^2}. 
\end{equation}

By \eqref{eq:subbnd1}, Lemma \ref{lemma:ijdiagonal}, and the Cauchy-Schwarz inequality, we have
\begin{align*}
	\frac{1}{N}& \sum_{l_1 < k} \E \left| \sum_{l_2 < k} e_{l_1}^{\mathrm{T}} \E_{k-1} \left[ \mat{C}_{N,k}(z) \oindicator{\Omega_{N,k} \cap Q_{N,k}} \right] e_{l_2} e_{l_2}^\mathrm{T} \E_{k-1} \left[ \mat{G}_{N,k}(w) \oindicator{\Omega_{N,k} \cap Q_{N,k}} \right] e_{l_1} \right| \\
	&\leq \frac{1}{Nc^2} \sum_{l_1 < k} \sum_{j \neq k} \E \left| x_{l_1 j} x_{j l_1} \oindicator{\Omega_{N,k} \cap Q_{N,k}} \left( e_j^\mathrm{T} \mat{G}_{N,k}^{(i,j)}(z) e_j - m(z) \right) \right| \\
	&\leq \frac{C}{N^2 c^2} \sum_{l_1 < k} \sum_{j \neq k} \sqrt{ \E \left| e_j \mat{G}_{N,k}^{(i,j)}(z) e_j - m(z) \right|^2 \oindicator{\Omega_{N,k}} } \\
	&= o(1)
\end{align*}
uniformly in $k$, where the constant $C>0$ depends only on the joint moment $\E|\xi_1^2 \xi_2^2|$.  

Similarly, by applying \eqref{eq:subbnd2} instead of \eqref{eq:subbnd1}, we find that
\begin{align*}
\frac{1}{N}& \sum_{l_1 < k} \E \left| \sum_{l_2 < k} e_{l_1}^{\mathrm{T}} \E_{k-1} \left[ \mat{C}_{N,k}(z) \oindicator{\Omega_{N,k} \cap Q_{N,k}} \right] e_{l_2} e_{l_1}^\mathrm{T} \E_{k-1} \left[ \mat{G}_{N,k}(w) \oindicator{\Omega_{N,k} \cap Q_{N,k}} \right] e_{l_2} \right| = o(1)
\end{align*}
uniformly in $k$.  

For the term involving $\mat{E}_{N,k}(z)$, we have
\begin{align*}
	\frac{1}{N} &\sum_{l_1 < k} \E \left| \sum_{l_2 < k} e_{l_1}^\mathrm{T} \E_{k-1} \left[ \mat{E}_{N,k}(z) \oindicator{\Omega_{N,k} \cap Q_{N,k}} \right] e_{l_2} e_{l_2}^\mathrm{T} \E_{k-1} \left[ \mat{G}_{N,k}(w) \oindicator{\Omega_{N,k} \cap Q_{N,k}} \right] e_{l_1} \right| \\
	&\leq \frac{C}{N c^2} \sum_{l_1 < k} \E \left| \sum_{j \neq k} \tau_{l_1 j} \left( x_{l_1 j} x_{j l_1} - \frac{\rho}{N} \right) \right| \\
	&\leq \frac{C}{N c^2} \sum_{l_1 < k} \sqrt{ \E \left| \sum_{j \neq k, l_1} \tau_{l_1 j} \left( x_{l_1 j} x_{j l_1} - \frac{\rho}{N} \right) \right|^2} +o(1)
\end{align*}
uniformly in $k$, where $C = \sup_{z \in \mathcal{C}} |m(z)| < \infty$ since $m(z)$ is continuous outside $\mathcal{E}_{\rho}$.  Since $\{(x_{ij}, x_{ji}) : 1 \leq i < j \leq N\}$ is a collection of independent random pairs, we have
\begin{align*}
	\E \left| \sum_{j \neq k, l_1} \tau_{l_1 j} \left( x_{l_1 j} x_{j l_1} - \frac{\rho}{N} \right) \right|^2 = \sum_{j \neq k, l_1} \left( \E(x_{l_1 j}^2 x_{j l_1}^2) - \frac{\rho^2}{N^2} \right) = O \left( \frac{1}{N} \right)
\end{align*}
uniformly in $k, l_1$.  Thus, we conclude that 
$$ \frac{1}{N} \sum_{l_1 < k} \E \left| \sum_{l_2 < k} e_{l_1}^\mathrm{T} \E_{k-1} \left[ \mat{E}_{N,k}(z) \oindicator{\Omega_{N,k} \cap Q_{N,k}} \right] e_{l_2} e_{l_2}^\mathrm{T} \E_{k-1} \left[ \mat{G}_{N,k}(w) \oindicator{\Omega_{N,k} \cap Q_{N,k}} \right] e_{l_1} \right| = o(1) $$
uniformly in $k$.  

Similarly, by applying \eqref{eq:subbnd2}, we obtain
$$ \frac{1}{N} \sum_{l_1 < k} \E \left| \sum_{l_2 < k} e_{l_1}^\mathrm{T} \E_{k-1} \left[ \mat{E}_{N,k}(z) \oindicator{\Omega_{N,k} \cap Q_{N,k}} \right] e_{l_2} e_{l_1}^\mathrm{T} \E_{k-1} \left[ \mat{G}_{N,k}(w) \oindicator{\Omega_{N,k} \cap Q_{N,k}} \right] e_{l_2} \right| = o(1) $$
uniformly in $k$.  

Finally, for the $\mat{F}_{N,k}(z)$ term, we have
\begin{align*}
	\sum_{l_2 < k} &e_{l_1}^\mathrm{T} \E_{k-1} \left[ \mat{F}_{N,k}(z) \oindicator{\Omega_{N,k} \cap Q_{N,k}} \right] e_{l_2} e_{l_2}^\mathrm{T} \E_{k-1} \left[ \mat{G}_{N,k}(w) \oindicator{\Omega_{N,k} \cap Q_{N,k}} \right] e_{l_1} \\
	&= - \sum_{j \neq k} \tau_{l_1 j} \E_{k-1} \left[ x_{l_1 j}^2 \oindicator{\Omega_{N,k} \cap Q_{N,k}} e_j^\mathrm{T} \mat{G}_{N,k}^{(l_1, j)}(z) e_{l_1} e_j^\mathrm{T} \mat{H}_{N,k}(z,w) e_{l_1} \right],
\end{align*}
where 
$$ H_{N,k}(z,w) := \sum_{l_2 < k} \mat{G}_{N,k}(z) e_{l_2} e_{l_2}^\mathrm{T} \E_{k-1} \left[ \mat{G}_{N,k}(w) \oindicator{\Omega_{N,k} \cap Q_{N,k}} \right]. $$
We observe that $\mat{H}_{N,k}(z,w)$ is the product of the sub-matrices formed from the first $k-1$ columns of $\mat{G}_{N,k}(z)$ and the first $k-1$ rows of $\E_{k-1}[\mat{G}_{N,k}(w) \oindicator{\Omega_{N,k} \cap Q_{N,k}}]$.  Thus, $\| \mat{H}_{N,k}(z,w) \| \leq \frac{1}{c^2}$ on $\Omega_{N,k}$.  Therefore, by Lemma \ref{lemma:ijbnd} and the Cauchy-Schwarz inequality, we have
\begin{align*}
	\frac{1}{N} &\E \left| \sum_{l_1, l_2 < k} e_{l_1}^\mathrm{T} \E_{k-1} \left[ \mat{F}_{N,k}(z) \oindicator{\Omega_{N,k} \cap Q_{N,k}} \right] e_{l_2} e_{l_2}^\mathrm{T} \E_{k-1} \left[ \mat{G}_{N,k}(w) \oindicator{\Omega_{N,k} \cap Q_{N,k}} \right] e_{l_1} \right| \\
	&\leq \frac{1}{N} \E \left| \sum_{l_1 < k} \sum_{ j \neq k} \tau_{l_1 j} x_{l_1 j}^2 \oindicator{\Omega_{N,k} \cap Q_{N,k}} e_j^\mathrm{T} \mat{G}_{N,k}^{(l_1,j)}(z) e_{l_1} e_j^\mathrm{T} \mat{H}_{N,k}(z,w) e_{l_1} \right| \\
	&\leq \frac{2}{Nc} \sqrt{\sum_{l_1, j \neq k} \E|x_{l_1 j}|^4 } \sqrt{ \sum_{l_1, j \neq k} \E \left| e_j^\mathrm{T} \mat{H}_{N,k}(z,w) e_{l_1} \right|^2 \oindicator{\Omega_{N,k}} } \\
	&\leq \frac{2C}{Nc} \sqrt{ \E \| \mat{H}_{N,k}(z,w) \|_2^2 \oindicator{\Omega_{N,k}} } \\
	&\leq \frac{2C}{\sqrt{N} c} \sqrt{ \E \| \mat{H}_{N,k}(z,w) \|^2 \oindicator{\Omega_{N,k}} }\\
	&\leq \frac{2C}{\sqrt{N} c^3}
\end{align*}
where $C>0$ depends only on the fourth moments of the atom variables $\xi_1, \xi_2, \zeta$.  

Similarly, we find that
$$ \frac{1}{N} \E \left| \sum_{l_1, l_2 < k} e_{l_1}^\mathrm{T} \E_{k-1} \left[ \mat{F}_{N,k}(z) \oindicator{\Omega_{N,k} \cap Q_{N,k}} \right] e_{l_2} e_{l_1}^\mathrm{T} \E_{k-1} \left[ \mat{G}_{N,k}(w) \oindicator{\Omega_{N,k} \cap Q_{N,k}} \right] e_{l_2} \right| = o(1) $$
uniformly in $k$.  

The inequalities above show that the contributions from the matrices $\mat{C}_{N,k}(z)$, $\mat{E}_{N,k}(z)$, and $\mat{F}_{N,k}(z)$ are negligible.  We now turn our attention to the contributive components.  

We first observe that 
\begin{align*}
	\sum_{l_1,l_2 < k} &e_{l_1}^\mathrm{T} \E_{k-1} \left[ \mat{I}_{N-1} \oindicator{\Omega_{N,k} \cap Q_{N,k}} \right] e_{l_2} e_{l_2}^\mathrm{T} \E_{k-1} \left[ \mat{G}_{N,k}(w) \oindicator{\Omega_{N,k} \cap Q_{N,k}} \right] e_{l_1} \\
	&= \sum_{l_1 < k} E_{k-1} \left[ \oindicator{\Omega_{N,k} \cap Q_{N,k}} \right] e_{l_1}^\mathrm{T} E_{k-1} \left[ \mat{G}_{N,k}(w) \oindicator{\Omega_{N,k} \cap Q_{N,k}} \right] e_{l_1}.
\end{align*}
Thus, since $\Omega_{N,k} \cap Q_{N,k}$ holds with overwhelming probability, we apply Lemma \ref{lemma:diagonal} and obtain
\begin{align*}
	\E &\left| \frac{1}{N} \sum_{l_1 < k} E_{k-1} \left[ \oindicator{\Omega_{N,k} \cap Q_{N,k}} \right] e_{l_1}^\mathrm{T} E_{k-1} \left[ \mat{G}_{N,k}(w) \oindicator{\Omega_{N,k} \cap Q_{N,k}} \right]e_{l_1} - \frac{k-1}{N} m(w) \right| \\
	&\leq \frac{C}{N} \sum_{l_1 < k} \E \left| \E_{k-1} \left[ \oindicator{\Omega_{N,k} \cap Q_{N,k}} \right] - 1 \right| \\
	&\qquad + \frac{1}{N} \sum_{l_1 < k} \E \left| e_{l_1}^\mathrm{T} E_{k-1} \left[ \mat{G}_{N,k}(w) \oindicator{\Omega_{N,k} \cap Q_{N,k}} \right] e_{l_1} - m(w) \right| \\
	&= o(1)
\end{align*}
uniformly in $k$.  

Similarly, 
\begin{align*}
	\frac{1}{N} \E &\Bigg| \sum_{l_1, l_2 < k } e_{l_1}^\mathrm{T} \E_{k-1} \left[ \mat{I}_{N-1} \oindicator{\Omega_{N,k} \cap Q_{N,k}} \right] e_{l_2} e_{l_1}^\mathrm{T} \E_{k-1} \left[ \mat{G}_{N,k}(w) \oindicator{\Omega_{N,k} \cap Q_{N,k}} \right] e_{l_2}^\mathrm{T} \\
	&\qquad\qquad - (k-1) m(w) \Bigg| = o(1) 
\end{align*}
uniformly in $k$.  

For the $\mat{D}_{N,k}(z)$ terms, we observe that
\begin{align*}
	\sum_{l_1, l_2 < k} &e_{l_1}^\mathrm{T} \E_{k-1} \left[ \mat{D}_{n,k}(z) \oindicator{\Omega_{N,k} \cap Q_{N,k}} \right] e_{l_2} e_{l_2}^\mathrm{T} \E_{k-1} \left[ \mat{G}_{N,k}(w) \oindicator{\Omega_{N,k} \cap Q_{N,k}}\right] e_{l_1} \\
	&= -m(z) \rho \frac{(N-3/2)}{N} \sum_{l_1, l_2 < k} b_{k l_1 l_2}(z) b_{k l_2 l_1}(w)
\end{align*}
and
\begin{align*}
	\sum_{l_1, l_2 < k} &e_{l_1}^\mathrm{T} \E_{k-1} \left[ \mat{D}_{N,k}(z) \oindicator{\Omega_{N,k} \cap Q_{N,k}} \right] e_{l_2} e_{l_1}^\mathrm{T} \E_{k-1} \left[ \mat{G}_{N,k}(w) \oindicator{\Omega_{N,k} \cap Q_{N,k}} \right] e_{l_2} \\
	&= -m(z) \rho \frac{(N-3/2)}{N} \sum_{l_1, l_2 < k} b_{k l_1 l_2}(z) b_{k l_1 l_2}(w).
\end{align*}

It remains to consider the terms involving $\mat{A}_{N,k}(z)$.  First, we collect a number of useful calculations in the following lemmata.  

\begin{lemma} \label{lemma:mean0}
For $l_1,l_2 < k$, one has 
\begin{align*}
	\E \left| e_j^\mathrm{T} \E_{k-1} \left[\mat{G}_{N,k}^{(l_1, j)}(z) x_{l_1j} \oindicator{\Omega_{N,k} \cap Q_{N,k}} \right] e_{l_2} \right|^2 = O(N^{-100})
\end{align*}
uniformly if $j \geq k$, and
\begin{align*}
	\E \Bigg| &e_j^\mathrm{T} \E_{k-1} \left[\mat{G}_{N,k}^{(l_1, j)}(z) x_{l_1j} \oindicator{\Omega_{N,k} \cap Q_{N,k}} \right] e_{l_2} \\
	&\qquad - x_{l_1 j} e_{j}^\mathrm{T} \E_{k-1} \left[ \mat{G}_{N,k}^{(l_1,j)}(z) \oindicator{\Omega_{N,k} \cap Q_{N,k}} \right] e_{l_2} \Bigg|^2 = O(N^{-100})
\end{align*}
uniformly if $j < k$.  
\end{lemma}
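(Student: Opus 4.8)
The plan is to exploit the defining property of $\mat{G}_{N,k}^{(l_1,j)}(z)$: since $\mat{X}_{N,k}^{(l_1,j)}$ is obtained from $\mat{X}_{N,k}$ by deleting the entries in positions $(l_1,j)$ and $(j,l_1)$, the matrix $\mat{G}_{N,k}^{(l_1,j)}(z)$ -- and any event defined solely through $\mat{X}_{N,k}^{(l_1,j)}$ -- is independent of the single scalar $x_{l_1 j}$. Throughout I will write $\mathcal{F}_{k-1}$ for the $\sigma$-algebra generated by $\{y_{ab} : a,b \leq k-1\}$, so that $\E_{k-1} = \E[\,\cdot \mid \mathcal{F}_{k-1}]$, and I will use freely from Lemma \ref{lemma:truncation} that (after truncation) $|x_{ab}| \leq 4\eps_N$ almost surely, $\E x_{ab} = 0$, and that the elliptic independence structure of Definition \ref{def:elliptic} is retained.

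First I would dispose of the case $j < k$. Here both $l_1$ and $j$ are at most $k-1$, so $x_{l_1 j}$ is a deterministic function of $y_{l_1 j}$ and hence $\mathcal{F}_{k-1}$-measurable. By the ``take out what is known'' property of conditional expectation it factors out of $\E_{k-1}$, so the difference displayed in the second assertion of the lemma is \emph{identically zero}; the claimed bound is then trivial.

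The case $j \geq k$ (which, since $e_j$ requires $j\neq k$, means $j > k$) is the substantive one, and now $x_{l_1 j}$ is \emph{not} $\mathcal{F}_{k-1}$-measurable. The obstruction is that the indicator $\oindicator{\Omega_{N,k} \cap Q_{N,k}}$ depends on $x_{l_1 j}$ through $\mat{X}_{N,k}$, so one cannot factor directly; I would resolve this by replacing it with the surrogate event $\Omega_{N,k}^{(l_1,j)} := \{ \inf_{z \in \mathcal{C}} \sigma_{N-1}(\mat{X}_{N,k}^{(l_1,j)} - z\mat{I}) \geq c/2 \}$. This event is $\sigma(\mat{X}_{N,k}^{(l_1,j)})$-measurable, forces $\|\mat{G}_{N,k}^{(l_1,j)}(z)\| \leq 2/c$ uniformly on $\mathcal{C}$, and -- by the argument in the proof of Lemma \ref{lemma:ijbnd} (Weyl's inequality together with $\|\mat{X}_{N,k} - \mat{X}_{N,k}^{(l_1,j)}\| \leq 8\eps_N$) -- satisfies $\Omega_{N,k} \subseteq \Omega_{N,k}^{(l_1,j)}$ for $N$ large, so that it holds with overwhelming probability and $\Omega_{N,k} \cap Q_{N,k} \subseteq \Omega_{N,k}^{(l_1,j)}$. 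I would then write $\oindicator{\Omega_{N,k} \cap Q_{N,k}} = \oindicator{\Omega_{N,k}^{(l_1,j)}} - \oindicator{R}$ with $R := \Omega_{N,k}^{(l_1,j)} \setminus (\Omega_{N,k} \cap Q_{N,k})$, and split the quantity inside $\E_{k-1}$ accordingly. For the main piece carrying $\oindicator{\Omega_{N,k}^{(l_1,j)}}$, the matrix $\mat{G}_{N,k}^{(l_1,j)}(z) \oindicator{\Omega_{N,k}^{(l_1,j)}}$ is measurable with respect to $\mathcal{G} := \sigma(\mathcal{F}_{k-1} \cup \sigma(\mat{X}_{N,k}^{(l_1,j)}))$, while $x_{l_1 j}$ is independent of $\mathcal{G}$ (no generator of $\mathcal{G}$ is $y_{l_1 j}$ or $y_{j l_1}$, since $j > k-1$) and has mean zero; conditioning first on $\mathcal{G}$ and then on $\mathcal{F}_{k-1}$ makes this piece vanish exactly. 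For the error piece carrying $\oindicator{R}$, I would bound $|x_{l_1 j}| \oindicator{R} \leq 4\eps_N \oindicator{R}$ and $\|\mat{G}_{N,k}^{(l_1,j)}(z) \oindicator{\Omega_{N,k}^{(l_1,j)}}\| \leq 2/c$ to get $|\E_{k-1}[\cdots]| \leq (8\eps_N / c)\, \E_{k-1} \oindicator{R}$; squaring, applying Jensen inside $\E_{k-1}$ and then the tower property gives a second moment at most $(8\eps_N/c)^2\, \Prob(R)$. Since $R \subseteq (\Omega_{N,k} \cap Q_{N,k})^C$ and $\Omega_{N,k} \cap Q_{N,k}$ holds with overwhelming probability by \eqref{eq:hatONC} and Corollary \ref{cor:QN}, this is $O_C(N^{-C})$ for every $C$, in particular $O(N^{-100})$, and all the estimates are uniform in $z \in \mathcal{C}$, in $k$, and in $l_1, l_2, j$.

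The only genuine difficulty is the bookkeeping in the case $j > k$: one must decouple $x_{l_1 j}$ from the indicator without losing the boundedness of the resolvent factor. The surrogate event $\Omega_{N,k}^{(l_1,j)}$ does exactly this, the point being that the inclusion $\Omega_{N,k} \subseteq \Omega_{N,k}^{(l_1,j)}$ guarantees that the (a priori ill-defined) matrix $\mat{G}_{N,k}^{(l_1,j)}(z)$ never actually contributes off the good event.
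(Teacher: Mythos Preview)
Your proof is correct and follows essentially the same surrogate-event strategy as the paper for the case $j>k$: replace $\oindicator{\Omega_{N,k}\cap Q_{N,k}}$ by an indicator depending only on $\mat{X}_{N,k}^{(l_1,j)}$, use independence and $\E x_{l_1 j}=0$ to kill the main term, and bound the remainder by $(8\eps_N/c)^2$ times the probability of a set contained in $(\Omega_{N,k}\cap Q_{N,k})^C$, which is $O(N^{-100})$ by overwhelming probability.

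For $j<k$ you are actually slightly more direct than the paper. The paper routes \emph{both} cases through the surrogate event $\Omega_{N,k}^{(l_1,j)}$, factoring $x_{l_1 j}$ out of $\E_{k-1}[\mat{G}_{N,k}^{(l_1,j)}(z)x_{l_1 j}\oindicator{\Omega_{N,k}^{(l_1,j)}}]$ and then switching the indicator back to $\oindicator{\Omega_{N,k}\cap Q_{N,k}}$ at the cost of a second $O(N^{-100})$ error. You instead observe that since $l_1,j\leq k-1$ the scalar $x_{l_1 j}$ is already $\mathcal{F}_{k-1}$-measurable, so it pulls out of $\E_{k-1}$ with the \emph{original} indicator and the displayed difference is identically zero. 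This is a clean shortcut; the paper's unified treatment buys nothing here except symmetry of exposition.
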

\begin{proof}
Define the event
$$ \Omega_{N,k}^{(l_1,j)} := \left\{ \| \mat{G}_{N,k}^{(l_1,j)}(z) \| \leq \frac{2}{c} \right\}. $$
By Lemma \ref{lemma:ijbnd}, $\Omega_{N,k} \subset \Omega_{N,k}^{(l_1,j)}$ for $N$ sufficiently large.  Thus,
\begin{align*}
	\E &\left| e_j^\mathrm{T} \E_{k-1} \left[ \mat{G}_{N,k}^{(l_1,j)}(z) x_{l_1 j} \oindicator{\Omega_{N,k} \cap Q_{N,k}} \right] e_{l_2} - e_{j}^\mathrm{T} \E_{k-1} \left[ \mat{G}_{N,k}^{(l_1,j)}(z) x_{l_1 j} \oindicator{\Omega_{N,k}^{(l_1,j)}} \right] e_{l_2} \right|^2 \\
	&\qquad\qquad\leq \left( \frac{8}{c} \eps_N \right)^2 \left[ \Prob(Q_N^C) + \Prob(\Omega_{N,k}^C) \right] = O(N^{-100}) 
\end{align*}
since $Q_N$ and $\Omega_{N,k}$ hold with overwhelming probability.  By independence, we have
$$ \E_{k-1} \left[ \mat{G}_{N,k}^{(l_1,j)}(z) x_{l_1 j}\oindicator{\Omega_{N,k}^{(l_1,j)}} \right] = \left\{
		\begin{array}{lr}
			0, & \text{if } j \geq k\\
			 x_{l_1 j} \E_{k-1} \left[ \mat{G}_{N,k}^{(l_1,j)}(z) \oindicator{\Omega_{N,k}^{(l_1,j)}}\right]  & \text{if } j < k.
		 \end{array}
	\right. $$
	
By using once more the fact that $Q_N$ and $\Omega_{N,k}$ hold with overwhelming probability, it is straight-forward to verify that
\begin{align*}
	\E &\left| x_{l_1 j} e_j^\mathrm{T} \E_{k-1} \left[ \mat{G}_{N,k}^{(l_1,j)}(z) \oindicator{\Omega_{N,k}^{(l_1,j)}} \right] e_{l_2} - x_{l_1 j} e_j^\mathrm{T} \E_{k-1} \left[ \mat{G}_{N,k}^{(l_1, j)}(z) \oindicator{\Omega_{N,k} \cap Q_{N,k}} \right] e_{l_2} \right|^2 \\
	&= O(N^{-100}),
\end{align*}
and the proof of the lemma is complete.  
\end{proof}

\begin{lemma} \label{lemma:L2}
One has
\begin{align}
	\sup_{1 \leq k \leq N} \sup_{l_1 < k} \E \Bigg| \sum_{j \neq k} \sum_{l_2 < k}& e_{j}^\mathrm{T} \E_{k-1} \left[ x_{l_1 j} \mat{G}_{N,k}^{(l_1, j)}(z) \oindicator{\Omega_{N,k} \cap Q_{N,k}} \right] e_{l_2} \nonumber \\
	&\qquad\times e_{l_2}^\mathrm{T} \E_{k-1} \left[ \mat{G}_{N,k}^{(l_1, j)}(w) \oindicator{\Omega_{N,k} \cap Q_{N,k}} \right] e_{l_1} \Bigg|^2 = o(1) \label{eq:L2:1}
\end{align}
and
\begin{align} 
	\sup_{1 \leq k \leq N} \sup_{l_1 < k} \E \Bigg| \sum_{j \neq k} \sum_{l_2 < k}& e_{j}^\mathrm{T} \E_{k-1} \left[ x_{l_1 j} \mat{G}_{N,k}^{(l_1, j)}(z) \oindicator{\Omega_{N,k} \cap Q_{N,k}} \right] e_{l_2} \nonumber \\
	&\qquad\times e_{l_1}^\mathrm{T} \E_{k-1} \left[ \mat{G}_{N,k}^{(l_1, j)}(w) \oindicator{\Omega_{N,k} \cap Q_{N,k}} \right] e_{l_2} \Bigg|^2 = o(1). \label{eq:L2:2}
\end{align}
\end{lemma}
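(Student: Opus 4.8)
The plan is to establish \eqref{eq:L2:1}; the bound \eqref{eq:L2:2} follows from the identical argument applied with the transpose of the second resolvent factor. Abbreviate $P := \sum_{l_2 < k} e_{l_2} e_{l_2}^{\mathrm{T}}$ and $\chi := \oindicator{\Omega_{N,k} \cap Q_{N,k}}$, so that the quantity inside the absolute value in \eqref{eq:L2:1} equals $\sum_{j \neq k} S_j$ with
\[
	S_j := e_j^{\mathrm{T}}\, \E_{k-1}\!\left[ x_{l_1 j}\, \mat{G}_{N,k}^{(l_1,j)}(z)\, \chi \right] P\, \E_{k-1}\!\left[ \mat{G}_{N,k}^{(l_1,j)}(w)\, \chi \right] e_{l_1}.
\]
First I would dispose of the indices $j > k$. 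By the first estimate in Lemma \ref{lemma:mean0}, $e_j^{\mathrm{T}} \E_{k-1}[ x_{l_1 j}\mat{G}_{N,k}^{(l_1,j)}(z)\chi ] e_{l_2}$ is $O(N^{-50})$ in $L^2$, uniformly in $l_2$, whereas for each fixed $j$ the vector $( e_{l_2}^{\mathrm{T}} \E_{k-1}[ \mat{G}_{N,k}^{(l_1,j)}(w)\chi ] e_{l_1} )_{l_2 < k}$ has squared Euclidean length at most $\| \E_{k-1}[ \mat{G}_{N,k}^{(l_1,j)}(w)\chi ] \|^2 \le 4/c^2$ by Lemma \ref{lemma:ijbnd} and Jensen's inequality. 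Cauchy--Schwarz over $l_2 < k$, and then over the at most $N$ indices $j > k$, gives $\E | \sum_{j > k} S_j |^2 = o(1)$.

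For the indices $j < k$, the second estimate in Lemma \ref{lemma:mean0} lets me replace $e_j^{\mathrm{T}} \E_{k-1}[ x_{l_1 j}\mat{G}_{N,k}^{(l_1,j)}(z)\chi ] e_{l_2}$ by $x_{l_1 j}\, e_j^{\mathrm{T}} \E_{k-1}[ \mat{G}_{N,k}^{(l_1,j)}(z)\chi ] e_{l_2}$ at an $L^2$ cost $O(N^{-50})$, handled as above; and, exactly as in the proof of Lemma \ref{lemma:mean0}, I would further replace $\chi$ (at another $O(N^{-50})$ cost) by $\oindicator{\Omega_{N,k}^{(l_1,j)}}$, where $\Omega_{N,k}^{(l_1,j)} := \{ \| \mat{G}_{N,k}^{(l_1,j)}(z) \| \le 2/c \} \cap \{ \| \mat{G}_{N,k}^{(l_1,j)}(w) \| \le 2/c \}$ is an event independent of the pair $(x_{l_1 j}, x_{j l_1})$. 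After these reductions it remains to show $\E | \sum_{j < k} x_{l_1 j} d_j |^2 = o(1)$ uniformly in $k, l_1$, where
\[
	d_j := e_j^{\mathrm{T}}\, \E_{k-1}\!\left[ \mat{G}_{N,k}^{(l_1,j)}(z)\,\oindicator{\Omega_{N,k}^{(l_1,j)}} \right] P\, \E_{k-1}\!\left[ \mat{G}_{N,k}^{(l_1,j)}(w)\,\oindicator{\Omega_{N,k}^{(l_1,j)}} \right] e_{l_1}
\]
is $\mathcal{F}_{k-1}$-measurable, independent of $(x_{l_1 j}, x_{j l_1})$, and satisfies $| d_j | \le 4/c^2$ deterministically. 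Expanding the square gives $\sum_{j, j' < k} \E[ x_{l_1 j}x_{l_1 j'}d_j\overline{d_{j'}} ]$. For the diagonal terms, independence of $x_{l_1 j}$ from $d_j$ bounds the contribution by $\frac{C}{N}\sum_{j<k}\E | d_j |^2$; using \eqref{eq:resolvdiff} together with Lemma \ref{lemma:ijbnd} and Lemma \ref{lemma:truncation} to replace each $\mat{G}_{N,k}^{(l_1,j)}$ by $\mat{G}_{N,k}$ up to operator-norm error $O(\eps_N)$, and swapping the deflated indicators back to $\chi$ at negligible cost, one obtains $\sum_{j<k}|d_j|^2 \le 2\| P\,\E_{k-1}[\mat{G}_{N,k}(z)\chi]\,P\,\E_{k-1}[\mat{G}_{N,k}(w)\chi]\,e_{l_1} \|^2 + O(N\eps_N^2) = O(1) + O(N^{1-2\eps})$, so the diagonal contribution is $O(N^{-1}) + O(N^{-2\eps}) = o(1)$.

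The crux is the off-diagonal sum $\sum_{j \neq j'}$. Condition on the $\sigma$-algebra $\mathcal{G}_{jj'}$ generated by all entries of $\mat{X}_N$ other than the pairs $(x_{l_1 j},x_{j l_1})$ and $(x_{l_1 j'},x_{j' l_1})$; then $d_j$ and $x_{l_1 j'}$ are functions of the pair at $(l_1,j')$ alone, $\overline{d_{j'}}$ and $x_{l_1 j}$ are functions of the pair at $(l_1,j)$ alone, and these two pairs are conditionally independent, so
\[
	\E[ x_{l_1 j}x_{l_1 j'}d_j\overline{d_{j'}} \mid \mathcal{G}_{jj'} ] = \E[ x_{l_1 j}\overline{d_{j'}} \mid \mathcal{G}_{jj'} ]\; \E[ x_{l_1 j'}d_j \mid \mathcal{G}_{jj'} ].
\]
A further application of \eqref{eq:resolvdiff} stripping the pair at $(l_1,j')$ from $d_j$ gives a first-order expansion $\E[ x_{l_1 j'}d_j \mid \mathcal{G}_{jj'} ] = N^{-1}C_{jj'} + O(N^{-3/2})$, in which $C_{jj'}$ is a bounded $\mathcal{G}_{jj'}$-measurable quantity, each of whose terms is a uniformly bounded factor times an off-diagonal entry of the twice-deflated resolvents $\mat{G}_{N,k}^{(l_1,j),(l_1,j')}(z), \mat{G}_{N,k}^{(l_1,j),(l_1,j')}(w)$ or of a product of such a resolvent with a conditional expectation of one. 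The key point is that $\sum_{j,j'<k}\E|C_{jj'}|^2 = o(N^2)$: after replacing these resolvents by $\mat{G}_{N,k}$, each such entry contributes, upon summing over $j$ (or over $j'$), either the squared norm of a single column of $\mat{G}_{N,k}$ or $\|\mat{G}_{N,k}\|_2^2$, both $O(N)$ on the relevant event, while the $O(\eps_N)$ replacement errors cost only $O(N^2\eps_N^2) = o(N^2)$. The symmetric statement holds for $\E[ x_{l_1 j}\overline{d_{j'}} \mid \mathcal{G}_{jj'} ] = N^{-1}D_{jj'} + O(N^{-3/2})$. Cauchy--Schwarz over the pairs $(j,j')$ then yields $| \sum_{j\neq j'}\E[ x_{l_1 j}x_{l_1 j'}d_j\overline{d_{j'}} ] | \le N^{-2}( \sum_{j,j'}\E|C_{jj'}|^2 )^{1/2}( \sum_{j,j'}\E|D_{jj'}|^2 )^{1/2} + o(1) = o(1)$, the lower-order pieces being controlled with the almost sure bound $|x_{ij}| \le 4\eps_N$ and the moment bounds of Lemma \ref{lemma:truncation}. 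Combining the three contributions proves \eqref{eq:L2:1}, and \eqref{eq:L2:2} is proved identically.

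The step I expect to be hardest is this off-diagonal analysis: one must carry out the first-order resolvent expansion of $d_j$ carefully enough to see that every term of $C_{jj'}$ carries an off-diagonal (hence only-on-average-small) resolvent entry, and then organize the double sum over $(j,j')$ so that the factor $N^{-2}$ coming from the two conditional expectations defeats the two $O(N)$ losses from the column-norm and Hilbert--Schmidt bounds. It is this bookkeeping, rather than any pointwise control of individual off-diagonal resolvent entries (which is not available here), that produces the required decay.
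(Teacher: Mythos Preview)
Your reduction via Lemma~\ref{lemma:mean0} (disposing of $j\ge k$, pulling $x_{l_1 j}$ out, and swapping $\chi$ for the deflated indicator) and your treatment of the diagonal terms are essentially the paper's argument. The paper proves the auxiliary bound $\sum_{j<k}\E|\,\cdot\,|^2=O(1)$ (its \eqref{eq:L2O1bnd}) by the same device you use, replacing each $\mat G_{N,k}^{(l_1,j)}$ by $\mat G_{N,k}$ at operator-norm cost $O(\eps_N)$; it then kills the diagonal with the almost-sure bound $|x_{l_1 j}|^2\le(4\eps_N)^2$ rather than with independence, but either way gives $o(1)$.

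Where you diverge is the off-diagonal analysis. The paper does \emph{not} condition on $\mathcal G_{jj'}$ and expand; instead it replaces every occurrence of $\mat G_{N,k}^{(l_1,j)}$ and $\mat G_{N,k}^{(l_1,j')}$ in the cross term by the doubly-deflated $\mat G_{N,k}^{(l_1,j),(l_1,j')}$, and every indicator by $\oindicator{\Omega_{N,k}^{(l_1,j),(l_1,j')}}$. After these swaps the cross term is \emph{exactly} zero by independence (its \eqref{eq:L2zerocontrib}), so the entire burden falls on bounding the finitely many replacement errors, which is done via two focused estimates (its \eqref{eq:L2suffice1} and \eqref{eq:L2suffice2}). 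Your route---factorize $\E[x_{l_1 j}x_{l_1 j'}d_j\overline{d_{j'}}\mid\mathcal G_{jj'}]$, expand each factor to first order, and Cauchy--Schwarz over $(j,j')$---is conceptually equivalent and can be pushed through, but it forces you to track the structure of \emph{every} first-order term $C_{jj'}$ and verify that each carries a square-summable resolvent entry in at least one of $j,j'$; you also have to swap the indicator $\oindicator{\Omega_{N,k}^{(l_1,j)}}$ (which still depends on the $(l_1,j')$ pair) inside the $\E_{k-1}$ before the expansion is clean, a step you do not mention. The paper's ``replace, then compute zero'' organization sidesteps exactly the bookkeeping you flag as hardest.
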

\begin{proof}
We will only verify \eqref{eq:L2:1} as the treatment of \eqref{eq:L2:2} is similar.  In view of Lemma \ref{lemma:mean0}, it suffices to show that
\begin{align}
 \E \Bigg| \sum_{j, l_2 < k}& x_{l_1 j} e_{j}^\mathrm{T} \E_{k-1} \left[ \mat{G}_{N,k}^{(l_1, j)}(z) \oindicator{\Omega_{N,k} \cap Q_{N,k}} \right] e_{l_2} \nonumber \\
	&\qquad\times e_{l_2}^\mathrm{T} \E_{k-1} \left[ \mat{G}_{N,k}^{(l_1, j)}(w) \oindicator{\Omega_{N,k} \cap Q_{N,k}} \right] e_{l_1} \Bigg|^2 = o(1) \label{eq:L2suffice}
\end{align}
uniformly in $k, l_1$.  We write the left-hand side of \eqref{eq:L2suffice} as
\begin{align}
	\E &\sum_{j, j' < k} x_{l_1 j} x_{l_1 j'} \label{eq:L2fullexpand} \\
	&\quad \times \sum_{l_2 < k} e_j^\mathrm{T} \E_{k-1} \left[ \mat{G}_{N,k}^{(l_1,j)}(z) \oindicator{\Omega_{N,k} \cap Q_{N,k}} \right] e_{l_2} e_{l_2}^\mathrm{T} \E_{k-1} \left[ \mat{G}_{N,k}^{(l_1,j)}(w) \oindicator{\Omega_{N,k} \cap Q_{N,k}} \right] e_{l_1} \nonumber \\
	&\quad \times \sum_{l_3 < k} e_{j'}^\mathrm{T} \E_{k-1} \left[ \overline{\mat{G}_{N,k}^{(l_1,j')}(z)} \oindicator{\Omega_{N,k} \cap Q_{N,k}} \right] e_{l_3} e_{l_3}^\mathrm{T} \E_{k-1} \left[ \overline{\mat{G}_{N,k}^{(l_1,j')}(w)} \oindicator{\Omega_{N,k} \cap Q_{N,k}} \right] e_{l_1}. \nonumber
\end{align}

We now claim that 
\begin{align}
	\sum_{j < k} \E \Bigg| \sum_{l_2 < k} &e_j^\mathrm{T} \E_{k-1} \left[ \mat{G}_{N,k}^{(l_1,j)}(z) \oindicator{\Omega_{N,k} \cap Q_{N,k}} \right] e_{l_2} \nonumber \\
	&\times e_{l_2}^\mathrm{T} \E_{k-1} \left[ \mat{G}_{N,k}^{(l_1,j)}(w) \oindicator{\Omega_{N,k} \cap Q_{N,k}} \right] e_{l_1} \Bigg|^2 = O(1) \label{eq:L2O1bnd}
\end{align}
uniformly in $k, l_1$.  Indeed, by the triangle inequality, we have 
\begin{align*}
\sum_{j < k} \E &\Bigg| \sum_{l_2 < k} e_j^\mathrm{T} \E_{k-1} \left[ \mat{G}_{N,k}^{(l_1,j)}(z) \oindicator{\Omega_{N,k} \cap Q_{N,k}} \right] e_{l_2} \\
	&\qquad \qquad \times e_{l_2}^\mathrm{T} \E_{k-1} \left[ \mat{G}_{N,k}^{(l_1,j)}(w) \oindicator{\Omega_{N,k} \cap Q_{N,k}} \right] e_{l_1} \Bigg|^2 \\
	&\leq \frac{C}{c^2} \sum_{j < k} \E \left\| \mat{G}_{N,k}^{(l_1,j)}(z) - \mat{G}_{N,k}(z) \right\|^2 \oindicator{\Omega_{N,k} \cap Q_{N,k}} \\
	&\qquad + \frac{C}{c^2} \sum_{j < k} \E \left\| \mat{G}_{N,k}^{(l_1,j)}(w) - \mat{G}_{N,k}(w) \right\|^2 \oindicator{\Omega_{N,k} \cap Q_{N,k}} \\
	&\qquad + C \E \sum_{j < k} \Bigg| \sum_{l_2 < k} e_j^\mathrm{T} \E_{k-1} \left[ \mat{G}_{N,k}(z) \oindicator{\Omega_{N,k} \cap Q_{N,k}} \right] e_{l_2} \\
	&\qquad \qquad \qquad \qquad \times e_{l_2}^\mathrm{T} \E_{k-1} \left[ \mat{G}_{N,k}(w) \oindicator{\Omega_{N,k} \cap Q_{N,k}} \right] e_{l_1} \Bigg|^2
\end{align*}
for some absolute constant $C > 0$.  The first two terms are $O(1)$ since
\begin{align} 
	\E \| &\mat{G}_{N,k}^{(l_1,j)}(z) - \mat{G}_{N,k}(z) \|^2 \oindicator{\Omega_{N,k} \cap Q_{N,k}} \nonumber \\
	&\leq \E \| \mat{G}_{N,k}(z) \|^2 \| \mat{G}_{N,k}^{(l_1,j)}(z) \|^2 \oindicator{\Omega_{N,k} \cap Q_{N,k}} (|x_{l_1 j}| + |x_{j l_1}|)^2 \nonumber \\
	&\leq \frac{8}{c^4} \left( \E |x_{l_1 j}|^2 + \E|x_{j l_1}|^2 \right) = O(N^{-1}) \label{eq:L21N}
\end{align}
uniformly in $k, l_1$.  The last term is controlled by observing that
$$ \sum_{l_2 < k} \E_{k-1} \left[ \mat{G}_{N,k}(z) \oindicator{\Omega_{N,k} \cap Q_{N,k}} \right] e_{l_2}  e_{l_2}^\mathrm{T} \E_{k-1} \left[ \mat{G}_{N,k}(w) \oindicator{\Omega_{N,k} \cap Q_{N,k}} \right] $$
is the product of the sub-matrices formed from the first $k-1$ columns of $\E_{k-1} \left[ \mat{G}_{N,k}(z) \oindicator{\Omega_{N,k} \cap Q_{N,k}} \right]$ and the first $k-1$ rows of $\E_{k-1} \left[ \mat{G}_{N,k}(w) \oindicator{\Omega_{N,k} \cap Q_{N,k}} \right]$.  Hence, 
\begin{align*}
	\E \sum_{j < k} &\left| \sum_{l_2 < k} e_j^\mathrm{T} \E_{k-1} \left[ \mat{G}_{N,k}(z) \oindicator{\Omega_{N,k} \cap Q_{N,k}} \right] e_{l_2}  e_{l_2}^\mathrm{T} \E_{k-1} \left[ \mat{G}_{N,k}(w) \oindicator{\Omega_{N,k} \cap Q_{N,k}} \right] e_{l_1} \right|^2 \\
	&\leq \E \left\| \sum_{l_2 < k} \E_{k-1} \left[ \mat{G}_{N,k}(z) \oindicator{\Omega_{N,k} \cap Q_{N,k}} \right] e_{l_2}  e_{l_2}^\mathrm{T} \E_{k-1} \left[ \mat{G}_{N,k}(w) \oindicator{\Omega_{N,k} \cap Q_{N,k}} \right] e_{l_1}  \right\|^2 \\
	&\leq \frac{1}{c^4},  
\end{align*}
and the proof of \eqref{eq:L2O1bnd} is complete.  

We now return to \eqref{eq:L2fullexpand}.  We first consider the diagonal terms ($j = j'$).  In this case, applying \eqref{eq:L2O1bnd}, we obtain
\begin{align*}
	\E &\sum_{j < k} |x_{l_1 j}|^2 \Bigg| \sum_{l_2 < k} e_j^\mathrm{T} \E_{k-1} \left[ \mat{G}_{N,k}^{(l_1,j)}(z) \oindicator{\Omega_{N,k} \cap Q_{N,k}} \right] e_{l_2} \\
	&\qquad\qquad\qquad\qquad \times e_{l_2}^\mathrm{T} \E_{k-1} \left[ \mat{G}_{N,k}^{(l_1,j)}(w) \oindicator{\Omega_{N,k} \cap Q_{N,k}} \right] e_{l_1} \Bigg|^2 \\
	&\leq (4 \eps_N)^2 \sum_{j < k} \Bigg| \sum_{l_2 < k} e_j^\mathrm{T} \E_{k-1} \left[ \mat{G}_{N,k}^{(l_1,j)}(z) \oindicator{\Omega_{N,k} \cap Q_{N,k}} \right] e_{l_2} \\
	&\qquad\qquad\qquad\qquad \times e_{l_2}^\mathrm{T} \E_{k-1} \left[ \mat{G}_{N,k}^{(l_1,j)}(w) \oindicator{\Omega_{N,k} \cap Q_{N,k}} \right] e_{l_1} \Bigg|^2 \\
	&= o(1)
\end{align*}
uniformly in $k, l_1$.  

We now consider the cross-terms ($j \neq j'$):
\begin{align}
\E &\sum_{\substack{j, j' < k \\ j \neq j'}} x_{l_1 j} x_{l_1 j'} \label{eq:L2crossterms} \\
	&\quad \times \sum_{l_2 < k} e_j^\mathrm{T} \E_{k-1} \left[ \mat{G}_{N,k}^{(l_1,j)}(z) \oindicator{\Omega_{N,k} \cap Q_{N,k}} \right] e_{l_2} e_{l_2}^\mathrm{T} \E_{k-1} \left[ \mat{G}_{N,k}^{(l_1,j)}(w) \oindicator{\Omega_{N,k} \cap Q_{N,k}} \right] e_{l_1} \nonumber \\
	&\quad \times \sum_{l_3 < k} e_{j'}^\mathrm{T} \E_{k-1} \left[ \overline{\mat{G}_{N,k}^{(l_1,j')}(z)} \oindicator{\Omega_{N,k} \cap Q_{N,k}} \right] e_{l_3} e_{l_3}^\mathrm{T} \E_{k-1} \left[ \overline{\mat{G}_{N,k}^{(l_1,j')}(w)} \oindicator{\Omega_{N,k} \cap Q_{N,k}} \right] e_{l_1}. \nonumber
\end{align}
For this case, we define 
$$ \mat{X}_{N,k}^{(l_1,j),(l_1,j')} := \mat{X}_{N,k}^{(l_1,j)} -  \tau_{l_1 j'} \left( x_{l_1 j'} e_{l_1} e_{j'}^\mathrm{T} + x_{j' l_1} e_{j'} e_{l_1}^\mathrm{T} \right) $$
and
$$ \mat{G}_{N,k}^{(l_1,j),(l_1,j')}(z) := \left( X_{N,k}^{(l_1,j),(l_1,j')} - z \mat{I} \right)^{-1}. $$
Using Lemma \ref{lemma:ijbnd}, it follows that, for all $j,j' < k$,
$$ \| \mat{G}_{N,k}^{(l_1,j),(l_1,j')}(z) \|, \| \mat{G}_{N,k}^{(l_1,j),(l_1,j')}(w) \| \leq \frac{2}{c} $$
on the event $\Omega_{N,k}$ for $N$ sufficiently large.  We also define the event
$$ \Omega_{N,k}^{(l_1,j),(l_1,j')} := \left\{ \| \mat{G}_{N,k}^{(l_1,j),(l_1,j')}(z) \| \leq \frac{2}{c}, \| \mat{G}_{N,k}^{(l_1,j),(l_1,j')}(w) \| \leq \frac{2}{c} \right\}. $$
We observe that if we can replace each occurrence of $\mat{G}_{N,k}^{(l_1,j)}(z)$, $\mat{G}_{N,k}^{(l_1,j)}(w)$, $\mat{G}_{N,k}^{(l_1,j')}(z)$, $\mat{G}_{N,k}^{(l_1,j')}(w)$ in \eqref{eq:L2crossterms} with the corresponding matrix $\mat{G}_{N,k}^{(l_1,j),(l_1,j')}(z)$, $\mat{G}_{N,k}^{(l_1,j),(l_1,j')}(w)$ and if we can also replace each occurrence of $\oindicator{\Omega_{N,k} \cap Q_{N,k}}$ with the corresponding indicator function $\oindicator{\Omega_{N,k}^{(l_1,j),(l_1,j')}}$, we would obtain 
\begin{align}
\E &\sum_{\substack{j, j' < k \\ j \neq j'}} x_{l_1 j} x_{l_1 j'} \sum_{l_2 < k} e_j^\mathrm{T} \E_{k-1} \left[ \mat{G}_{N,k}^{(l_1,j),(l_1,j')}(z) \oindicator{\Omega_{N,k}^{(l_1,j),(l_1,j')}} \right] e_{l_2} \label{eq:L2zerocontrib} \\
	&\qquad \qquad \qquad \qquad \times e_{l_2}^\mathrm{T} \E_{k-1} \left[ \mat{G}_{N,k}^{(l_1,j),(l_1,j')}(w) \oindicator{\Omega_{N,k}^{(l_1,j),(l_1,j')}} \right] e_{l_1} \nonumber \\
	&\qquad \times \sum_{l_3 < k} e_{j'}^\mathrm{T} \E_{k-1} \left[ \overline{\mat{G}_{N,k}^{(l_1,j),(l_1,j')}(z)} \oindicator{\Omega_{N,k}^{(l_1,j),(l_1,j')}} \right] e_{l_3} \nonumber \\
	&\qquad \qquad \qquad \qquad \times e_{l_3}^\mathrm{T} \E_{k-1} \left[ \overline{\mat{G}_{N,k}^{(l_1,j),(l_1,j')}(w)} \oindicator{\Omega_{N,k}^{(l_1,j),(l_1,j')}} \right] e_{l_1} \nonumber \\
	&= 0 \nonumber
\end{align}
by independence.  Thus, it suffices to show that such replacements are possible. 

We begin by showing we can replace each occurrence of $\mat{G}_{N,k}^{(l_1,j)}(z)$, $\mat{G}_{N,k}^{(l_1,j)}(w)$, $\mat{G}_{N,k}^{(l_1,j')}(z)$, $\mat{G}_{N,k}^{(l_1,j')}(w)$ in \eqref{eq:L2crossterms} with the corresponding matrix $\mat{G}_{N,k}^{(l_1,j),(l_1,j')}(z)$, $\mat{G}_{N,k}^{(l_1,j),(l_1,j')}(w)$ by applying the substitution  
$$ \mat{G}_{N,k}^{(l_1,j)}(z) = \mat{G}_{N,k}^{(l_1,j),(l_1,j')}(z) - \mat{G}_{N,k}^{(l_1,j),(l_1,j')}(z) \tau_{l_1 j'} ( x_{l_1 j'} e_{l_1} e_{j'}^{\mathrm{T}} + x_{j' l_1} e_{j'} e_{l_1}^\mathrm{T}) \mat{G}_{N,k}^{(l_1,j)}(z). $$
Indeed, the difference caused by this replacement can be divided into several error terms.  The treatment of each term is similar.  As an illustration of their treatment, the first error term is bounded by 
\begin{align*}
	\E \sum_{\substack{j,j' < k \\ j \neq j'}} &|x_{l_1 j}| |x_{l_1 j'}|^2 \tau_{l_1 j'} \Bigg| \sum_{l_2 < k} e_j^\mathrm{T} \E_{k-1} \left[ \mat{G}_{N,k}^{(l_1,j),(l_1,j')}(z) \oindicator{\Omega_{N,k} \cap Q_{N,k}} e_{l_1} e_{j'}^\mathrm{T} \mat{G}_{N,k}^{(l_1,j)}(z) \right] e_{l_2} \\
	&\qquad \qquad \qquad \qquad \times e_{l_2}^\mathrm{T} \E_{k-1} \left[ \mat{G}_{N,k}^{(l_1,j)}(w) \oindicator{\Omega_{N,k} \cap Q_{N,k}} \right] e_{l_1} \Bigg| \\
	&\qquad \qquad \times \Bigg| \sum_{l_3 < k} e_{j'}^\mathrm{T} \E_{k-1} \left[ \mat{G}_{N,k}^{(l_1,j')}(z) \oindicator{\Omega_{N,k} \cap Q_{N,k}} \right] e_{l_3} \\
	&\qquad \qquad \qquad \qquad \times e_{l_3}^\mathrm{T} \E_{k-1} \left[ \mat{G}_{N,k}^{(l_1,j')}(w) \oindicator{\Omega_{N,k} \cap Q_{N,k}} \right] e_{l_1} \Bigg| \\
	&\leq \frac{4}{c^2} \E \sum_{\substack{j,j' < k \\ j \neq j'}} |x_{l_1 j}| |x_{l_1 j'}|^2 \left| e_j^\mathrm{T} \mat{G}_{N,k}^{(l_1,j),(l_1,j')}(z) \oindicator{\Omega_{N,k} \cap Q_{N,k}} e_{l_1} \right| \\
	&\qquad \qquad \times \Bigg| \sum_{l_3 < k} e_{j'}^\mathrm{T} \E_{k-1} \left[ \mat{G}_{N,k}^{(l_1,j')}(z) \oindicator{\Omega_{N,k} \cap Q_{N,k}} \right] e_{l_3} \\
	&\qquad \qquad \qquad \qquad \times e_{l_3}^\mathrm{T} \E_{k-1} \left[ \mat{G}_{N,k}^{(l_1,j')}(w) \oindicator{\Omega_{N,k} \cap Q_{N,k}} \right] e_{l_1} \Bigg|.
\end{align*}
Thus, by the Cauchy-Schwarz inequality, it suffices to show that
\begin{align} \label{eq:L2suffice1}
	\sum_{\substack{j,j' < k \\ j \neq j'}} \E |x_{l_1 j}|^2 \left| e_j^\mathrm{T} \mat{G}_{N,k}^{(l_1,j),(l_1,j')}(z) \oindicator{\Omega_{N,k} \cap Q_{N,k}} e_{l_1} \right|^2 = O(1) 
\end{align}
and
\begin{align}
	\sum_{j,j' < k} \E|x_{l_1 j'}|^4 \Bigg| \sum_{l_3 < k} &e_{j'}^\mathrm{T} \E_{k-1} \left[ \mat{G}_{N,k}^{(l_1,j')}(z) \oindicator{\Omega_{N,k} \cap Q_{N,k}} \right] e_{l_3} \label{eq:L2suffice2} \\
	&\qquad \times e_{l_3}^\mathrm{T} \E_{k-1} \left[ \mat{G}_{N,k}^{(l_1,j')}(w) \oindicator{\Omega_{N,k} \cap Q_{N,k}} \right] e_{l_1} \Bigg|^2 = o(1) \nonumber
\end{align}
uniformly in $k, l_1$.  

We first consider \eqref{eq:L2suffice1}.  Using the same technique as in the proof of Lemma \ref{lemma:mean0}, we can replace $\oindicator{\Omega_{N,k} \cap Q_{N,k}}$ with $\oindicator{\Omega_{N,k}^{(l_1,j)}}$, where
$$ \Omega_{N,k}^{(l_1,j)} := \left\{ \| \mat{G}_{N,k}^{(l_1,j)}(z) \| \leq \frac{2}{c}, \| \mat{G}_{N,k}^{(l_1,j)}(w) \| \leq \frac{2}{c} \right\}. $$
Thus, the left-hand side of \eqref{eq:L2suffice1} is bounded by
\begin{align} \label{eq:L2suffice1help}
	\frac{1+\sigma^2}{N} \sum_{\substack{j,j' < k \\ j \neq j'}} \E &\left| e_j^\mathrm{T} \mat{G}_{N,k}^{(l_1,j),(l_1,j')}(z) e_{l_1} \oindicator{\Omega_{N,k}^{(l_1,j)}} \right|^2 + o(1).
\end{align}
Passing back to $\oindicator{\Omega_{N,k} \cap Q_{N,k}}$, we find that, for some absolute constant $C>0$, \eqref{eq:L2suffice1help} is bounded by
\begin{align*}
	C&\frac{1+\sigma^2}{N} \sum_{j,j' < k} \E \left| e_j^\mathrm{T} \left( \mat{G}_{N,k}^{(l_1,j),(l_1,j')}(z) - \mat{G}_{N,k}^{(l_1,j)}(z) \right) e_{l_1} \oindicator{\Omega_{N,k} \cap Q_{N,k}} \right|^2 \\
	&\quad + C\frac{1+\sigma^2}{N} \sum_{j,j' < k} \E \left| e_j^\mathrm{T} \left( \mat{G}_{N,k}^{(l_1,j)}(z) - \mat{G}_{N,k}(z) \right) e_{l_1}\oindicator{\Omega_{N,k} \cap Q_{N,k}} \right|^2 \\
	&\quad + C \frac{1+\sigma^2}{N} \sum_{j,j' < k} \E \left| e_j^\mathrm{T} \mat{G}_{N,k}(z) e_{l_1} \oindicator{\Omega_{N,k} \cap Q_{N,k}} \right|^2 + o(1) \\
	&\leq C\frac{1+\sigma^2}{N} \sum_{j,j' < k} \E \left\| \mat{G}_{N,k}^{(l_1,j),(l_1,j')}(z) - \mat{G}_{N,k}^{(l_1,j)}(z) \right\|^2 \oindicator{\Omega_{N,k} \cap Q_{N,k}}  \\
	&\quad + C\frac{1+\sigma^2}{N} \sum_{j,j' < k} \E \left\|\mat{G}_{N,k}^{(l_1,j)}(z) - \mat{G}_{N,k}(z) \right\|^2 \oindicator{\Omega_{N,k} \cap Q_{N,k}} \\
	&\quad + C \frac{1+\sigma^2}{N} \sum_{j,j' < k} \E \left| e_j^\mathrm{T} \mat{G}_{N,k}(z) e_{l_1} \oindicator{\Omega_{N,k} \cap Q_{N,k}} \right|^2 + o(1) \\
	&= O(1)
\end{align*}
uniformly in $k, l_1$ by \eqref{eq:L21N}.  

For \eqref{eq:L2suffice2}, we use the same technique to replace $\oindicator{\Omega_{N,k} \cap Q_{N,k}}$ with $\oindicator{\Omega_{N,k}^{(l_1,j')}}$ (and then pass back to $\oindicator{\Omega_{N,k} \cap Q_{N,k}}$).  Thus, the left-hand side of \eqref{eq:L2suffice2} is bounded by 
\begin{align*}
	\frac{C}{N^2} \sum_{j,j' < k} \Bigg| \sum_{l_3 < k} &e_{j'}^\mathrm{T} \E_{k-1} \left[ \mat{G}_{N,k}^{(l_1,j')}(z) \oindicator{\Omega_{N,k} \cap Q_{N,k}} \right] e_{l_3} \\
	&\qquad \times e_{l_3}^\mathrm{T} \E_{k-1} \left[ \mat{G}_{N,k}^{(l_1,j')}(w) \oindicator{\Omega_{N,k} \cap Q_{N,k}} \right] e_{l_1} \Bigg|^2 + o(1),
\end{align*}
where $C > 0$ depends only on the fourth moments of the atom variables $\xi_1, \xi_2$.  Thus, \eqref{eq:L2suffice2} now follows from \eqref{eq:L2O1bnd}.  

Lastly we observe that by using the same technique as in the proof of Lemma \ref{lemma:mean0}, one can replace each occurrence of $\oindicator{\Omega_{N,k} \cap Q_{N,k}}$ with the corresponding indicator function $\oindicator{\Omega_{N,k}^{(l_1,j),(l_1,j')}}$.  Since the error caused by these replacements is $o(1)$ uniformly in $k, l_1$, the proof of the lemma is complete.  
\end{proof}

\begin{lemma} \label{lemma:L22}
One has
\begin{align}
	\sup_{1 \leq k \leq N} \sup_{l_1 < k} \E \Bigg| \sum_{j, l_2 < k} &\tau_{l_1, j} \left( x_{l_1 j} x_{j l_1} - \frac{\rho}{N} \right) e_j^\mathrm{T} E_{k-1} \left[ \mat{G}_{N,k}(z) \oindicator{\Omega_{N,k} \cap Q_{N,k}} \right] e_{l_2} \nonumber \\
	&\qquad\times e_{l_2}^\mathrm{T} \E_{k-1} \left[ \mat{G}_{N,k}(w) \oindicator{\Omega_{N,k} \cap Q_{N,k}} \right] e_j \Bigg|^2 = o(1) \label{eq:L22:1}
\end{align}
and
\begin{align}
\sup_{1 \leq k \leq N} \sup_{l_1 < k} \E \Bigg| \sum_{j, l_2 < k} &\tau_{l_1, j} \left( x_{l_1 j}^2 - \frac{1}{N} \right) e_j^\mathrm{T} E_{k-1} \left[ \mat{G}_{N,k}(z) \oindicator{\Omega_{N,k} \cap Q_{N,k}} \right] e_{l_2} \nonumber \\
	&\qquad\times e_{j}^\mathrm{T} \E_{k-1} \left[ \mat{G}_{N,k}(w) \oindicator{\Omega_{N,k} \cap Q_{N,k}} \right] e_{l_2} \Bigg|^2 = o(1). \label{eq:L22:2}
\end{align}
\end{lemma}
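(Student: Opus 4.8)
The plan is to establish \eqref{eq:L22:1} and \eqref{eq:L22:2} by the same argument, written out only for \eqref{eq:L22:1}; the proof of \eqref{eq:L22:2} is obtained verbatim upon replacing $x_{l_1 j}x_{jl_1}-\rho/N$ by $x_{l_1 j}^2-1/N$ and the second factor $e_{l_2}^\mathrm{T}\E_{k-1}[\mat{G}_{N,k}(w)\oindicator{\Omega_{N,k}\cap Q_{N,k}}]e_j$ by $e_j^\mathrm{T}\E_{k-1}[\mat{G}_{N,k}(w)\oindicator{\Omega_{N,k}\cap Q_{N,k}}]e_{l_2}$, using $\E|\xi_1|^4<\infty$ in place of $\E[\xi_1^2\xi_2^2]<\infty$. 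The strategy follows the proof of Lemma \ref{lemma:L2}: decouple the $j$-th summand from the off-diagonal pair $(x_{l_1 j},x_{jl_1})$, expand the square over $j,j'$, and show the diagonal terms are negligible while the cross terms contribute $o(1)$. Since $\tau_{l_1 j}=1$ for $j\neq l_1$ and the single term $j=l_1$ has $L^2$-norm $O(\eps_N^2)=o(1)$ by the almost sure bounds of Lemma \ref{lemma:truncation}, the sum may be restricted to $j<k$, $j\neq l_1$. Writing $P:=\sum_{l_2<k}e_{l_2}e_{l_2}^\mathrm{T}$, the quantity inside the absolute value in \eqref{eq:L22:1} equals $\sum_{j}(x_{l_1 j}x_{jl_1}-\rho/N)\,e_j^\mathrm{T}M_{N,k}(z,w)e_j$ with $M_{N,k}(z,w):=\E_{k-1}[\mat{G}_{N,k}(z)\oindicator{\Omega_{N,k}\cap Q_{N,k}}]\,P\,\E_{k-1}[\mat{G}_{N,k}(w)\oindicator{\Omega_{N,k}\cap Q_{N,k}}]$.

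First I would decouple. Using the resolvent identity \eqref{eq:resolvdiff} and Lemma \ref{lemma:ijbnd}, in the $j$-th summand I replace every $\mat{G}_{N,k}(z),\mat{G}_{N,k}(w)$ by $\mat{G}_{N,k}^{(l_1,j)}(z),\mat{G}_{N,k}^{(l_1,j)}(w)$ and, arguing as in the proof of Lemma \ref{lemma:mean0}, replace $\oindicator{\Omega_{N,k}\cap Q_{N,k}}$ by $\oindicator{\Omega_{N,k}^{(l_1,j)}}$, where $\Omega_{N,k}^{(l_1,j)}:=\{\|\mat{G}_{N,k}^{(l_1,j)}(z)\|\le 2/c,\ \|\mat{G}_{N,k}^{(l_1,j)}(w)\|\le 2/c\}$. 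Each swap produces error terms carrying an extra factor $|x_{l_1 j}|+|x_{jl_1}|\le 8\eps_N$; combining this with the coefficient $|x_{l_1 j}x_{jl_1}-\rho/N|$, the Cauchy--Schwarz inequality, the deterministic bound $2/c$ for the modified resolvents on the relevant events, and $O(1)$-type row/column-sum bounds as in \eqref{eq:L2O1bnd} and \eqref{eq:L21N}, I would verify as in Lemma \ref{lemma:L2} that the total error is $o(1)$ uniformly in $k,l_1$. The point is that $\mat{X}_{N,k}^{(l_1,j)}$ --- hence $\mat{G}_{N,k}^{(l_1,j)}$ and, since $l_1,j<k$, even $\E_{k-1}[\mat{G}_{N,k}^{(l_1,j)}(\cdot)\oindicator{\Omega_{N,k}^{(l_1,j)}}]$ --- is independent of $(x_{l_1 j},x_{jl_1})$, while $\E[x_{l_1 j}x_{jl_1}-\rho/N]=o(N^{-2})$ and $\E|x_{l_1 j}x_{jl_1}-\rho/N|^2=O(N^{-2})$ by Lemma \ref{lemma:truncation} (using $\E[\xi_1^2\xi_2^2]<\infty$). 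It then suffices to bound $\E|\sum_{j}(x_{l_1 j}x_{jl_1}-\rho/N)\,e_j^\mathrm{T}M_{N,k}^{(l_1,j)}(z,w)e_j|^2$, where $M_{N,k}^{(l_1,j)}$ is the analogue of $M_{N,k}$ built from $\mat{G}_{N,k}^{(l_1,j)}$ and $\oindicator{\Omega_{N,k}^{(l_1,j)}}$ and satisfies $\|M_{N,k}^{(l_1,j)}\|\le 4/c^2$.

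Next I would expand the square over $j,j'$. For the diagonal terms ($j=j'$), the independence of $(x_{l_1 j},x_{jl_1})$ from $M_{N,k}^{(l_1,j)}$ lets the expectation factor, so the diagonal contribution is at most $\sum_{j<k}\E|x_{l_1 j}x_{jl_1}-\rho/N|^2\,\E|e_j^\mathrm{T}M_{N,k}^{(l_1,j)}(z,w)e_j|^2\le\sum_{j<k}O(N^{-2})(4/c^2)^2=O(N^{-1})=o(1)$ uniformly in $k,l_1$. For the cross terms ($j\neq j'$) I would decouple a second time, replacing via \eqref{eq:resolvdiff} every $\mat{G}_{N,k}^{(l_1,j)},\mat{G}_{N,k}^{(l_1,j')}$ by the doubly modified $\mat{G}_{N,k}^{(l_1,j),(l_1,j')}$ and the indicators by $\oindicator{\Omega_{N,k}^{(l_1,j),(l_1,j')}}$, exactly as in the passage from \eqref{eq:L2crossterms} to \eqref{eq:L2zerocontrib}; the errors are again $o(1)$ uniformly in $k,l_1$. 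After this the scalar matrix factor depends on the entries of $\mat{X}_{N,k}$ only through entries other than $(l_1,j),(j,l_1),(l_1,j'),(j',l_1)$, hence is independent of $x_{l_1 j}x_{jl_1}-\rho/N$ and $x_{l_1 j'}x_{j'l_1}-\rho/N$, which are independent of each other (distinct off-diagonal pairs) and each of expectation $o(N^{-2})$; so each cross term has expectation $o(N^{-4})$, and the $O(N^2)$ cross terms together contribute $o(1)$. Combining the three steps yields \eqref{eq:L22:1}.

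The main obstacle I anticipate is the error bookkeeping in the two decoupling steps rather than anything conceptual: one must check that \emph{every} error term generated by the resolvent-identity substitutions is $o(1)$ uniformly in $k$ and $l_1$. The delicate point is that one may not bound the coefficient simply by its almost sure size $|x_{l_1 j}x_{jl_1}-\rho/N|=O(\eps_N^2)$, because $\sum_{j<k}|e_j^\mathrm{T}M_{N,k}(z,w)e_j|^2$ is only $O(N)$ --- unlike the corresponding quantity in Lemma \ref{lemma:L2}, whose $O(1)$ bound \eqref{eq:L2O1bnd} relies on a fixed boundary index $e_{l_1}$ --- so the crude bound would require $\eps_N^2=o(N^{-1})$, which is not assumed; instead one uses the $L^2$-estimate $\E|x_{l_1 j}x_{jl_1}-\rho/N|^2=O(N^{-2})$ and the independence gained from decoupling, as above.
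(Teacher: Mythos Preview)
Your approach is correct, but it is more elaborate than necessary and misses a structural simplification that the paper exploits. The key observation you overlooked is that in Lemma~\ref{lemma:L22}, unlike in Lemma~\ref{lemma:L2}, the matrix
\[
M_{N,k}(z,w)=\E_{k-1}\bigl[\mat G_{N,k}(z)\oindicator{\Omega_{N,k}\cap Q_{N,k}}\bigr]\,P\,\E_{k-1}\bigl[\mat G_{N,k}(w)\oindicator{\Omega_{N,k}\cap Q_{N,k}}\bigr]
\]
does not depend on $j$. Consequently the paper proceeds directly: it expands the square over $j,j'$ without any preliminary decoupling. For the diagonal terms $j=j'$ one simply uses the deterministic bound $|e_j^{\mathrm T}M_{N,k}e_j|\le 1/c^2$ together with $\sum_{j<k}\E\bigl|x_{l_1 j}x_{jl_1}-\rho/N\bigr|^2=O(N^{-1})$; no independence (hence no decoupling) is needed here. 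For the cross terms $j\neq j'$ the paper first notes that $\sum_{j,j'<k}\E\bigl|x_{l_1 j}x_{jl_1}-\rho/N\bigr|\,\bigl|x_{l_1 j'}x_{j'l_1}-\rho/N\bigr|=O(1)$ uniformly, and then replaces $\mat G_{N,k}$ directly by the doubly modified $\mat G_{N,k}^{(l_1,j),(l_1,j')}$ in one step, using the deterministic bounds $\|\mat G_{N,k}-\mat G_{N,k}^{(l_1,j)}\|\le (16/c^2)\eps_N$ and $\|\mat G_{N,k}^{(l_1,j)}-\mat G_{N,k}^{(l_1,j),(l_1,j')}\|\le (32/c^2)\eps_N$ on $\Omega_{N,k}$; since these bounds are uniform and the $L^1$ mass of the coefficients is $O(1)$, each replacement costs $O(\eps_N)=o(1)$.

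Your first decoupling step, introducing $\mat G_{N,k}^{(l_1,j)}$ into each $j$-summand, is therefore superfluous: it creates a $j$-dependence that was not there to begin with. The difficulty you anticipated in your last paragraph---that one cannot afford the crude almost-sure bound $|x_{l_1 j}x_{jl_1}-\rho/N|=O(\eps_N^2)$ against a matrix sum of size $O(N)$---does not in fact arise, because the cross-term replacement error is controlled not by squaring but by the $O(1)$ bound on $\sum_{j,j'}\E|a_j||a_{j'}|$ multiplied by a single $O(\eps_N)$ norm estimate. Your two-step decoupling still works, but the paper's single-step route is shorter and avoids the bookkeeping you flagged as the main obstacle.
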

\begin{proof}
We will only verify \eqref{eq:L22:1} as the treatment of \eqref{eq:L22:2} is similar.  Indeed, we express the left-hand side of \eqref{eq:L22:1} (without the suprema) as
\begin{align*}
	\E &\sum_{j,j' < k} \tau_{l_1 j} \tau_{l_1 j'} \left( x_{l_1 j} x_{j l_1} - \frac{\rho}{N} \right) \left( x_{l_1 j'} x_{j' l_1} - \frac{\rho}{N} \right) \\
	& \quad \times \sum_{l_2 < k} e_j^\mathrm{T} \E_{k-1} \left[ \mat{G}_{N,k}(z) \oindicator{\Omega_{N,k} \cap Q_{N,k}}  \right] e_{l_2} e_{l_2}^\mathrm{T} e_{k-1} \left[ \mat{G}_{N,k}(w) \oindicator{\Omega_{N,k} \cap Q_{N,k}} \right] e_j \\
	& \quad \times \sum_{l_3 < k} e_{j'}^\mathrm{T} \E_{k-1} \left[ \overline{\mat{G}_{N,k}(z)} \oindicator{\Omega_{N,k} \cap Q_{N,k}} \right] e_{l_3} e_{l_3}^\mathrm{T} \E_{k-1} \left[ \overline{\mat{G}_{N,k}(w)} \oindicator{\Omega_{N,k} \cap Q_{N,k}} \right] e_{j'}.
\end{align*}

We first consider the diagonal terms ($j = j'$).  Indeed, in this case we obtain
\begin{align*}
	\E \sum_{j < k} &\tau_{l_1 j}^2 \left|x_{l_1 j} x_{j l_1} - \frac{\rho}{N} \right|^2 \Bigg| \sum_{l_2 < k} e_j^{\mathrm{T}} \E_{k-1} \left[\mat{G}_{N,k}(z) \oindicator{\Omega_{N,k} \cap Q_{N,k}} \right] e_{l_2} \\
	&\qquad\qquad\qquad\qquad\qquad \times e_{l_2}^\mathrm{T} \E_{k-1} \left[ \mat{G}_{N,k}(w) \oindicator{\Omega_{N,k} \cap Q_{N,k}} \right] e_j \Bigg|^2 \\
	&\leq \frac{1}{c^4} \E \sum_{j < k} \left|x_{l_1 j} x_{j l_1} - \frac{\rho}{N} \right|^2 = o(1)
\end{align*}
uniformly in $k, l_1$.  

We now consider the cross-terms ($j \neq j'$):
\begin{align}
	\E &\sum_{\substack{j,j' < k \\j \neq j'}} \tau_{l_1 j} \tau_{l_1 j'} \left( x_{l_1 j} x_{j l_1} - \frac{\rho}{N} \right) \left( x_{l_1 j'} x_{j' l_1} - \frac{\rho}{N} \right) \label{eq:crossterms} \\
	& \quad\times \sum_{l_2 < k} e_j^\mathrm{T} \E_{k-1} \left[ \mat{G}_{N,k}(z) \oindicator{\Omega_{N,k} \cap Q_{N,k}}  \right] e_{l_2} e_{l_2}^\mathrm{T} e_{k-1} \left[ \mat{G}_{N,k}(w) \oindicator{\Omega_{N,k} \cap Q_{N,k}} \right] e_j \nonumber \\
	& \quad\times \sum_{l_3 < k} e_{j'}^\mathrm{T} \E_{k-1} \left[ \overline{\mat{G}_{N,k}(z)} \oindicator{\Omega_{N,k} \cap Q_{N,k}} \right] e_{l_3} e_{l_3}^\mathrm{T} \E_{k-1} \left[ \overline{\mat{G}_{N,k}(w)} \oindicator{\Omega_{N,k} \cap Q_{N,k}} \right] e_{j'}. \nonumber
\end{align}
For this case, we define 
$$ \mat{X}_{N,k}^{(l_1,j),(l_1,j')} := \mat{X}_{N,k}^{(l_1,j)} -  \tau_{l_1 j'} \left( x_{l_1 j'} e_{l_1} e_{j'}^\mathrm{T} + x_{j' l_1} e_{j'} e_{l_1}^\mathrm{T} \right) $$
and
$$ \mat{G}_{N,k}^{(l_1,j),(l_1,j')}(z) := \left( X_{N,k}^{(l_1,j),(l_1,j')} - z \mat{I} \right)^{-1}. $$
Using Lemma \ref{lemma:ijbnd}, it follows that, for all $j,j' < k$,
$$ \| \mat{G}_{N,k}^{(l_1,j),(l_1,j')}(z) \|, \| \mat{G}_{N,k}^{(l_1,j),(l_1,j')}(w) \| \leq \frac{2}{c} $$
on the event $\Omega_{N,k}$ for $N$ sufficiently large.  

Since 
$$ \sum_{j,j' < k} \E \left| x_{l_1 j} x_{j l_1} - \frac{\rho}{N} \right| \left| x_{l_1 j'} x_{j' l_1} - \frac{\rho}{N} \right| = O(1) $$
uniformly in $k,l_1$, we can apply the deterministic bounds
$$ \| \mat{G}_{N,k}(z) - \mat{G}_{N,k}^{(l_1,j)}(z) \| \leq \frac{16}{c^2} \eps_N, \quad \| \mat{G}_{N,k}(z)^{(l_1,j)} - \mat{G}_{N,k}^{(l_1,j),(l_1,j')}(z) \| \leq \frac{32}{c^2} \eps_N, $$
which hold on $\Omega_{N,k}$, to replace each occurrence of $\mat{G}_{N,k}(z)$ and $\mat{G}_{N,k}(w)$ in \eqref{eq:crossterms} with $\mat{G}_{N,k}^{(l_1,j),(l_1,j')}(z)$ and $\mat{G}_{N,k}^{(l_1,j),(l_1,j')}(w)$.  Moreover, by the same technique used in the proof of Lemma \ref{lemma:mean0}, we can replace $\oindicator{\Omega_{N,k} \cap Q_{N,k}}$ with $\oindicator{\Omega_{N,k}^{(l_1,j),(l_1,j')}}$, where
$$ \Omega_{N,k}^{(l_1,j),(l_1,j')} := \left\{ \| \mat{G}_{N,k}^{(l_1,j),(l_1,j')}(z) \| \leq \frac{2}{c}, \| \mat{G}_{N,k}^{(l_1,j),(l_1,j')}(w) \| \leq \frac{2}{c} \right\}. $$

After these replacements, \eqref{eq:crossterms} reduces to
\begin{align*}
\E &\sum_{\substack{j,j' < k \\ j \neq j'}} \tau_{l_1 j} \tau_{l_1 j'} \left( x_{l_1 j} x_{j l_1} - \frac{\rho}{N} \right) \left( x_{l_1 j'} x_{j' l_1} - \frac{\rho}{N} \right) \\
	& \times \sum_{l_2 < k} e_j^\mathrm{T} \E_{k-1} \left[ \mat{G}_{N,k}^{(l_1,j),(l_1,j')}(z) \oindicator{\Omega_{N,k}^{(l_1,j),(l_1,j')}}  \right] e_{l_2} \\
	&\qquad\qquad \times e_{l_2}^\mathrm{T} e_{k-1} \left[ \mat{G}_{N,k}^{(l_1,j),(l_1,j')}(w) \oindicator{\Omega_{N,k}^{(l_1,j),(l_1,j')}} \right] e_j \\
	& \times \sum_{l_3 < k} e_{j'}^\mathrm{T} \E_{k-1} \left[ \overline{\mat{G}_{N,k}^{(l_1,j),(l_1,j')}(z)} \oindicator{\Omega_{N,k}^{(l_1,j),(l_1,j')}} \right] e_{l_3} \\
	&\qquad\qquad \times e_{l_3}^\mathrm{T} \E_{k-1} \left[ \overline{\mat{G}_{N,k}^{(l_1,j),(l_1,j')}(w)} \oindicator{\Omega_{N,k}^{(l_1,j),(l_1,j')}}\right] e_{j'} + o(1).
\end{align*}
As 
$$ \E  \left( x_{l_1 j} x_{j l_1} - \frac{\rho}{N} \right) \left( x_{l_1 j'} x_{j' l_1} - \frac{\rho}{N} \right)  = 0 $$
for $j \neq j'$, by independence, we find that the contribution from \eqref{eq:crossterms} is $o(1)$ uniformly in $k,l_1$.  
\end{proof}

We now consider the terms involving $\mat{A}_{N,k}(z)$.  Indeed, by \eqref{eq:resolvdiff} and Lemma \ref{lemma:L2}, we write
\begin{align*}
	\sum_{l_2 < k} &e_{l_1}^\mathrm{T} \E_{k-1} \left[ \mat{A}_{N,k}(z) \oindicator{\Omega_{N,k} \cap Q_{N,k}} \right] e_{l_2} e_{l_2}^\mathrm{T} \E_{k-1} \left[ \mat{G}_{N,k}(w) \oindicator{\Omega_{N,k} \cap Q_{N,k}}\right] e_{l_1} \\
	&= \sum_{j \neq k} \sum_{l_2 < k} e_j^\mathrm{T} \E_{k-1} \left[ \mat{G}_{N,k}^{(l_1,j)}(z) x_{l_1 j} \oindicator{\Omega_{N,k} \cap Q_{N,k}} \right] e_{l_2} \\
	&\qquad \times e_{l_2}^\mathrm{T} \E_{k-1} \left[ \left( \mat{G}_{N,k}(w) - \mat{G}_{N,k}^{(l_1, j)}(w) \right) \oindicator{\Omega_{N,k} \cap Q_{N,k}} \right] e_{l_1} + o(1) \\
	&= -\sum_{j \neq k} \sum_{l_2 < k} e_j^\mathrm{T} \E_{k-1} \left[ \mat{G}_{N,k}^{(l_1,j)}(z) x_{l_1 j} \oindicator{\Omega_{N,k} \cap Q_{N,k}} \right] e_{l_2} \\
	&\qquad \times e_{l_2}^{\mathrm{T}} \E_{k-1} \left[ x_{l_1 j} \tau_{l_1 j} \mat{G}_{N,k}^{(l_1,j)}(w) e_{l_1} e_{j}^\mathrm{T} \mat{G}_{N,k}(w) \oindicator{\Omega_{N,k} \cap Q_{N,k}} \right] e_{l_1} \\
	&\quad - \sum_{j \neq k} \sum_{l_2 < k} e_j^\mathrm{T} \E_{k-1} \left[ \mat{G}_{N,k}^{(l_1,j)}(z) x_{l_1 j} \oindicator{\Omega_{N,k} \cap Q_{N,k}} \right] e_{l_2} \\
	&\qquad \times e_{l_2}^\mathrm{T} \E_{k-1} \left[ x_{j l_1} \tau_{l_1 j} \mat{G}_{N,k}^{(l_1,j)}(w) e_j e_{l_1}^\mathrm{T} \mat{G}_{N,k}(w) \oindicator{\Omega_{N,k} \cap Q_{N,k}} \right] e_{l_1} + o(1) \\
	&=: - \Psi_{N,k,1}(z,w) - \Psi_{N,k,2}(z,w) + o(1),
\end{align*}
where $o(1)$ denotes a term which tends to zero in $L_2$ as $N \to \infty$.  

By Lemma \ref{lemma:mean0} and the Cauchy-Schwarz inequality, we have
\begin{align*}
	\E &| \Psi_{N,k,1}(z,w)| \\
	&\leq \E \Bigg| \sum_{j,l_2 < k} \tau_{l_1 j} x_{l_1 j}^2 e_j \E_{k-1} \left[ \mat{G}_{N,k}^{(l_1,j)}(z) \oindicator{\Omega_{N,k} \cap Q_{N,k}} \right] e_{l_2} \\
	&\qquad\qquad \times e_{l_2}^\mathrm{T} \mat{G}_{N,k}^{(l_1,j)}(w) e_{l_1} e_j^\mathrm{T} \mat{G}_{N,k}(w) e_{l_1} \oindicator{\Omega_{N,k} \cap Q_{N,k}} \Bigg| + O(N^{-1}) \\
	&\leq \Bigg( \E \sum_{j < k} |x_{l_1 j}|^4 \Bigg| \sum_{l_2 < k} e_j^\mathrm{T} \E_{k-1} \left[ \mat{G}_{N,k}^{(l_1,j)}(z) \oindicator{\Omega_{N,k} \cap Q_{N,k}} \right] e_{l_2} \\
	&\qquad\qquad \times e_{l_2}^\mathrm{T} \mat{G}_{N,k}^{(l_1,j)}(w) e_{l_1} \Bigg|^2 \oindicator{\Omega_{N,k} \cap Q_{N,k}} \Bigg)^{1/2} \\
	&\qquad \times \left( \E \sum_{j < k} \left| e_j^\mathrm{T} \mat{G}_{N,k}(w) e_{l_1} \right|^2 \oindicator{\Omega_{N,k} \cap Q_{N,k}} \right)^{1/2} + O(N^{-1}) \\
	&\leq \frac{(4 \eps_N)^2}{c} \Bigg( \E \sum_{j < k} \Bigg| \sum_{l_2 < k} e_j^\mathrm{T} \E_{k-1} \left[ \mat{G}_{N,k}^{(l_1,j)}(z) \oindicator{\Omega_{N,k} \cap Q_{N,k}} \right] e_{l_2} \\
	&\qquad\qquad\qquad\qquad \times e_{l_2}^\mathrm{T} \mat{G}_{N,k}^{(l_1,j)}(w) e_{l_1} \Bigg|^2 \oindicator{\Omega_{N,k} \cap Q_{N,k}} \Bigg)^{1/2} + O(N^{-1}) \\
	&= o(1)
\end{align*}
uniformly in $k, l_1$.  Here we used that
$$ \E \sum_{j < k} \Bigg| \sum_{l_2 < k} e_j^\mathrm{T} \E_{k-1} \left[ \mat{G}_{N,k}^{(l_1,j)}(z) \oindicator{\Omega_{N,k} \cap Q_{N,k}} \right] e_{l_2} e_{l_2}^\mathrm{T} \mat{G}_{N,k}^{(l_1,j)}(w) e_{l_1} \Bigg|^2 \oindicator{\Omega_{N,k} \cap Q_{N,k}} = O(1). $$
This follows by replacing $\mat{G}_{N,k}^{(l_1,j)}(z)$ and $\mat{G}_{N,k}^{(l_1,j)}(w)$ with $\mat{G}_{N,k}(z)$ and $\mat{G}_{N,k}(w)$; indeed, it can be proven in the same way as in the proof of \eqref{eq:L2O1bnd}.  

We now consider $\Psi_{N,k,2}(z,w)$.  By Lemma \ref{lemma:mean0}, we only need to consider
\begin{align*}
	\sum_{l_2,j < k} \tau_{l_1,j} x_{l_1 j} x_{j l1} &e_j^\mathrm{T} \E_{k-1} \left[ \mat{G}_{N,k}^{(l_1,j)}(z) \oindicator{\Omega_{N,k} \cap Q_{N,k}} \right] e_{l_2} \\
	&\qquad \times e_{l_2}^\mathrm{T} \E_{k-1} \left[ \mat{G}_{N,k}^{(l_1,j)}(w) \oindicator{\Omega_{N,k} \cap Q_{N,k}} e_j e_{l_1}^\mathrm{T} \mat{G}_{N,k}(w) e_{l_1} \right]. 
\end{align*}
In view of Lemma \ref{lemma:diagonal} and the fact that 
\begin{equation} \label{eq:mombnduse}
	\sqrt{\E|x_{l_1 j} x_{j l_1}|^2 } = O(N^{-1}),
\end{equation}
this reduces to studying
\begin{align*}
	m(w) \sum_{j,l_2 < k} \tau_{l_1 j} x_{l_1 j} x_{j l_1} &e_j^\mathrm{T} \E_{k-1} \left[ \mat{G}_{N,k}^{(l_1,j)}(z) \oindicator{\Omega_{N,k} \cap Q_{N,k}} \right] e_{l_2} \\
	&\qquad \times e_{l_2}^\mathrm{T} \E_{k-1} \left[ \mat{G}_{N,k}^{(l_1,j)}(w) \oindicator{\Omega_{N,k} \cap Q_{N,k}} \right] e_j.
\end{align*}

From \eqref{eq:resolvdiff} and Lemma \ref{lemma:ijbnd}, we have the deterministic bound
$$ \| \mat{G}_{N,k}(z) - \mat{G}_{N,k}^{(l_1,j)}(z) \| \leq \frac{16}{c^2} \eps_N $$
on the event $\Omega_{N,k}$.  Thus, by the bound above and \eqref{eq:mombnduse}, the only non-negligible contribution from $\Psi_{N,k,2}(z,w)$ is given by
\begin{align*}
	m(w) \sum_{j,l_2 < k} \tau_{l_1 j} x_{l_1 j} x_{j l_1} &e_j \E_{k-1} \left[ \mat{G}_{N,k}(z) \oindicator{\Omega_{N,k} \cap Q_{N,k}} \right] e_{l_2}\\
	&\qquad \times e_{l_2}^\mathrm{T} \E_{k-1} \left[ \mat{G}_{N,k}(w) \oindicator{\Omega_{N,k} \cap Q_{N,k}} \right] e_j.
\end{align*}
Therefore, by Lemma \ref{lemma:L22}, we conclude that the only non-negligible contribution from
$$ \sum_{l_2 < k} e_{l_1}^\mathrm{T} \E_{k-1} \left[ \mat{A}_{N,k}(z) \oindicator{\Omega_{N,k} \cap Q_{N,k}} \right] e_{l_2} e_{l_2}^\mathrm{T} \E_{k-1} \left[ \mat{G}_{N,k}(w) \oindicator{\Omega_{N,k} \cap Q_{N,k}}\right] e_{l_1} $$
is given by 
\begin{align*}
	- m(w) \frac{\rho}{N} \sum_{j, l_2 < k} \tau_{l_1 j} b_{k j l_2}(z) b_{k l_2 j}(w).
\end{align*}
Hence,
\begin{align*}
	\frac{1}{N} \sum_{l_1, l_2 < k} &e_{l_1}^\mathrm{T} \E_{k-1} \left[ \mat{A}_{N,k}(z) \oindicator{\Omega_{N,k} \cap Q_{N,k}} \right] e_{l_2} e_{l_2}^\mathrm{T} \E_{k-1} \left[ \mat{G}_{N,k}(w) \oindicator{\Omega_{N,k} \cap Q_{N,k}}\right] e_{l_1} \\
	&= - m(w) \frac{(k-1)}{N} \frac{\rho}{N} \sum_{j, l_2 < k} b_{k j l_2}(z) b_{k l_2 j}(w) + o(1),
\end{align*}
where $o(1)$ denotes a term which tends to zero in probability (uniformly in $k$) as $N \to \infty$.  Similarly, using \eqref{eq:L2:2} and \eqref{eq:L22:2} from Lemmas \ref{lemma:L2} and \ref{lemma:L22}, we find that
\begin{align*}
	\frac{1}{N} \sum_{l_1, l_2 < k} &e_{l_1}^\mathrm{T} \E_{k-1} \left[ \mat{A}_{N,k}(z) \oindicator{\Omega_{N,k} \cap Q_{N,k}} \right] e_{l_2} e_{l_1}^\mathrm{T} \E_{k-1} \left[ \mat{G}_{N,k}(w) \oindicator{\Omega_{N,k} \cap Q_{N,k}}\right] e_{l_2} \\
	&= - m(w) \frac{(k-1)}{N} \frac{1}{N} \sum_{j, l_2 < k} b_{k j l_2}(z) b_{k l_2 j}(w) + o(1).
\end{align*}

Combining the contributive components above, we obtain
\begin{align*}
	\frac{1}{N} &\left[ z + \rho m(z) \frac{(N - 3/2)}{N} \right] \sum_{l_1, l_2 < k} b_{k l_1 l_2}(z) b_{k l_2 l_1}(w) \\
	&= - m(w) \rho \frac{(k-1)}{N} \frac{1}{N} \sum_{l_1, l_2 < k} b_{k l_1 l_2}(z) b_{k l_2 l_1}(w) - m(w) \left( \frac{k-1}{N} \right) + o(1),
\end{align*}
where $o(1)$ denotes a term which tends to zero in probability (uniformly in $k$) as $N \to \infty$.  Since $z + \rho m(z) = \frac{-1}{m(z)}$, we obtain
\begin{align*}
	\frac{1}{N} &\sum_{l_1, l_2 < k} b_{k l_1 l_2}(z) b_{k l_2 l_1}(w) \\
	&= m(z) m(w) \rho \frac{(k-1)}{N} \frac{1}{N} \sum_{l_1, l_2 < k} b_{k l_1 l_2}(z) b_{k l_2 l_1}(w) + m(z) m(w) \left( \frac{k-1}{N} \right) + o(1) \\
	&= \frac{m(z) m(w) \left(\frac{k-1}{N} \right) }{ 1 - \rho \left( \frac{k-1}{N} \right) m(z) m(w) } + o(1).
\end{align*}
As
\begin{align*}
	\lim_{N \to \infty} &\frac{1}{N} \sum_{k=1}^N \frac{m(z) m(w) \left(\frac{k-1}{N} \right) }{ 1 - \rho \left( \frac{k-1}{N} \right) m(z) m(w) } = \int_{0}^1 \frac{t m(z) m(w)}{1 - t \rho m(z) m(w)} dt,
\end{align*}
we conclude that
\begin{equation} \label{eq:betaN1}
	\rho^2 \beta_{N,1}(z,w) \longrightarrow - \rho - \frac{ \log( 1- \rho m(z) m(w)) }{ m(z) m(w) } 
\end{equation}
in probability as $N \to \infty$.  Similarly, we obtain 
\begin{equation} \label{eq:betaN2}
	\beta_{N,2}(z,w) \longrightarrow -1 - \frac{ \log (1 - m(z) m(w)) }{m(z) m(w)} 
\end{equation}
in probability as $N \to \infty$.

From \eqref{eq:betaNreduce}, \eqref{eq:betaN1}, and \eqref{eq:betaN2}, we conclude that $\beta_{N}(z,w)$ converges in probability to $\beta(z,w)$ as $N \to \infty$, where $\beta(z,w)$ is defined in \eqref{def:beta}.  In particular, this implies that
$$ \sum_{k=1}^N \E_{k-1} \left[ Z_{N,k}''(z) Z_{N,k}''(w) \right] \longrightarrow \frac{\partial^2}{\partial z \partial w} m(z) m(w) \beta(z,w) $$
in probability as $N \to \infty$.  This completes the proof of Theorem \ref{thm:finitedim}.

\begin{remark}
We note that the functions $\beta(z,w)$ and $m(z)$ depend on $\hat{\rho}$ instead of $\rho$ (to use the notation from above).  However, in view of Lemma \ref{lemma:truncation} and Lemma \ref{lemma:cont} in Appendix \ref{sec:cont}, one can replace all occurrences of $\hat{\rho}$ with $\rho$.  Similarly, from part \eqref{item:highmom} of Lemma \ref{lemma:truncation}, one can replace $\E[\hat{\xi}_1^2 \hat{\xi}_2^2]$ with $\E[\xi_1^2 \xi_2^2]$.  
\end{remark}

\section{Tightness} \label{sec:tight}

We now verify that the process $\Xi_N$ is tight in the space of continuous functions on the contour $\mathcal{C}$.  It follows from the Arzela-Ascoli criteria (see e.g. \cite{PB68}) that it suffices to verify the following result.

\begin{lemma} \label{lemma:tight}
There exists a constant $C>0$ (independent of $N$) such that
$$ \E \left|\frac{\Xi_N(z) - \Xi_N(w)}{z-w} \right|^2 \leq C $$
for all $z,w \in \mathcal{C}$ with $z \neq w$.  
\end{lemma}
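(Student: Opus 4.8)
The plan is to run the martingale argument of \cite[Chapter 9]{BSbook} and \cite{RS} on the matrix $\mat G_N(z)\mat G_N(w)$ rather than on $\mat G_N(z)$. Since $\oindicator{\Omega_N}$ is independent of $z$ and, on $\Omega_N$, both resolvents are defined with $\mat G_N(z)-\mat G_N(w)=(z-w)\mat G_N(z)\mat G_N(w)$, we obtain for $z\ne w$
\begin{align*}
  \frac{\Xi_N(z)-\Xi_N(w)}{z-w} &= \sum_{k=1}^N\gamma_{N,k}(z,w),\\
  \gamma_{N,k}(z,w) &:= (\E_k-\E_{k-1})\big[\tr(\mat G_N(z)\mat G_N(w))\oindicator{\Omega_N}\big].
\end{align*}
This is a sum of martingale differences, so by orthogonality it suffices to prove $\sum_{k=1}^N\E|\gamma_{N,k}(z,w)|^2\le C$ uniformly for $z,w\in\mathcal C$; in fact I will show $\E|\gamma_{N,k}(z,w)|^2=O(N^{-1})$ uniformly in $k$ and in $z,w\in\mathcal C$, the factor $1/(z-w)$ having been absorbed through the resolvent identity, so that the bound holds all the way down to $z=w$.

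First I would pass from $\mat G_N$ to the truncated resolvent $\mat G_{N,k}$. By \eqref{eq:trident} one has, on $\Omega_N$, $\tr(\mat G_N(z)\mat G_N(w))=R_k(z,w)+\tr(\mat G_{N,k}(z)\mat G_{N,k}(w))$, where $R_k(z,w)$ is obtained from the right-hand side of \eqref{eq:trident} at $z$ and at $w$ by dividing the difference by $z-w$; using $\tfrac{1}{z-w}\big(\mat G_{N,k}(z)^2-\mat G_{N,k}(w)^2\big)=\mat G_{N,k}(z)^2\mat G_{N,k}(w)+\mat G_{N,k}(z)\mat G_{N,k}(w)^2$ and $\tfrac{1}{z-w}\big(d_k(z)^{-1}-d_k(w)^{-1}\big)=\big(1+r_k\mat G_{N,k}(z)\mat G_{N,k}(w)c_k\big)/\big(d_k(z)d_k(w)\big)$, with $d_k(z):=\tfrac1{\sqrt N}y_{kk}-z-r_k\mat G_{N,k}(z)c_k$, one checks that $R_k(z,w)$ is a genuine rational expression (no residual $1/(z-w)$) in $z,w$, in the bilinear forms $r_k\mat B c_k$ with $\mat B$ a product of at most three of $\mat G_{N,k}(z),\mat G_{N,k}(w)$, and in $\tfrac1{\sqrt N}y_{kk}$, with denominator a product of at most three of $d_k(z),d_k(w)$. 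Since $\tr(\mat G_{N,k}(z)\mat G_{N,k}(w))\oindicator{\Omega_{N,k}\cap Q_{N,k}}$ involves neither the $k$-th row nor the $k$-th column, $\E_k-\E_{k-1}$ annihilates it, and by the same reasoning as in the proofs of Lemmas \ref{lemma:reduction} and \ref{lemma:reduction2} one may replace $\oindicator{\Omega_N}$ by $\oindicator{\Omega_{N,k}\cap Q_{N,k}}$ throughout, incurring a per-term $L^2$ error that is negligible after summation in $k$ (using \eqref{eq:hatONC}, Corollary \ref{cor:QN}, and \eqref{eq:alphaprob}, on whose intersection $|d_k(z)|\ge c/4$ uniformly for $z\in\mathcal C$). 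Thus $\gamma_{N,k}(z,w)=(\E_k-\E_{k-1})\big[R_k(z,w)\oindicator{\Omega_{N,k}\cap Q_{N,k}}\big]+(\text{negligible})$.

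Next I would extract the fluctuation. Let $\bar R_k(z,w)$ be the result of replacing in $R_k(z,w)$ every bilinear form $r_k\mat B c_k$ by $\tfrac{\rho}{N}\tr\mat B$, replacing $\tfrac1{\sqrt N}y_{kk}$ by $0$, and replacing each $d_k(z)$ by $\bar d_k(z):=-z-\tfrac\rho N\tr\mat G_{N,k}(z)$ (which satisfies $|\bar d_k(z)|\ge c/2$ on $Q_{N,k}$). Then $\bar R_k(z,w)\oindicator{\Omega_{N,k}\cap Q_{N,k}}$ is a function of $\mat G_{N,k}$ alone, hence is annihilated by $\E_k-\E_{k-1}$, so $\gamma_{N,k}(z,w)=(\E_k-\E_{k-1})\big[(R_k(z,w)-\bar R_k(z,w))\oindicator{\Omega_{N,k}\cap Q_{N,k}}\big]+(\text{negligible})$. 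A telescoping multilinear expansion writes $R_k-\bar R_k$ as a finite sum of products in which exactly one factor is an increment $r_k\mat B c_k-\tfrac\rho N\tr\mat B$, or $\alpha_{N,k}(z)=d_k(z)-\bar d_k(z)$ (recall \eqref{def:alpha}), or $\tfrac1{\sqrt N}y_{kk}$, while all remaining factors (the $r_k\mat B c_k$, their surrogates $\tfrac\rho N\tr\mat B$, and the inverse denominators) are $O(1)$ in every $L^p$ on $\Omega_{N,k}\cap Q_{N,k}$, since $\|\mat G_{N,k}(z)\|\le 1/c$, $\tfrac1N|\tr\mat B|\le\|\mat B\|\le C$, and $|d_k|,|\bar d_k|\ge c/4$. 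By \eqref{eq:alpha}, \eqref{eq:GNk2}, Lemma \ref{lemma:lde} and the truncated-moment bounds of Lemma \ref{lemma:truncation}, each such increment is $O(N^{-1/2})$ in every $L^p$, so the generalized H\"older inequality gives $\|(R_k(z,w)-\bar R_k(z,w))\oindicator{\Omega_{N,k}\cap Q_{N,k}}\|_2=O(N^{-1/2})$ uniformly in $k$ and in $z,w\in\mathcal C$. Since $\|(\E_k-\E_{k-1})U\|_2\le 2\|U\|_2$, this yields $\E|\gamma_{N,k}(z,w)|^2=O(N^{-1})$ uniformly, and summing over $k$ completes the proof.

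The main obstacle is the bookkeeping in the last two paragraphs: writing out a clean enough closed form for the rank-one update $R_k(z,w)$ of the product trace, and then checking that after subtracting the row-$k$-free surrogate $\bar R_k$ every surviving term genuinely carries a fluctuation factor of size $N^{-1/2}$. The difficulty is that the obvious numerator pieces, such as $1+r_k\mat G_{N,k}^2(z)c_k$ or $1+r_k\mat G_{N,k}(z)\mat G_{N,k}(w)c_k$, are themselves $O(1)$, not small, so the required smallness must be produced either by an $\alpha_{N,k}$-type increment or by exact cancellation against $\bar R_k$; organizing this across the several terms of the telescoping expansion — and disposing of the harmless indicator replacements — is where essentially all of the technical labour resides, exactly as in the corresponding tightness arguments of \cite[Chapter 9]{BSbook} and \cite{RS}.
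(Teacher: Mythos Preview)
Your plan is essentially the paper's: martingale decomposition, rank-one trace update via Proposition~\ref{prop:trace}, and extraction of the $O(N^{-1/2})$ fluctuation through Lemma~\ref{lemma:lde} and its consequences. The paper organizes the same computation by writing $\tr\mat G_N(z)-\tr\mat G_{N,k}(z)=T_{N,k,1}(z)-T_{N,k,2}(z)$, where $T_{N,k,1}$ carries the full denominator $d_k(z)^{-1}$ and $T_{N,k,2}$ only the surrogate $\bar d_k(z)^{-1}$, and then bounds each of $(T_{N,k,j}(z)-T_{N,k,j}(w))/(z-w)$ via the dedicated Lemmas~\ref{lemma:alphazw}--\ref{lemma:tnk1}; your ``telescope $R_k-\bar R_k$'' is the abstract version of the same split.

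The one place your sketch is imprecise is the indicator switch. On $\Omega_{N,k}\cap Q_{N,k}$ alone, $|d_k(z)|$ is \emph{not} bounded below (indeed $R_k\,\oindicator{\Omega_{N,k}\cap Q_{N,k}}$ need not even be defined), so the claimed bound $\|(R_k-\bar R_k)\oindicator{\Omega_{N,k}\cap Q_{N,k}}\|_2=O(N^{-1/2})$ does not follow as written. The paper sidesteps this by keeping $\oindicator{\Omega_N\cap Q_N}$ on the $T_{N,k,1}$ part (where the deterministic bound $|d_k(z)|^{-1}=|(\mat G_N(z))_{kk}|\le 1/c$ is available) and only passing to $\oindicator{\Omega_{N,k}\cap Q_{N,k}}$ on $T_{N,k,2}$, which involves just $\bar d_k^{-1}$. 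Your alternative of intersecting with the $\alpha$-event from \eqref{eq:alphaprob} also works, but then the surrogate you subtract must still carry only $\oindicator{\Omega_{N,k}\cap Q_{N,k}}$ (not the intersection with the $\alpha$-event), since otherwise it depends on $r_k,c_k,y_{kk}$ and is no longer annihilated by $\E_k-\E_{k-1}$; the residual $\bar R_k\,\oindicator{A_k^c\cap\Omega_{N,k}\cap Q_{N,k}}$ is then disposed of by the overwhelming smallness of $\Prob(A_k^c)$ together with the deterministic bound $|\bar R_k|\le C$ on $Q_{N,k}$.
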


It is not surprising that the proof of Lemma \ref{lemma:tight} uses many of the same techniques described in Section \ref{sec:distr}.  We will also use the same notation introduced in Section \ref{sec:distr}.  Recall that $c_k$ denotes the $k$-th column of $\mat{X}_N$ with the $k$-th entry removed and $r_k$ denotes the $k$-th row of $\mat{X}_N$ with the $k$-th entry removed.  We begin with a few preliminary results which are corollaries of Lemmas \ref{lemma:alpha} and \ref{lemma:lde}.

\begin{lemma} \label{lemma:alphazw}
There exists a constant $C>0$ (independent of $N$) such that 
$$ \sum_{k=1}^N \sqrt{ \E \left| \frac{\alpha_{N,k}(z) - \alpha_{N,k}(w)}{z-w} \right|^4 \oindicator{\Omega_{N,k}} } \leq C $$
for all $z,w \in \mathcal{C}$ with $z \neq w$, where $\alpha_{N,k}(z)$ is defined in \eqref{def:alpha}.  
\end{lemma}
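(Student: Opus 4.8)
The plan is to derive Lemma~\ref{lemma:alphazw} directly from the estimate \eqref{eq:GNk2} established in Lemma~\ref{lemma:alpha}. The first step is a resolvent-identity reduction. Recalling the definition \eqref{def:alpha} of $\alpha_{N,k}$, the diagonal term $\frac{1}{\sqrt N} y_{kk}$ is independent of $z$ and hence cancels in the difference $\alpha_{N,k}(z) - \alpha_{N,k}(w)$. Since $\mat{G}_{N,k}(z) - \mat{G}_{N,k}(w) = (z-w)\mat{G}_{N,k}(z)\mat{G}_{N,k}(w)$ by the resolvent identity, dividing by $z-w$ gives
\[
	\frac{\alpha_{N,k}(z) - \alpha_{N,k}(w)}{z-w} = -\left( r_k \mat{G}_{N,k}(z) \mat{G}_{N,k}(w) c_k - \frac{\rho}{N} \tr \mat{G}_{N,k}(z) \mat{G}_{N,k}(w) \right),
\]
which is exactly the random variable whose $p$-th moment on $\Gamma_{N,k}$ is bounded in \eqref{eq:GNk2}.

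Next I would apply \eqref{eq:GNk2} with $p = 4$, $D = \mathcal{C}$, and $\gamma = c$, so that $\Gamma_{N,k}$ coincides with $\Omega_{N,k}$. This yields
\[
	\E \left| \frac{\alpha_{N,k}(z) - \alpha_{N,k}(w)}{z-w} \right|^4 \oindicator{\Omega_{N,k}} \leq K_4 \left( \frac{\left( \max\{ \E|\xi_1|^4, \E|\xi_2|^4 \} \right)^2}{c^8 N^2} + \frac{\max\{ \E|\xi_1|^8, \E|\xi_2|^8 \}}{c^8 N^3} \right),
\]
uniformly over $z,w \in \mathcal{C}$ with $z \neq w$ and over $1 \leq k \leq N$, where all moments refer to the truncated atom variables. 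By Lemma~\ref{lemma:truncation}\eqref{item:momentbnd} the fourth moments are controlled by those of the original atom variables, so the first term is $O(N^{-2})$. For the second term, the almost-sure bound $|\hat\xi_i| \leq 4\eps_N \sqrt N$ from Lemma~\ref{lemma:truncation}\eqref{item:asbnd} gives $\E|\hat\xi_i|^8 \leq (4 \eps_N \sqrt N)^2 \, \E|\hat\xi_i|^6 = O(\eps_N^2 N)$, whence the second term is $O(\eps_N^2 N^{-2}) = o(N^{-2})$ since $\eps_N = N^{-\eps}$. Altogether,
\[
	\E \left| \frac{\alpha_{N,k}(z) - \alpha_{N,k}(w)}{z-w} \right|^4 \oindicator{\Omega_{N,k}} = O(N^{-2})
\]
uniformly in $z, w, k$.

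Finally, taking the square root produces a quantity of size $O(N^{-1})$, again uniformly in $z,w \in \mathcal{C}$ and $1 \leq k \leq N$, and summing the $N$ terms indexed by $k$ gives a bound $O(1)$, which is the assertion of the lemma; the constant is independent of $N$ (and of $z,w$) because the bound \eqref{eq:GNk2} is.

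The only point requiring a moment of care --- and it is a minor one --- is checking that the $N^{-(p-1)}$ term in \eqref{eq:GNk2}, which carries the larger $2p$-th moment of the (truncated) atom variables, does not dominate; this is where the explicit truncation scale $\eps_N = N^{-\eps}$ enters. No genuinely new estimate beyond Lemmas~\ref{lemma:alpha} and \ref{lemma:truncation} is required, so there is no substantial obstacle.
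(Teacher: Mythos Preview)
Your proof is correct and follows essentially the same route as the paper: reduce via the resolvent identity to the centered bilinear form $r_k \mat{G}_{N,k}(z)\mat{G}_{N,k}(w) c_k - \frac{\rho}{N}\tr \mat{G}_{N,k}(z)\mat{G}_{N,k}(w)$, then invoke \eqref{eq:GNk2} with $p=4$. The paper's proof is terser and does not spell out the moment bookkeeping for the truncated atom variables, but the argument is the same.
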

\begin{proof}
As
$$ \mat{G}_{N,k}(z) - \mat{G}_{N,k}(w) = (z-w) \mat{G}_{N,k}(z) \mat{G}_{N,k}(w), $$
 it suffices to show that
$$ \sum_{k=1}^N \sqrt{ \E \left| r_k \mat{G}_{N,k}(z) \mat{G}_{n,k}(w) c_k - \frac{\rho}{N} \tr \mat{G}_{N,k}(z) \mat{G}_{N,k}(w) \right|^4 \oindicator{\Omega_{N,k}} } \leq C $$
for all $z,w \in \mathcal{C}$.  The claim now follows from \eqref{eq:GNk2}.  
\end{proof}

\begin{lemma} \label{lemma:GNk2zw}
There exists a constant $C>0$ (independent of $N$) such that 
$$ \sup_{1 \leq k \leq N} \E \left| \frac{r_k \left(\mat{G}_{N,k}^2(z) - \mat{G}_{N,k}^2(w) \right) c_k}{ z-w} \right|^4\oindicator{\Omega_{N,k}} \leq C $$
for all $z,w \in \mathcal{C}$ with $z \neq w$.  
\end{lemma}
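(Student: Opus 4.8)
The plan is to remove the apparent singularity at $z=w$ by means of the resolvent identity and then reduce to the bilinear concentration bound of Lemma~\ref{lemma:lde}. Since $\frac{d}{dz}\mat{G}_{N,k}(z)=\mat{G}_{N,k}^2(z)$ and $\mat{G}_{N,k}(z)-\mat{G}_{N,k}(w)=(z-w)\mat{G}_{N,k}(z)\mat{G}_{N,k}(w)$, a telescoping argument gives
\[
	\mat{G}_{N,k}^2(z)-\mat{G}_{N,k}^2(w)=(z-w)\left(\mat{G}_{N,k}^2(z)\mat{G}_{N,k}(w)+\mat{G}_{N,k}(z)\mat{G}_{N,k}^2(w)\right).
\]
Writing $\mat{B}_{N,k}(z,w):=\mat{G}_{N,k}^2(z)\mat{G}_{N,k}(w)+\mat{G}_{N,k}(z)\mat{G}_{N,k}^2(w)$, the quantity to be estimated becomes simply $\E|r_k\mat{B}_{N,k}(z,w)c_k|^4\oindicator{\Omega_{N,k}}$, with the factor $(z-w)^{-1}$ gone. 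On $\Omega_{N,k}$ one has $\sup_{z\in\mathcal{C}}\|\mat{G}_{N,k}(z)\|\le 1/c$, hence $\|\mat{B}_{N,k}(z,w)\|\le 2/c^3$ there, uniformly in $z,w\in\mathcal{C}$, in $k$, and in $N$.

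Next I would split $r_k\mat{B}_{N,k}c_k=\big(r_k\mat{B}_{N,k}c_k-\tfrac{\rho}{N}\tr\mat{B}_{N,k}\big)+\tfrac{\rho}{N}\tr\mat{B}_{N,k}$. The second term is immediate: on $\Omega_{N,k}$, $\tfrac1N|\tr\mat{B}_{N,k}|\le\|\mat{B}_{N,k}\|\le 2/c^3$ and $|\rho|\le1$, so $\E|\tfrac{\rho}{N}\tr\mat{B}_{N,k}|^4\oindicator{\Omega_{N,k}}\le(2/c^3)^4$. For the first term I would condition on $\mat{X}_{N,k}$, noting that $\mat{B}_{N,k}(z,w)$ and $\oindicator{\Omega_{N,k}}$ are $\sigma(\mat{X}_{N,k})$-measurable while $(r_k,c_k)$ is independent of $\mat{X}_{N,k}$ by Definition~\ref{def:elliptic}. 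After rescaling the coordinate pairs of $(r_k,c_k)$ by $\sqrt N$ (so that each has mean zero, unit variance, and correlation $\rho$ up to the $o(N^{-1})$ truncation correction of Lemma~\ref{lemma:truncation}\eqref{item:rho}, with $\mu_p=\max\{\E|\hat\xi_1|^p,\E|\hat\xi_2|^p\}$), Lemma~\ref{lemma:lde} applied conditionally with $p=4$ to the then-deterministic matrix $\mat{B}_{N,k}(z,w)$ yields
\[
	\E\!\left[\big|r_k\mat{B}_{N,k}c_k-\tfrac{\rho}{N}\tr\mat{B}_{N,k}\big|^4\,\big|\,\mat{X}_{N,k}\right]\le\frac{K_4}{N^4}\left(\mu_4^2\big(\tr(\mat{B}_{N,k}\mat{B}_{N,k}^\ast)\big)^2+\mu_8\,\tr\!\big((\mat{B}_{N,k}\mat{B}_{N,k}^\ast)^2\big)\right).
\]
On $\Omega_{N,k}$ one has $\tr(\mat{B}_{N,k}\mat{B}_{N,k}^\ast)\le N\|\mat{B}_{N,k}\|^2=O(N)$ and $\tr((\mat{B}_{N,k}\mat{B}_{N,k}^\ast)^2)\le N\|\mat{B}_{N,k}\|^4=O(N)$; by Lemma~\ref{lemma:truncation}\eqref{item:momentbnd}, $\mu_4,\mu_6=O(1)$, and the almost-sure bound $|\hat\xi_i|\le 4\eps_N\sqrt N$ from Lemma~\ref{lemma:truncation}\eqref{item:asbnd} gives $\mu_8\le 16\eps_N^2 N\,\mu_6=O(\eps_N^2 N)$. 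Hence $\E|r_k\mat{B}_{N,k}c_k-\tfrac{\rho}{N}\tr\mat{B}_{N,k}|^4\oindicator{\Omega_{N,k}}=O(N^{-2})+O(\eps_N^2 N^{-2})=o(1)$, and combining the two pieces yields $\E|r_k\mat{B}_{N,k}(z,w)c_k|^4\oindicator{\Omega_{N,k}}\le C$ with $C$ depending only on $c$ and the moments in condition~{\bf C0}, uniformly in $z,w\in\mathcal{C}$ with $z\ne w$, in $1\le k\le N$, and in $N$, which is the claim.

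The only step that genuinely matters is the telescoping identity, which is precisely what trades the dangerous factor $(z-w)^{-1}$ for one extra resolvent; after that the argument is a routine application of Lemma~\ref{lemma:lde} in the same spirit as the proofs of Lemmas~\ref{lemma:alpha} and~\ref{lemma:alphazw}. The only points requiring mild care are conditioning on $\mat{X}_{N,k}$ before invoking the deterministic-matrix form of Lemma~\ref{lemma:lde}, and controlling the eighth moment $\mu_8$ through the truncation cutoff rather than through condition~{\bf C0} directly.
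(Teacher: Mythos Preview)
Your proof is correct and follows essentially the same approach as the paper: both arguments use the resolvent telescoping $\mat{G}_{N,k}^2(z)-\mat{G}_{N,k}^2(w)=(z-w)\bigl(\mat{G}_{N,k}^2(z)\mat{G}_{N,k}(w)+\mat{G}_{N,k}(z)\mat{G}_{N,k}^2(w)\bigr)$ to cancel the factor $(z-w)^{-1}$, split off the trace, bound the trace term by the operator-norm bound on $\Omega_{N,k}$, and control the centered bilinear form via Lemma~\ref{lemma:lde}. The only cosmetic difference is that the paper treats the two summands $\mat{G}_{N,k}^2(z)\mat{G}_{N,k}(w)$ and $\mat{G}_{N,k}(z)\mat{G}_{N,k}^2(w)$ separately (as \eqref{eq:2bndshow} and \eqref{eq:2bndshow2}), while you bundle them into a single matrix $\mat{B}_{N,k}(z,w)$; your explicit handling of $\mu_8$ via the truncation bound is a detail the paper leaves implicit.
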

\begin{proof}
By the triangle inequality, we have
\begin{align*}
	\E &\left| r_k \left(\mat{G}_{N,k}^2(z) - \mat{G}_{N,k}^2(w) \right) c_k \right|^4\oindicator{\Omega_{N,k}} \\
	&\leq C\bigg( \E \left| r_k \left( \mat{G}_{N,k}^2(z) - \mat{G}_{N,k}(z) \mat{G}_{N,k}(w) \right) c_k \right|^4 \oindicator{\Omega_{N,k}} \\
	&\qquad + \E \left| r_k \left( \mat{G}_{N,k}(z) \mat{G}_{N,k}(w) - \mat{G}_{N,k}^2(w) \right) c_k \right|^4 \oindicator{\Omega_{N,k}} \bigg) 
\end{align*}
for some absolute constant $C>0$.  By the resolvent identity, we obtain
\begin{equation} \label{eq:zzwresolv}
	\mat{G}_{N,k}^2(z) - \mat{G}_{N,k}(z) \mat{G}_{N,k}(w) = (z-w)\mat{G}_{N,k}^2(z) \mat{G}_{N,k}(w) 
\end{equation}
and
\begin{equation} \label{eq:zwwresolv}
	\mat{G}_{N,k}(z) \mat{G}_{N,k}(w) - \mat{G}_{N,k}^2(w) = (z-w) \mat{G}_{N,k}(z) \mat{G}_{N,k}^2(w). 
\end{equation}
Thus, it suffices to show that 
\begin{equation} \label{eq:2bndshow}
	\sup_{1 \leq k \leq N} \E \left| r_k \mat{G}_{N,k}^2(z) \mat{G}_{N,k}(w) c_k \right|^4 \oindicator{\Omega_{N,k}}\leq C 
\end{equation}
and
\begin{equation} \label{eq:2bndshow2}
	\sup_{1 \leq k \leq N}\E \left| r_k \mat{G}_{N,k}(z) \mat{G}_{N,k}^2(w) c_k \right|^4 \oindicator{\Omega_{N,k}} \leq C 
\end{equation}
for all $z,w \in \mathcal{C}$.   

We will prove \eqref{eq:2bndshow}; the proof of \eqref{eq:2bndshow2} is similar and left to the reader.  Since 
$$ \left| \frac{\rho}{N} \tr \left( \mat{G}_{N,k}^2(z) \mat{G}_{N,k}(w) \right) \right| \leq \| \mat{G}_{N,k}(z)\|^2 \| \mat{G}_{N,k}(w) \| \leq \frac{1}{c^3} $$
on the event $\Omega_{N,k}$, it suffices to show 
$$ \sup_{1 \leq k \leq N} \E \left| r_k \mat{G}_{N,k}^2(z) \mat{G}_{N,k}(w) c_k - \frac{\rho}{N} \tr \left( \mat{G}_{N,k}^2(z) \mat{G}_{N,k}(w) \right) \right|^4 \oindicator{\Omega_{N,k}}\leq C. $$
The claim now follows from Lemma \ref{lemma:lde}.  
\end{proof}

\begin{lemma} \label{lemma:GNk2zwtrace}
There exists a constant $C>0$ (independent of $N$) such that 
$$ \sum_{k=1}^N \E \left| \frac{ r_k \left(\mat{G}_{N,k}^2(z) - \mat{G}_{N,k}^2(w) \right) c_k - \frac{\rho}{N} \tr \left(\mat{G}_{N,k}^2(z) - \mat{G}_{N,k}^2(w) \right) }{ z-w } \right|^2 \oindicator{\Omega_{N,k}} \leq C $$
for all $z,w \in \mathcal{C}$ with $z \neq w$.  
\end{lemma}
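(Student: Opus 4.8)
The plan is to reduce the expression to a single bilinear form in $(r_k,c_k)$ of exactly the type handled by Lemma~\ref{lemma:lde}, and then argue as in the proofs of Lemmas~\ref{lemma:alphazw} and~\ref{lemma:GNk2zw}. First I would combine the two resolvent identities \eqref{eq:zzwresolv} and \eqref{eq:zwwresolv}: writing
\begin{equation*}
	\mat{G}_{N,k}^2(z) - \mat{G}_{N,k}^2(w) = \left( \mat{G}_{N,k}^2(z) - \mat{G}_{N,k}(z)\mat{G}_{N,k}(w) \right) + \left( \mat{G}_{N,k}(z)\mat{G}_{N,k}(w) - \mat{G}_{N,k}^2(w) \right)
\end{equation*}
and dividing by $z-w$ gives
\begin{equation*}
	\frac{\mat{G}_{N,k}^2(z) - \mat{G}_{N,k}^2(w)}{z-w} = \mat{G}_{N,k}^2(z)\mat{G}_{N,k}(w) + \mat{G}_{N,k}(z)\mat{G}_{N,k}^2(w) =: \mat{B}_{N,k}(z,w).
\end{equation*}
Thus the summand in the lemma is precisely $\bigl| r_k \mat{B}_{N,k}(z,w) c_k - \tfrac{\rho}{N}\tr \mat{B}_{N,k}(z,w) \bigr|^2 \oindicator{\Omega_{N,k}}$, and on the event $\Omega_{N,k}$ one has $\|\mat{G}_{N,k}(z)\|, \|\mat{G}_{N,k}(w)\| \le 1/c$, hence $\|\mat{B}_{N,k}(z,w)\| \le 2/c^3$ uniformly over $z,w \in \mathcal{C}$.

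Next I would condition on $\mat{X}_{N,k}$ (equivalently on $\mat{G}_{N,k}$, which is independent of $(r_k,c_k)$), noting that $\oindicator{\Omega_{N,k}}$ is $\mat{X}_{N,k}$-measurable. Writing $r_k = N^{-1/2} R_k$ and $c_k = N^{-1/2} C_k$, where $R_k$, $C_k$ are the vectors of truncated, normalized off-diagonal entries (so their entries have mean zero, unit variance, and pairwise covariance $\hat\rho$), I would apply Lemma~\ref{lemma:lde} with $p=2$ to the deterministic matrix $\mat{B}_{N,k}(z,w)$. Using $\tr\bigl(\mat{B}_{N,k}(z,w)\mat{B}_{N,k}(z,w)^\ast\bigr) \le (N-1)\|\mat{B}_{N,k}(z,w)\|^2 \le 4N/c^6$ on $\Omega_{N,k}$, together with the moment bounds of Lemma~\ref{lemma:truncation} (which give $\mu_4 = \max\{\E|\hat\xi_1|^4, \E|\hat\xi_2|^4\} = O(1)$), this yields
\begin{equation*}
	\E\left[ \left| r_k \mat{B}_{N,k}(z,w) c_k - \frac{\hat\rho}{N}\tr \mat{B}_{N,k}(z,w) \right|^2 \oindicator{\Omega_{N,k}} \right] \le \frac{C}{N}
\end{equation*}
uniformly in $1 \le k \le N$ and in $z,w \in \mathcal{C}$. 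The discrepancy between $\hat\rho$ and $\rho$ is absorbed using $|\hat\rho - \rho| = o(N^{-1})$ from Lemma~\ref{lemma:truncation} and the crude bound $N^{-1}|\tr \mat{B}_{N,k}(z,w)|\,\oindicator{\Omega_{N,k}} \le 2/c^3$. Summing the resulting estimate over $k = 1,\ldots,N$ produces the claimed bound $C$.

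The argument is essentially routine once the reduction is in place; the only points requiring any care are the $N^{-1/2}$ rescaling of $(r_k,c_k)$ when matching the hypotheses of Lemma~\ref{lemma:lde}, the deterministic Hilbert--Schmidt bound $\tr(\mat{B}_{N,k}\mat{B}_{N,k}^\ast) = O(N)$ valid on $\Omega_{N,k}$, and the passage from $\hat\rho$ back to $\rho$. None of these is a genuine obstacle: structurally the estimate is the same as Lemmas~\ref{lemma:alphazw} and~\ref{lemma:GNk2zw}, differing only in that one subtracts the normalized trace before taking the second (rather than the fourth) moment, which is exactly what makes the per-$k$ bound $O(1/N)$ — and hence the sum over $k$ bounded — possible.
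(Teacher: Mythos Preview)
Your proposal is correct and takes essentially the same approach as the paper: both use the resolvent identities \eqref{eq:zzwresolv}--\eqref{eq:zwwresolv} to strip the factor $z-w$ and then invoke Lemma~\ref{lemma:lde} together with the operator norm bound $\|\mat{G}_{N,k}\|\le 1/c$ on $\Omega_{N,k}$. The only cosmetic difference is that the paper first splits into the two pieces $\mat{G}_{N,k}^2(z)\mat{G}_{N,k}(w)$ and $\mat{G}_{N,k}(z)\mat{G}_{N,k}^2(w)$ via the triangle inequality and bounds each separately, whereas you keep them together as the single matrix $\mat{B}_{N,k}(z,w)$ and apply Lemma~\ref{lemma:lde} once; the content is the same.
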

\begin{proof}
By the triangle inequality, \eqref{eq:zzwresolv}, and \eqref{eq:zwwresolv}, it suffices to show that
\begin{equation} \label{eq:sumzzw}
	\sum_{k=1}^N \E \left| r_k \mat{G}_{N,k}^2(z) \mat{G}_{N,k}(w) c_k - \frac{\rho}{N} \tr \mat{G}_{N,k}^2(z) \mat{G}_{N,k}(w) \right|^2 \oindicator{\Omega_{N,k}} \leq C 
\end{equation}
and
\begin{equation} \label{eq:sumzww}
	\sum_{k=1}^N \E \left| r_k \mat{G}_{N,k}(z) \mat{G}_{N,k}^2(w) c_k - \frac{\rho}{N} \tr \mat{G}_{N,k}(z) \mat{G}_{N,k}^2(w) \right|^2 \oindicator{\Omega_{N,k}} \leq C 
\end{equation}
for all $z,w \in \mathcal{C}$.  Both \eqref{eq:sumzzw} and \eqref{eq:sumzww} follow from Lemma \ref{lemma:lde}.  
\end{proof}

We are now ready to prove Lemma \ref{lemma:tight}.  

\begin{proof}[Proof of Lemma \ref{lemma:tight}]
Since
\begin{align*}
	\E &\left| (\tr \mat{G}_N(z) \oindicator{\Omega_{N}} - \tr \mat{G}_{N}(z) \oindicator{\Omega_N \cap Q_N}) - (\tr \mat{G}_N(w) \oindicator{\Omega_{N}} - \tr \mat{G}_{N}(w) \oindicator{\Omega_N \cap Q_N}) \right|^2 \\
	&\qquad= \E \left| \tr \mat{G}_{N}(z) - \tr \mat{G}_N(w) \right|^2 \oindicator{\Omega_N} \oindicator{Q_N^C} \\
	&\qquad\leq |z-w|^2 \E \left| \tr \mat{G}_N(z) \mat{G}_N(w) \right|^2 \oindicator{\Omega_N} \oindicator{Q_N^C} \\
	&\qquad\leq \frac{|z-w|^2}{c^4} N^2 \Prob(Q_N^C) \\
	&\qquad\leq C |z-w|^2
\end{align*}
by the resolvent identity and Corollary \ref{cor:QN}, it suffices to show that
$$ \E \left| \sum_{k=1}^N (\E_k - \E_{k-1}) \frac{ \left(\tr \mat{G}_N(z)\oindicator{\Omega_N \cap Q_N} - \tr \mat{G}_N(w)\oindicator{\Omega_N \cap Q_N} \right) }{z-w} \right|^2 \leq C $$
for all $z,w \in \mathcal{C}$ with $z \neq w$.  Here we applied the same martingale decomposition as we used previously in Section \ref{sec:distr}.  

As before, we write
\begin{align*}
	\sum_{k=1}^N &(\E_k - \E_{k-1}) \left(\tr \mat{G}_N(z)\oindicator{\Omega_N \cap Q_N} - \tr \mat{G}_N(w)\oindicator{\Omega_N \cap Q_N} \right) \\
		&= \sum_{k=1}^N (\E_k - \E_{k-1}) \bigg[ \left( \tr \mat{G}_N(z)\oindicator{\Omega_N \cap Q_N} - \tr \mat{G}_{N,k}(z)\oindicator{\Omega_{N,k} \cap Q_{N,k}} \right) \\
		&\qquad\qquad\qquad\qquad - \left( \tr \mat{G}_N(w)\oindicator{\Omega_N \cap Q_N}  - \tr \mat{G}_{N,k}(w)\oindicator{\Omega_{N,k} \cap Q_{N,k}}\right) \bigg].
\end{align*}
We now observe that
\begin{align*}
	 \E \bigg| \sum_{k=1}^n& (\E_k - \E_{k-1}) \bigg[ \left( \tr \mat{G}_{N,k}(z) \oindicator{\Omega_{N,k} \cap Q_{N,k}} - \tr \mat{G}_{N,k}(w) \oindicator{\Omega_{N,k} \cap Q_{N,k}} \right) \\
	 &\qquad\qquad\qquad - \left( \tr \mat{G}_{N,k}(z) \oindicator{\Omega_{N} \cap Q_{N}} - \tr \mat{G}_{N,k}(w) \oindicator{\Omega_{N} \cap Q_{N}} \right) \bigg] \bigg|^2 \\
	 &\leq C \sum_{k=1}^N \E \left| \tr \mat{G}_{N,k}(z) - \tr \mat{G}_{N,k}(w) \right|^2 \oindicator{\Omega_{N,k}\cap Q_{N,k}} \oindicator{\Omega_N^C \cup Q_N^C} \\
	 &\leq C |z-w|^2 \sum_{k=1}^N \left| \tr \mat{G}_{N,k}(z) \mat{G}_{N,k}(w) \right|^2\oindicator{\Omega_{N,k}\cap Q_{N,k}} \oindicator{\Omega_N^C \cup Q_N^C} \\
	 &\leq \frac{C |z-w|^2}{c^4} N^3 \Prob(\Omega_N^C \cup Q_N^C) \\
	 &\leq C |z-w|^2
\end{align*}
by \eqref{eq:hatONC} and Corollary \ref{cor:QN}.  Thus, it suffices to show that
\begin{align*}
	\E \left| \sum_{k=1}^N (\E_k - \E_{k-1}) \frac{ \left( \tr \mat{G}_{N}(z) - \tr \mat{G}_{N,k}(z) \right) - \left( \tr \mat{G}_{N}(w) - \tr \mat{G}_{N,k}(w) \right) }{z-w} \oindicator{\Omega_N \cap Q_N} \right|^2 \leq C
\end{align*}
for all $z,w \in \mathcal{C}$ with $z \neq w$.  

By Proposition \ref{prop:trace} and \eqref{def:alpha}, we have
\begin{align*}
	 &\sum_{k=1}^N (\E_k - \E_{k-1}) \left[ \left( \tr \mat{G}_{N}(z) - \tr \mat{G}_{N,k}(z) \right)\oindicator{\Omega_N \cap Q_N} - \left( \tr \mat{G}_{N}(w) - \tr \mat{G}_{N,k}(w) \right)\oindicator{\Omega_N \cap Q_N} \right]  \\
	 &= \sum_{k=1}^N(\E_k - \E_{k-1}) \left[ \left( T_{N,k1}(z) - T_{N,k,2}(z) \right)\oindicator{\Omega_N \cap Q_N} - \left( T_{N,k,1}(w) - T_{N,k,2}(w) \right)\oindicator{\Omega_N \cap Q_N} \right],
\end{align*}
where
$$ T_{N,k,1}(z) := \frac{ (1 + r_k \mat{G}_{N,k}^2(z) c_k ) \left(z + \frac{\rho}{N} \tr \mat{G}_{N,k}(z) \right)^{-1} \alpha_{N,k}(z) }{ \frac{1}{\sqrt{N}} y_{kk} - z- r_k \mat{G}_{N,k}(z) c_k } $$
and
$$ T_{N,k,2}(z) := (1 + r_k \mat{G}_{N,k}^2(z) c_k ) \left(z + \frac{\rho}{N} \tr \mat{G}_{N,k}(z) \right)^{-1}. $$
We now note that
\begin{align*}
	&\sum_{k=1}^N \E \left|  \left( T_{N,k,2}(z) - T_{N,k,2}(w) \right) \oindicator{\Omega_{N} \cap Q_N} - \left( T_{N,k,2}(z) - T_{N,k,2}(w) \right) \oindicator{\Omega_{N,k} \cap Q_{N,k}} \right|^2 \\
	&\leq \sum_{k=1}^N \E \left| T_{N,k,2}(z) - T_{N,k,2}(w) \right|^2 \oindicator{\Omega_{N,k} \cap Q_{N,k}} \oindicator{\Omega_N^C \cup Q_N^C} \\
	&\leq C|z-w|^2 \sum_{k=1}^N \sqrt{ \E \left| 1 + r_k \mat{G}_{N,k}^2(z) c_k \right|^4 \oindicator{\Omega_N,k} } \sqrt{ \Prob (\Omega_N^C \cup Q_N^C) } \\
	&\qquad + C \sum_{k=1}^N \sqrt{ \E \left| r_k \mat{G}_{N,k}^2(z) c_k - r_k \mat{G}_{N,k}^2(w) c_k \right|^4 \oindicator{\Omega_{N,k}} } \sqrt{ \Prob (\Omega_N^C \cup Q_N^C) } \\
	&\leq C |z-w|^2
\end{align*}
by the Cauchy-Schwarz inequality and Lemmas \ref{lemma:lde} and \ref{lemma:GNk2zw}. 

Therefore, by the triangle inequality, it suffices to show that
\begin{equation} \label{eq:tnk1}
	\E \left| \sum_{k=1}^N (\E_k - \E_{k-1}) \frac{ T_{N,k,1}(z) - T_{N,k,1}(w) }{z-w} \oindicator{\Omega_{N} \cap Q_N}  \right|^2 \leq C 
\end{equation}
and
\begin{equation} \label{eq:tnk2}
	\E \left| \sum_{k=1}^N (\E_k - \E_{k-1}) \frac{ T_{N,k,2}(z) - T_{N,k,2}(w) }{z-w} \oindicator{\Omega_{N,k} \cap Q_{N,k}}  \right|^2 \leq C
\end{equation}
for all $z,w \in \mathcal{C}$ with $z \neq w$.  

The bound in \eqref{eq:tnk1} follows from Lemma \ref{lemma:tnk1} below.  It remains to prove \eqref{eq:tnk2}.  We first observe that
\begin{align*}
	(\E_k - \E_{k-1}) &T_{N,k,2}(z) \oindicator{\Omega_{N,k} \cap Q_{N,k}} \\
	&= \E_k \left(r_k \mat{G}_{N,k}^2(z) c_k - \frac{\rho}{N} \tr \mat{G}_{N,k}^2(z) \right) \left(z + \frac{\rho}{N} \tr \mat{G}_{N,k}(z) \right)^{-1} \oindicator{\Omega_{N,k} \cap Q_{N,k}}. 
\end{align*}
In addition, by the resolvent identity, we have
\begin{align*}
	&\bigg| \left(r_k \mat{G}_{N,k}^2(z) c_k - \frac{\rho}{N} \tr \mat{G}_{N,k}^2(z) \right) \left(z + \frac{\rho}{N} \tr \mat{G}_{N,k}(z) \right)^{-1} \\
	&\qquad - \left(r_k \mat{G}_{N,k}^2(w) c_k - \frac{\rho}{N} \tr \mat{G}_{N,k}^2(w) \right) \left(z + \frac{\rho}{N} \tr \mat{G}_{N,k}(w) \right)^{-1} \bigg| \\
	&\leq C|z-w| \left| r_k \mat{G}_{N,k}^2(z) c_k - \frac{\rho}{N} \tr \mat{G}_{N,k}^2(z) \right| \\
	&\qquad + C\left| \left( r_k \mat{G}_{N,k}^2(z) c_k - \frac{\rho}{N} \tr \mat{G}_{N,k}^2(z) \right) - \left( r_k \mat{G}_{N,k}^2(w) c_k - \frac{\rho}{N} \tr \mat{G}_{N,k}^2(w) \right) \right|
\end{align*}
on the event $Q_{N,k}$.  So we conclude that
\begin{align*}
	\E &\left| \sum_{k=1}^N (\E_k - \E_{k-1}) \left( T_{N,k,2}(z) - T_{N,k,2}(w)\right) \oindicator{\Omega_{N,k} \cap Q_{N,k}}  \right|^2 \\
	&\leq C |z-w|^2 \sum_{k=1}^N \E \left| r_k \mat{G}_{N,k}^2(z) c_k - \frac{\rho}{N} \tr \mat{G}_{N,k}^2(z) \right|^2 \oindicator{\Omega_{N,k}} \\
	&\qquad + C \sum_{k=1}^N \E \left|  r_k \left( \mat{G}_{N,k}^2(z) - \mat{G}_{N,k}^2(w) \right) c_k - \frac{\rho}{N} \tr \left( \mat{G}_{N,k}^2(z) - \mat{G}_{N,k}^2(w) \right) \right|^2 \oindicator{\Omega_{N,k}} \\
	&\leq C|z-w|^2
\end{align*}
by Lemmas \ref{lemma:lde} and \ref{lemma:GNk2zwtrace}.  This completes the proof of \eqref{eq:tnk2}.  
\end{proof}

It remains to prove estimate \eqref{eq:tnk1}.  

\begin{lemma} \label{lemma:tnk1}
There exists a constant $C>0$ (independent of $N$) such that \eqref{eq:tnk1} holds for all $z,w \in \mathcal{C}$ with $z \neq w$.
\end{lemma}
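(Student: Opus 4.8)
The plan is to run the same martingale decomposition used throughout Section~\ref{sec:distr}. Write
\[ V_{N,k}(z,w):=\frac{T_{N,k,1}(z)-T_{N,k,1}(w)}{z-w}\,\oindicator{\Omega_N\cap Q_N}, \]
so that the quantity in \eqref{eq:tnk1} is $\E\bigl|\sum_{k=1}^N(\E_k-\E_{k-1})V_{N,k}(z,w)\bigr|^2$. Since $\{(\E_k-\E_{k-1})V_{N,k}(z,w)\}_{k=1}^N$ is a martingale difference sequence and conditional expectation is an $L^2$-contraction, the summands are orthogonal in $L^2$, whence
\[ \E\Bigl|\sum_{k=1}^N(\E_k-\E_{k-1})V_{N,k}(z,w)\Bigr|^2=\sum_{k=1}^N\E\bigl|(\E_k-\E_{k-1})V_{N,k}(z,w)\bigr|^2\le 4\sum_{k=1}^N\E|V_{N,k}(z,w)|^2. \]
Thus it suffices to prove $\sum_{k=1}^N\E\bigl|\frac{T_{N,k,1}(z)-T_{N,k,1}(w)}{z-w}\bigr|^2\oindicator{\Omega_N\cap Q_N}\le C$ uniformly for $z\ne w$ on $\mathcal{C}$.

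The next step is to record the deterministic bounds available on $\Omega_N\cap Q_N$: from $\hat E_N$ and $\Omega_{N,k}$ one has $\|\mat{G}_N(z)\|,\|\mat{G}_{N,k}(z)\|\le 1/c$; from $Q_{N,k}$ one has $|z+\tfrac{\rho}{N}\tr\mat{G}_{N,k}(z)|\ge c/2$; and, using $|(\mat{G}_N(z))_{kk}|\le\|\mat{G}_N(z)\|$ together with the Schur identity $(\mat{G}_N(z))_{kk}=(\tfrac{1}{\sqrt N}y_{kk}-z-r_k\mat{G}_{N,k}(z)c_k)^{-1}$, one has $|\tfrac{1}{\sqrt N}y_{kk}-z-r_k\mat{G}_{N,k}(z)c_k|\ge c$; the same bounds hold with $z$ replaced by $w$. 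The last of these requires the full-matrix event, so $\oindicator{\Omega_N\cap Q_N}$ cannot be weakened in any factor involving that resolvent entry; on the other hand $\oindicator{\Omega_N\cap Q_N}\le\oindicator{\Omega_{N,k}\cap Q_{N,k}}$ may be used freely to invoke Lemmas~\ref{lemma:alpha}, \ref{lemma:lde}, \ref{lemma:alphazw} and \ref{lemma:GNk2zw}.

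Then I would factor $T_{N,k,1}(z)=W_{N,k}(z)\,\alpha_{N,k}(z)$, where $W_{N,k}(z):=(1+r_k\mat{G}_{N,k}^2(z)c_k)(z+\tfrac{\rho}{N}\tr\mat{G}_{N,k}(z))^{-1}(\tfrac{1}{\sqrt N}y_{kk}-z-r_k\mat{G}_{N,k}(z)c_k)^{-1}$, and split
\[ \frac{T_{N,k,1}(z)-T_{N,k,1}(w)}{z-w}=\frac{W_{N,k}(z)-W_{N,k}(w)}{z-w}\,\alpha_{N,k}(z)+W_{N,k}(w)\,\frac{\alpha_{N,k}(z)-\alpha_{N,k}(w)}{z-w}. \]
On $\Omega_N\cap Q_N$ the random variable $W_{N,k}(w)$ is $O(1)$ in $L^4$ uniformly in $k$ (its two rational factors are deterministically bounded there, while $1+r_k\mat{G}_{N,k}^2(w)c_k$ is $O(1)$ in $L^4$ on $\Omega_{N,k}$ by Lemma~\ref{lemma:lde}, bounding $\tfrac{\rho}{N}\tr\mat{G}_{N,k}^2(w)$ deterministically and the centered bilinear form by Lemma~\ref{lemma:lde}). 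Hence, by Cauchy--Schwarz and Lemma~\ref{lemma:alphazw}, the second term contributes $\le C\sum_{k=1}^N\sqrt{\E|\tfrac{\alpha_{N,k}(z)-\alpha_{N,k}(w)}{z-w}|^4\oindicator{\Omega_{N,k}}}\le C$. For the first term, expand $\tfrac{W_{N,k}(z)-W_{N,k}(w)}{z-w}$ by the telescoping (Leibniz) identity for the three factors of $W_{N,k}$, and make each difference quotient explicit through the resolvent identity: that of $1+r_k\mat{G}_{N,k}^2(\cdot)c_k$ is $\tfrac{r_k(\mat{G}_{N,k}^2(z)-\mat{G}_{N,k}^2(w))c_k}{z-w}$, which is $O(1)$ in $L^4$ on $\Omega_{N,k}$ by Lemma~\ref{lemma:GNk2zw}; that of $(z+\tfrac{\rho}{N}\tr\mat{G}_{N,k}(\cdot))^{-1}$ equals $-(z+\tfrac{\rho}{N}\tr\mat{G}_{N,k}(z))^{-1}(w+\tfrac{\rho}{N}\tr\mat{G}_{N,k}(w))^{-1}(1+\tfrac{\rho}{N}\tr\mat{G}_{N,k}(z)\mat{G}_{N,k}(w))$, deterministically $O(1)$ on $\Omega_N\cap Q_N$; and that of $(\tfrac{1}{\sqrt N}y_{kk}-z-r_k\mat{G}_{N,k}(z)c_k)^{-1}$ equals a product of two deterministically bounded resolvent entries times $1+r_k\mat{G}_{N,k}(z)\mat{G}_{N,k}(w)c_k$, the latter being a deterministically bounded piece $1+\tfrac{\rho}{N}\tr\mat{G}_{N,k}(z)\mat{G}_{N,k}(w)$ plus a centered bilinear form controlled in $L^p$ by \eqref{eq:GNk2}. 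Multiplying through by $\alpha_{N,k}(z)$ and using $\E|\alpha_{N,k}(z)|^2\oindicator{\Omega_{N,k}}=O(N^{-1})$ and $\E|\alpha_{N,k}(z)|^4\oindicator{\Omega_{N,k}}=O(N^{-2})$ from \eqref{eq:alpha}, each resulting term is dispatched by the Cauchy--Schwarz or generalized Hölder inequality (the higher moments being finite and controlled via the almost sure truncation bounds of Lemma~\ref{lemma:truncation}), and summing over $k$ yields a uniform $O(1)$ bound. Combining the two pieces proves \eqref{eq:tnk1}.

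The only genuine difficulty here is bookkeeping rather than conceptual: one must keep careful track of which a priori bound holds on which event — in particular, that the resolvent entry $(\mat{G}_N(z))_{kk}$ is controlled only on the full-matrix event $\Omega_N$, so $\oindicator{\Omega_N\cap Q_N}$ must be retained in every factor coming from $(\tfrac{1}{\sqrt N}y_{kk}-z-r_k\mat{G}_{N,k}(z)c_k)^{-1}$ — and one must check that each term of the Leibniz expansion still carries a factor that is $O(N^{-1/2})$ in $L^2$, supplied either by the $\alpha_{N,k}$ already present in $T_{N,k,1}$ or, in the term where $\alpha_{N,k}$ itself is differentiated, by the summability estimate of Lemma~\ref{lemma:alphazw}. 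No new idea beyond those already deployed in Section~\ref{sec:distr} is needed.
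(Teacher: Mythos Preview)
Your proposal is correct and follows essentially the same route as the paper: martingale orthogonality reduces \eqref{eq:tnk1} to $\sum_k\E|V_{N,k}|^2$, and then a Leibniz/telescoping expansion of the four factors of $T_{N,k,1}$ together with the deterministic bounds on $\Omega_N\cap Q_N$ and Lemmas~\ref{lemma:alpha}, \ref{lemma:lde}, \ref{lemma:alphazw}, \ref{lemma:GNk2zw} disposes of each piece. The paper's only simplification is in your Term~C: it bounds the difference quotient $(z-w)^{-1}\bigl[(\mat{G}_N(z))_{kk}-(\mat{G}_N(w))_{kk}\bigr]=(\mat{G}_N(z)\mat{G}_N(w))_{kk}$ deterministically by $c^{-2}$ via the resolvent identity for the full matrix, thereby avoiding the extra random factor $1+r_k\mat{G}_{N,k}(z)\mat{G}_{N,k}(w)c_k$ and the appeal to generalized H\"older.
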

\begin{proof}
By several applications of the triangle inequality and the trivial bounds 
$$ \sup_{z \in \mathcal{C}} \left| \left(\mat{G}_N(z)\right)_{kk} \right| = \sup_{z \in \mathcal{C} } \left| \frac{1}{\frac{1}{\sqrt{N}} y_{kk} - z- r_k \mat{G}_{N,k}(z) c_k } \right| \leq \frac{1}{c} $$
and
$$ \sup_{z \in \mathcal{C}} \left| \left( z + \frac{\rho}{N} \tr \mat{G}_{N,k}(z) \right)^{-1} \right| \leq \frac{2}{c}, $$
which both hold on the event $\Omega_N \cap Q_N$, the verification of \eqref{eq:tnk1} follows from the following four inequalities.  
\begin{enumerate}[(i)]
\item We have
\begin{align*}
	\sum_{k=1}^N &\E \left| r_k \mat{G}_{N,k}^2(z)c_k - r_k \mat{G}_{N,k}^2(w) c_k \right|^2 \left| \alpha_{N,k}(z) \right|^2 \oindicator{\Omega_{N} \cap Q_N} \\
	&\leq \sum_{k=1}^N \sqrt{ \E \left| r_k \mat{G}_{N,k}^2(z)c_k - r_k \mat{G}_{N,k}^2(w) c_k \right|^4  \oindicator{\Omega_{N,k}} } \sqrt{ \E \left| \alpha_{N,k}(z) \right|^4 \oindicator{\Omega_{N,k}} } \\
	&\leq C |z-w|^2
\end{align*}
by the Cauchy Schwarz inequality and Lemmas \ref{lemma:alpha} and \ref{lemma:GNk2zw}.  
\item Similarly, by Lemmas \ref{lemma:alpha} and \ref{lemma:lde}, we obtain
\begin{align*}
	\sum_{k=1}^N \E &\left| (1 + r_k \mat{G}_{N,k}^2(w) c_k ) \alpha_{N,k}(z) \left( \left( z + \frac{\rho}{N} \tr \mat{G}_{N,k}(z) \right)^{-1} - \left( w + \frac{\rho}{N} \tr \mat{G}_{N,k}(w)  \right)^{-1} \right) \right|^2 \oindicator{\Omega_{N} \cap Q_N} \\
	&\leq C |z-w|^2 \sum_{k=1}^N \sqrt{ \E \left| 1 + r_k \mat{G}_{N,k}^2(w) c_k \right|^4\oindicator{\Omega_{N,k}} } \sqrt{ \E \left| \alpha_{N,k}(z) \right|^4 \oindicator{\Omega_{N,k}} } \\
	&\leq C |z-w|^2.  
\end{align*}
\item In view of Lemmas \ref{lemma:lde} and \ref{lemma:alphazw}, we have
\begin{align*}
	\sum_{k=1}^N \E &\left| (1 + r_k \mat{G}_{N,k}^2(w) c_k) (\alpha_{N,k}(z) - \alpha_{N,k}(w)) \right|^2 \oindicator{\Omega_{N} \cap Q_N} \\
	&\leq \sum_{k=1}^N \sqrt{ \E \left| 1 + r_k \mat{G}_{N,k}^2(w) c_k \right|^4 \oindicator{\Omega_{N,k}} } \sqrt{ \E \left| \alpha_{N,k}(z) - \alpha_{N,k}(w) \right|^4\oindicator{\Omega_{N,k}} } \\
	&\leq C |z-w|^2.
\end{align*}
\item Finally, recalling that
$$ \left( \mat{G}_{N}(z) \right)_{kk} = \frac{1}{\frac{1}{\sqrt{N}} y_{kk} - z - r_k \mat{G}_{N,k}(z) c_k } $$
on the event $\Omega_N$, we have
\begin{align*} 
	\sum_{k=1}^N &\E \left| (1+r_k \mat{G}_{N,k}^2(w) c_k) \alpha_{N,k}(w) \left( \left( \mat{G}_{N}(z) \right)_{kk} - \left( \mat{G}_{N}(w) \right)_{kk} \right) \right|^2 \oindicator{\Omega_N \cap Q_N} \\
	&\leq C|z-w|^2 \sum_{k=1}^N \sqrt{ \E \left| 1 + r_k \mat{G}_{N,k}^2(w) c_k \right|^4\oindicator{\Omega_{N,k}} } \sqrt{ \E \left| \alpha_{N,k}(w) \right|^4 \oindicator{\Omega_{N,k}} } \\
	&\leq C|z-w|^2
\end{align*}
by the resolvent identity and Lemmas \ref{lemma:alpha} and \ref{lemma:lde}.
\end{enumerate}
The proof of the lemma is complete.  
\end{proof}

\section{Concluding the proof of Theorem \ref{thm:main}} \label{sec:conclusion}

It follows from Theorem \ref{thm:finitedim} and Lemma \ref{lemma:tight}, that $\Xi_N$ converges weakly as $N \to \infty$ to a mean-zero Gaussian process in the space of continuous functions on the contour $\mathcal{C}$ (see \cite{PB68} for details).  This implies that
$$ \tr f(X_N) \oindicator{E_N} - \E \tr f(X_N) \oindicator{E_N} $$
converges to a Gaussian random variable with mean zero.  Since $f$ was defined as a linear combination of $f_1, \ldots, f_k$, we conclude (by the Cram\'{e}r-Wold device) that the random vector
$$ \left( \tr f_j \left( \frac{1}{\sqrt{N}} \mat{Y}_N \right) \oindicator{E_N} - \E  \tr f_j \left( \frac{1}{\sqrt{N}} \mat{Y}_N \right) \oindicator{E_N} \right)_{j=1}^k $$
converges to a mean-zero multivariate Gaussian $(\mathcal{G}(f_1), \ldots, \mathcal{G}(f_k))$ as $N \to \infty$.  We now compute the covariance structure.  

Indeed, recall the definitions of $\upsilon(z,w)$ and $\beta(z,w)$ given in \eqref{def:upsilon} and \eqref{def:beta}.  In particular, we observe that 
\begin{equation} \label{eq:upsilonswitch}
	\upsilon(z,w) = \upsilon(w,z) 
\end{equation}
for all $z, w \in \mathcal{C}$.  Since $\Xi_N$ converges weakly to a mean-zero Gaussian process in the space of continuous functions on the contour $\mathcal{C}$, it follows from Theorem \ref{thm:finitedim} and \eqref{eq:upsilonswitch} that 
$$ \E[\mathcal{G}(f_i) \mathcal{G}(f_j) ] =  -\frac{1}{4 \pi^2} \oint_{\mathcal{C}} \oint_{\mathcal{C}} f_i(z) f_j(w) \upsilon(z,w) dz dw. $$
Thus, by integration by parts, we obtain
$$ \E[\mathcal{G}(f_i) \mathcal{G}(f_j) ] = -\frac{1}{4 \pi^2} \oint_{\mathcal{C}} \oint_{\mathcal{C}} f_i'(z) f_j'(w) m(z) m(w) \beta(z,w) dz dw. $$
The proof of Theorem \ref{thm:main} is now complete.

\appendix

\section{Proof of Theorem \ref{thm:lsv}} \label{sec:lsvproof}

The proof of Theorem \ref{thm:lsv} is essentially the same as the proof given in \cite{OR}. The primary differences are that
\begin{enumerate}[(i)]
\item we require control of the least singular value of the matrix $\mat Y_N$ whose entries have been truncated to be $O(N^{1/2 - \eps})$ instead of $O(1)$, and
\item the given bounds are shown to hold with overwhelming probability instead of almost surely.  
\end{enumerate}
Focusing on these differences, we now sketch the proof of Theorem \ref{thm:lsv}.

We begin the proof of Theorem \ref{thm:lsv} with a few reductions.  We first observe that it suffices to prove Theorem \ref{thm:lsv} with the ellipsoid $\mathcal{E}_{\hat{\rho}}$ instead of $\mathcal{E}_{\rho}$ (to use the notation from above).  Indeed, as $\lim_{N \to \infty} \hat{\rho} = \rho$, we observe that
$$ \{z \in \mathbb{C} : \dist(z, \mathcal{E}_{\rho}) \geq \delta \} \subset \{z \in \mathbb{C} : \dist(z, \mathcal{E}_{\hat{\rho}}) \geq \delta/100 \} $$
for $N$ sufficiently large.  From this point forward, we will simply write $\rho$ to denote the correlation of the truncated entries (i.e. $\rho = \E[ \hat{\xi}_1 \hat{\xi}_2]$ from the notation above).  

We continue to write $\mat{X}_N := \frac{1}{\sqrt{N}} \mat{Y}_N$, where the entries of $\mat{Y}_N$ have been truncated as in Section \ref{sec:truncation}.  We observe that it suffices to prove the theorem under the assumption that the diagonal entries of $\mat{X}_N$ are zero.  Indeed, by Lemma \ref{lemma:truncation}, the entries of $\mat{X}_N$ are bounded in absolute value by $4\eps_N$.  Let $\breve{\mat X}_N$ be constructed from the matrix $\mat{X}_N$ by setting the diagonal entries to zero.  Then, by Weyl's inequality, we have
$$ \sup_{z \in \mathbb{C}} | \sigma_N(\breve{\mat X}_N - z\mat{I}) - \sigma_N(\mat{X}_N - z \mat{I}) | \leq \| \breve{\mat X}_N - \mat{X}_N \| \leq 4\eps_N $$
almost surely.  Henceforth, we will assume that the diagonal entries of $\mat{X}_N$ are zero.  

Following the arguments in \cite{OR}, the proof now amounts to showing that for $z$ of distance greater than $\delta$ from $ \mathcal{E}_{\rho}$, there are no eigenvalues of $(\mat X_N - z \mat{I})^*(\mat X_N-z \mat{I})$ less than some constant $c$ (where $c$ is allowed to depend on $z$).  To this end, fix $z \in \mathbb{C}$ outside $ \mathcal{E}_{\rho}$ and define the probability measure
$$ \nu_{\mat X_N -z \mat I} := \frac{1}{2N} \sum_{i=1}^N \left( \delta_{\sigma_i(\mat{X}_N - z \mat{I})} + \delta_{-\sigma_i(\mat{X}_N - z \mat{I})} \right), $$
where $\sigma_1(\mat X_N -z \mat I), \ldots, \sigma_N(\mat X_N -z \mat I)$ are the singular values of $\mat X_N -z \mat I$.  

It was shown in \cite{Nell, NO} that almost surely $\nu_{ \mat X_N -z \mat I}$ converges weakly to a probability measure $\nu_z$ (which depends on both $z$ and $\rho$).  Moreover, for $z$ outside the ellipsoid $\mathcal{E}_{\rho}$, $\nu_z$ satisfies the following property.  

\begin{lemma}[Theorem 6.1 from \cite{OR}] \label{lemma:existc}
Fix $-1 \leq \rho \leq 1$, and let $\delta > 0$.  Then there exists $c>0$ such that $\nu_z([-2c,2c]) = 0$ for all $z \in \mathbb{C}$ with $\dist(z,\mathcal{E}_{\rho}) \geq \delta$.  
\end{lemma}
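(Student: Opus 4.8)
Since this statement is \cite[Theorem 6.1]{OR}, the plan is to recall the argument given there while tracking the quantitative dependence on $\delta$. The key tool is the Hermitization of the shifted elliptic matrix together with the self-consistent (Dyson-type) equation for the limiting symmetrized singular value distribution.

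First I would recall that $\nu_{\mat{X}_N - z\mat{I}}$ is the empirical spectral distribution of the $2N \times 2N$ Hermitian matrix
$$ \mat{H}_N(z) := \begin{pmatrix} \mat{0} & \mat{X}_N - z\mat{I} \\ (\mat{X}_N - z\mat{I})^\ast & \mat{0} \end{pmatrix}, $$
whose eigenvalues are exactly $\pm \sigma_i(\mat{X}_N - z\mat{I})$. By \cite{Nell, NO}, $\nu_{\mat{X}_N - z\mat{I}}$ converges almost surely to the deterministic measure $\nu_z$, which is symmetric under $x \mapsto -x$ and whose Stieltjes transform $g_z(\eta) := \int (x - \eta)^{-1}\, d\nu_z(x)$ solves a fixed-point system with coefficients rational in $\eta$, $z$, $\bar z$ and $\rho$. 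The statement $\nu_z((-2c,2c)) = 0$ is equivalent to $g_z$ extending analytically across the interval $(-2c,2c) \subset \mathbb{R}$, i.e. to the fixed-point equation admitting, for $\eta$ in a complex neighbourhood of $0$, a finite solution depending analytically on $\eta$; by the implicit function theorem this reduces to solvability at $\eta = 0$ together with nondegeneracy of the relevant Jacobian, and openness then propagates the solution to a whole symmetric interval around $0$.

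Next I would check that solvability at $\eta = 0$ is governed exactly by the condition $z \notin \mathcal{E}_\rho$. Substituting $\eta = 0$ into the self-consistent equation and using the function $m(z)$ from \eqref{eq:def:mz} (the branch solving $m(z) = -1/(z + \rho m(z))$, analytic off $\mathcal{E}_\rho$), one sees that a finite solution exists precisely when the quantities entering the equation remain bounded, which by \eqref{def:Erho} is equivalent to $z \notin \mathcal{E}_\rho$; the degenerate cases $\rho = \pm 1$ reduce to the classical semicircle computation on the segment $\mathcal{E}_{\pm 1}$. This produces, for each fixed $z$ with $\dist(z, \mathcal{E}_\rho) > 0$, a gap $(-2c_z, 2c_z)$ in the complement of $\operatorname{supp}(\nu_z)$.

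Finally, to extract a single $c > 0$ valid on $D_\delta := \{z \in \mathbb{C} : \dist(z, \mathcal{E}_\rho) \geq \delta\}$, I would argue by compactness. The edge of $\operatorname{supp}(\nu_z)$ closest to $0$ depends continuously on the coefficients of the Dyson equation, hence on $z$; and for $|z| \geq 2K$, where $K$ is a constant bound for $\|\mat{X}_N\|$ (Lemma \ref{lemma:normowp}), every singular value of $\mat{X}_N - z\mat{I}$ exceeds $|z|/2$, so $\operatorname{supp}(\nu_z) \subset \{x : |x| \geq |z|/2\}$. Thus $z \mapsto \dist(0, \operatorname{supp}(\nu_z))$ is continuous and strictly positive on the compact set $D_\delta \cap \{|z| \leq 2K\}$ and is bounded below by $K$ off that set; taking $c$ to be half the resulting positive infimum over $D_\delta$ finishes the proof. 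The main obstacle is the continuity-and-nondegeneracy claim in the second step, namely that the spectral edge of $\nu_z$ moves continuously with $z$ and stays uniformly bounded away from $0$ on $D_\delta$: this is exactly the quantitative stability analysis of the elliptic Dyson equation carried out in \cite{OR}, and I would follow that argument line by line.
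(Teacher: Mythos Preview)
The paper does not give its own proof of this lemma: it is quoted verbatim as \cite[Theorem~6.1]{OR}, with only the remark that the degenerate cases $\rho=\pm 1$ reduce to \cite[Theorem~5.2]{BSbook}. Your sketch is a faithful outline of the argument one finds in \cite{OR} (Hermitization, the self-consistent equation for $\nu_z$, solvability at $\eta=0$ characterized by $z\notin\mathcal{E}_\rho$, then continuity of the spectral edge in $z$ plus a compactness argument to obtain a uniform $c$), so your approach is consistent with what the paper does, just more explicit.

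One small comment: in your compactness step you invoke Lemma~\ref{lemma:normowp} to control $\operatorname{supp}(\nu_z)$ for large $|z|$. That lemma is a probabilistic bound on the random matrix $\mat{X}_N$, whereas $\nu_z$ is deterministic; the implication goes through (an almost-sure operator-norm bound on $\mat{X}_N$ forces the limiting symmetrized singular-value law to be supported in $\{|x|\ge |z|-K\}$), but it is cleaner to read the large-$|z|$ behaviour directly off the self-consistent equation for $\nu_z$, which is how \cite{OR} proceeds.
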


\begin{remark}
The proof of Lemma \ref{lemma:existc} when $\rho = \pm 1$ follows from \cite[Theorem 5.2]{BSbook}; see \cite[Remark 6.2]{OR} for further details.  
\end{remark}

Let $c>0$ be given by Lemma \ref{lemma:existc} above.  In order to complete the proof, we will show that the trace of the resolvent of a linearization of $(\mat X_N - z)^*(\mat X_N-z)$ is sufficiently close to the limiting Stieltjes transform of $\nu_z$ with overwhelming probability.    From this we will conclude that $\nu_{\mat X_N -z \mat I}([-c,c]) = 0$ with overwhelming probability.

We now introduce some notation.  We define the \emph{Hermitization} of an $N \times N$ matrix $\mat X = (X_{ij})_{i,j=1}^N$ to be an $N \times N$ matrix with entries that are  $2 \times 2$ block matrices where the $ij^{\mathrm{th}}$ entry is the $2 \times 2$ block
\[ \begin{pmatrix} 0 & {X}_{ij} \\ \overline{{X}}_{ji}& 0   \end{pmatrix}. \]
We note that the Hermitization of $\mat X$ can be conjugated by a $2N \times 2N$ permutation matrix to obtain 
\[\begin{pmatrix} 0 & \mat X  \\ \mat X^*  &  0 \end{pmatrix}.\]


Let $\mat H_N$ to be the Hermization of $\mat X_N$. 
We will generally treat $\mat H_N$ as an $N \times N$ matrix with entries that are $2\times 2$ blocks, but occasionally it will instead be useful to consider $\mat H_N$ as a $2N \times 2N$ matrix.

Additionally, we define the $2 \times 2$ matrix
\begin{align}
\label{qdef}
\mat  q := \begin{pmatrix} \eta & z \\ \overline{z}& \eta   \end{pmatrix} 
\end{align}
with $ \eta=E+i t \in \C^{+} := \{w \in \mathbb{C} : \Im(w) > 0\}$.  We also define the resolvent of the Hermitianization
\[\mat  R_N(\mat q) =\mat  R_N(\eta,z) := (\mat H_N - \mat I_N \otimes \mat  q)^{-1}. \]

For any matrix $\mat H$ with entries that are $2 \times 2$ blocks, we mean $\tr_N(\mat H) := \frac{1}{N} \sum_{i} \mat H_{ii}$ where $\mat H_{ii}$ is the $i^{\mathrm{th}}$ diagonal $2 \times 2$ block of $\mat H$. When working with $N \times N$ matrices with entries that are $2 \times 2$ blocks, we use superscripts to refer to entries of the $2 \times 2$ blocks. Additionally, when forming an $N \times N$ matrix whose $ij^{\mathrm{th}}$ entry is the $ab^{\mathrm{th}}$ entry ($a,b \in \{1,2\}$) of the $ij^{\mathrm{th}}$ $2 \times 2$ block we also use superscripts. For example, $\mat R^{21}$ is the $N \times N$ matrix formed from taking each $\mat R_{ij}$ block and replacing it by its (2,1)-entry.  


We write $\mat H_i$ to be the $i^{\mathrm{th}}$ column (of $2 \times 2$ blocks) of $\mat H_N$ and $\mat H_i^{(i)}$ to be the $i^{\mathrm{th}}$ column of $\mat H_N$ with the $i^{\mathrm{th}}$ block removed. We let $\mat R_N^{(i)}$ be the resolvent of $\mat H_N$ where the $i^{\mathrm{th}}$ row and $i^{\mathrm{th}}$ column of $\mat H_N$ (viewed as an $N \times N$ matrix of $2 \times 2$ blocks) have been removed, and set $\mat \Gamma_N^{(i)}(\mat{q}) := \frac{1}{N} \sum_{j \not = i} \mat R_{jj}^{(i)}$.

Let $\Gamma(\mat q)$ be the $2 \times 2$ matrix Stieltjes transform with positive imaginary part which satisfies the fixed point equation
\begin{align*}
\mat  \Gamma(\mat q) = - (\mat  q + \mat \Sigma(\mat  \Gamma(\mat q)) )^{-1},
 \end{align*} 
where $\mat \Sigma$ is the operator on $2 \times 2$ matrices defined by
\[\mat \Sigma \begin{pmatrix} a & b \\ c & d \end{pmatrix} :=\begin{pmatrix} d & \rho  c \\ \rho b & a \end{pmatrix}. \]
It follows from \cite{Nell,NO} that $\mat \Gamma_N$ converges to $\mat \Gamma$ almost surely.

To complete the proof we proceed as in \cite{OR} and prove an a priori bound on  $\mat \Gamma_N(\mat{q}) - \mat \Gamma(\mat{q})$. This will provide an estimate on the number of small singular values of $\mat X_N - z \mat I$.  This estimate will be used to prove a better bound on $\mat \Gamma_N(\mat{q}) - \mat \Gamma(\mat{q})$ from which the desired result will follow.

We now develop an a priori bound on $\mat \Gamma_N(\mat{q}) - \mat \Gamma(\mat{q})$ for $\eta = E+ i t_N \in \mathbb{C}^+$, with $t_N$ going to zero polynomially in $N$ and $E \in [0,c]$.

By the Schur complement, the diagonal entries of the resolvent are
\begin{align*}
\mat R_{ii} &= -(\mat q + \mat H_{i }^{(i)} \mat R_N^{(i)} \mat H_{ i}^{(i)})^{-1} \\
&=  - (\mat q +\mat \Sigma(\mat \Gamma_N) - \mat \Sigma( \mat \Gamma_N) +\mat \Sigma(\mat \Gamma_N^{(i)})   -\mat \Sigma(\mat \Gamma_N^{(i)}) + \mat  H^{(i)*}_i \mat R_N^{(i)} \mat H^{(i)}_i )^{-1}.
\end{align*}
Let
\begin{equation*}
\mat{ \widehat \gamma}_N^{(i)} := \mat H_i^{(i)*} \mat R_N^{(i)} \mat H_i^{(i)} - \mat \Sigma( \mat \Gamma_N^{(i)}).
\end{equation*}
 Summing over $i$ gives a formula for the trace:
\begin{align*}
\mat \Gamma_N( \mat q) 
&=\sum_{i} - (\mat q + \mat \Sigma( \mat \Gamma_N(\mat q)) -\mat \Sigma(\mat \Gamma_N(\mat q)) + \mat \Sigma(\mat \Gamma_N^{(i)}(\mat q))    + \mat{\widehat \gamma}_N^{(i)} )^{-1}.
\end{align*}

\begin{lemma} 
\label{epsbound}
There exist some $\alpha,\beta>0$ such that if $\mat q$ is as in \eqref{qdef} with $t_N \geq N^{-\beta}$, then with overwhelming probability
\[ \sup_{1\leq i\leq N, E \in [0,c] } \|\mat \Sigma(\mat \Gamma_N(\mat q)) - \mat \Sigma(\mat \Gamma_N^{(i)}(\mat q))   -\mat {\widehat \gamma}_N^{(i)}\| = O(N^{-\alpha}).\]
\end{lemma}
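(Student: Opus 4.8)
The plan is to prove Lemma~\ref{epsbound} by decomposing the error $\mat \Sigma(\mat \Gamma_N) - \mat \Sigma(\mat \Gamma_N^{(i)}) - \mat{\widehat\gamma}_N^{(i)}$ into two pieces: the difference of traces $\mat \Sigma(\mat \Gamma_N) - \mat \Sigma(\mat \Gamma_N^{(i)})$ coming from removing one row/column, and the fluctuation term $\mat H_i^{(i)*}\mat R_N^{(i)} \mat H_i^{(i)} - \mat \Sigma(\mat\Gamma_N^{(i)})$. Since $\mat\Sigma$ is a fixed linear operator on $2\times2$ matrices, it suffices to control $\|\mat\Gamma_N - \mat\Gamma_N^{(i)}\|$ and the quadratic-form fluctuation, both uniformly over $1\le i\le N$ and $E\in[0,c]$, with overwhelming probability. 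For the first piece, the resolvent identity (interlacing for the Hermitization viewed as a $2N\times 2N$ Hermitian matrix) gives $\|\mat\Gamma_N(\mat q) - \mat\Gamma_N^{(i)}(\mat q)\| \le C/(N t_N)$ deterministically, since each diagonal $2\times2$ block of $\mat R_N$ is bounded in norm by $C/t_N$ on $\mathbb{C}^+$ and removing a row changes the normalized trace by $O(1/N)$ of such bounded blocks; choosing $t_N \ge N^{-\beta}$ with $\beta$ small makes this $O(N^{-\alpha_1})$ for some $\alpha_1>0$.

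For the fluctuation piece, I would condition on $\mat R_N^{(i)}$ (which is independent of the $i$-th column $\mat H_i^{(i)}$) and apply the concentration inequality for bilinear forms, Lemma~\ref{lemma:lde}, entry-by-entry to the four scalar components of the $2\times2$ block $\mat H_i^{(i)*}\mat R_N^{(i)} \mat H_i^{(i)}$. After truncation, the atom variables are bounded by $4\eps_N\sqrt{N}$ almost surely and have the moment bounds from Lemma~\ref{lemma:truncation}, so Lemma~\ref{lemma:lde} yields, for each fixed $E,t_N$,
\begin{align*}
\E\left\| \mat H_i^{(i)*}\mat R_N^{(i)} \mat H_i^{(i)} - \mat\Sigma(\mat\Gamma_N^{(i)}) \right\|^p \le K_p\left( \frac{(\mu_4 \|\mat R_N^{(i)}\|_2^2/N^2)^{p/2}}{ } + \frac{\mu_{2p}}{N^p}\tr(\mat R_N^{(i)}\mat R_N^{(i)*})^{p/2} \right),
\end{align*}
and using $\tr(\mat R^{(i)}\mat R^{(i)*}) \le 2N/t_N^2$ together with $\mu_{2p} \le (4\eps_N\sqrt N)^{2p-6}\mu_6 = O(N^{p-3+o(1)})$ from the truncation bounds, one gets a bound of order $(N t_N)^{-p} \cdot N^{o(1)}$ plus $N^{-2+o(1)}t_N^{-p}$; taking $p$ large and $t_N \ge N^{-\beta}$ with $\beta$ chosen small enough relative to $\eps$ makes the right-hand side summable in $N$ after a union bound over $i$. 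To upgrade from a fixed $\mat q$ to uniformity over $E\in[0,c]$, I would use a net of $[0,c]$ of polynomial size $N^{O(1)}$ together with the deterministic Lipschitz estimate $\|\mat R_N(\eta,z) - \mat R_N(\eta',z)\| \le t_N^{-2}|\eta - \eta'|$ (again from the resolvent identity), exactly as in the net argument in the proof of Lemma~\ref{lemma:alpha}; this costs another power of $t_N^{-1}$ which is absorbed by enlarging $p$.

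The main obstacle is bookkeeping the competition between the truncation scale $\eps_N = N^{-\eps}$, the imaginary-part scale $t_N = N^{-\beta}$, and the moment growth $\mu_{2p}$: one must verify there is a genuine open range of exponents $(\alpha,\beta)$ for which the high-moment bound from Lemma~\ref{lemma:lde}, raised to the power needed to beat the union bound over $i$ and over the net, still decays like $N^{-cp}$ for large $p$. This is the same balancing act carried out in \cite{OR}, and the extra subtlety here is only that the truncation is at scale $N^{1/2-\eps}$ rather than $O(1)$, so the factor $\mu_{2p} \lesssim N^{(1/2-\eps)(2p-6)}\mu_6$ is larger; one checks that choosing $\beta < 2\eps$ (say) keeps $(N t_N)^{-p}\mu_{2p}/N^{p}$ under control. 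Once the balance is fixed, the rest is the routine resolvent-identity and concentration estimates described above, and the conclusion $\sup_{i,E}\|\cdot\| = O(N^{-\alpha})$ with overwhelming probability follows.
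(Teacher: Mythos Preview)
Your proposal is correct and follows essentially the same approach as the paper: the paper also splits into the deterministic bound $\|\mat\Sigma(\mat\Gamma_N-\mat\Gamma_N^{(i)})\|=O((Nt_N)^{-1})$ (citing \cite{OR}), then applies Lemma~\ref{lemma:lde} entry-by-entry to $\mat{\widehat\gamma}_N^{(i)}$ with the truncated moment bound $\mu_{2p}=O(N^{(2p-4)(1/2-\eps)})$, and closes with Markov plus a union bound over $i$ and an $N^{-1}$-net of $[0,c]$, arriving at the constraint $\alpha+\beta<2\eps$. Your first-term exponent $(Nt_N)^{-p}$ should read $N^{-p/2}t_N^{-p}$, but this does not affect the argument.
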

We will require that $\alpha+\beta<2\eps$ and $\beta < \alpha$.

\begin{proof}[Proof of Lemma \ref{epsbound}]
As in \cite{OR}, we restrict to an $N^{-1}$-net, $S_N$, of $[0,c]$. In \cite{OR} the estimate
\begin{equation*}
\|\mat \Sigma( \mat \Gamma_N - \mat \Gamma_N^{(i)}) \| = O((N t_N)^{-1})
\end{equation*}
is deterministic and can be repeated.

To bound $\|\mat{\widehat \gamma}_N^{(i)}\|$, we apply Lemma \ref{lemma:lde} to each entry of this block. Noting that the $2p^{\mathrm{th}}$ moment of any entry of $\mat Y_N$ is $O(N^{(2p-4)(1/2-\eps)})$, we have
\begin{align}
\label{gammoment}
\E &[|\mat{\widehat  \gamma}_N^{(i)ab}|^p ] \notag \\
&\leq \frac{ K_p }{ N^{p} } \E\left[ (\tr(\mat R^{(i)a'b'}(\mat R^{(i)a'b'})^*))^{p/2}+N^{(2p-4)(1/2-\eps)} \tr(\mat R^{(i)a'b'}(\mat R^{(i)a'b'})^*)^{p/2} \right] \notag \\
&\leq K_p t_N^{-p} (N^{-p/2} + N^{-2 p \varepsilon})
\end{align}
with $a$ and $b$ either 1 or 2, and $a'=a+1 \pmod 2$, $b'=b+1 \pmod 2$. The final estimate uses that $N$ times the operator norm of a self-adjoint matrix bounds its trace; a trivial estimate then shows that the operator norm is bounded by $t_N^{-2}$.

By Markov's inequality and the union bound, we obtain
\begin{align*}
\P\left(\max_{1\leq i\leq N, E_j \in S_N} \|\mat{\widehat \gamma}_N^{(i)} \| \geq N^{-\alpha} \right) &\leq \sum_{1\leq i\leq N, E_j \in S_N} N^{p \alpha} \E( \|\mat{ \widehat \gamma}_N^{(i)} \|^p ) \notag\\
&\leq  K_p   N^{2+ p (\alpha+\beta-2\eps)} 
\end{align*}
Since $p \geq 2$ is arbitrary, the proof of the lemma is complete.  
\end{proof}

The proof of \cite[Lemma 6.5]{OR} can now be repeated nearly verbatim on the event 
\[\sup_{1\leq i\leq N, E \in [0,c] } \|\mat \Sigma(\mat \Gamma_N(\mat q)) - \mat \Sigma(\mat \Gamma_N^{(i)}(\mat q))   -\mat {\widehat \gamma}_N^{(i)}\| = O(N^{-\alpha})\]
 to show that 
\begin{equation}
\label{STap}
\sup_{E\in[0,c]} \|\mat \Gamma_N(\mat q) - \mat \Gamma(\mat q)\| = o( N^{-\beta}) 
\end{equation}
with overwhelming probability.

The arguments in the remainder of \cite[Section 6.2]{OR} are deterministic and can be repeated to turn estimate \eqref{STap} into the following estimates on the empirical spectral measure which hold with overwhelming probability:
\[ \max_{k\leq N} \E_k[ \nu_{\mat X_N - z \mat I}([0,c])] = o(N^{-\beta}),~~~ \max_{k\leq N} \E_k[ \nu_{\mat X_{N,i} - z \mat I}([0,c])] = o(N^{-\beta}). \]
These bounds give, by the spectral theorem, an $O(1)$ bound on the trace of the resolvent (see \cite[Lemma 6.7]{OR}). In light of these estimates, we define the event $\Lambda_i$ that $\nu_{X_{N,i}-zI}([0,c])= o( N^{-\beta})$.

Now we use the a priori bound to estimate $\mat \Gamma_N - \E[\mat  \Gamma_N]$ and $\E[\mat \Gamma_N] -\mat \Gamma$.  Indeed, we estimate $\mat \Gamma_N - \E[\mat  \Gamma_N]$ by rewriting it as a sum of martingale differences:
\begin{align*}
\mat \Gamma_N - \E[\mat  \Gamma_N] = \sum_{i=1}^N (\E_{i} - \E_{i-1} ) (\mat \Gamma_N - \mat \Gamma_N^{(i)}) = \frac{1}{N} \sum_{i=1}^N   (\E_{i} - \E_{i-1} ) (\mat  R_{ii} \mat  \zeta_N^{(i)}),
\end{align*}
where 
\[ \mat \zeta_N^{(i)} := (\mat  I_2 + \mat H_i^{(i)*} \mat R_N^{(i)} \mat R_N^{(i)} \mat H_i^{(i)}). \]

To complete the proof it suffices to show that for arbitrary $\eta > 0$, and any $l > 0$
\[ \P\left( \max_{E\in\mathcal{S}_N} N t_N \| \mat \Gamma_N - \E[ \mat \Gamma_N] \| \geq \eta \right) = O_{l}(N^{-l}).\]


Recalling that 
\[ \P( \cup_{i=1}^{N}\{\oindicator{\Lambda_i} =0\}) = O_l(N^{-l}) ,\]
leads to the estimate
\begin{align*}
\P&( \max_{E\in\mathcal{S}_N}  N t_N \| \mat \Gamma_N - \E[ \mat \Gamma_N] \| > \eta) \\
&= \P( ( \max_{E\in\mathcal{S}_N}  N t_N \| \mat \Gamma_N - \E[ \mat \Gamma_N] \| > \eta ) \cap_{i=1}^{N} \{\oindicator{\Lambda_i} =1\}) \\
&\qquad +  \P( ( \max_{E\in\mathcal{S}_N}  N t_N \| \mat \Gamma_N - \E[ \mat \Gamma_N] \| > \eta ) \cap_{i=1}^{N} \{\oindicator{\Lambda_i} =1\}^C)  \\
&\leq \P(  \max_{E\in\mathcal{S}_N} t_N \| \sum_{i=1}^N (\E_{i} - \E_{i-1}  ) \mat R_{ii} \mat \zeta_N^{(i)}\| \oindicator{\Lambda_i} > \eta )  + O_l(N^{-l}). 
\end{align*}


Then defining 
\begin{align*}
\widehat{ \mat R}_{ii}:= -(q + \E[\mat \Sigma(\mat \Gamma_N^{(i)})])^{-1}
\end{align*}
and
\begin{equation*}
\mat  \gamma_N^{(i)} := \mat H_i^{(i)*} \mat R_N^{(i)} \mat H_i^{(i)} - \E[\mat \Sigma(\mat \Gamma_N^{(i)})],
\end{equation*}
leads to the following expansion:
\begin{align*}
\mat R_{ii} \mat \zeta_N^{(i)} 
 =& \left( \mat R_{ii} - \mat {\widehat R}_{ii} \right) \mat  \zeta_N^{(i)} + \widehat{\mat  R}_{ii} \zeta_N^{(i)}\\
 &= \widehat{ \mat  R}_{ii}  \mat  \gamma_N^{(i)}  \mat  R_{ii} \mat  \zeta_N^{(i)}  + \widehat{ \mat  R}_{ii} \mat  \zeta_N^{(i)}\\
 &= \widehat{ \mat  R}_{ii}  \mat  \gamma_N^{(i)}  \widehat{ \mat  R}_{ii}  (\mat  I_2 + \mat \Sigma( \tr_N( \mat R^{(i)}_N \mat R^{(i)}_N )) \\
 &+ \widehat{ \mat  R}_{ii} \mat   \gamma_N^{(i)} 
\widehat{ \mat  R}_{ii} \left(\mat  H_i^{(i)*} \mat  R_N^{(i)} \mat R_N^{(i)} \mat  H_{i}^{(i)} - \mat \Sigma( \tr_N(\mat  R^{(i)}_N \mat R^{(i)}_N )) \right) \\
&+ \widehat{\mat  R}_{ii}
 \mat   \gamma_N^{(i)}
  \widehat{\mat  R}_{ii}
  \mat  \gamma_N^{(i)} \mat R_{ii}  \mat \zeta_N^{(i)} + \widehat{\mat  R}_{ii} \mat  \zeta_N^{(i)}.
\end{align*}

Finally Burkholder's and Rosenthal's  inequality can be combined with the following lemma to give the desired bound.

\begin{lemma} \label{lemma:bndpeps} 
For $a,b \in \{1,2\}$ and any $p \geq 2$, 
\begin{equation*}
\E[|\mat \Gamma_N^{(i)ab}(\mat q) - \E[\mat \Gamma_N^{(i)ab}(\mat q)]|^{p}] \leq K_p N^{-p/2} t_N^{-p}
\end{equation*}
and
\begin{equation*}
\E [| \gamma_N^{(i)ab} |^p ] \leq K_p t_N^{-p} (N^{-p/2} + N^{-2 p \eps}) .
\end{equation*}
In addition, there exists a constant $K>0$ such that, for all large $N$,
\begin{equation*}
\| \mat{\widehat{R}}_{ii} \| \leq K .
\end{equation*}

\end{lemma}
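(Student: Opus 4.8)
The plan is to prove the three estimates in the order stated, relying on the tools already in place: the martingale decomposition together with Burkholder's inequality for the first, Lemma~\ref{lemma:lde} (concentration of bilinear forms) for the second, and the a priori bound \eqref{STap} together with \cite[Lemma 6.7]{OR} for the third. For the first estimate I would fix $i$ and expand $\mat\Gamma_N^{(i)ab}(\mat q) - \E[\mat\Gamma_N^{(i)ab}(\mat q)]$ as a sum of martingale differences obtained by revealing the rows and columns of $\mat X_{N,i}$ one at a time. Writing $\mat\Gamma_N^{(i,j)}$ for the analogue of $\mat\Gamma_N^{(i)}$ with the $j$-th row and column of $\mat X_{N,i}$ also removed, one has $(\E_j - \E_{j-1})\mat\Gamma_N^{(i)ab} = (\E_j - \E_{j-1})(\mat\Gamma_N^{(i)ab} - \mat\Gamma_N^{(i,j)ab})$ since $\mat\Gamma_N^{(i,j)}$ is independent of the $j$-th row and column. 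Following the arguments of \cite{OR} --- the resolvent identity, interlacing for the diagonal blocks of the resolvents of the nested Hermitian matrices $\mat H_N^{(i)}$ and $\mat H_N^{(i,j)}$ (recall $\|\mat R_N^{(i)}\|\le t_N^{-1}$ because $\Im(\mat q)=t_N\mat I_2$), and Lemma~\ref{lemma:lde} for the bilinear-form fluctuations appearing in the off-diagonal blocks --- each increment is shown to be of order $(Nt_N)^{-1}$ in $L^p$, and then Burkholder's inequality for martingale differences gives
\[
\E\bigl|\mat\Gamma_N^{(i)ab} - \E[\mat\Gamma_N^{(i)ab}]\bigr|^p \le K_p\Bigl( \bigl(N(Nt_N)^{-2}\bigr)^{p/2} + N(Nt_N)^{-p}\Bigr) = K_p\bigl(N^{-p/2}t_N^{-p} + N^{1-p}t_N^{-p}\bigr),
\]
where for $p\ge2$ the first term dominates, yielding the claim.

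For the second estimate I would write $\gamma_N^{(i)ab} = (\mat H_i^{(i)*}\mat R_N^{(i)}\mat H_i^{(i)})^{ab} - \E[\mat\Sigma(\mat\Gamma_N^{(i)})]^{ab}$ and split it as (bilinear form $-$ its conditional mean given $\mat R_N^{(i)}$) $+$ (conditional mean $-$ expectation). A direct computation identifies $(\mat H_i^{(i)*}\mat R_N^{(i)}\mat H_i^{(i)})^{ab}$ as $\tfrac1N$ times a bilinear form in the $i$-th row and column of $\mat Y_N$ whose kernel is the $(a',b')$-component sub-block of $\mat R_N^{(i)}$ with $a'=a+1\pmod 2$, $b'=b+1\pmod 2$; since the entries are real, the relevant second moment $\E[\bar X_k Y_k]$ equals $1$ when $a=b$ and $\hat\rho$ when $a\ne b$, so the conditional mean of this bilinear form is exactly $\mat\Sigma(\mat\Gamma_N^{(i)})^{ab}$ up to the $O(N^{-1})$ truncation corrections of Lemma~\ref{lemma:truncation}. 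The first piece is then $\hat\gamma_N^{(i)ab}$, and conditioning on $\mat R_N^{(i)}$ (independent of the $i$-th row and column) and invoking Lemma~\ref{lemma:lde} with the deterministic bounds $\tr(\mat B\mat B^*)\le CNt_N^{-2}$ and $\tr((\mat B\mat B^*)^{p/2})\le CNt_N^{-p}$ on $\{\|\mat R_N^{(i)}\|\le t_N^{-1}\}$, together with $\mu_4=O(1)$ and $\mu_{2p}=O((\eps_N\sqrt N)^{2p-4})=O(N^{(2p-4)(1/2-\eps)})$ from Lemma~\ref{lemma:truncation}, recovers the bound $K_p t_N^{-p}(N^{-p/2}+N^{-2p\eps})$ exactly as in the proof of Lemma~\ref{epsbound}, taking $\eps\le1/4$ so that the exponent $(2p-4)(1/2-\eps)+1-p$ is $\le -2p\eps$. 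The second piece equals $(\text{const})\cdot(\mat\Gamma_N^{(i)a'b'}-\E[\mat\Gamma_N^{(i)a'b'}])$ by linearity of $\mat\Sigma$, which is of the same order by the first estimate.

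For the third estimate, observe that $\mat{\widehat R}_{ii}=-(\mat q+\E[\mat\Sigma(\mat\Gamma_N^{(i)})])^{-1}$ is deterministic. From \eqref{STap}, $\|\mat\Gamma_N(\mat q)-\mat\Gamma(\mat q)\|=o(N^{-\beta})$ with overwhelming probability; combining this with the deterministic estimate $\|\mat\Gamma_N-\mat\Gamma_N^{(i)}\|=O((Nt_N)^{-1})$ and the trivial bound $\|\mat\Gamma_N^{(i)}\|\le t_N^{-1}$ on the complementary event yields $\|\E[\mat\Gamma_N^{(i)}]-\mat\Gamma(\mat q)\|=o(1)$. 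Hence $\mat q+\E[\mat\Sigma(\mat\Gamma_N^{(i)})]=-\mat\Gamma(\mat q)^{-1}+\mat E_N$ with $\|\mat E_N\|=o(1)$, using the fixed-point relation $\mat q+\mat\Sigma(\mat\Gamma(\mat q))=-\mat\Gamma(\mat q)^{-1}$. Since $z$ is fixed with $\dist(z,\mathcal{E}_\rho)\ge\delta$ and $E\in[0,c]$ lies outside the support of $\nu_z$ by Lemma~\ref{lemma:existc}, the limiting transform $\mat\Gamma(E+it_N,z)$ is bounded uniformly in $N$ (this is the content of the $O(1)$ bound on the trace of the limiting resolvent, \cite[Lemma 6.7]{OR}), so $\mat\Gamma(\mat q)^{-1}$ is bounded below by a positive constant and $\mat q+\E[\mat\Sigma(\mat\Gamma_N^{(i)})]$ is invertible for all large $N$ with $\|\mat{\widehat R}_{ii}\|=O(1)$.

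The step I expect to be the main obstacle is the uniform $(Nt_N)^{-1}$ control of the martingale increments in the first estimate. For the diagonal block entries, where $\mat\Gamma_N^{(i)11}=\mat\Gamma_N^{(i)22}$ equals, up to sign, the Stieltjes transform of the symmetrized singular value distribution of $\mat X_{N,i}-z\mat I$, this is the classical interlacing estimate; but for the off-diagonal block entries the naive Schur-complement bound produces only $O(t_N^{-3}N^{-1})$, so one must instead isolate the bilinear-form fluctuation in the increment and estimate its $L^p$ norm via Lemma~\ref{lemma:lde}, precisely as in \cite{OR}. The remaining steps are routine given the estimates cited above.
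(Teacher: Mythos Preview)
Your proposal is correct and follows essentially the same approach as the paper, which simply records that the proof ``follows the proof given in \cite{OR} almost exactly,'' the only change being to use the moment bound \eqref{gammoment} on $\widehat{\gamma}_N^{(i)}$; you have correctly identified this as the key input for the second estimate (via the splitting $\gamma_N^{(i)}=\widehat{\gamma}_N^{(i)}+\Sigma(\Gamma_N^{(i)}-\E[\Gamma_N^{(i)}])$), and your sketches for the first and third estimates track the arguments in \cite{OR} as intended.
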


\begin{proof}
The proof of Lemma \ref{lemma:bndpeps} follows the proof given in \cite{OR} almost exactly.  The only change is that one must use \eqref{gammoment} to bound $\E [|\mat {\widehat \gamma}_N^{(i)ab}|^p$.
\end{proof}

The arguments in \cite{OR} can now be repeated exactly to show that $\E[\mat \Gamma_N(\mat q)] -  \mat \Gamma(\mat q)  = O(N^{-1})$ as it only involves low moment estimates.

Thus, we conclude that, with probability $1-O_l(N^{-l})$, $\mat \Gamma_N(\mat q) - \mat  \Gamma(\mat q)  = o((N t_N)^{-1})$.  After conditioning on this event, the final arguments in \cite{OR} can be repeated verbatim to show that $\nu_{\mat{X}_N - z \mat{I}}([-c,c]) = 0$ with overwhelming probability.

\section{Continuity of $m(z)$ in $\rho$} \label{sec:cont}

Recall that $m(z)$ (defined in \eqref{eq:def:mz}) is a solution of 
\begin{equation} \label{eq:rhomz}
	\rho m^2(z) + z m(z) + 1 = 0 
\end{equation}
for $z \not\in \mathcal{E}_{\rho}$.  Since $m$ is a function of $\rho$, we will explicitly write $m_{\rho}$ to denote this dependence.  Recall from Lemma \ref{lemma:truncation} that $\hat{\rho} \to \rho$ as $N \to \infty$.  Since $\hat{\rho}$ depends on $N$, $m_{\hat{\rho}}$ also depends on $N$.  We will show that $m_{\hat{\rho}}(z)$ converges to $m_{\rho}(z)$ as $N \to \infty$ for any fixed $z \not\in \mathcal{E}_{\rho}$.    

\begin{lemma} \label{lemma:cont}
Let $\delta > 0$.  Then, for any $z \notin \mathcal{E}_{\rho, \delta}$, 
$$ \lim_{N \to \infty} m_{\hat{\rho}}(z) = m_{\rho}(z). $$
\end{lemma}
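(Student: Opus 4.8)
The plan is to characterize $m_\rho(z)$, for $z$ outside $\mathcal{E}_\rho$, as the \emph{unique} root of the quadratic \eqref{eq:rhomz} lying in the open unit disk, and then argue by compactness: every subsequential limit of $m_{\hat\rho}(z)$ satisfies the limiting quadratic and lies in the open unit disk, hence equals $m_\rho(z)$.

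First I would record the disk characterization, treating all $\sigma \in [-1,1]$ at once. Parametrizing the unit circle by $w = e^{i\theta}$, one computes $-(\sigma w + w^{-1}) = -(1+\sigma)\cos\theta + i(1-\sigma)\sin\theta$, which traces out $\partial\mathcal{E}_\sigma$ (by \eqref{def:Erho}, and directly for $\sigma = \pm 1$). Hence, whenever $\dist(z,\mathcal{E}_\sigma) > 0$, no root of $\sigma w^2 + zw + 1 = 0$ has modulus exactly $1$. When $\sigma \neq 0$ the product of the two roots is $1/\sigma$, so $|\sigma| \leq 1$ forces exactly one root strictly inside and one strictly outside the unit circle; when $\sigma = 0$ the single root $-1/z$ has modulus $< 1$ since $\mathcal{E}_0$ is the unit disk and $|z| > 1$. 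Since $m_\sigma$ is analytic on the connected exterior of $\mathcal{E}_\sigma$, continuous at $\infty$ with value $0$ there, and never of modulus $1$ on that exterior (else $z \in \partial\mathcal{E}_\sigma$), we conclude $|m_\sigma| < 1$ throughout. Thus $m_\sigma(z)$ is precisely the unique root of \eqref{eq:rhomz} (with $\sigma$ in place of $\rho$) in the open unit disk.

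Now fix $z \notin \mathcal{E}_{\rho,\delta}$. By Lemma~\ref{lemma:truncation}\eqref{item:rho} we have $\hat\rho \to \rho$, and $|\hat\rho| \leq 1$ by Cauchy--Schwarz once the truncated variances are normalized (Lemma~\ref{lemma:truncation}\eqref{item:asbnd}). Since $\sigma \mapsto \mathcal{E}_\sigma$ is continuous in the Hausdorff metric on $[-1,1]$ (including the degenerate endpoints), $\dist(z,\mathcal{E}_{\hat\rho}) > \delta/2$ for all large $N$, so $m_{\hat\rho}(z)$ is defined and $|m_{\hat\rho}(z)| \leq 1$. Passing to any subsequence along which $m_{\hat\rho}(z) \to w_\ast$, and taking limits in $\hat\rho\,m_{\hat\rho}(z)^2 + z\,m_{\hat\rho}(z) + 1 = 0$ (noting $\hat\rho\,m_{\hat\rho}(z)^2 \to \rho w_\ast^2$, which also covers $\rho = 0$), gives $\rho w_\ast^2 + z w_\ast + 1 = 0$ with $|w_\ast| \leq 1$. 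If $|w_\ast| = 1$ then $z = -(\rho w_\ast + w_\ast^{-1}) \in \partial\mathcal{E}_\rho$, contradicting $\dist(z,\mathcal{E}_\rho) > \delta$; hence $|w_\ast| < 1$, and by the uniqueness above $w_\ast = m_\rho(z)$. As every subsequence of $(m_{\hat\rho}(z))_N$ has a further subsequence converging to $m_\rho(z)$, the whole sequence converges to $m_\rho(z)$, which is the claim.

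The compactness and limiting steps are routine; the points needing genuine care are the disk characterization of $m_\sigma$ and the Hausdorff-continuity of $\mathcal{E}_\sigma$ at the degenerate values $\sigma = 0, \pm 1$ — in particular, arranging the argument so a single line of reasoning covers all of $\rho \in [-1,1]$ rather than branching into the Wigner ($\rho = \pm 1$) and iid ($\rho = 0$) cases separately.
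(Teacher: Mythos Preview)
Your proof is correct and proceeds by a genuinely different route than the paper's. The paper argues by case analysis: for $\rho\neq 0$ it invokes continuity of roots of a (monic) polynomial in its coefficients (dividing \eqref{eq:rhomz} by $\rho$), while for $\rho=0$ it runs a separate ad hoc argument, first using that $\hat\rho\, m_{\hat\rho}(z)\to 0$ to get a uniform bound on $|m_{\hat\rho}(z)|$ and then subtracting the two quadratic relations. Your approach instead isolates a single \emph{characterization} of $m_\sigma(z)$ valid for all $\sigma\in[-1,1]$ simultaneously --- namely, that it is the unique root of $\sigma w^2+zw+1=0$ lying in the open unit disk --- and then runs a compactness/subsequence argument. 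The disk characterization is the real content: your observation that $|w|=1$ forces $z=-(\sigma w+w^{-1})\in\partial\mathcal{E}_\sigma$ both rules out unit-modulus roots and, combined with the product-of-roots relation $w_1w_2=1/\sigma$ and the known behaviour $m_\sigma(z)\to 0$ at infinity, pins down $m_\sigma$ uniquely. This buys you a uniform treatment with no branching on $\rho$, and in particular the limit $\rho=0$ (where the quadratic degenerates) requires no special handling. The paper's approach is shorter to state but leans on the reader to track which of the two roots is $m_\rho$ under the ``continuity of roots'' citation; yours makes that identification explicit via the disk criterion.
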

\begin{proof}
Fix $z \notin \mathcal{E}_{\rho, \delta}$.  Since $\lim_{N \to \infty} \hat{\rho} = \rho$, it follows that $z \notin \mathcal{E}_{\hat{\rho},\delta}$ for $N$ sufficiently large.  From \eqref{eq:rhomz}, we make the following two observations.
\begin{enumerate}[(i)]
\item Since the roots of a (monic) polynomial are continuous functions of the coefficients (see \cite{CC,T}), we conclude from \eqref{eq:rhomz} that $m_\rho(z)$ is a continuous function of $\rho \in [-1,1]\setminus\{0\}$. \label{item:obs1}
\item Similarly, by multiplying \eqref{eq:rhomz} by $\rho$, we see that $\rho m_{\rho}(z)$ is a continuous function of $\rho \in [-1,1]$. \label{item:obs2}
\end{enumerate}

We now divide the proof into two cases.  From observation \eqref{item:obs1}, it follows that $\lim_{N \to \infty} m_{\hat{\rho}}(z) = m_{\rho}(z)$ in the case $\rho \neq 0$.

We now consider the case when $\rho = 0$.  Since $z \notin \mathcal{E}_{0, \delta}$, it follows that $|z| > 1$.  From observation \eqref{item:obs2} and \eqref{eq:def:mz}, we have
$$ \lim_{N \to \infty} \hat{\rho} m_{\hat{\rho}}(z) = 0, $$
and hence there exists $c > 0$ such that
$$ \left| \hat{\rho} m_{\hat{\rho}}(z) + z \right| \geq c $$
for $N$ sufficiently large.  By \eqref{eq:rhomz}, it follows that
$$ |m_{\hat{\rho}}(z)| \leq \frac{1}{c} $$
for $N$ sufficiently large.  Let $m_0(z) = -1/z$ (i.e. $m_0(z)$ is given by \eqref{eq:rhomz} when $\rho=0$).  Then subtracting the equation for $m_{\hat{\rho}}(z)$ from the equation for $m_0(z)$ yields
$$ |z| |m_0(z) - m_{\hat{\rho}}(z)| = |\hat{\rho}| |m_{\hat{\rho}}(z) |^2 \leq \frac{|\hat{\rho}|}{c^2} $$
for $N$ sufficiently large.  Since $|z| > 1$ and $\hat{\rho} \to 0$, we conclude that 
$$ \lim_{N \to \infty} m_{\hat{\rho}}(z) = m_0(z), $$ 
and the proof is complete.  
\end{proof}

\end{document}